\documentclass[reqno,final,oneside]{amsart}
\usepackage{amsaddr}
\usepackage[textwidth=16.1cm]{geometry}

\usepackage{mathrsfs}  
\usepackage{cancel} 
\usepackage[dvipsnames]{xcolor}
\usepackage{enumitem}
\usepackage{mathtools}
\usepackage[final]{hyperref}
\usepackage{bm}
\usepackage{subcaption}

\usepackage{comment}
\usepackage{xargs}
\usepackage{soul}

\newtheorem{theorem}{Theorem}[section]
\newtheorem{lemma}[theorem]{Lemma}
\newtheorem{proposition}[theorem]{Proposition}

\theoremstyle{definition}
  \newtheorem{definition}[theorem]{Definition}

\theoremstyle{remark}
  \newtheorem{remark}[theorem]{Remark}

\newcommand{\lp}{\varepsilon}
\newcommand{\N}{\mathbb{N}}

\newcommand{\R}{\mathbb{R}}
\newcommand{\C}{\mathbb{C}}
\newcommand{\Id}{\mathbf{Id}}
\newcommand{\id}{\mathbf{id}}
\newcommand{\eps}{\varepsilon}
\newcommand{\vphi}{\varphi}
\newcommand{\weakly}{\rightharpoonup}
\newcommand{\weaklystar}{\stackrel{*}{\rightharpoonup}}
\newcommand{\defas}{\coloneqq}
\newcommand{\sym}{\mathrm{sym}}
\newcommand{\elen}{\mathcal{W}^{\mathrm{el}}}
\newcommand{\cplen}{\mathcal{W}^{\mathrm{cpl}}}
\newcommand{\hyen}{\mathcal{H}}
\newcommand{\mechen}{\mathcal{M}}
\newcommand{\toten}{\mathcal{E}}
\newcommand{\inten}{W^{\mathrm{in}}}
\newcommand{\tempen}{\mathcal{T}}
\newcommand{\diss}{\mathcal{R}}
\newcommand{\elpot}{W^{\mathrm{el}}}
\newcommand{\cplpot}{W^{\mathrm{cpl}}}
\newcommand{\hypot}{H}
\newcommand{\felpot}{W}
\newcommand{\disspot}{R}
\newcommand{\Wid}{\mathcal{Y}_{\id}}
\newcommand{\Wzero}{W^{2, p}_{\Gamma_D}(\Omega; \R^d)}
\newcommand{\pl}{\partial}

\newcommand{\yst}[1]{y_{\lp,\tau}^{(#1)}}
\newcommand{\ysts}[1]{y_{\tau}^{(#1)}}
\newcommand{\tst}[1]{\theta_{\lp,\tau}^{(#1)}}
\newcommand{\tsts}[1]{\theta_{\tau}^{(#1)}}
\newcommand{\lst}[1]{\ell_{\tau}^{(#1)}}
\newcommand{\wst}[1]{w_{\lp,\tau}^{(#1)}}

\newcommand{\fst}[1]{f_\tau^{(#1)}}

\newcommand{\gst}[1]{g_\tau^{(#1)}}
\newcommand{\ust}[1]{u^{(#1)}}
\newcommand{\bt}{\theta_\flat}
\newcommand{\btst}[1]{\theta_{\flat, \tau}^{(#1)}}
\newcommand{\hc}{\mathbb{K}}
\newcommand{\hcm}{\mathcal{K}}
\newcommand{\Kst}[1]{\hcm_{\lp, \tau}^{(#1)}}
\newcommand{\drate}{\xi}

\newcommand{\haus}{\mathcal{H}}
\newcommand{\ddif}{\delta_\tau}
\newcommand{\aC}{C_0}
\newcommand{\ac}{c_0}

\newcommand{\ny}{\overline{y}_{\lp, \tau}}
\newcommand{\py}{\underline{y}_{\lp, \tau}}
\newcommand{\ay}{\hat{y}_{\lp, \tau}}
\newcommand{\dotay}{\dot{\hat{y}}_{\lp, \tau}}
\newcommand{\nyk}{\overline{y}_k}
\newcommand{\pyk}{\underline{y}_k}
\newcommand{\ayk}{\hat{y}_k}
\newcommand{\awk}{\hat{w}_k}
\newcommand{\dotayk}{\dot{\hat{y}}_k}
\newcommand{\nt}{\overline{\theta}_{\lp, \tau}}
\newcommand{\pt}{\underline{\theta}_{\lp, \tau}}
\newcommand{\at}{\hat{\theta}_{\lp, \tau}}

\newcommand{\ntk}{\overline{\theta}_k}
\newcommand{\ptk}{\underline{\theta}_k}

\newcommand{\nuk}{\overline{u}_k}
\newcommand{\puk}{\underline{u}_k}
\newcommand{\auk}{\hat{u}_k}
\newcommand{\dotauk}{\dot{\hat{u}}_k}
\newcommand{\nmuk}{\overline{\mu}_k}
\newcommand{\pmuk}{\underline{\mu}_k}
\newcommand{\amuk}{\hat\mu_k}

\newcommand{\nw}{\overline{w}_{\lp, \tau}}
\newcommand{\pw}{\underline{w}_{\lp, \tau}}
\newcommand{\aw}{\hat{w}_{\lp, \tau}}
\newcommand{\dotaw}{\dot{\hat{w}}_{\lp, \tau}}
\newcommand{\intQ}{\int_0^T\int_\Omega}

\newcommand{\intSN}{\int_0^T\int_{\Gamma_N}}

\newcommand{\CW}{\mathbb{C}_W}
\newcommand{\CD}{\mathbb{C}_D}
\newcommand{\mechenl}{\overline{\mechen}_0}
\newcommand{\rdrate}{\drate_\alpha^{\rm{reg}}}
\usepackage{stackengine}
\newcommand{\cdddot}{\mathrel{\Shortstack{{.} {.} {.}}}}
\newcommand*{\di}{\mathop{}\!\mathrm{d}}

\DeclareMathOperator{\dist}{dist}
\DeclareMathOperator*{\argmin}{argmin}
\DeclareMathOperator{\trace}{tr}
\DeclareMathOperator{\diver}{div}

\DeclarePairedDelimiterX\setof[1]\{\}{#1}
\DeclarePairedDelimiterX\abs[1]\lvert\rvert{#1}
\DeclarePairedDelimiterX\norm[1]\lVert\rVert{#1}
\DeclarePairedDelimiterX\sprod[2]\langle\rangle{#1, #2}

\newcommand{\NNN}{\color{black}}

\newcommand{\CCC}{\color{black}}
\newcommand{\BBB}{\color{black}}
\newcommand{\MMM}{\color{black}}

\numberwithin{equation}{section}
\begin{document}
\title[Nonlinear and linearized models in thermoviscoelasticity]{Nonlinear and linearized models in thermoviscoelasticity}

\subjclass[2010]{\CCC 74D05, 74D10, 74A15, 35A15, 35Q74}
\keywords{Thermoviscoelasticity, frame-indifferent viscous stresses, in-time discretization, linearization.}

\author[R.~Badal]{Rufat Badal}
\address[Rufat Badal]{
  Department of Mathematics, \\
  Friedrich-Alexander Universit\"at Erlangen-N\"urnberg, \\
  Cauerstr.~11, D-91058 Erlangen, Germany
}
\email{rufat.badal@fau.de}

\author[M.~Friedrich]{Manuel Friedrich} 
\address[Manuel Friedrich]{%
  Department of Mathematics, \\
  Friedrich-Alexander Universit\"at Erlangen-N\"urnberg, \\
  Cauerstr.~11, D-91058 Erlangen, Germany, \\
  \& Mathematics M\"{u}nster, \\
  University of M\"{u}nster, \\
  Einsteinstr.~62, D-48149 M\"{u}nster, Germany
}
\email{manuel.friedrich@fau.de}

\author[M.~Kru\v{z}\'ik]{Martin Kru\v{z}\'ik}
\address[Martin Kru\v{z}\'ik]{
  Czech Academy of Sciences, \\
  Institute of Information Theory and Automation, \\
  Pod vod\'arenskou v\v{e}\v{z}\'i 4, CZ-182 08 Praha 8, Czechia (corresponding address) \\
  \& Faculty of Civil Engineering, \\
  Czech Technical University, \\
  Th\'akurova 7, CZ-166 29 Praha 6, Czechia}
\email{kruzik@utia.cas.cz}

\begin{abstract}
We consider a quasistatic nonlinear  model  in thermoviscoelasticity at a finite-strain setting in the Kelvin-Voigt rheology where both the elastic and viscous stress tensors comply with the principle of frame indifference under rotations. The force balance is formulated in the reference configuration by resorting to the concept of nonsimple materials whereas the heat transfer equation is \CCC governed \BBB by the Fourier law in the deformed configurations. Weak solutions are obtained by means of a staggered in-time discretization where the deformation and the temperature are updated alternatingly. Our result refines a recent work by {\sc Mielke \& Roub\'{\i}\v{c}ek} \cite{MielkeRoubicek20Thermoviscoelasticity} since our approximation does not require any regularization of the viscosity term. Afterwards, we focus on the case of deformations  near the identity and small temperatures,  and we show by a rigorous linearization procedure that weak solutions of the nonlinear system converge in a suitable sense to solutions of a system in linearized thermoviscoelasticity.  The  same property holds for time-discrete approximations and  we provide a corresponding commutativity result.   
\end{abstract}

\maketitle

\section{Introduction}

Nonlinear and large strain continuum mechanics has become a thriving field of research over the last decades which is still subject of important advancements and, at the same time, offers many challenging open questions. For instance, rigorous studies on large strain viscoelastic materials \cite{FiredrichKruzik18Onthepassage,  kroemrou, Lewick, MielkeOrtnerSenguel14Anapproach}   or nonlinear models in thermoviscoelasticity \cite{MielkeRoubicek20Thermoviscoelasticity} have been initiated only recently. Besides analytical intricacies, the usage of large strain models in engineering practice is often impeded due to nonconvex behavior that complicates numerical implementations. At many occasions, however, linearized models are still sufficient to describe observed phenomena and are significantly easier to treat, both analytically and numerically. Roughly speaking, heuristic calculations suggest that, if the deformation of the body is ``close'' to the identity, nonlinear models can be replaced by  linear ones with a negligible error. Clearly, the  reliability of such predictions depends on \NNN the \BBB rigorous derivation of  simplified linearized models, e.g., via $\Gamma$-convergence \cite{Braides:02, DalMaso93AnIntroduction}. This is an intensive research program that has been initiated in the context of linearized elastostatics in \cite{DalMasoNegriPercivale02Linearized}. Subsequently, this work was extended in various directions, among others, models for   incompressible materials \cite{JesenkoSchmidt21Geometric, MaininiPercivale20Variational, MaininiPercivale21Linearization},  atomistic models \cite{Braides-Solci-Vitali:07, Schmidt:2009}, or problems without Dirichlet boundary conditions \cite{MaorMora21Reference} have been considered. For  multiwell energies allowing for phase transitions we refer to \cite{alicandro.dalmaso.lazzaroni.palombaro, DavoliFriedrich20Two-well, Schmidt08Linear}, and we mention also settings beyond elasticity such as plasticity \cite{Ulisse} or fracture \cite{Friedrich:15-2, higherordergriffith}. As to evolutionary  models, we refer to \cite{FiredrichKruzik18Onthepassage} where viscoelasticity in the Kelvin-Voigt rheology and its linearized version are treated.

The goal of this contribution is to couple the nonlinear equations of viscoelasticity with a heat transfer equation. We first analyze a corresponding frame-indifferent and thermodynamically-consistent model of thermoviscoelasticity at large strains, and  refine the results obtained recently by {\sc Mielke \& Roub\'{\i}\v{c}ek} \cite{MielkeRoubicek20Thermoviscoelasticity}. Afterwards, in the spirit of the isothermal result \cite{FiredrichKruzik18Onthepassage}, we pass to a linearized limit in terms of rescaled displacement fields and different regimes of rescaled temperatures.

We start by introducing the large strain model. Neglecting inertial effects, a nonlinear viscoelastic material in \CCC Kelvin-Voigt \BBB rheology obeys the following system of equations
\begin{equation}\label{viscoel}
 - \diver\big(
      \partial_F W(\nabla y, \theta)
      + \partial_{\dot F} R(\nabla y, \nabla \dot y, \theta)
    \big) 
  = f \qquad \text{in $[0,T]\times \Omega$.}
\end{equation}
Here, $[0, T]$ is a process time interval with $T > 0$, $\Omega \subset \R^d$ is a bounded domain representing the reference configuration, $y \colon [0, T] \times \Omega \to \R^d$ is a deformation mapping, $\nabla y$ is the deformation gradient, $\theta$ denotes the temperature,  $W \colon \R^{d\times d} \times [0, \infty) \to \R \cup \setof{+\infty}$ is a stored energy density, which represents a potential of the first Piola-Kirchhoff stress tensor $\partial_F W$, and $F \in \R^{d\times d}$ is the placeholder of $\nabla y$. Finally, $R \NNN \colon \R^{d \times d} \times \R^{d \times d} \times [0, \infty) \to \R\BBB$ denotes a (pseudo)potential of dissipative forces, where $\dot F$ is the time derivative of $F$, and $f \colon [0, T] \times \Omega \to \R^d$ is a volume density of external forces acting on $\Omega$.

The density $W$ respects frame indifference under rotations  and  positivity of the determinant of the deformation gradient, i.e., local non-self-penetration is realized. (In contrast to \cite{kroemrou}, we do not consider conditions implying global
non-self-penetration.) At the same time, we focus on physically correct viscous stresses, i.e., as observed by {\sc Antman} \cite{Antmann04Nonlinear}, $R$ must comply with \NNN a \BBB time-continuous frame indifference principle meaning that for all $F$ it holds that 
\begin{equation*}
  R(F,\dot F,\theta) = \hat R(C,\dot C,\theta),
\end{equation*}
for some nonnegative function $\hat R$, where $C \defas F^\top F$ and $\dot C \defas \dot F^T F + F^T \dot F$.

In contrast to the rapidly developed static theory at large strains, already in the isothermal case  existence of solutions to \eqref{viscoel}  remains a challenging problem and results for models respecting the physically relevant frame indifference for both $W$ and $R$ are scarce. We refer, e.g., to \cite{Lewick}  for  local in-time existence or to \cite{demoulini} for the existence of  measure-valued solutions. To date, weak solutions in  finite strain isothermal viscoelasticity  \cite{FiredrichKruzik18Onthepassage, kroemrou, MielkeRoubicek20Thermoviscoelasticity} can only be guaranteed by using the concept of  second-grade nonsimple materials  where   the stored energy density (and consequently the first Piola-Kirchhoff stress tensor) additionally depends on the second gradient of the deformation. This idea was first introduced by {\sc Toupin} \cite{Toupin62Elastic,Toupin64Theory} and proved to be useful in mathematical continuum mechanics, see e.g.~\cite{BallCurrieOlver81Null, Batra76Thermodynamics, MielkeRoubicek16Rateindependent, Podio02Contact}. In this spirit, we consider  a version of \eqref{viscoel} for nonsimple materials where the \NNN stored \BBB energy density depends also on the second gradient of $y$, and \eqref{viscoel} is replaced by 
\begin{equation}\label{viscoel_nonsimple}
 - \diver\Big(
      \partial_F W(\nabla y, \theta)
      - {\rm div}(\partial_G H(\nabla^2 y))
      + \partial_{\dot{F}}R(\nabla y, \nabla \dot y, \theta)
    \Big)
  = f \qquad \text{in $[0,T]\times \Omega$,}
\end{equation}
which corresponds to an additional convex term $ \int_\Omega H(\nabla^2 y)  \di x$ in the stored energy. Let us mention  that a main justification of this model lies in the observation that,  in the small strain limit and under suitable scaling,  the problem leads to the standard system of linear viscoelasticity without second gradient  \cite{FiredrichKruzik18Onthepassage}.

In the present contribution, we focus on a nonlinear coupling of the system \eqref{viscoel_nonsimple} with a heat transfer equation of the form
\begin{equation}\label{heat}
  c_V(\nabla y,\theta) \, \dot\theta =
    \diver(\mathcal{K}(\nabla y, \theta) \nabla\theta)
    + \partial_{\dot F} R(\nabla y, \nabla \dot y, \theta) : \nabla \dot y
    + \theta \partial_{F \theta} W^{\rm cpl}(\nabla y, \theta) : \nabla \dot y \qquad \NNN \text{in $[0,T]\times \Omega$}, \BBB
\end{equation}
where  $W^{\rm cpl}$ denotes a  thermo-mechanical coupling potential, $c_V(F,\theta) = -\theta \partial^2_\theta W^{\rm cpl}(F, \theta)$ is the  heat capacity, $\mathcal{K}$ denotes the matrix of the heat-conductivity coefficients, and the last term plays the role of an {adiabatic heat} source. This corresponds to heat transfer modeled by the Fourier law in the deformed configuration which is however pulled back to the reference configurations, whence $\mathcal{K}$ depends on the deformation gradient.   
 Here,  following \cite{MielkeRoubicek20Thermoviscoelasticity}, we assume a rather weak thermal coupling by using the splitting of the free energy $W$ via the explicit ansatz 
\begin{align}\label{decomposition}
W(F,\theta) = W^{\rm el}(F) + W^{\rm cpl}(F,\theta)
\end{align}
implying $\partial_\theta W = \partial_\theta W^{\rm cpl}$. The coupled system \eqref{viscoel_nonsimple}--\eqref{heat} is equipped with suitable initial and boundary conditions, see \eqref{main_bc}--\eqref{initial_cond} below.

Thermoviscoelasticity is a notoriously difficult problem already at small strains, e.g.,  there is no obvious variational structure of the thermal part due to  the low regularity of data. New developments in the the $L^1$-theory for the nonlinear heat equation \cite{Boccardoetal, BoccardoGallouet89Nonlinear} paved the way to advancements in small strain thermoviscoelasticity (for example, see \cite{Blanchard, Bonetti, Roubicek98Nonlinear}) which eventually culminated in the analysis of a physically sound large-strain model by {\sc Mielke \& Roub\'{\i}\v{c}ek} \cite{MielkeRoubicek20Thermoviscoelasticity}. We refer to \cite[Introduction; items $(\alpha)$--$(\lp)$]{MielkeRoubicek20Thermoviscoelasticity} for the main properties and challenges for this model which coincides with ours up to minor points, see  Remark \ref{rem: relation}.  Their existence result is based on a time-incremental approach for a regularized system which does not comply with the above mentioned frame indifferent principles, e.g., in \eqref{viscoel_nonsimple} a term $\NNN \lambda \BBB \nabla \dot{y}$ is added for $\NNN \lambda \BBB > 0$. Then, they first pass to the time-continuous limit in the regularized problem and eventually recover the original system in the limit of the vanishing regularization parameter $\NNN \lambda\BBB$. 

The first result of our work (Theorem \ref{thm:van_tau}) revisits their study by proposing a slightly different semidiscretization in time which  directly approximates the PDE system in the  limit for vanishing time steps and comes along without any regularization. Although establishing the same existence result on weak solutions, our approach sheds new light on the issue  as we propose a time-discrete approximation scheme complying with frame indifference. This combined with a spatial discretization, see e.g.\ \cite[Section 9.3]{KruzikRoubicek19Rate-independent},  could be the basis for a numerical implementation. As in \cite{MielkeRoubicek20Thermoviscoelasticity}, our scheme is staggered, i.e., first the deformation is updated at fixed temperature from the previous time \NNN step \BBB and then the temperature is updated. Our scheme differs in the usage of explicit or implicit steps, i.e., whether in certain terms the `old' or the `new' temperature is used, see Remark \ref{rem:difference_to_mielke_in_steps}. By means of delicate estimates on the coupling potential, we are hereby able to establish the necessary a priori bounds without any regularization. At this point, we \CCC derive \BBB a priori estimates for different scalings of the elastic strains and the temperature which is at the basis of our subsequent analysis on small-strain limits.

In the second part of our work, we are interested in the case of small strains and temperatures, i.e., when  $\nabla u \defas \nabla y - \Id $ is of order $\lp$ for some small $\lp >0$ and $\theta$ is of order $\lp^\alpha$ for any exponent $\alpha >0$. Here, $u \defas y-\id$ is the displacement corresponding to $y$ with  $\id$ and $\Id$ standing for the identity map and identity matrix, respectively.  Such  properties are certainly reasonable if initial values and boundary values for the deformation and the temperature are close to the identity or zero, respectively. 
Therefore, it is convenient to introduce the rescaled displacement $u_\lp = \lp^{-1}(y - \id)$ and rescaled temperature $\mu_\lp = \lp^{-\alpha} \theta$, and to replace $f$ by $\lp f$. We write \eqref{viscoel_nonsimple}--\eqref{heat} in terms of the rescaled quantities and  multiply \eqref{viscoel_nonsimple} with $\lp^{-1}$ and \eqref{heat} with $\lp^{-\alpha}$. Then, formally, we can pass to the limit and obtain the system
\begin{equation}\label{viscoel-small-intro}
\begin{aligned}
	-\diver \big( \C_W e(u) + \C_D e(\dot u) + \mathbb{B}^{(\alpha)} \mu \big) &= f, \\
	\bar c_V \dot\mu - \diver(\mathbb{K}_0 \nabla \mu)
  &= \C_D^{(\alpha)} e(\dot u): e(\dot u),
\end{aligned}
\end{equation}
where $\C_W \defas \partial^2_F \elpot(\Id)$ is the tensor of elastic constants ($\elpot$ is defined in \eqref{decomposition}), $\C_D\defas\partial_{\dot F^2}R(\Id, \CCC \dot F \BBB, 0)$ is the tensor of viscosity coefficients, $\mathbb{B}^{(\alpha)}$ represents a thermal expansion matrix,  $\bar c_V$ is the heat capacity   at zero temperature and the stress free material state, and $\mathbb{K}_0 \defas \mathcal{K}(\Id, 0)$. Finally, $e(u)\defas(\nabla u+(\nabla u)^\top)/2$ denotes the linear strain tensor and $e(\dot u)$ the strain rate. By different scaling properties of the two equations, it turns out that the limit is $\alpha$-dependent and, as we point out later, only meaningful in the regime $\alpha \in [1,2]$. The matrix $\mathbb{B}^{(\alpha)}$ is only active for $\alpha = 1$ and in this case it is related to the coupling potential, namely $\mathbb{B}^{(\alpha)} =  \partial_{F\theta} \cplpot(\Id,0)$. On the other hand, $\C_D^{(\alpha)} $ is nonzero only for $\alpha=2$ and then it coincides with $\C_D$.  \NNN Interestingly, \BBB although the nonlinear thermoviscoelasticity system is written for a nonsimple material, in the limit we obtain  linear equations without spatial gradients of $e(u)$.

\NNN Formal derivations of such  \BBB PDE systems is not new and can be found, e.g., in \cite[Section 59]{GurtinFriedAnand10Themechanics}. The second  main contribution of our work (Theorem \ref{thm:linearization_right_diag}) is to make this limit passage rigorous, i.e., we show that solutions to the nonlinear system \eqref{viscoel_nonsimple}--\eqref{heat} converge in a suitable sense to  the unique \BBB solution of the linear system \eqref{viscoel-small-intro} as $\lp\to 0$. Besides this convergence result, we also get analogous convergences for time-discretized problems,   and we confirm that  convergences for vanishing time step and $\lp\to 0$ commute, see Theorem \ref{thm:linearization_left_bottom}.

To our best knowledge, it is the first linearization result of a mechanical model coupled with heat transfer in the material. We perform linearization near the natural (i.e., stress free) state and zero temperature. Without further details, let us however mention that by a shifting argument our techniques would allow to  linearize about a fixed, positive temperature $\theta_c$, whenever the initial and boundary data lie above $\theta_c$ and the coupling potential $W^{\rm cpl}(F,\theta)$ vanishes for $\theta \le \theta_c$.

We now give an  outline of the paper and present some fundamental ingredients of the proof. After some basic notation, we introduce the nonlinear setting in Subsection \ref{sec:setting}. In Subsection \ref{sec:nonlinear_scheme}, we formulate our semi-discrete approximation result in the nonlinear setting and briefly highlight the differences to the scheme in  \cite{MielkeRoubicek20Thermoviscoelasticity}, see Remark \ref{rem:difference_to_mielke_in_steps}. In  Subsection \ref{sec: linear case}, we introduce the linearized setting and present our results on convergence of solutions in the nonlinear-to-linear passage. 

In Subsections \ref{sec:single_step_well_defined}--\ref{sec: welldef}, we address the well-definedness of the staggered time-incremental scheme. The core of our approach is an inductive bound on the total energy, see Lemma \ref{lem:initial_toten_bound}: this is achieved by suitably testing the momentum balance and the heat-transfer equation,  adding the two equations, and exploiting cancellation of the dissipation. In contrast to \cite{MielkeRoubicek20Thermoviscoelasticity}, see particularly \cite[Remark 6.1]{MielkeRoubicek20Thermoviscoelasticity}, this cancellation is \CCC already \BBB possible \CCC in the time-discrete setting \BBB as we use a simpler, explicit, thermo-mechanical coupling term in the scheme allowing us to proceed without the necessity of regularizing terms. This, however, comes at the expense of the fact that the argument to guarantee nonnegativity of the temperature in the thermal step is more sophisticated. For this, we need a delicate estimate for the coupling potential, see Proposition \ref{prop:existence_thermal_step}. 

As a preparation for the passage to the linearized system, we need an adaption of the bound on the total energy, see Subsection \ref{sec: adaptions}. In fact, due to the different scaling $\lp$ and $\lp^\alpha$ of the mechanical and the heat-transfer equation, the above mentioned cancellation cannot be used in general for small $\lp$. \NNN Thus, \BBB   novel techniques are required to tame the contribution  of the dissipation including higher integrability of the temperature variable, see Lemma \ref{lem:initial_toten_bound_lin} for details. Section \ref{sec:staggered_scheme} is closed with  a priori bounds derived from the energy bound, see Subsection \ref{sec: a priori}. As in \cite{MielkeRoubicek20Thermoviscoelasticity}, the main ingredients here are Gagliardo-Nirenberg interpolation inequalities and special test functions developed by {\sc Boccardo and Gallou\"et} \cite{BoccardoGallouet89Nonlinear} for parabolic equations with measure-valued right-hand side. For convenience of the reader, almost complete proofs are provided since in addition to \cite{MielkeRoubicek20Thermoviscoelasticity} we need  scaling invariant estimates in terms of the small parameter $\lp$.
 
In Section \ref{sec:tau_to_zero_delta_fixed} we then address the passage to vanishing time steps in the nonlinear model. At this point,  having settled  the  a priori estimates,  we can essentially follow \cite{MielkeRoubicek20Thermoviscoelasticity}. Since we work without regularization terms, however, we need to combine and adapt the  techniques from Sections 5--6 of \cite{MielkeRoubicek20Thermoviscoelasticity}, and therefore we elaborate the proofs to some extent. Eventually, Section \ref{sec: linearization} is devoted to the linearization. In Subsection~\ref{sec: 5.1} we first deal with the passage to the time-continuous problem. The strategy in the proof is similar to the one in the nonlinear setting in Section \ref{sec:tau_to_zero_delta_fixed}, with the additional challenge that in each term we need to ensure that higher order terms in Taylor expansions are asymptotically negligible. In particular, we show that contributions of the second gradient vanish in the limit. As in the nonlinear setting, strong convergence of the strains and the strain rates is necessary to pass to the limit, see Lemma \ref{lem:strong_strain_rates_conv} and Lemma \ref{lemma: strong ratistrain}. Due to rescaling of the equations, however, this is more demanding  in the passage to the linerized setting  as higher integrability of the temperature is needed to control the coupling term, cf.\ Remark \ref{rem: after temp}. Eventually, Subsection \ref{sec: 5.2} is devoted to time-discrete problems which particularly involves a $\Gamma$-convergence result   for the mechanical part, see Proposition \ref{prop:gamma_conv}.

\section{The model and main results}\label{sec:model_and_main_results}

\subsection{The setting and modeling assumptions}\label{sec:setting}

In what follows, we \NNN use \BBB standard notation for Lebesgue and Sobolev spaces. The lower index $_+$ means nonnegative elements, i.e., $L^2_+(\Omega)$ denotes the convex cone of nonnegative functions belonging to $L^2(\Omega)$ and a similar notation is used for $H^1_+(\Omega)$.  We also set $\R_+\defas [0,+\infty)$. Let \NNN  $a \wedge b \defas \min\setof{a, b}$ for $a, \, b \in \R$.  Denoting by $d \NNN \ge 2\BBB$ the dimension, \NNN we let $\Id \in \R^{d \times d}$  be the identity matrix, and \BBB $\id(x) \defas x$ stands for the identity map on $\R^d$. We define \NNN the subsets \BBB $SO(d) \defas \setof{A \in \R^{d \times d} \colon A^T A = \Id, \, \NNN \det A = 1 \BBB }$, $GL^+(d) \defas \setof{F \in \R^{d \times d} \colon \det(F) > 0}$, \NNN and \BBB $\R^{d \times d}_\sym \defas \setof{A \in \R^{d \times d} \colon A^T = A}$.
Furthermore, $F^{-T} \defas (F^{-1})^T=(F^T)^{-1}$, and  given a tensor (of arbitrary dimension), $\abs{F}$ will denote its Frobenius norm.
We denote the scalar product between vectors, matrices, or 3rd-order tensors by $\cdot$, $:$, and $\cdddot$, respectively.  As usual, in the proofs generic constants $C$ may vary from line to line. If not stated otherwise, constants depend only on $d$, $p >d$, $\Omega$, $\alpha >0$, and the potentials introduced in the sequel. \NNN We frequently use a scaled version of Young's inequality with constant $\lambda \in (0,1)$ by which we mean $ab \le \lambda a^p + Cb^q/\lambda$ for $a,b \ge 0$, exponents $p,q \ge 1$ with $1/p + 1/q = 1$, and $C>0$ large enough.     \BBB

Consider an open set $\Omega \subset \R^d$ with Lipschitz boundary $\Gamma \defas \partial \Omega$.
Let $\Gamma_D, \, \Gamma_N$ be disjoint Borel subsets of $\Gamma$ such that $\haus^{d-1}(\Gamma_D) > 0$, $\haus^{d-1}(\Gamma_N) > 0$, and $\Gamma = \Gamma_D \cup \Gamma_N$, representing Dirichlet and Neumann parts of the boundary, respectively.
For $p > d$, we introduce the set of \emph{admissible deformations} by
\begin{equation}\label{eq: randwerde}
  \Wid \defas \setof*{
    y \in W^{2, p}(\Omega; \R^d) \colon
    y = \id \text{ on } \Gamma_D, \,
    \det(\nabla y) > 0 \text{ in } \Omega
  }
\end{equation}
and we say that the \emph{absolute temperature} $\theta$ is admissible if $\theta \in L^1_+(\Omega)$. We also introduce the space
\begin{equation}\label{eq: 000}
 \NNN   \Wzero \BBB \defas \setof{y \in W^{2, p}(\Omega; \R^d) \colon y = 0 \text{ on } \Gamma_D}.
\end{equation}
\NNN Next, we \BBB discuss our variational setting.
In this regard, let $\ac, \, \aC$ with $0 < \ac < \aC \CCC < \infty \BBB$ be some  fixed \BBB  constants.

\noindent \textbf{Mechanical energy and coupling energy:}
The \emph{elastic energy} $\elen \colon \Wid \to \R_+$ is given by
\begin{equation}\label{purely_elastic}
  \elen(y) \defas \int_\Omega \elpot(\nabla y) \di x,
\end{equation}
where $\elpot \colon GL^+(d) \to \R_+$ is a frame indifferent elastic energy potential with the usual assumptions in nonlinear elasticity.
More precisely, we require that
\begin{enumerate}[label=(W.\arabic*)]
  \item \label{W_regularity} $\elpot$  is \NNN continuous and $C^3$ in a neighborhood of $SO(d)$; \BBB
  \item \label{W_frame_invariace} Frame indifference: $\elpot(QF) = \elpot(F)$ for all $F \in GL^+(d)$ and $Q \in SO(d)$;
  \item \label{W_lower_bound} Lower bound: $W^{\rm el}(F) \ge \ac \big(|F|^2 + \det(F)^{-q}\big) - \aC$ for all $F \in GL^+(d)$, where $q \ge \frac{pd}{p-d}$.
\end{enumerate}
Adopting the concept of 2nd-grade nonsimple materials, see \cite{Toupin62Elastic, Toupin64Theory}, we also consider a \emph{strain gradient \BBB energy term} $\hyen \colon \Wid \to \R_+$, defined as
\begin{equation}\label{hyperelastic}
  \hyen(y) \defas \int_\Omega \hypot(\nabla^2 y) \di x,
\end{equation}
where its potential $\hypot \colon \R^{d \times d \times d} \to \R_+$ satisfies
\begin{enumerate}[label=(H.\arabic*)]
  \item \label{H_regularity} $\hypot$ is convex and $C^1$;
  \item \label{H_frame_indifference} Frame indifference: $\hypot(QG) = \hypot(G)$ for all $G \in \R^{d \times d \times d}$ and $Q \in SO(d)$;
  \item \label{H_bounds} $\ac \abs{G}^p \leq H(G) \leq \aC (1+ \abs{G}^p)$ and $\abs{\pl_G H(G)} \leq \aC \abs{G}^{p-1}$ for all $G \in \R^{d \times d \times d}$.
\end{enumerate}
The \emph{mechanical energy} $\mechen \colon \Wid \to \R_+$ is then defined as the sum
\begin{equation}\label{mechanical}
  \mechen(y) \defas \elen(y) + \hyen(y).
\end{equation}
Besides the mechanical energy, we introduce a \emph{coupling energy} $\cplen \colon \Wid \times \NNN L^1_+(\Omega) \BBB  \to \R$ given by
\begin{equation*}
  \cplen(y, \theta) \defas \int_\Omega \cplpot(\nabla y, \theta) \di x,
\end{equation*}
where $\cplpot \colon GL^+(d) \times \R_+ \to \R$ describes mutual interactions of mechanical and thermal effects (see e.g.~\cite{GurtinFriedAnand10Themechanics}), and satisfies
\begin{enumerate}[label=(C.\arabic*)]
  \item \label{C_regularity} $\cplpot$ is continuous and $C^2$ in $GL^+(d) \times (0, \infty)$;
  \item \label{C_frame_indifference} $\cplpot(QF, \theta) = \cplpot(F, \theta)$ for all $F \in GL^+(d)$, $\theta \geq 0$, and $Q \in SO(d)$;
  \item \label{C_zero_temperature} $\cplpot(F, 0) = 0$ for all $F \in GL^+(d)$;
  \item \label{C_lipschitz} $|\cplpot(F,\theta) - \cplpot(\tilde{F}, \theta)| \le \aC(1 + |F| + |\tilde{F}|)|F - \tilde{F}|$ for all $F, \, \tilde F \in GL^+(d)$\CCC, \BBB and $\theta \geq 0$;
  \item \label{C_bounds} For all $F \in \NNN GL^+(d)\BBB$ and $\theta > 0$ it holds that
  \begin{align*}
    \abs{\partial_F^2 W^{\rm cpl}(F,\theta)} &\le \aC, &
    \abs{\pl_{F \theta} \cplpot(F, \theta)} & \leq \frac{\aC(1+|F|)}{\max\lbrace \theta,1\rbrace}, &
    \ac & \leq -\theta \pl_\theta^2 \cplpot(F, \theta) \leq \aC.
  \end{align*}
\end{enumerate}
\NNN Notice \BBB that, by \ref{C_zero_temperature} and the second bound in \ref{C_bounds}, $\pl_F \cplpot$ can be continuously extended to zero temperatures with $\pl_F W^{\rm cpl}(F, 0) = 0$. For $F \in GL^+(d)$ and $\theta \geq 0$, we define the \emph{total free energy potential} 
\begin{align}\label{eq: free energy}
\felpot(F, \theta) \defas \elpot(F) + \cplpot(F, \theta).
\end{align}

\noindent\textbf{Dissipation potential:}
The \emph{dissipation functional} $\diss \colon \Wid \times \Wzero \times  L^1_+(\Omega) \BBB \to \R_+$ is defined as
\begin{equation}\label{dissipation}
  \diss(\tilde y, y - \tilde y, \theta)
  \defas \int_\Omega \disspot(\nabla \tilde y, \nabla y - \nabla \tilde y, \theta) \di x,
\end{equation}
where $\disspot \colon \R^{d \times d} \times \R^{d \times d} \times \R_+ \to \R_+$ is the \emph{potential of dissipative forces} satisfying
\begin{enumerate}[label=(D.\arabic*)]
  \item \label{D_quadratic} $\disspot(F, \dot F, \theta) \defas \frac{1}{2} D(C, \theta)[\dot C, \dot C] \defas \frac{1}{2} \dot C : D(C, \theta) \dot C$, where $C \defas F^T F$, $\dot C \defas \dot F^T F + F^T \dot F$, and $D \in C(\R^{d \times d}_\sym \times \R_+; \R^{d \times d \times d \times d})$ with $D_{ijkl} = D_{jikl}= D_{klij}$ for $1 \le i,j,k,l \le d$;
  \item \label{D_bounds} $\ac \abs{\dot C}^2 \leq \dot C : D(C, \theta) \dot C \leq \aC \abs{\dot C}^2$ for all $C, \, \dot C \in \R^{d \times d}_\sym$, and $\theta \geq 0$.
\end{enumerate}
Notice that the fact that $\disspot$ can be written as a function depending on the right Cauchy-Green tensor $C = F^T F$ and its time derivative $\dot C$ is equivalent to \emph{dynamic frame indifference} (see also \cite{Antmann98Physically}).
Condition \ref{D_quadratic} also implies that the viscous stress $\pl_{\dot F} \disspot(F, \dot F, \theta)$ is linear in the time derivative $\dot C$ as indeed a simple calculation shows
\begin{equation}\label{chain_rule_Fderiv}
  \pl_{\dot F} \disspot(F, \dot F, \theta) = 2 F (D(C, \theta) \dot C).
\end{equation}
The choice of a linear material viscosity is crucial in our approach and is a relevant modeling assumption for
non-activated dissipative processes with rather moderate rates.
We emphasize, however, that the geometrical nonlinearity of finite elasticity is still present due to $\dot C$ in \eqref{chain_rule_Fderiv},   and that $\pl_{\dot F} \disspot$ necessarily also depends on $F$, even for constant functions $D$. We also define the associated \emph{dissipation rate} $\drate \colon \R^{d \times d} \times \R^{d \times d} \times \R_+ \to \R_+$ as
\begin{equation}\label{diss_rate}
  \drate(F, \dot F, \theta)
  \defas \pl_{\dot F} \disspot(F, \dot F, \theta) : \dot F
  = D(C, \theta) \dot C : \dot C = 2 R(F, \dot F,\theta),
\end{equation}
where the \CCC second \BBB identity follows from \eqref{chain_rule_Fderiv}, $\dot C = \dot F^T F + F^T \dot F$, and the symmetries in \ref{D_quadratic}.

\noindent\textbf{Heat conductivity and internal energy:}
The map $\hc \colon \Omega \times \R_+ \to \R^{d \times d}_\sym$ will denote the \emph{heat conductivity tensor} of the material in the deformed configuration.
We require that $\hc$ is continuous, symmetric, uniformly positive definite, and bounded.
More precisely, for all $x \in \Omega$ and $\theta \geq 0$ it holds that
\begin{equation}\label{spectrum_bound_K}
  \ac \leq \hc(x, \theta) \leq \aC,
\end{equation}
where the inequalities are meant in the eigenvalue sense.
We define the pull-back $\hcm \colon \Omega \times  GL^+(d) \BBB  \times \R_+ \to \R^{d \times d}_\sym$ of $\hc$ into the reference configuration by (see \cite[(2.24)]{MielkeRoubicek20Thermoviscoelasticity})
\begin{equation}\label{hcm}
  \hcm(x, F, \theta) \defas \det(F) F^{-1} \hc(x, \theta) F^{-T}.
\end{equation}
From this point on, we will usually omit stating the $x$-dependence of $\hcm$ and $\hc$ explicitly, shortly writing $\hc(\theta)$ for $\hc(x, \theta)$ and $\hcm(F, \theta)$ for $\hcm(x, F, \theta)$.

\noindent\textbf{Internal and total energy:}
The \textit{internal energy} $\inten \colon GL^+(d) \times (0, \infty) \to \R$ is defined as
\begin{equation}\label{Wint}
  \inten(F, \theta) \defas \cplpot(F, \theta) - \theta \pl_\theta \cplpot(F, \theta).
\end{equation}
Using \ref{C_zero_temperature} and the third bound in \ref{C_bounds}, we can easily see that $\inten$ can be continuously extended to zero temperatures by setting $\inten(F, 0) = 0$ for all $F \in GL^+(d)$.
Also by the third bound in \ref{C_bounds}, the internal energy is controlled by the temperature in the following sense: 
\begin{align}\label{sec_deriv}
  \partial_{\theta} \inten (F, \theta)
  = -\theta \pl_\theta^2 \cplpot(F, \theta) \in [\ac, \aC]
  \qquad \text{for all $F \in GL^+(d)$ and $\theta \NNN > \BBB 0$}
\end{align}
which along with \ref{C_zero_temperature} yields
\begin{equation}\label{inten_lipschitz_bounds}
  \ac \theta \leq \inten(F, \theta) \leq \aC \theta.
\end{equation}
\NNN Eventually, \BBB the \emph{total energy functional} $\toten \colon \Wid \times L^1_+(\Omega) \to \R_+$ is then given by
\begin{equation}\label{toten}
  \toten(y, \theta) \defas \mechen(y) +  \NNN \mathcal{W}^{\rm in}(y,\theta) \quad \text{ with }  \mathcal{W}^{\rm in}(y,\theta) \defas \BBB \int_\Omega \inten(\nabla y, \theta) \di x.
\end{equation}

\begin{remark}[Comparison to \cite{MielkeRoubicek20Thermoviscoelasticity}\BBB]\label{rem: relation}
  We close this part on modeling assumptions by highlighting the differences to the assumptions in \cite{MielkeRoubicek20Thermoviscoelasticity}:
  Our condition in \ref{W_lower_bound} is slightly more general than the corresponding one in \cite[(2.30a)]{MielkeRoubicek20Thermoviscoelasticity}, where the term $\abs{F}^2$ is replaced by $\abs{F}^s$ for $s > 2$.
  We do not assume that $W^{\rm cpl}$ is bounded from below.
  Condition \ref{C_zero_temperature} as well as bounds similar to \ref{C_lipschitz}--\ref{C_bounds} are also required in \cite{MielkeRoubicek20Thermoviscoelasticity}, see \cite[(2.15), (2.30)]{MielkeRoubicek20Thermoviscoelasticity}.
  There, the bound on $\pl_{F \theta} \cplpot$ is slightly more general for $\theta$ near zero, and only an upper bound on the eigenvalues of $\pl_F^2 \cplpot(F, \theta)$ is required, see \cite[(2.30c)]{MielkeRoubicek20Thermoviscoelasticity}. This similarity of the assumptions will in particular allow us to employ several intermediate steps proven in \cite{MielkeRoubicek20Thermoviscoelasticity}.   For models complying with the above assumptions we refer to \cite[Examples 2.4, 2.5]{MielkeRoubicek20Thermoviscoelasticity}.
\end{remark}

\noindent\textbf{Equations of nonlinear thermoviscoelasticity:} Fixing a finite time horizon $T > 0$, let us from now on shortly write $I \defas [0, T]$.
We fix a constant $\lp \in (0, 1]$ which represents the \emph{magnitude of the elastic strain}.
In the first part of the paper, we are mainly interested in the large strain setting, where $\lp = 1$.
However, later we perform the passage to the small strain limit $\lp \to 0$.
To allow for a consistent notation, we include the parameter $\lp$ throughout  the entire paper.
\NNN Let \BBB $\lp f$ with $f \in W^{1, 1}(I; L^2(\Omega; \R^d))$ be a time-dependent \emph{dead force}, $\lp g$ with $g \in W^{1, 1}(I; L^2(\Gamma_N; \R^d))$  be \BBB a \emph{boundary traction}, and let $\lp^{ \alpha \BBB} \bt$ with $\bt \in  W^{1, 1}(I; L^2_+(\Gamma))$  and $\alpha >0$ \BBB   be an external temperature.
 We study the coupled system \BBB
\begin{subequations}\label{main_evol_eq}
\begin{align}
  \lp f &=
    -\diver\big(
      \pl_F \felpot(\nabla y, \theta)
      + \pl_{\dot F} \disspot(\nabla y, \nabla \dot y, \theta)
      - \diver(\pl_G \hypot(\nabla^2 y))
    \big), \label{main_mechanical_eq} \\
  -\theta \pl_\theta^2 \cplpot(\nabla y, \theta) \, \dot{\theta} &=
    \diver(\hcm(\nabla y, \theta) \nabla \theta)
    + \drate(\nabla y, \nabla \dot y, \theta)
    + \theta \pl_{F \theta} \cplpot(\nabla y, \theta) : \nabla \dot y, \label{main_thermal_eq}
\end{align}
\end{subequations}
which, as in \cite{MielkeRoubicek20Thermoviscoelasticity}, is complemented with the boundary conditions
\begin{subequations}\label{main_bc}
\begin{align}
  \big(
    \pl_F \felpot(\nabla y, \theta)
    + \pl_{\dot F} \disspot(\nabla y, \nabla \dot y, \theta)
  \big) \nu
  - \diver_S \big( \pl_G \hypot(\nabla^2 y) \nu \big)
    &= \lp g & &\text{ on } \NNN I \times \BBB \Gamma_N, \\
  y &= \id & &\text{ on } I \times \Gamma_D, \\
  \pl_G \hypot(\nabla^2 y) : (\nu \otimes \nu)
    &= 0 & &\text{ on }I \times  \Gamma, \\
  \hcm(\nabla y, \theta) \nabla \theta \cdot  \nu \BBB + \kappa \theta
    &= \kappa \lp^\alpha \bt & &\text{ on } I \times \Gamma.
\end{align}
\end{subequations}
Here, $\nu$ denotes the outward pointing unit normal on $\Gamma$ and $\kappa \ge 0$ is a \emph{phenomenological heat-transfer coefficient} on $\Gamma$. Moreover, $\diver_S$   represents the \emph{surface divergence}, defined by $\diver_S(\cdot) = \trace(\nabla_S(\cdot))$, where $\trace$ denotes the trace and $\nabla_S \defas (\Id - \nu \otimes \nu) \nabla$ denotes the surface gradient. We refer to \cite[(2.28)--(2.29)]{MielkeRoubicek20Thermoviscoelasticity} for an explanation and derivation of the boundary conditions.  Note that by \eqref{diss_rate} the system \eqref{main_evol_eq} indeed coincides with \eqref{viscoel_nonsimple}--\eqref{heat}. \BBB

The mechanical evolution \eqref{main_mechanical_eq} is the quasistatic version of the Kelvin-Voigt rheological model (neglecting inertia), corresponding to the sum of the conservative and the dissipative forces.
The equation \eqref{main_thermal_eq} follows from the entropy equation $\theta \dot s = \xi - \diver  q \BBB$, where the \emph{entropy} $s$ is expressed in terms of the free energy by $s = - \partial_\theta W = - \partial_\theta \cplpot$.
Furthermore, the dissipation rate $\xi$ is defined in \eqref{diss_rate} and the \emph{heat flux} $q$ is modeled by the \emph{Fourier law} in the deformed configuration, pulled back to the reference configuration, i.e., $q = - \mathcal{K}(F,\theta) \nabla \theta$.
The term $-\theta \pl_\theta^2 \cplpot(\nabla y, \theta)$ corresponds to the \emph{heat capacity} \CCC at constant volume \BBB and the last term in \eqref{main_thermal_eq} is an \emph{adiabatic heat} source.
We again refer to \cite{MielkeRoubicek20Thermoviscoelasticity} or to \cite[Section 8.1]{KruzikRoubicek19Rate-independent} for details.
Notice that the the purely mechanical stored energy \NNN $W^{\rm el}$, see \eqref{purely_elastic}, \BBB does not influence the heat production and transfer in \eqref{main_thermal_eq}.

We consider a corresponding initial-value problem, by imposing the initial conditions
\begin{equation}\label{initial_cond}
  y(0, \cdot) =  y_{0,\lp} \BBB \defas \id + \lp u_0 \qquad \text{and} \qquad \theta(0, \cdot) = \theta_{0,\lp} \defas \lp^\alpha \mu_0
\end{equation}
for some  $\mu_0 \in L^2_+(\Omega)$ and some $u_0 \CCC \in W^{2, p}_{\Gamma_D}(\Omega; \R^d)\BBB$.
We now define weak solutions associated to the initial-boundary-value problem \eqref{main_evol_eq}--\eqref{initial_cond}.

\begin{definition}[Weak solution of the nonlinear system]\label{def:weak_formulation}
A couple $(y, \theta) \colon I \times \Omega \to \R^d \times \R$ is called a \emph{weak solution} to the initial-boundary-value problem \eqref{main_evol_eq}--\eqref{initial_cond} if $y \in L^\infty(I; \Wid) \cap H^1(I; H^1(\Omega; \R^d))$ with $y(0, \cdot)=  y_{0,\lp}\BBB$, $\theta \in L^1(I; W^{1,1}(\Omega))$ with $\theta \ge 0$ a.e., and if it satisfies the identities
\begin{equation}\label{weak_limit_mechanical_equation}
\begin{aligned}
  &\intQ \pl_G
    \hypot(\nabla^2 y) \cdddot \nabla^2 z
    + \Big(
      \pl_F \felpot(\nabla y, \theta)
      + \pl_{\dot F} \disspot(\nabla y, \nabla \dot y, \theta)
    \Big) : \nabla z \di x \di t \\
  &\quad= \lp \intQ f \cdot z \di x \di t
    + \lp \int_0^T \int_{\Gamma_N} g \cdot z \di \haus^{d-1} \di t
\end{aligned}
\end{equation}
for any test function $z \in C^\infty(I \times \overline{\Omega}; \R^d)$ with $z = 0$ on $I \times \Gamma_D$, as well as
\begin{equation}\label{weak_limit_heat_equation}
\begin{aligned}
  &\intQ \hcm(\nabla y, \theta) \nabla \theta \cdot \nabla \vphi
    -\big(
      \drate(\nabla y, \nabla \dot y, \theta)
      + \pl_F \cplpot(\nabla y, \theta) : \nabla \dot y
    \big) \vphi
    - \inten(\nabla y, \theta) \dot \vphi \di x \di t \\
  &\quad + \kappa \int_0^T \int_{\NNN \Gamma} \theta \vphi \di \haus^{d-1} \di t
    = \kappa \lp^\alpha \int_0^T \int_{\NNN \Gamma} \bt \vphi \di \haus^{d-1} \di t
    + \int_\Omega \inten(\nabla  y_{0,\lp}, \BBB \theta_{0,\lp}) \, \vphi(0) \di x
\end{aligned}
\end{equation}
for any test function $\vphi \in C^\infty(I \times \overline \Omega)$ with $\varphi(T) = 0$.
\end{definition}

One can indeed show that sufficiently smooth weak solutions lead to the classical formulation \eqref{main_evol_eq} along with the boundary conditions \eqref{main_bc}, see \cite{MielkeRoubicek20Thermoviscoelasticity}.
We refer to \cite[(2.28)--(2.29)]{MielkeRoubicek20Thermoviscoelasticity} for details on the derivation of \eqref{main_mechanical_eq}, particularly how to treat the boundary terms.
For the derivation of \eqref{main_thermal_eq}, one uses standard integration by parts and the fact that by the definition in \eqref{Wint} we have
\begin{equation*}
  \frac{\di}{\di t} (\inten(\nabla y, \theta))
  = \partial_F \cplpot(\nabla y, \theta) : \nabla \dot y
    - \theta \pl_{F \theta} \cplpot (\nabla y, \theta) : \nabla \dot y
    - \theta \pl_\theta^2 \cplpot (\nabla y, \theta) \dot \theta.
\end{equation*}
Moreover, using test functions with $\varphi(0) \neq 0$ we obtain $\inten(\nabla y(0), \theta(0)) =  \inten(\nabla y_{0,\lp}, \theta_{0,\lp})\BBB$, and by the strict monotonicity in \eqref{sec_deriv} along with $y(0) = y_{0,\lp}$ we conclude $\theta(0) = \theta_{0,\lp}$.
We emphasize  that one can only expect the regularity $\nabla \dot y \in L^2(I \times \Omega;  \R^{d\times d})\BBB$ and thus $\drate(\nabla y, \nabla \dot y, \theta) \in L^1(I \times \Omega)$ by \eqref{diss_rate}.
Therefore, \eqref{main_thermal_eq} can be understood as a heat equation with $L^1$-data.
For this, \eqref{weak_limit_heat_equation} is a standard weak formulation, see \NNN e.g.~\cite{Roubicek98Nonlinear}. \BBB

\subsection{Approximation of solutions in the nonlinear setting}\label{sec:nonlinear_scheme}
In this subsection, we  study the nonlinear system and therefore we fix $\lp =1$. (In the notation, $\lp$ is still included, as before.)
The existence of energy-conserving weak solutions to \eqref{main_evol_eq} in the sense of Definition \ref{def:weak_formulation} has been proven in \cite[Theorem~2.2]{MielkeRoubicek20Thermoviscoelasticity}.
In contrast to this work, we show here that the solutions can be obtained directly as limits of a staggered time-incremental scheme without using  any \BBB additional regularization.

 We  fix a discrete time step size $\tau \in (0, 1]$.
For the sake of notational clarity, we assume without a further mention that any $\tau$ we encounter evenly divides the time interval $[0,T]$.
Given any sequence \NNN $(a_l)_{l \ge 0}$, \BBB it will be useful to introduce the following notation for discrete differences
\begin{equation*}
  \ddif a_l \defas \frac{a_l - a_{l-1}}{\tau}, \quad l \in  \N. \BBB
\end{equation*}
Our time-discrete staggered scheme is initialized by setting
\begin{equation}\label{def_0_step}
 \ysts 0 \defas y_{0,\lp} \qquad \text{and} \qquad \tsts 0 \defas \theta_{0,\lp},
\end{equation}
where $y_{0,\lp}$ and $\theta_{0,\lp}$ are as in \eqref{initial_cond}.  
We then alternate between a \textit{mechanical step}, deforming the material while keeping the temperature fixed, and a \textit{thermal step}, adjusting the temperature distribution inside the material without changing the deformation, \CCC see Figure \ref{fig:minmoves}. \BBB
\begin{figure}[ht]
  \begin{subfigure}{.245\textwidth}
    \centering
    \includegraphics[height=3.1cm]{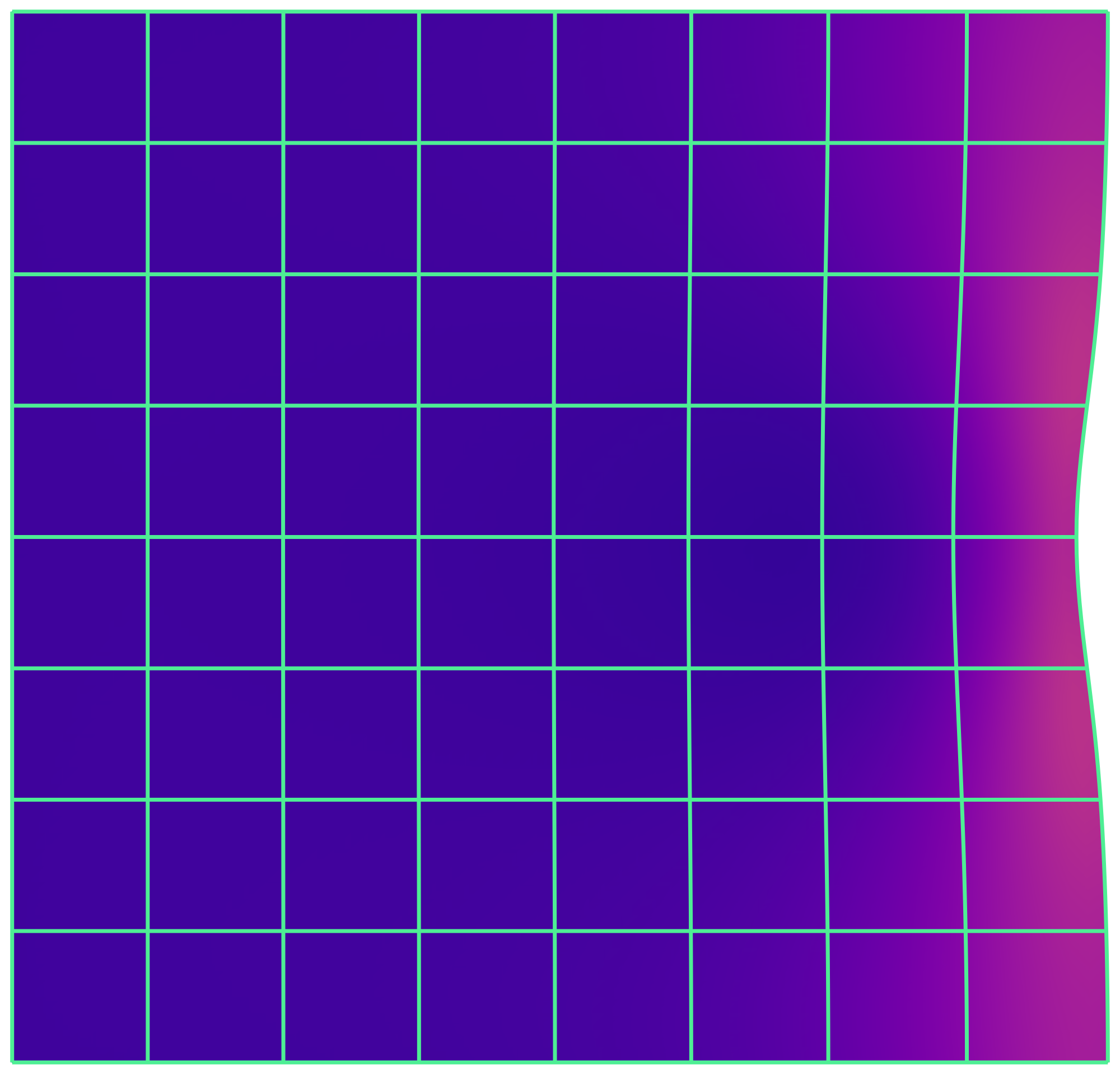}  
    \caption{previous thermal step}
    \label{fig:prev}
  \end{subfigure}
  \begin{subfigure}{.245\textwidth}
    \centering
    \includegraphics[height=3.1cm]{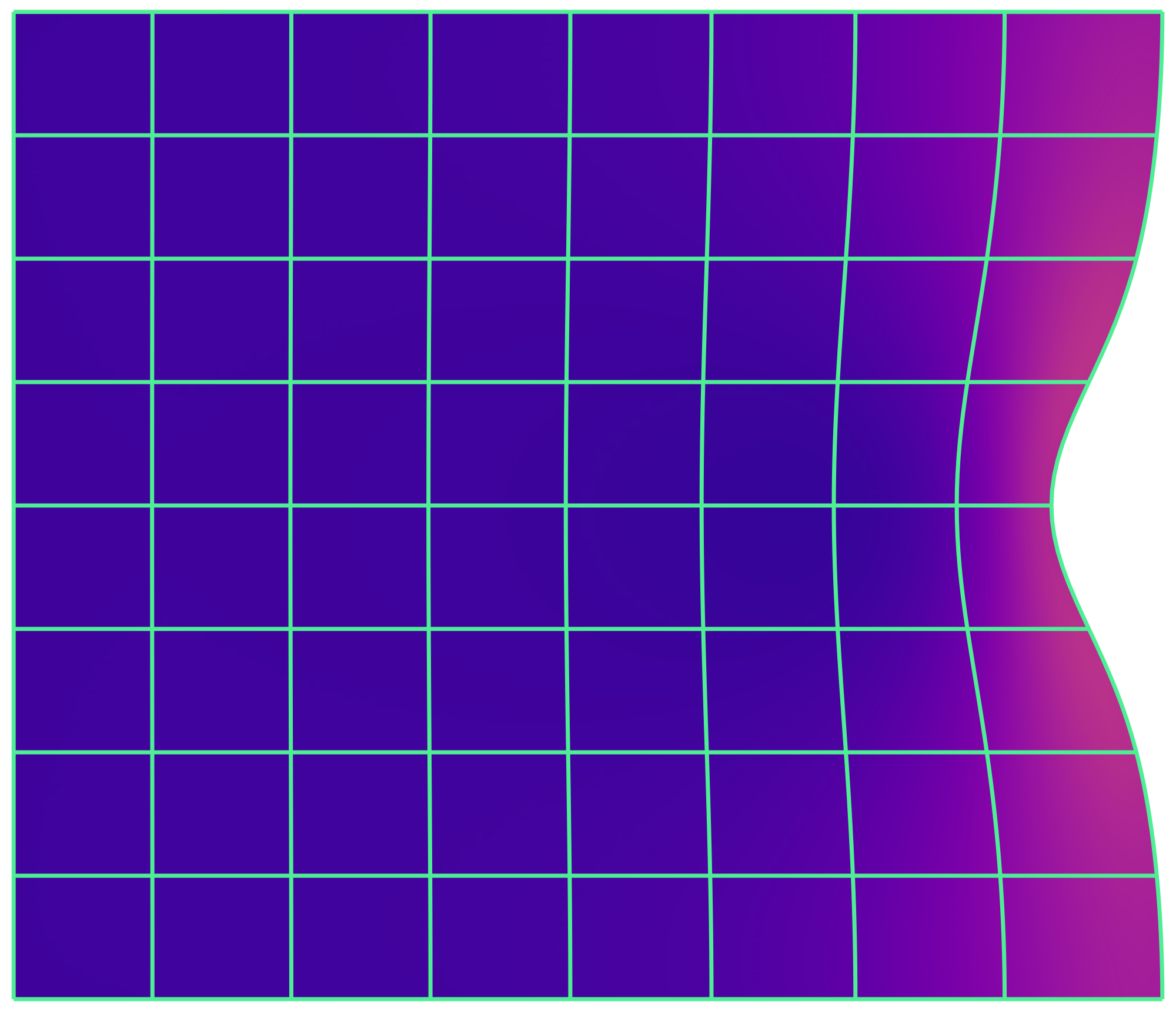}
    \caption{current mechanical step}
    \label{fig:cur_mech_step}
  \end{subfigure}
  \begin{subfigure}{.245\textwidth}
    \centering
    \includegraphics[height=3.1cm]{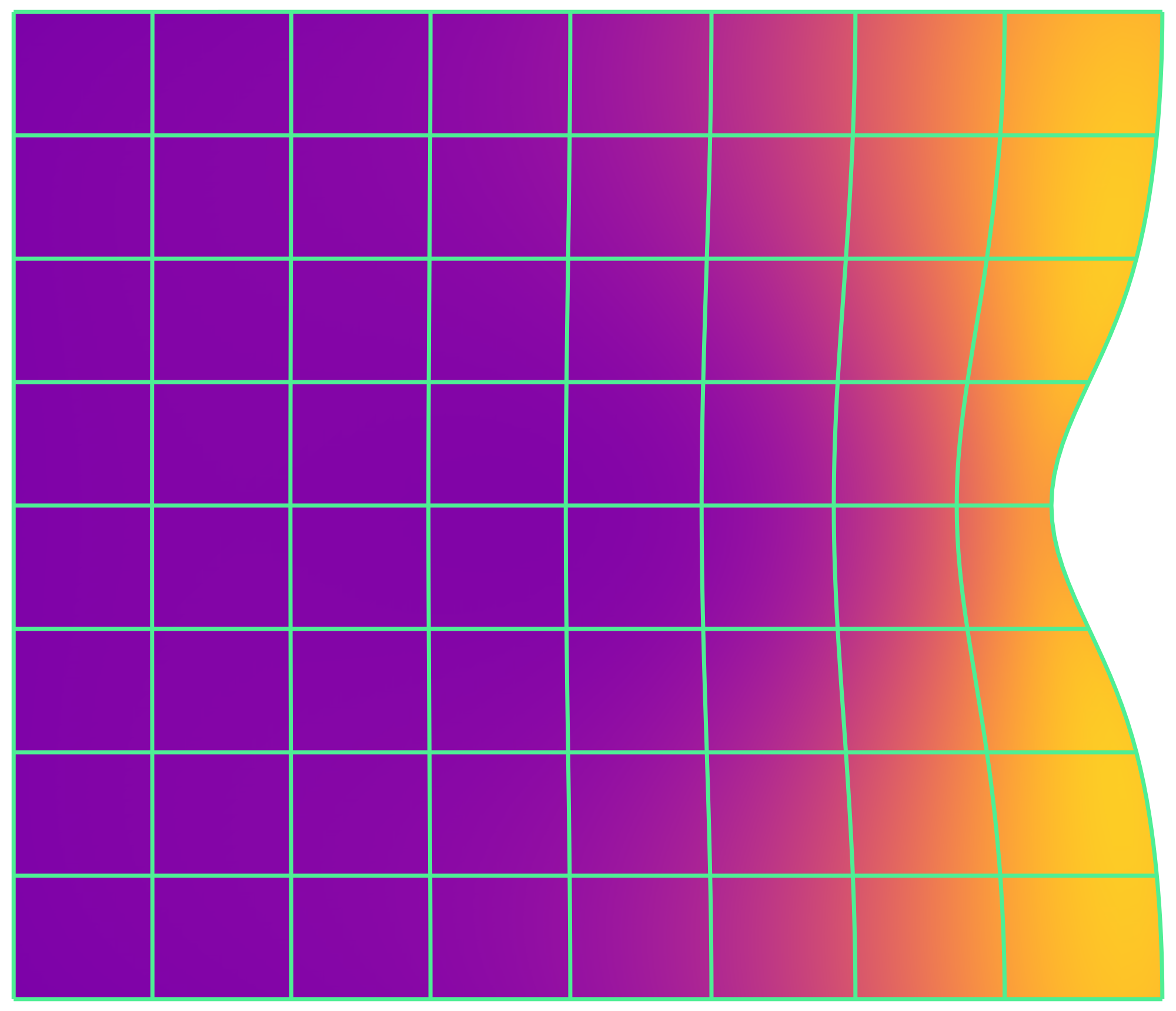}  
    \caption{current thermal step}
    \label{fig:cur_temp_step}
  \end{subfigure}
  \begin{subfigure}{.245\textwidth}
    \centering
    \includegraphics[height=3.1cm]{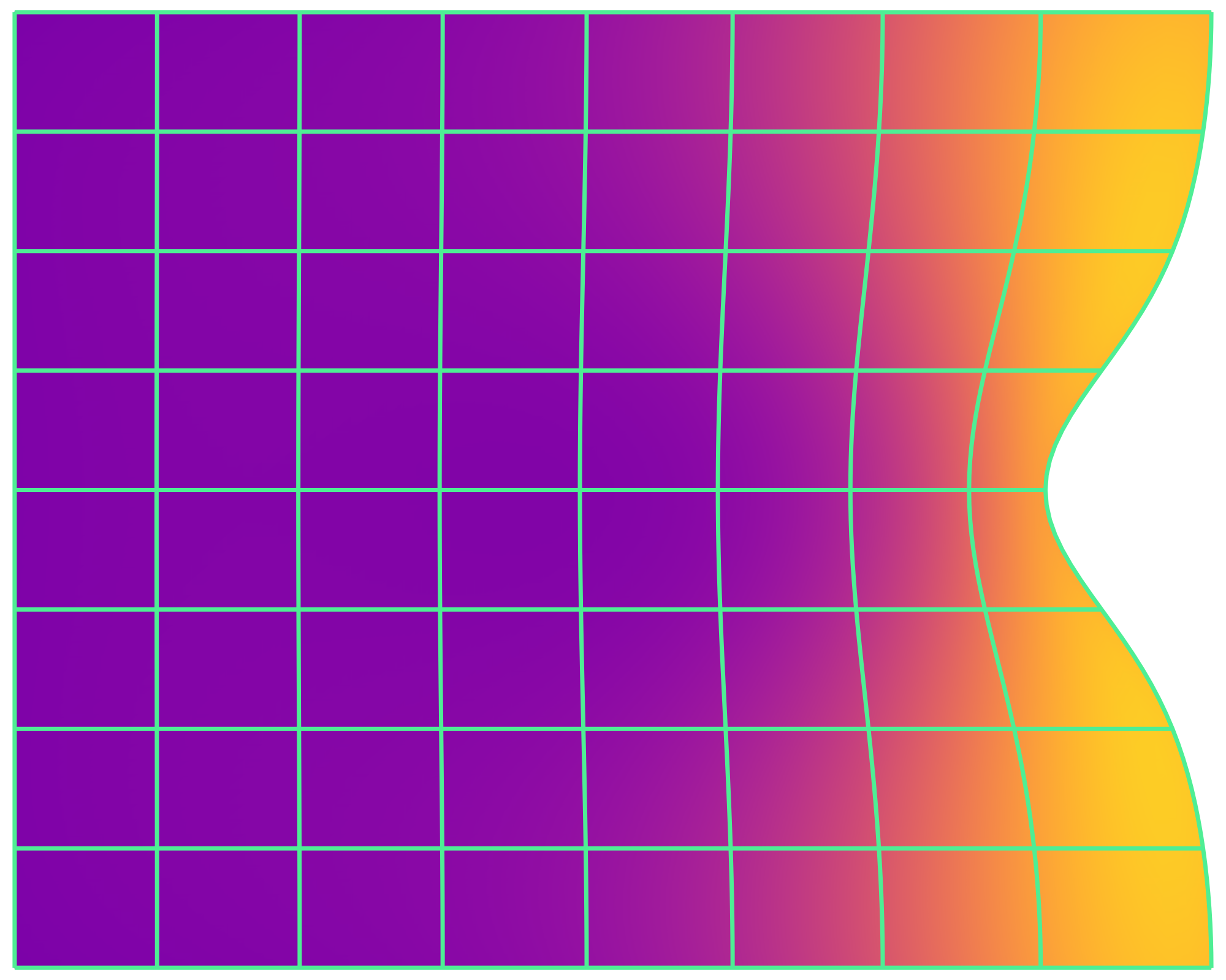}  
    \caption{next mechanical step}
    \label{fig:next_mech_step}
  \end{subfigure}
  \caption{4 consecutive steps of the staggered scheme in $\R^2$}
  \label{fig:minmoves}
\end{figure}
More precisely, suppose that we have already constructed $\ysts 0, \ldots,  \ysts{k-1} \in \Wid$, and $\tsts 0, \ldots,  \tsts{k-1} \in L^2_+(\Omega)$ for some $k \in \setof{1, \ldots,  T / \tau}$.
The next deformation $\ysts k$ is a solution of the minimization problem
\begin{equation}\label{mechanical_step}
  \min_{y \in \Wid} \Big\{
    \mechen(y) + \cplen\big(y, \tsts{k-1})
    + \frac{1}{\tau} \diss(\ysts{k-1}, y - \ysts{k-1}, \tsts{k-1})
    - \lp \langle \lst{k}, y \rangle
  \Big\},
\end{equation}
where
\begin{equation}\label{forces_mech_step}
  \langle \lst{k}, y \rangle
    \defas \int_\Omega \fst k \cdot y \di x
    + \int_{\Gamma_N} \gst k \cdot y \di \haus^{d-1}
\end{equation}
for $\fst k \defas \CCC \tau^{-1} \BBB \int_{(k-1)\tau}^{k\tau} f(t) \di t$  and \BBB $g^{(k)}_\tau \defas \CCC \tau^{-1} \BBB \int_{(k-1)\tau}^{k\tau} g(t) \di t$.
We define the $k$-th temperature step $\tsts k$ as a solution of the minimization problem
\begin{align}\label{thermal_step}
  \min_{\theta \in H^1_+(\Omega)}
  \Bigg\{
    &\int_\Omega \int_0^\theta \frac{1}{\tau}\big(
      \inten(\nabla \ysts{k},s)
      - \inten(\nabla \ysts{k-1},\tsts{k-1})
    \big) \di s \di x
    + \int_\Omega \frac{1}{2} \nabla \theta
      \cdot \hcm(\nabla \ysts{k-1}, \tsts{k-1}) \nabla \theta \di x \notag \\
    &- \int_\Omega h_\tau(\ysts{k}, \ysts{k-1}, \tsts{k-1}) \, \theta \di x
    + \frac{\kappa}{2} \int_\Gamma (\theta - \lp^\alpha\btst k )^2 \di \haus^{d-1}
  \Bigg\},
\end{align}
where $h_\tau$ plays the role of a heat source given by
\begin{equation}\label{def:h_tau}
  h_\tau(\ysts k,\ysts {k-1},\tsts {k-1})
  \defas \pl_F \cplpot(\nabla \ysts{k-1}, \tsts{k-1})
    : \ddif \nabla \ysts k
  + \drate(\nabla \ysts{k-1}, \ddif \nabla \ysts k, \tsts{k-1})
\end{equation}
and $\btst k \defas \frac{1}{\tau} \int_{(k-1)\tau}^{k\tau} \bt(t) \di t$.
The underlying idea is that the Euler-Lagrange equations associated to \eqref{mechanical_step} and \eqref{thermal_step} lead to time-discretized variants of the equations \eqref{main_evol_eq},  see \eqref{mechanical_step_single} and \eqref{el_thermal_step} below. \BBB
Supposing that the steps $\ysts 0, \ldots,  \ysts{T / \tau}$ and $\tsts 0, \ldots,  \tsts{T / \tau}$ as described above exist, we define \NNN interpolations as follows: for $k \in \setof{0, \ldots,  T / \tau}$, \NNN we let $\overline{y}_\tau(k\tau) =   \underline{y}_\tau(k\tau) = \hat{y}_\tau(k\tau)  \defas  \ysts k$   and for  $t \in ((k-1)\tau, k\tau)$ \BBB 
\begin{align}\label{y_interpolations}
  \overline{y}_\tau(t) \BBB &\defas \ysts k, &
    \underline{y}_\tau(t) \BBB & \defas \ysts{k-1}, &
    \hat{y}_\tau(t) \BBB \defas \frac{k \tau - t}{\tau} \ysts{k-1} + \frac{t - (k-1)\tau}{\tau} \ysts k.
\end{align}
A similar notation is employed for $\overline{y}_\tau$, $\underline{y}_\tau$, and $\hat{y}_\tau$. 
We now formulate our first main result concerning the convergence of solutions to the staggered scheme towards a weak solution of \eqref{main_evol_eq}--\eqref{initial_cond}.

\begin{theorem}[Staggered time-incremental scheme and convergence to solutions]\label{thm:van_tau}
  Given any $T > 0$ there exists $\tau_0 \in (0, 1]$ such that for any $\tau \in (0, \tau_0)$  the \BBB following holds: \\
{\rm (i)} (Existence of the scheme) The sequences $\ysts 0, \ldots,  \ysts{T / \tau}$ and $\tsts 0, \ldots,  \tsts{T / \tau}$ satisfying \eqref{def_0_step}, \eqref{mechanical_step}, and \eqref{thermal_step} exist.\\
{\rm (ii)} (Convergence to solutions) There exist $y \in L^\infty(I; \Wid)  \cap H^1(I; H^1(\Omega; \R^d))\BBB$ and $\theta \in L^1(I; W^{1,1}(\Omega))$ such that  the couple $(y, \theta)$ \BBB is a weak solution to \eqref{main_evol_eq}--\eqref{initial_cond} in the sense of Definition~\ref{def:weak_formulation},  and \BBB up to selecting a subsequence, it holds  that \BBB 
    \begin{align}
         \NNN \hat{y}_\tau \BBB &\to y \text{ in } L^\infty(I; W^{1,\infty}(\Omega;\R^d))
      & &\text{and} &
    \dot{\hat{y}}_\tau  &\to \dot y_\lp \text{ strongly in } L^2(I; H^1(\Omega; \R^d)),
      \label{van_tau_y_conv} \\
     \hat{\theta}_\tau  &\to \theta \text{ in } \CCC L^s(I \times \Omega) \BBB \BBB
      & &\text{and} &
    \hat{\theta}_\tau &\weakly \theta \text{ weakly in } L^r(I; W^{1,r}(\Omega))\BBB
      \label{van_tau_theta_conv}
    \end{align}
  as $\tau \to 0$  for any $r \in [1, \tfrac{d+2}{d+1})$ and $s \in [1, \frac{d+2}{d})$.  
     The same holds true if we replace $\hat{y}_\tau$ by $\overline{y}_\tau$ or $\underline{y}_\tau$ \NNN in the first part of \eqref{van_tau_y_conv},  and $\hat{\theta}_\tau$ by $\overline{\theta}_\tau$ or $\underline{\theta}_\tau$ \NNN in \eqref{van_tau_theta_conv}.    \BBB
   \end{theorem}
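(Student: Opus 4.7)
For part (i), I would proceed inductively. Given $\yst{k-1}$ and $\tsts{k-1}$, existence of the mechanical step minimizer $\yst k$ follows from the direct method on $\Wid$: the functional in \eqref{mechanical_step} is coercive on $W^{2,p}(\Omega;\R^d)$ by \ref{H_bounds}, the Sobolev embedding $W^{2,p}\hookrightarrow C^1$ (using $p>d$) combined with the singularity of $W^{\rm el}$ at $\det = 0$ from \ref{W_lower_bound} forces minimizing sequences to stay uniformly away from the boundary of admissibility, and weak lower semicontinuity of $\elen+\hyen$ is classical; the dissipation is quadratic (hence convex) in $\nabla y$, and $\cplen(\cdot,\tsts{k-1})$ is continuous in the $C^1$-topology by \ref{C_lipschitz}. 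For the thermal step, the functional in \eqref{thermal_step} is strictly convex in $\theta$ because $\partial_\theta \inten(\nabla \yst k,\cdot)\in[\ac,\aC]$ by \eqref{sec_deriv}, plus convex gradient and boundary contributions, giving existence in $H^1(\Omega)$. The delicate point is verifying $\tsts k\ge 0$: one shows that truncating at zero cannot decrease the energy, which rests on a careful estimate comparing the potentially dangerous source $h_\tau$ (containing $\partial_F\cplpot$) against the strict positivity of $\partial_\theta \inten$ near zero temperature, using the bound on $\partial_{F\theta}\cplpot$ in \ref{C_bounds}. This is precisely where a smallness assumption on $\tau$ enters, fixing $\tau_0$.

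For part (ii), the heart of the matter is an a priori bound on $\toten(\yst k,\tsts k)$. Testing the Euler-Lagrange equation of the mechanical step by $\ddif \yst k$ and the thermal step effectively with the constant $1$, then summing, the adiabatic source $\theta\,\partial_{F\theta}\cplpot:\nabla\dot y$ and dissipation $\drate$ cancel the corresponding contributions from the other equation; this cancellation is made possible in the discrete setting by the explicit choice of $h_\tau$, avoiding any need for regularization. Discrete Gr\"onwall then yields uniform bounds on $\|\nabla^2 \yst k\|_{L^p}$, $\|\tsts k\|_{L^1}$, and on $\int_0^T\!\!\int_\Omega \drate\di x\di t$, the last upgrading to $\|\nabla\dot{\hat y}_\tau\|_{L^2(I;L^2)}$ via \ref{D_quadratic}--\ref{D_bounds} and the lower bound on $\det(\nabla y)$.

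To obtain the temperature estimates in \eqref{van_tau_theta_conv}, I would apply the Boccardo-Gallou\"et device: test the discrete heat equation with $\varphi=1-(1+\tsts k)^{-\gamma}$ for small $\gamma>0$ to derive an $L^r$-bound on $\nabla\tsts k$ with $r<(d+2)/(d+1)$, then interpolate with the $L^\infty(I;L^1)$ bound via Gagliardo-Nirenberg to reach $L^s$ with $s<(d+2)/d$. Together with a Nikolskii-type time-increment bound extracted from the heat equation itself, Aubin-Lions yields strong convergence $\hat\theta_\tau\to\theta$ in $L^s(I\times\Omega)$. Meanwhile, the uniform $W^{2,p}$-in-space bound combined with the $H^1$-in-time bound for $\hat y_\tau$ gives, via Aubin-Lions and the compact embedding $W^{2,p}\hookrightarrow\hookrightarrow W^{1,\infty}$, strong convergence of $\hat y_\tau,\overline{y}_\tau,\underline{y}_\tau$ in $L^\infty(I;W^{1,\infty}(\Omega;\R^d))$.

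The principal obstacle will be \emph{strong convergence of the strain rates} $\nabla\dot{\hat y}_\tau\to\nabla\dot y$ in $L^2(I\times\Omega)$, which is indispensable both for identifying the nonlinear viscous stress $\partial_{\dot F}\disspot$ in \eqref{weak_limit_mechanical_equation} and, more critically, for handling the $L^1$-source $\drate$ on the right-hand side of \eqref{weak_limit_heat_equation}. I would exploit the uniform quadratic convexity of $R$ in $\dot C$ encoded in \ref{D_quadratic}--\ref{D_bounds}: testing the discrete momentum balance by an appropriate difference $\dot{\hat y}_\tau - z_\tau$ built from the limit $\dot y$ and using the already established strong convergences of $\nabla y_\tau$ and $\theta_\tau$, one shows $\int_0^T\!\!\int_\Omega \drate(\nabla\underline{y}_\tau,\nabla\dot{\hat y}_\tau,\underline\theta_\tau)\di x\di t\to\int_0^T\!\!\int_\Omega \drate(\nabla y,\nabla \dot y,\theta)\di x\di t$, and uniform convexity of $\drate$ in $\dot C$ then upgrades weak $L^2$-convergence of $\nabla\dot{\hat y}_\tau$ to strong. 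Once this is in hand, the passage to the limit in \eqref{weak_limit_mechanical_equation}--\eqref{weak_limit_heat_equation} is routine from the regularity and continuity of all ingredients, and the initial conditions are preserved by construction \eqref{def_0_step}.
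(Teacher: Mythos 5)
Your overall strategy coincides with the paper's: direct method for both incremental steps, a combined test of the two Euler--Lagrange equations ($z=\ddif\ysts k$ and $\vphi=1$) exploiting the cancellation of the dissipation, discrete Gronwall, Boccardo--Gallou\"et test functions plus Gagliardo--Nirenberg and Aubin--Lions for the temperature, and recovery of strong convergence of the strain rates by comparing discrete and continuous energy balances and invoking the uniform positive definiteness of $D$ together with the generalized Korn inequality. Two points, however, are genuine gaps rather than omitted routine details.

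First, in the limit passage for the momentum balance the term $\pl_G\hypot(\nabla^2\overline y_\tau)$ is \emph{not} routine: $\nabla^2\overline y_\tau$ converges only weakly in $L^p$, and $\pl_G\hypot$ is nonlinear, so you cannot identify the weak limit of $\pl_G\hypot(\nabla^2\overline y_\tau)$ by continuity. The paper resolves this with Minty's trick for the hemicontinuous monotone operator $\mathbf H(y)=\int\pl_G\hypot(\nabla^2 y)\cdddot\nabla^2(\cdot)$ (monotonicity coming from the convexity of $\hypot$ in \ref{H_regularity}), using that the remaining terms converge strongly enough to give $\langle b_{\tau},\overline y_\tau\rangle\to\langle b,y\rangle$. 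Your sketch needs this step; without it the proof of \eqref{weak_limit_mechanical_equation} does not close.

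Second, your treatment of the two places where $\tau_0$ is fixed is too quick. For coercivity of the mechanical step, $\cplpot$ is not assumed bounded from below, so $\mechen+\cplen(\cdot,\tsts{k-1})$ alone need not be coercive: the difference $\cplen(y,\tsts{k-1})-\cplen(\ysts{k-1},\tsts{k-1})$ must be absorbed into the dissipation $\tfrac1\tau\diss(\ysts{k-1},y-\ysts{k-1},\tsts{k-1})$ via \eqref{C_locally_lipschitz}, the generalized Korn inequality (Lemma \ref{lem:gen_korn}), and Young's inequality with constant $c_M/(2\tau)$ --- this is where $\tau_0$ first enters, not only in the thermal step. Similarly, for nonnegativity of the thermal minimizer the decisive mechanism is not the positivity of $\pl_\theta\inten$ alone: one shows $\tau^{-1}\wst{k-1}+h_\tau\ge0$ pointwise by bounding $|\pl_F\cplpot:\ddif\nabla\ysts k|\le C_M\sqrt{\wst{k-1}}\,|\dot C|\le \tau^{-1}\wst{k-1}+C_M^2\tau|\dot C|^2$ and then absorbing the quadratic remainder into the dissipation rate $\drate\ge\ac|\dot C|^2$ contained in $h_\tau$ (see \eqref{eq: drate for late}); if one only had $\drate\ge0$ this would instead require smallness of the mechanical energy (cf.\ Remark \ref{eq: temp-rem}). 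Make these two absorptions explicit and your argument matches the paper's.
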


\NNN Let us mention that the proof shows   that weak solutions satisfy \CCC a \BBB total energy balance of the form
$$\frac{d}{\di t} \toten(y,\theta) = \lp \int_\Omega f \cdot \dot y \di x
    +  \lp \int_{\Gamma_N} g \cdot \dot  y \di \haus^{d-1}  - \kappa  \int_\Gamma (\theta - \lp^\alpha\theta_\flat)  \di \haus^{d-1}, $$
i.e., the total energy is conserved up to the work of the external loadings and the heat flux through $\Gamma$. \BBB 
\begin{remark}[Difference to scheme in \cite{MielkeRoubicek20Thermoviscoelasticity}]\label{rem:difference_to_mielke_in_steps}
  The scheme has several differences to the one considered in \cite[(4.5)--(4.7)]{MielkeRoubicek20Thermoviscoelasticity}.
  On the one hand, both steps in \cite{MielkeRoubicek20Thermoviscoelasticity} are suitably regularized.
  More precisely, in \eqref{mechanical_step} an additional dissipative term $\frac{\lambda}{2\tau} \Vert \nabla y - \nabla \ysts{k-1} \Vert_{L^2(\Omega)}^2$  is \BBB considered, where $\lambda > 0$ is a regularization parameter \NNN (called $\eps$ there), \BBB and in \eqref{thermal_step} the dissipation rate $\xi$ is replaced by a smoothly truncated version $\frac{\xi}{1 + \lambda \xi}$.
  On the other hand, the term $\pl_F \cplpot(\nabla \ysts{k-1}, \tsts{k-1}) \, \theta$ in  \eqref{thermal_step}--\eqref{def:h_tau} \BBB is replaced  by the more involved term $\int_0^\theta \pl_F \cplpot(\nabla \ysts k, s) \di s$.
  One of the main novelties in the present work is that the same result  on  existence and time-discrete approximations is achieved for the simpler, explicit, thermo-mechanical coupling term $\pl_F \cplpot(\nabla \ysts{k-1}, \tsts{k-1})$ and without  regularizing terms.
\end{remark}

\subsection{Passage to linearized thermoviscoelasticity}\label{sec: linear case}

We are now interested in the passage to a small strain regime $\lp \to 0$.
This is induced by small external loading, boundary traction, and external temperature as $\lp \to 0$, see \eqref{main_mechanical_eq} and the boundary conditions in \eqref{main_bc}.
In a similar fashion, we suppose that the initial values are small when $\lp$ is small, see \eqref{initial_cond}.
 At this point, we additionally  need to require \BBB
\begin{enumerate}[label=(W.\arabic*)]
  \setcounter{enumi}{3}
  \item \label{W_lower_bound_spec} $\elpot(F) \geq \ac \dist^2(F, SO(d))$ for all $F \in GL^+(d)$, \NNN and \BBB $\elpot(F) = 0$ if $F \in SO(d)$\CCC;\BBB
\end{enumerate}
\begin{enumerate}[label=(H.\arabic*)]
  \setcounter{enumi}{3}
  \item \label{H_bounds2} $H(0) = 0$\CCC;\BBB
\end{enumerate}
\begin{enumerate}[label=(C.\arabic*)]
  \setcounter{enumi}{5}
  \item \label{C_heatcap_cont} The \emph{heat capacity} $c_V(F, \theta) \defas - \theta \pl_\theta^2 \cplpot(F, \theta)$ for $F \in GL^+(d)$ and $\theta > 0$ as well as $\pl_{F \theta} \cplpot$ can be continuously extended to $GL^+(d) \times \R_+$\CCC;\BBB
  \item \label{C_thrid_order} \CCC For all $F \in GL^+(d)$ and $\theta > 0$ it holds that $\partial_{FF\theta} \cplpot(F, \theta) \leq \frac{C_0}{\max\{\theta, 1\}}$.\BBB
\end{enumerate}
In order to ensure the compatibility of \ref{W_lower_bound_spec} with \ref{W_lower_bound}, we assume $\aC \geq \ac(d + 1)$ from now on.
We write the equations \eqref{main_evol_eq} and the boundary conditions \eqref{main_bc} equivalently in terms of the \emph{rescaled displacement field}  $u= \lp^{-1} (y - \id)$ \BBB and the \emph{rescaled temperature}  $\mu =\lp^{-\alpha} \theta$. \BBB
Then,  for $\alpha \in [1,2]$, \BBB rescaling the equations by $\lp^{-1}$ and  $\lp^{-\alpha}$, \BBB respectively, and letting $\lp \to 0$ we obtain, at least formally, the system
\begin{equation}\label{viscoel_small}
\begin{aligned}
  - \BBB	\diver \big( \CW e(u) + \C_D e(\dot u) + \mathbb{B}^{(\alpha)} \mu \big) &= f, \\
	\bar c_V \dot\mu - \diver(\mathbb{K}_0 \nabla \mu) 
  &=\CD^{(\alpha)} e(\dot u): e(\dot u),
\end{aligned}
\end{equation}
along with the boundary conditions
\begin{align}\label{viscoel_small_bdy}
	u &= 0 \text{ on } I \times \Gamma_D, &
  \big( \CW e(u) + \CD e(\dot u) + \mathbb{B}^{(\alpha)} \mu \big) \nu &= g \text{ on } I \times \Gamma_N, &
  \mathbb{K}_0 \nabla \mu \cdot  \nu \BBB + \kappa \mu
  &= \kappa \bt \text{ on } I \times \Gamma
\end{align}
and initial conditions
\begin{equation}\label{initial_conds_lin}
  u(0) = u_0, \quad \mu(0) = \mu_0.
\end{equation}
Here, $e(u) \defas \frac{1}{2} (\nabla u + (\nabla u)^T)$ denotes the linear strain tensor,  and \BBB the \MMM tensors \BBB of elasticity and viscosity coefficients are defined by
\begin{align}\label{def_WD_tensors}
  \C_W &\defas \partial^2_{F}\elpot(\Id), &
  \C_D &\defas \partial^2_{\dot F} R(\Id,  0 , \BBB  0) = 4D(\Id,0).
\end{align}
Moreover, the heat conductivity tensor and the heat capacity (see also \ref{C_heatcap_cont}) at \CCC zero temperature \BBB and the natural material state \BBB are given by
\begin{align}\label{lin_heat_cond_cap}
  \mathbb{K}_0(x) &\defas \mathbb{K}(x,0), &
  \bar c_V & \defas c_V(\Id, 0).
\end{align}
\NNN Eventually, \BBB we have the $\alpha$-dependent quantities
\begin{align}\label{alpha_dep}
  \mathbb{B}^{(\alpha)} &= \begin{cases}
    \partial_{F\theta} \cplpot(\Id,0) & \text{if } \alpha=1 \\
    0  & \text{if } \alpha \in (1,2]
  \end{cases}, &
  \CD^{(\alpha)} &= \begin{cases}
    0 & \text{if } \alpha \in [1,2) \\
    \CD  & \text{if } \alpha =2,
  \end{cases}
\end{align}
where $\mathbb{B}^{(\alpha)}$ plays the role of a \emph{thermal expansion matrix}.
Notice that in the formal analysis above the  elasticity \BBB  tensor does not depend on the coupling potential.
This is due to the fact that $\pl_F^2 \cplpot(\Id, 0) = 0$, see \ref{C_zero_temperature}.

\NNN Although \BBB the nonlinear system is given for a nonsimple material, in the limit we obtain equations without spatial gradients of $e(u)$. \NNN This is a consequence of the growth conditions in  \BBB \ref{H_bounds}.   Moreover, there is an interesting decoupling effect due to the different scaling \CCC of coupling terms in \BBB the mechanical and the heat-transfer equation, expressed in terms of the $\alpha$-dependent quantities in \eqref{alpha_dep}.
This computation also shows why we restrict to the range $\alpha \in [1, 2]$. Indeed, formally, we would have $\mathbb{B}^{(\alpha)} = + \infty$ for $\alpha < 1$ while $\mathbb{C}_D^{(\alpha)} = +\infty$ for $\alpha > 2$.

The second main goal of this article is to show that the above formal linearization can be made rigorous.
In the case $\alpha \in [1, 2)$, our analysis requires a regularization of the thermal evolution.
More precisely, we define the $k$-th thermal step through
\begin{equation}\label{thermal_step_reg}\tag{\ref*{thermal_step}$_\lp$}
   \eqref{thermal_step} \BBB \text{ with $\drate$ replaced by $\rdrate$,}
\end{equation}
where
\begin{equation}\label{def_rdrate}
  \rdrate \defas \begin{cases}
    \drate &\text{if } \drate \leq 1, \\
    \drate^{\alpha/2} &\text{else.}
  \end{cases}
\end{equation}
 \NNN Due to the different scaling of the mechanical and the heat-transfer equation, the existence \MMM of a solution to \BBB \NNN the scheme is more delicate for $\eps$ small and $\alpha \neq 2$. More specifically, we need higher integrability of $\inten$ defined in \eqref{Wint}  in $L^{2/\alpha}$  which can be guaranteed by the choice in \eqref{def_rdrate}. We refer to Subsection~\ref{sec: adaptions} below for details. We \CCC also \NNN emphasize that for  $\alpha = 2$ no regularization is applied as $\drate_\alpha^{\rm{reg}} = \drate$. A similar result  as \BBB Theorem \ref{thm:van_tau} holds true in the regularized setting.
\begin{proposition}[Vanishing time-discretization in the regularized  \MMM nonlinear \BBB setting]\label{cor:van_tau_reg}
  Given any $T > 0$ there exists  $\lp_0,\tau_0 \in (0, 1]$ \BBB such that for any  $\tau \in (0, \tau_0)$  \BBB and $\lp \in (0, \lp_0)$ the following holds: \\
{\rm (i)} (Existence of the scheme) The sequences $\yst 0, \ldots,  \yst{T / \tau}$ and $\tst 0, \ldots,  \tst{T / \tau}$ satisfying \eqref{def_0_step}, \eqref{mechanical_step}, and \eqref{thermal_step_reg} exist.\\
{\rm (ii)} (Convergence to solutions) The convergences \eqref{van_tau_y_conv}--\eqref{van_tau_theta_conv} \NNN towards a limit $(y_\lp, \theta_\lp)$ \BBB hold true for the interpolations of the steps from {\rm (i)}.  Here $(y_\lp, \theta_\lp)$ is a weak solution to the system  in a sense similar to Definition \ref{def:weak_formulation}, namely  \eqref{weak_limit_mechanical_equation} is satisfied and \eqref{weak_limit_heat_equation} holds with $\drate$ replaced by $\rdrate$. \BBB 
\end{proposition}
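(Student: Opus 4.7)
The plan is to mirror the proof of Theorem \ref{thm:van_tau} step by step, with two modifications: (i) replacing $\drate$ by the regularized rate $\rdrate$ in the thermal minimization problem, and (ii) tracking the $\lp$-dependence throughout so that all a priori bounds are stable as $\lp \to 0$. Since Theorem \ref{thm:van_tau} has already established the nonregularized case for $\lp=1$, my task reduces to verifying that each ingredient (existence of the two minimization subproblems, nonnegativity of the temperature iterates, energy balance, and the compactness arguments driving $\tau\to 0$) survives the change $\drate \leadsto \rdrate$ uniformly in small $\lp$.

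For \textrm{(i)}, existence of $\yst{k}$ is unaffected, since \eqref{mechanical_step} is independent of $\rdrate$; coercivity follows from \ref{W_lower_bound}, \ref{H_bounds} and \ref{D_bounds} together with the Lipschitz estimate \ref{C_lipschitz} for $\cplpot$ at the fixed temperature $\tst{k-1}$, and the direct method yields a minimizer as in Subsection~\ref{sec:single_step_well_defined}. For the thermal step \eqref{thermal_step_reg}, coercivity and weak lower semicontinuity on $H^1_+(\Omega)$ are again immediate because $\rdrate(\nabla \yst{k-1}, \ddif \nabla \yst{k}, \tst{k-1})$ does not depend on the minimizing $\theta$, so the regularized functional is strictly convex in $\theta$. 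Nonnegativity of $\tst{k}$ is obtained by the Stampacchia-type truncation argument used in Proposition \ref{prop:existence_thermal_step}: testing the Euler-Lagrange equation with $(\tst{k})^-$ and using \ref{C_zero_temperature}, the bound on $\pl_{F\theta}\cplpot$ in \ref{C_bounds}, and the fact that $\rdrate\ge 0$. The only point where the regularization enters is in ensuring that the forcing term $h_\tau$ has sufficient integrability to make the estimate closeable for small $\lp$; this is precisely the role of the truncation $\drate\mapsto \drate^{\alpha/2}$ for $\drate>1$, which produces a right-hand side in $L^{2/\alpha}$, as pointed out after \eqref{def_rdrate}.

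For \textrm{(ii)}, the strategy is to combine the adapted total-energy bound (Lemma~\ref{lem:initial_toten_bound_lin}) with the Boccardo-Gallou\"et test-function technique used in Subsection~\ref{sec: a priori}. Summing the discrete mechanical identity, obtained by testing \eqref{mechanical_step} with $\ddif \yst{k}$, against the discrete heat identity, obtained by testing \eqref{thermal_step_reg} with a constant, one sees that the explicit coupling term $\pl_F \cplpot(\nabla \yst{k-1},\tst{k-1}):\ddif\nabla\yst{k}$ cancels the corresponding mechanical contribution; the dissipation contributes with opposite signs and cancels up to the truncation gap $|\drate - \rdrate|$, which is controlled thanks to the exponent $\alpha/2\le 1$. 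This yields an $\lp$-scaled bound on $\mechen(\yst{k}) + \toten$-type quantities together with $\tau\sum_j \|\ddif\nabla\yst{j}\|_{L^2}^2$, and, via the Boccardo-Gallou\"et test functions $1-(1+\theta)^{-r}$, a bound on $\nt$ in $L^r(I;W^{1,r}(\Omega))$ for $r<(d+2)/(d+1)$ and on $\at$ in $L^s(I\times\Omega)$ for $s<(d+2)/d$, all uniform in $\tau$ (and in $\lp$, after division by the appropriate powers of $\lp$). From these estimates, standard Aubin-Lions compactness and the $W^{2,p}\hookrightarrow C^{1,\beta}$ embedding ($p>d$) yield the convergences \eqref{van_tau_y_conv}--\eqref{van_tau_theta_conv}, and passage to the limit in the discrete Euler-Lagrange equations reproduces the weak formulation of Definition~\ref{def:weak_formulation} with $\drate$ replaced by $\rdrate$, following the template of Section~\ref{sec:tau_to_zero_delta_fixed}.

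The main obstacle is the identification of the limit in the product $\rdrate(\nabla\py,\nabla\dotay,\pt)\vphi$ in the heat equation, which requires strong convergence of $\nabla\dotay$ in $L^2(I\times\Omega;\R^{d\times d})$; this is delivered by Lemma~\ref{lem:strong_strain_rates_conv}, whose proof uses the quadratic structure in \ref{D_quadratic} together with the strong convergence of $\nabla \ny$ in $L^\infty$. A second delicate point is verifying that the regularization $\drate^{\alpha/2}$ does \emph{not} create spurious terms in the limit: since on $\{\drate\le 1\}$ one has $\rdrate=\drate$, and on the complement the contribution is controlled by the higher integrability of $\inten$ produced by the adapted energy estimate, the regularized and unregularized dissipation rates coincide in the limiting identity up to a term that vanishes. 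No further new techniques beyond those assembled in Sections~\ref{sec:staggered_scheme}--\ref{sec:tau_to_zero_delta_fixed} are needed.
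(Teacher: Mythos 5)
Your outline of part (ii) is broadly consistent with Section~\ref{sec:tau_to_zero_delta_fixed}, but there are two genuine gaps in part (i), both exactly at the points where the regularization and the restriction $\lp<\lp_0$ actually matter.

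First, the nonnegativity of the temperature in the regularized thermal step does \emph{not} follow from ``the truncation argument used in Proposition~\ref{prop:existence_thermal_step} \dots and the fact that $\rdrate\ge 0$.'' That argument hinges on the pointwise absorption \eqref{eq: drate for late}: one bounds $|\pl_F\cplpot(\nabla\yst{k-1},\tst{k-1}):\ddif\nabla\yst{k}|\le \wst{k-1}/\tau + C_M^2\tau\,|\dot C^{(k)}_{\lp,\tau}|^2$ by Young's inequality and then absorbs the quadratic term into the dissipation using the lower bound $\drate\ge \ac|\dot C|^2$. Since $\rdrate$ grows only like $\drate^{\alpha/2}$ for large $\drate$, this absorption fails, and your proposal as written breaks precisely here. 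The paper's fix (Remark~\ref{eq: temp-rem} and Proposition~\ref{prop: thermal with reg}) is to absorb the coupling term into $\wst{k-1}/\tau$ \emph{alone}, which is possible only when $\|\nabla\yst{k\pm}-\Id\|_{L^\infty}$ is small, i.e.\ when $\mechen(\yst{k-1}),\mechen(\yst{k})\le\eta$ for a universal small $\eta$ — this is where the hypothesis $\lp<\lp_0$ enters, together with \ref{W_lower_bound_spec} and \eqref{W1infty_dist_to_id}. Your proposal never explains where $\lp_0$ comes from, and it misattributes the role of the regularization to ``integrability of $h_\tau$'' (note $h_\tau\in L^\infty(\Omega)$ automatically, since $\yst{k},\yst{k-1}\in W^{2,p}$ with $p>d$).

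Second, your energy estimate — test the heat equation with a constant, add to the mechanical identity, and let the dissipation ``cancel up to the truncation gap $|\drate-\rdrate|$'' — is the $\alpha=2$ mechanism of Lemma~\ref{lem:initial_toten_bound}. For $\alpha\in[1,2)$ the paper stresses that the two equations scale as $\lp^2$ and $\lp^\alpha$ respectively and \emph{cannot simply be added}: one must control the rescaled energy $\toten_\lp$ of \eqref{eq: rescaled toten}, which requires the $L^{2/\alpha}$-bound on $\inten$. This is obtained in Lemma~\ref{lem:initial_toten_bound_lin} by testing the heat equation with $\chi'(\wst{l})$, $\chi(t)=\tfrac{\alpha}{2}(\lp^\alpha+t)^{2/\alpha}$, and the truncation $\rdrate\le\drate^{\alpha/2}$ is introduced precisely so that $\int_\Omega\rdrate\,\chi'(\wst{l})\,\di x\le\int_\Omega\drate\,\di x + C\lp^2(1+\toten_\lp)$ by Young's inequality (see \eqref{eq: xireg}), after which the full dissipation cancels against the mechanical identity. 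One can argue that for this proposition alone (fixed $\lp$, $\tau\to 0$) the cruder bound $\toten\lesssim\lp^\alpha$ from the constant test would still yield $\mechen\le\eta$ for $\lp_0$ small and hence salvage the induction, but you would have to make that argument explicitly, and the refined estimate is in any case indispensable for the subsequent linearization. A minor further point: your ``second delicate point'' in the limit passage is moot — the limit equation retains $\rdrate$ (the proposition states \eqref{weak_limit_heat_equation} with $\drate$ replaced by $\rdrate$), so one only needs equi-integrability of $\rdrate\le\drate$ and Vitali's theorem, not an argument that the regularization error vanishes.
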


We will prove that \eqref{viscoel_small} admits a unique weak solution and that solutions of the above described regularization  guaranteed by  Proposition  \ref{cor:van_tau_reg}(ii) \BBB converge to the solution of \eqref{viscoel_small} in a suitable sense.
Setting
\begin{equation}\label{eq: H1 strange}
  H^1_{\Gamma_D}(\Omega; \R^d) \defas \setof{u \in H^1(\Omega; \R^d) \colon u = 0 \text{ on } \Gamma_D}
\end{equation}
we have the following definition of weak solutions  for the linearized system. \BBB
\begin{definition}[Weak solution of the linearized system]\label{def:weak_form_linear_evol}
A couple $(u, \mu) \colon I \times \Omega \to \R^d \times \R$ is called a \emph{weak solution} to the initial-boundary-value problem \eqref{viscoel_small}--\eqref{initial_conds_lin} if $u \in H^1(I; H^1_{\Gamma_D}(\Omega; \R^d))$ with $u(0, \cdot) = u_0$, $\mu \in L^1(I; W^{1,1}(\Omega))$ with $\mu \ge 0$ a.e., and if it satisfies the identities
\begin{equation}\label{linear_evol_mech}
  \intQ \big( \CW e(u) + \CD e(\dot u) + \mu \mathbb{B}^{(\alpha)} \big) : \nabla z \di x \di t
    = \intQ f \cdot z \di x \di t + \intSN g \cdot z \di \haus^{d-1} \di t
\end{equation}
for any $z \in C^\infty(I \times \overline\Omega; \R^d)$ with $z = 0$ on $I \times \Gamma_D$, as well as
\begin{align}\label{linear_evol_temp}
  &\intQ
      \hc \nabla \mu \cdot \nabla \vphi
      - \CD^{(\alpha)} e(\dot u) : e(\dot u) \vphi
      - \bar c_V \mu \dot \vphi \di x \di t
    + \kappa \int_\Gamma \mu \vphi \di \haus^{d-1} \di t \notag \\
  &= \kappa \int_\Gamma   \theta_\flat \BBB \vphi \di \haus^{d-1} \di t
    + \bar c_V \int_\Omega \mu_0 \vphi(0) \di x
\end{align}
for any $\vphi \in C^\infty(I \times \overline\Omega)$   with $\vphi(T) = 0$. \BBB
\end{definition}

Indeed, it is a standard matter to check that sufficiently smooth weak solutions lead to the classical formulation \eqref{viscoel_small}. Next, we state the relation between time-continuous or time-discrete solutions of  the nonlinear system \BBB and solutions to \NNN  \eqref{viscoel_small}--\eqref{initial_conds_lin}. \BBB
\begin{theorem}[Passage to linearized thermoviscoelasticity]\label{thm:linearization_right_diag}
  Under the above assumptions we have:\\
{\rm (i)} There exists a unique weak solution $(u, \mu)$ to  \eqref{viscoel_small}--\eqref{initial_conds_lin} \BBB in the sense of Definition \ref{def:weak_form_linear_evol}.\\
{\rm (ii)} Given any sequence $(\lp_k)_k$ converging to zero and any sequence of weak solutions $(y_{\lp_k}, \theta_{\lp_k})$  given \BBB by Proposition \ref{cor:van_tau_reg}\,{\rm(ii)}, the functions $u_{\lp_k} \defas \lp_k^{-1} (y_{\lp_k} - \id)$ and $\mu_k = \lp_k^{-\alpha} \theta_{\lp_k}$ satisfy
    \begin{align*}
   \NNN  u_{\lp_k} &\to u \text{ in } L^\infty(I;H^1(\Omega; \R^d)),  &  \NNN \dotauk &\to   \dot u \text{  in } L^2(I; H^1(\Omega; \R^d)), \BBB \\ 
      \mu_{\lp_k} &\to \mu \text{ in } \CCC L^s(I \times \Omega)\BBB,  & 
      \mu_{\lp_k} &\weakly \mu \text{ weakly in } L^r(I; W^{1, r}(\Omega))
    \end{align*}
    for any  $s \in [1,  \frac{d+2}{d})$ and $r \in [1, \tfrac{d+2}{d+1})$. \\
    {\rm (iii)} Given sequences $(\lp_k)_k$, $(\tau_k)_k$ converging to zero and any sequence $(\overline{y}_{\lp_k\CCC,\BBB \tau_k}, \overline{\theta}_{\lp_k\CCC,\BBB \tau_k})$ of time-discrete solutions   given by Proposition \ref{cor:van_tau_reg}\,{\rm (i)}, $\overline{u}_k \defas \lp_k^{-1} (\overline{y}_{\lp_k\CCC,\BBB \tau_k} - \id)$ and $\overline{\mu}_k = \lp_k^{-\alpha} \overline{\theta}_{\lp_k\CCC,\BBB \tau_k}$ satisfy
    \begin{align*}
         \NNN \hat{u}_k &\to u \text{ in } L^\infty(I;H^1(\Omega; \R^d)),  &  \NNN   \dot{\hat{u}}_k &\to u \text{ in } L^2(I;H^1(\Omega; \R^d)), \BBB \\
     \hat{\mu}_k &\to \mu \text{ in } \CCC L^s(I \times \Omega)\BBB, & 
      \hat{\mu}_k &\weakly \mu \text{ weakly in } L^r(I; W^{1, r}(\Omega))
    \end{align*}
    for any $s \in [1,  \frac{d+2}{d})$ and $r \in [1, \tfrac{d+2}{d+1})$. \NNN Apart from the convergence of $\dot{\hat{u}}_k$, \BBB the same holds true if we replace $\hat{y}_{\lp_k\CCC,\BBB \tau_k}$ by $\overline{y}_{\lp_k\CCC,\BBB \tau_k}$ or $\underline{y}_{\lp_k\CCC,\BBB \tau_k}$ and $\hat{\theta}_{\lp_k\CCC,\BBB \tau_k}$ by $\overline{\theta}_{\lp_k\CCC,\BBB \tau_k}$ or $\underline{\theta}_{\lp_k\CCC,\BBB \tau_k}$, and consider the corresponding rescaled quantities.
\end{theorem}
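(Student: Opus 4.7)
The plan is to first establish (ii), which furnishes existence of a weak solution to \eqref{viscoel_small}--\eqref{initial_conds_lin} by identifying the limit of the rescaled nonlinear solutions; (i) then reduces to a uniqueness argument; (iii) follows by running the argument of (ii) jointly in $\lp$ and $\tau$, together with a $\Gamma$-convergence result for the discrete mechanical step (Proposition~\ref{prop:gamma_conv}).

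For (ii), given weak solutions $(y_{\lp_k}, \theta_{\lp_k})$ produced by Proposition~\ref{cor:van_tau_reg}(ii), we first harvest the adapted total-energy bound of Lemma~\ref{lem:initial_toten_bound_lin}, which yields $\mechen(y_{\lp_k}(t)) \le C\lp_k^2$, an upper bound on $\inten(\nabla y_{\lp_k},\theta_{\lp_k})$ of order $\lp_k^\alpha$, and an $L^2(I\times\Omega)$ bound on $\nabla \dot y_{\lp_k}/\lp_k$. Combining \ref{W_lower_bound_spec} with the Friesecke--James--M\"uller geometric rigidity estimate (as employed in \cite{FiredrichKruzik18Onthepassage}) and the Dirichlet condition $y_{\lp_k}=\id$ on $\Gamma_D$, we deduce that $u_{\lp_k}$ is bounded in $L^\infty(I;H^1_{\Gamma_D}(\Omega;\R^d))$ and $\dot u_{\lp_k}$ is bounded in $L^2(I;H^1_{\Gamma_D}(\Omega;\R^d))$. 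The bound on $\hyen$ supplied by \ref{H_bounds} ensures that all second-gradient contributions are asymptotically negligible after the $\lp_k^{-1}$-rescaling. For the rescaled temperature $\mu_{\lp_k} \defas \lp_k^{-\alpha}\theta_{\lp_k}$, estimate \eqref{inten_lipschitz_bounds} gives $L^\infty(I;L^1)$-boundedness, and the Boccardo--Gallou\"et-type estimates from Subsection~\ref{sec: a priori}, rendered scaling-invariant in $\lp_k$, provide boundedness in $L^r(I;W^{1,r}(\Omega))$ for $r<\tfrac{d+2}{d+1}$ and compactness in $L^s(I\times\Omega)$ for $s<\tfrac{d+2}{d}$.

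With these compactness properties we extract limits $(u,\mu)$ and pass to the limit in \eqref{weak_limit_mechanical_equation}--\eqref{weak_limit_heat_equation}. Taylor-expanding around $(\Id,0)$ via \ref{C_regularity}, \ref{C_zero_temperature}, and \ref{C_heatcap_cont}, the rescaled mechanical stress $\lp_k^{-1}\partial_F\felpot(\nabla y_{\lp_k},\theta_{\lp_k})$ converges to $\CW e(u) + \mathbb{B}^{(\alpha)}\mu$ (only $\alpha=1$ survives by the scaling $\lp_k^{\alpha-1}$ of the coupling contribution, matching \eqref{alpha_dep}), while $\lp_k^{-1}\partial_{\dot F}\disspot(\nabla y_{\lp_k},\nabla\dot y_{\lp_k},\theta_{\lp_k}) \to \CD e(\dot u)$ by \ref{D_quadratic}. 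Strong convergence of the strain rates $e(\dot u_{\lp_k})$, needed to identify the heat-equation source when $\alpha=2$, is obtained by testing the rescaled mechanical equation with $\dot u_{\lp_k}-\dot u$ and exploiting the monotonicity of the map $\dot C\mapsto D(C,\theta)\dot C$, in the spirit of Lemma~\ref{lem:strong_strain_rates_conv} and Lemma~\ref{lemma: strong ratistrain}. The rescaled dissipation $\lp_k^{-\alpha}\rdrate(\nabla y_{\lp_k},\nabla\dot y_{\lp_k},\theta_{\lp_k})$ is, away from the regularization region in \eqref{def_rdrate}, of the form $\lp_k^{2-\alpha}\CD e(\dot u_{\lp_k}):e(\dot u_{\lp_k}) + o(1)$; for $\alpha\in[1,2)$ the regularization furnishes the additional integrability needed for this contribution to vanish, whereas for $\alpha=2$ the strong convergence above identifies its limit as $\CD e(\dot u):e(\dot u)$. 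Similarly, the adiabatic source in \eqref{main_thermal_eq}, which scales as $O(\lp_k)$ after the $\lp_k^{-\alpha}$-rescaling, vanishes in $L^1$. This identifies $(u,\mu)$ as a weak solution of \eqref{viscoel_small}--\eqref{initial_conds_lin}, proving both convergence in (ii) and existence in (i).

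Uniqueness in (i) splits by regime: for $\alpha\in(1,2)$ the system decouples trivially since $\mathbb{B}^{(\alpha)}=0=\CD^{(\alpha)}$; for $\alpha=1$ the heat equation has vanishing source so $\mu$ is unique, and the linear mechanical equation then determines $u$; for $\alpha=2$ the mechanical equation is decoupled and uniquely solvable by coercivity of $\CD$, producing a fixed $L^1$-source in the heat equation whose renormalized solution $\mu$ is unique by \cite{Boccardoetal}. Part (iii) proceeds by running the argument of (ii) jointly in $\lp_k,\tau_k\to 0$ on the piecewise-constant and piecewise-affine interpolations of \eqref{y_interpolations}; the new ingredient is the $\Gamma$-convergence of the discrete mechanical functional behind \eqref{mechanical_step} (Proposition~\ref{prop:gamma_conv}), together with a discrete analogue of the monotonicity argument to upgrade to strong convergence of $\dot{\hat u}_k$. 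The main obstacle throughout is the simultaneous control, as $\lp_k\to 0$, of the quadratic dissipation $\drate/\lp_k^\alpha$ in the heat equation and of the coupling $\mathbb{B}^{(\alpha)}\mu_{\lp_k}$ in the mechanical equation; these opposing demands pin down the admissible range $\alpha\in[1,2]$ and force the regularization \eqref{def_rdrate} whenever $\alpha<2$ in order to secure sufficient integrability of $\mu_{\lp_k}$.
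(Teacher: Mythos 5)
Your overall strategy coincides with the paper's: a priori bounds and rigidity give compactness of $u_{\lp_k}$ in $H^1(I;H^1)$ and of $\mu_{\lp_k}$ via the Boccardo--Gallou\"et estimates, Taylor expansion around $(\Id,0)$ linearizes the stresses with the $\alpha$-dependent survival of $\mathbb{B}^{(\alpha)}$ and $\CD^{(\alpha)}$, strong convergence of the strain rates is obtained from an energy-balance/monotonicity argument and is the key to identifying the heat source for $\alpha=2$, and uniqueness splits by regime exactly as in Lemma~\ref{lem:unique_mu} (the paper merely reverses your order, proving (iii) first from the discrete Euler--Lagrange equations and noting that (ii) follows by the same computations in the time-continuous weak formulation).

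Two points need correction. First, Proposition~\ref{prop:gamma_conv} is \emph{not} an ingredient of part (iii): it is the engine of Theorem~\ref{thm:linearization_left_bottom}(i), where $\tau$ is \emph{fixed} and only $\lp\to 0$. Its $\Gamma$-limit $\bar E_0^{(k)}$ contains the term $\frac{1}{2\tau}\int_\Omega \CD e(u-u_\tau^{(k-1)}):e(u-u_\tau^{(k-1)})\di x$ and therefore degenerates as $\tau\to0$; with $\lp_k,\tau_k\to0$ simultaneously there is no fixed grid on which to apply it. Part (iii) is instead proved by summing the discrete Euler--Lagrange equations \eqref{mechanical_step_single} and \eqref{el_thermal_step} over the time steps and passing to the limit in the resulting space-time weak forms (Propositions~\ref{prop:linearization_mech} and \ref{prop:linearization_therm}), which is exactly "running the argument of (ii) on the interpolations" — so drop the $\Gamma$-convergence from your outline of (iii). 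Second, "testing the rescaled mechanical equation with $\dot u_{\lp_k}-\dot u$" hides the real difficulty: $\dot y_{\lp_k}$ is only in $L^2(I;H^1)$ and the energy $\mechen$ is not convex, so the chain rule needed to convert this test into an energy identity relies on the $\Lambda$-convexity of $\mechen$ (and, in the discrete setting, on the quadratic error term $\Lambda\tau^2\|\ddif\nabla\yst{k}\|_{L^2}^2$), cf.\ \eqref{cont_energy_balance} and \eqref{disc_energy_balance-new}; the argument then proceeds by matching the nonlinear (discrete) energy inequality against the exact linearized balance \eqref{lim_energy_balance-new} to upgrade the liminf inequalities for $\mechenl$ and the dissipation to equalities, after which positive definiteness of $D$ and the generalized Korn inequality give strong convergence. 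A minor remark: for $\alpha\in[1,2)$ the vanishing of $\lp_k^{-\alpha}\rdrate$ in the limit heat equation already follows from $\rdrate\le\drate$ and $\intQ\drate=O(\lp_k^2)$; the regularization \eqref{def_rdrate} is needed earlier, for the rescaled energy bound \eqref{inductive_therm_bound} and the nonnegativity of the temperature in the thermal step, not for this limit passage.
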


\NNN Note particularly that we obtain strong convergence of strains and strain rates. \BBB Finally, we study the relation between the time-discrete solutions in the nonlinear and the linear setting,  as well as the convergence of time-discrete solutions in the linearized setting  under vanishing time-discretization.  
\begin{theorem}[Passage to linearized thermoviscoelasticity, time-discrete solutions] \label{thm:linearization_left_bottom} The following properties hold: \\ \BBB 
{\rm (i)} \NNN Let $\tau$ be sufficiently small. \BBB For every $k \in \{1, \ldots,  T / \tau \}$ we have \CCC as $\lp \to 0$ \BBB
    \begin{align}\label{eq: again a convergence}
      \frac{1}{\lp} (\yst k - \id) &\to u_\tau^{(k)} \text{ strongly in } H^1(\Omega; \R^d), &
      \frac{1}{ \lp^\alpha } \tst k &\weakly \mu_\tau^{(k)} \text{ weakly in } W^{1, r}(\Omega)
    \end{align}
 for any $r \in [1, \tfrac{d+2}{d+1})$,     where $u_\tau^{(k)}$ is uniquely determined by 
    \begin{equation}\label{el_mech_step_lin}
      \int_\Omega \big(\CW e(u_\tau^{(k)}) \CCC + \CD e(\ddif u_\tau^{(k)})\BBB + \mu_\tau^{(k-1)} \mathbb{B}^{(\alpha)}\big) : \nabla z \di x
      - \langle \lst k , z \rangle = 0
    \end{equation}
    for all $z \in H^1_{\Gamma_D}(\Omega; \R^d)$ and $\mu_\tau^{(k)}$ is uniquely determined by
    \begin{equation}\label{el_thermal_step_lin}
      \int_\Omega
        \Big(
          \bar c_V \ddif \mu_\tau^{(k)}
          \NNN - \BBB \CD^{(\alpha)} e(\ddif u_\tau^{(k)}) : e(\ddif u_\tau^{(k)})
        \Big) \vphi \di x
      + \int_\Omega \mathbb{K}_0 \nabla \mu_\tau^{(k)} \cdot \nabla \vphi \di x
      + \kappa \int_\Gamma (\mu_\tau^{(k)} - \btst k) \vphi \di \haus^{d-1} = 0
    \end{equation}
    for all $\vphi \in C^\infty(\overline \Omega)$, where $\ddif u_\tau^{(k)} \defas (u_\tau^{(k)} - u_\tau^{(k-1)})/\tau$ and $\ddif \mu_\tau^{(k)} \defas (\mu_\tau^{(k)} - \mu_\tau^{(k-1)})/\tau$.\\
{\rm (ii)} Given $(u_\tau^{(k)})_k$ and $(\mu_\tau^{(k)})_k$ from {\rm (i)}, define $\hat{u}_\tau$ and $\hat{\mu}_\tau$ similarly to \eqref{y_interpolations}.
    Then,
    \begin{equation}\label{linearized_tau_zero_convs}
    \begin{aligned}
      \NNN  \hat{u}_\tau &\to u \text{ in } L^\infty(I;H^1(\Omega; \R^d)), &  \NNN \dot{\hat{u}}_\tau &\to \dot{\hat{u}} \text{ in } L^2(I;H^1(\Omega; \R^d)), \BBB \\
       \hat{\mu}_\tau &\to \mu \text{ in } \CCC L^s(I \times \Omega)\BBB, &  
      \hat{\mu}_\tau &\weakly \mu \text{ weakly in } L^r(I; W^{1, r}(\Omega))
    \end{aligned}
    \end{equation}
     as $\tau \to 0$     for any $s \in [1, \frac{d+2}{d})$ and $r \in [1, \frac{d+2}{d+1})$, where $(u, \mu)$ is the  unique \BBB weak solution of \NNN \eqref{viscoel_small}--\eqref{initial_conds_lin} \BBB in the sense of Definition \ref{def:weak_form_linear_evol}. \NNN Apart from the time derivative, \BBB the convergences in \eqref{linearized_tau_zero_convs} also hold for the other interpolations. \BBB
\end{theorem}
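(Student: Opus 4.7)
The plan is to prove (i) by induction on $k$ and then to derive (ii) by standard compactness arguments adapted to the now-linear discrete problem, along the lines of Theorem \ref{thm:van_tau}(ii). The base case $k=0$ is immediate from \eqref{def_0_step} and \eqref{initial_cond}, giving $u_\tau^{(0)}=u_0$ and $\mu_\tau^{(0)}=\mu_0$. Assume the convergences \eqref{eq: again a convergence} have been established at step $k-1$. For the mechanical update I would invoke the $\Gamma$-convergence result Proposition~\ref{prop:gamma_conv}: after the substitution $y=\id+\eps u$, subtraction of inessential constants, and division by $\eps^2$, the functional minimized in \eqref{mechanical_step} $\Gamma$-converges in $H^1_{\Gamma_D}(\Omega;\R^d)$ to the quadratic
\begin{equation*}
\mathcal{F}_0^{(k)}(u) \defas \tfrac{1}{2}\int_\Omega \CW e(u):e(u)\di x + \int_\Omega \mu_\tau^{(k-1)}\, \mathbb{B}^{(\alpha)} : \nabla u \di x + \tfrac{1}{2\tau}\int_\Omega \CD e(u-u_\tau^{(k-1)}):e(u-u_\tau^{(k-1)}) \di x - \langle \lst k, u\rangle.
\end{equation*}
Here \ref{W_lower_bound_spec}, \ref{H_bounds2}, together with the identities $\partial_F\cplpot(\cdot,0)\equiv 0$ and $\partial_F^2\cplpot(\cdot,0)\equiv 0$ (inherited from \ref{C_zero_temperature} and controlled for $\theta>0$ via \ref{C_bounds} and \ref{C_thrid_order}), identify the elastic, thermo-mechanical (active only for $\alpha=1$ by \eqref{alpha_dep}) and viscous tensors correctly; the growth of $H$ in \ref{H_bounds} makes the second-gradient contribution asymptotically negligible. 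Coercivity plus the induction hypothesis yields strong $H^1$-convergence of the minimizers to the unique minimizer $u_\tau^{(k)}$, whose Euler--Lagrange equation is \eqref{el_mech_step_lin}.

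For the thermal update, I would start from the Euler--Lagrange equation of \eqref{thermal_step_reg}, substitute $\theta=\eps^\alpha\mu$ and $\tst{k-1}=\eps^\alpha\mu_\tau^{(k-1)}$, and divide through by $\eps^\alpha$. Using $\partial_\theta \inten(\Id,0)=\bar c_V$ from \eqref{sec_deriv} and the Lipschitz-type control in \eqref{inten_lipschitz_bounds}, the time-difference term converges to $\bar c_V\ddif\mu_\tau^{(k)}$; the conductivity term converges to $\mathbb{K}_0\nabla\mu_\tau^{(k)}$ by \eqref{hcm} and \eqref{lin_heat_cond_cap}; the coupling contribution $\eps^{-\alpha}\partial_F\cplpot(\nabla\yst{k-1},\tst{k-1}):\ddif\nabla\yst k$ is $O(\eps)$ (thanks to $\partial_F\cplpot(\cdot,0)=0$, $\partial_F^2\cplpot(\cdot,0)=0$, and the pointwise bound in \ref{C_thrid_order}) and vanishes; the dissipation piece $\eps^{-\alpha}\rdrate$ tends to $\CD^{(\alpha)} e(\ddif u_\tau^{(k)}):e(\ddif u_\tau^{(k)})$ in accordance with \eqref{alpha_dep} and \eqref{def_rdrate}, using that for $\eps$ small one has $\drate\le 1$ so $\rdrate=\drate$, and the expansion $\dot C=2\eps e(\dot u)+O(\eps^2)$ combined with \eqref{chain_rule_Fderiv}; the boundary term becomes $\kappa(\mu-\btst k)$. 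The a priori estimates of Lemma \ref{lem:initial_toten_bound_lin} and the Boccardo--Gallou\"et-type estimates of Subsection \ref{sec: a priori} provide weak compactness of $\eps^{-\alpha}\tst k$ in $W^{1,r}(\Omega)$ and strong compactness in $L^s(\Omega)$ for the stated exponents. The limit equation is exactly \eqref{el_thermal_step_lin}, which has a unique solution $\mu_\tau^{(k)}$; this closes the induction.

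For (ii), I would replicate the vanishing-$\tau$ procedure of Theorem \ref{thm:van_tau}(ii) in the purely linear setting. Testing \eqref{el_mech_step_lin} against $\ddif u_\tau^{(k)}$, testing \eqref{el_thermal_step_lin} against $\vphi\equiv 1$, and summing, the dissipation cancels exactly as in Lemma \ref{lem:initial_toten_bound}, yielding a $\tau$-uniform energy balance and hence bounds on $\hat u_\tau$ in $L^\infty(I;H^1(\Omega;\R^d))\cap H^1(I;H^1(\Omega;\R^d))$. Boccardo--Gallou\"et test functions applied to the time-discrete heat equation then give weak compactness of $\hat\mu_\tau$ in $L^r(I;W^{1,r}(\Omega))$ and strong compactness in $L^s(I\times\Omega)$ in the declared ranges. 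Passing to the limit in the time-discrete linear equations produces the weak formulation \eqref{linear_evol_mech}--\eqref{linear_evol_temp}, and uniqueness from Theorem \ref{thm:linearization_right_diag}(i) upgrades the subsequential convergence to convergence of the full sequence.

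The main obstacle, as in Theorem \ref{thm:linearization_right_diag}(ii), is the thermal equation for $\alpha\in[1,2)$: the scaling mismatch between the dissipation $\drate\sim\eps^2$ and the thermal scale $\eps^\alpha$ would, without further care, blow up $\eps^{-\alpha}\drate$. The regularization \eqref{def_rdrate} together with the refined a priori bound of Subsection \ref{sec: adaptions} and the Taylor-remainder control of $\partial_F\cplpot$ near $(\Id,0)$ provided by \ref{C_thrid_order} are precisely what makes $\eps^{-\alpha}h_\tau$ pass to the limit. A further nontrivial point is the strong convergence of the strain rates needed to identify the nonlinear product $\CD^{(\alpha)} e(\ddif u_\tau^{(k)}):e(\ddif u_\tau^{(k)})$ in the case $\alpha=2$; this is obtained by testing the rescaled mechanical equation against $u_\eps-u_\tau^{(k)}$ and exploiting the quadratic coercivity of $\disspot$, exactly as in Lemmas \ref{lem:strong_strain_rates_conv} and \ref{lemma: strong ratistrain}.
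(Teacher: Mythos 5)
Your proposal follows essentially the same route as the paper's proof: induction on $k$, with Proposition \ref{prop:gamma_conv} plus energy convergence and strict convexity for the mechanical step, a direct limit passage in the rescaled Euler--Lagrange equation \eqref{el_thermal_step} for the thermal step, and an Aubin--Lions/Boccardo--Gallou\"et compactness argument combined with uniqueness for part (ii). Two remarks. First, your justification of the dissipation limit via ``for $\lp$ small one has $\drate\le 1$ so $\rdrate=\drate$'' is not correct as stated: $\drate$ is a pointwise quantity controlled only in $L^1(\Omega)$, so no uniform pointwise smallness is available. The conclusion is nevertheless right and is reached more simply: for $\alpha=2$ one has $\rdrate=\drate$ identically by \eqref{def_rdrate}, and the limit is identified from the strong $H^1(\Omega)$-convergence of $u_{\lp,\tau}^{(k)}$ and $u_{\lp,\tau}^{(k-1)}$ delivered by the $\Gamma$-convergence step — since $\tau$ is fixed, this already gives strong $L^2$-convergence of $e(\ddif u_{\lp,\tau}^{(k)})$, so no analogue of Lemma \ref{lemma: strong ratistrain} is needed in part (i); for $\alpha<2$ one uses $\rdrate\le\drate$ and $\lp^{-\alpha}\Vert\drate\Vert_{L^1(\Omega)}\le C\lp^{2-\alpha}\to0$. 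Second, in part (ii) you propose to re-derive the $\tau$-uniform energy bounds by testing the linear discrete equations, whereas the paper inherits them from the nonlinear a priori bounds (Lemmas \ref{lem:comp_uk}--\ref{lem:comp_muk}, Theorem \ref{thm:further_apriori_temp_bounds}) via lower semicontinuity along $\lp\to0$. Your route is viable, but note that for $\alpha=1$ the coupling term $\int_\Omega\mu_\tau^{(k-1)}\mathbb{B}^{(\alpha)}:\nabla\ddif u_\tau^{(k)}\di x$ survives in the mechanical test and must be absorbed via Young's inequality, which requires the $L^2(I\times\Omega)$ bound on the rescaled temperature from Remark \ref{rem: next a priori} (resp.\ Remark \ref{rem: after temp}(iii)) — the same ingredient the paper needs at the corresponding point of its energy-balance argument for the strong convergence of $\dot{\hat u}_\tau$.
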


\begin{remark}[Variational  structure \BBB in the time-discrete linear setting]
  With regard to Theorem \ref{thm:linearization_left_bottom}, we can in fact show that $u_\tau^{(k)}$ is the unique solution of the minimization problem
  \begin{equation*}
    \argmin_{u \in H^1_{\Gamma_D}(\Omega; \R^d)}
      \Bigg\{
        \frac{1}{2} \int_\Omega (\CW e(u) + \mu_\tau^{(k-1)} \mathbb{B}^{(\alpha)}) : e(u) \di x
        + \frac{1}{2\tau} \int_\Omega \CD e(u - u_\tau^{(k-1)}) : e(u - u_\tau^{(k-1)}) \di x
        - \sprod{\lst k}{u}
      \Bigg\}
  \end{equation*}
  and for $\alpha \in [1, 2)$ that \NNN the nonnegative function \BBB $\mu_\tau^{(k)}$ is the unique solution of the minimization problem
  \begin{equation}\label{min_problem_temp_lin}
    \argmin_{\NNN \mu \in H^1(\Omega)\BBB}
      \Bigg\{
        \frac{\bar c_V}{2\tau} \int_\Omega (\mu - \mu_\tau^{(k-1)})^2 \di x
        \NNN - \BBB \int_\Omega \CD^{(\alpha)} e(\ddif u_\tau^{(k)}) : e(\ddif u_\tau^{(k)}) \mu
          + \frac{1}{2} \mathbb{K}_0 \nabla \mu \cdot \nabla \mu \di x
        + \frac{\kappa}{2} \int_{\Gamma} (\mu - \btst{k})^2 \di x
      \Bigg\}.
  \end{equation}
  From \NNN the a priori bounds \BBB in the nonlinear setting, we will be only able to prove that $\CD e(\ddif u_\tau^{(k)}) : e(\ddif u_\tau^{(k)}) \in L^1(\Omega)$.
  Consequently, the functional in \eqref{min_problem_temp_lin} might not be well-defined on $H^1(\Omega)$ \NNN for $\alpha =2$. \BBB
  Nevertheless,   for sufficiently smooth $\Gamma$, smooth functions $f$ and $\theta_\flat$, \NNN and $\Gamma_D=\Gamma$,  \BBB it follows by elliptic regularity theory that $\CD e(\ddif u_\tau^{(k)}) : e(\ddif u_\tau^{(k)}) \in L^2(\Omega)$.
  In this case, $\mu_\tau^{(k)}$ is a minimizer of \eqref{min_problem_temp_lin} also for $\alpha = 2$. 
\end{remark}
\NNN
Section \ref{sec:staggered_scheme} is devoted to existence of the staggered time-incremental scheme leading to Theorem \ref{thm:van_tau}(i) and Proposition \ref{cor:van_tau_reg}(i). Then, in Section \ref{sec:tau_to_zero_delta_fixed} we pass to the limit $\tau \to 0$ and show Theorem \ref{thm:van_tau}(ii) and Proposition \ref{cor:van_tau_reg}(ii).  Eventually, in Section \ref{sec: linearization} we address the limit $\eps \to 0$ and prove Theorems \ref{thm:linearization_right_diag} \BBB and   \ref{thm:linearization_left_bottom}.

\section{Staggered time-incremental scheme}\label{sec:staggered_scheme}
This section is devoted to the analysis of the staggered time-incremental scheme described in the previous section. Let us start with some fundamental auxiliary results.

\begin{lemma}[A \NNN priori estimates, \BBB positivity of determinant]\label{lem:pos_det}
  Given $M > 0$ there exists a constant $C_M > 0$ such that for all $y \in \Wid$ with $\mechen(y) \leq M$ (where $\mechen$ is defined in \eqref{mechanical}) it holds that
  \begin{align}\label{pos_det}
    \norm{y}_{W^{2, p}(\Omega)} &\leq C_M, &
    \norm{y}_{C^{1, 1-d/p}(\Omega)} &\leq C_M, &
    \norm{(\nabla y)^{-1}}_{C^{1 - d/p}(\Omega)} &\leq C_M, &
    \det(\nabla y) \geq \frac{1}{ C_M} \MMM \text{ in } \Omega.\BBB
  \end{align}
  If $W$ additionally satisfies  {\rm \ref{W_lower_bound_spec}}, there exists a universal constant $C$ and \CCC a constant \BBB $C_M^* >0$ with $C_M^* \to 0$ as $M \to 0$ such that 
  \begin{align}
    \norm{y - \id}_{H^1(\Omega)} &\leq C \norm{\dist(y, SO(d))}_{L^2(\Omega)}, \label{H1_dist_to_id} \\
    \norm{y - \id}_{W^{1, \infty}(\Omega)} &\leq C^*_M. \label{W1infty_dist_to_id}
  \end{align} \BBB
\end{lemma}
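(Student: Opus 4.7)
The plan is to establish the four bounds in \eqref{pos_det} via direct consequences of the growth assumptions on $W^{\rm el}$ and $H$, combined with Sobolev embedding and a Healey–Krömer type lower bound for the determinant. The refinements \eqref{H1_dist_to_id}--\eqref{W1infty_dist_to_id} under \ref{W_lower_bound_spec} will then follow from geometric rigidity together with a Gagliardo–Nirenberg interpolation argument.

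First, from \ref{H_bounds} we have $\ac|\nabla^2 y|^p \le H(\nabla^2 y)$, so $\|\nabla^2 y\|_{L^p}^p \le \mechen(y)/\ac \le M/\ac$. Since $y=\id$ on $\Gamma_D$ with $\haus^{d-1}(\Gamma_D)>0$, Poincaré's inequality applied twice gives $\|y\|_{W^{2,p}(\Omega)}\le C_M$, establishing the first bound in \eqref{pos_det}. The second bound then follows from the continuous embedding $W^{2,p}(\Omega)\hookrightarrow C^{1,1-d/p}(\overline{\Omega})$, which is available since $p>d$. For the lower bound on the determinant, \ref{W_lower_bound} yields $\int_\Omega \det(\nabla y)^{-q}\di x \le C_M$ with $q\ge pd/(p-d)$, which combined with the uniform $C^{1-d/p}$-bound on $\nabla y$ allows to invoke the Healey–Krömer lemma (see \cite[Theorem~3.1]{kroemrou} or \cite[Theorem~3.1]{Healey-Kroemer:09}): the Hölder regularity of $\nabla y$ controls the oscillation of $\det\nabla y$, which combined with the integrability $\det(\nabla y)^{-q}\in L^1$ for $q>d/(1-d/p)$ forces $\det\nabla y\ge 1/C_M$ pointwise. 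The bound on $(\nabla y)^{-1}$ in $C^{1-d/p}$ is then immediate from $(\nabla y)^{-1}=\mathrm{cof}(\nabla y)^T/\det\nabla y$, using that both the cofactor matrix is in $C^{1-d/p}$ and the determinant is bounded away from zero.

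For \eqref{H1_dist_to_id}, under \ref{W_lower_bound_spec} the Friesecke–James–Müller rigidity estimate provides a rotation $R\in SO(d)$ with
\begin{equation*}
  \|\nabla y - R\|_{L^2(\Omega)}^2 \le C\int_\Omega \dist^2(\nabla y, SO(d))\di x \le C \ac^{-1}\elen(y).
\end{equation*}
By Poincaré's inequality, after subtracting a suitable constant $c\in\R^d$, we control $\|y-R\,\cdot -c\|_{L^2(\Omega)}$ by the same quantity. Using the boundary condition $y=\id$ on $\Gamma_D$ and the trace theorem, the $L^2(\Gamma_D)$-norm of the affine map $x\mapsto (\Id-R)x-c$ is controlled by $\|\dist(\nabla y, SO(d))\|_{L^2}$. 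Since $\haus^{d-1}(\Gamma_D)>0$, the map $(R,c)\mapsto \|(\Id-R)\cdot -c\|_{L^2(\Gamma_D)}$ defines a norm on the finite-dimensional space of affine maps, so $|\Id-R|+|c|\le C\|\dist(\nabla y, SO(d))\|_{L^2}$, and \eqref{H1_dist_to_id} follows by the triangle inequality.

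For \eqref{W1infty_dist_to_id}, note that $\hyen(y)\le M$ combined with \ref{H_bounds2} and \ref{H_bounds} yields $\|\nabla^2 y\|_{L^p}\le (M/\ac)^{1/p}$, while \eqref{H1_dist_to_id} together with \ref{W_lower_bound_spec} gives $\|\nabla y-\Id\|_{L^2}\le C\sqrt{M}$. A Gagliardo–Nirenberg interpolation of the form
\begin{equation*}
  \|\nabla y - \Id\|_{L^\infty(\Omega)} \le C \|\nabla y - \Id\|_{W^{1,p}(\Omega)}^{\vartheta}\,\|\nabla y - \Id\|_{L^2(\Omega)}^{1-\vartheta},
\end{equation*}
valid for some $\vartheta\in(0,1)$ since $p>d$, together with Poincaré on $y-\id$, produces a bound of the form $\|y-\id\|_{W^{1,\infty}}\le C\,M^{\beta}$ for some $\beta>0$, where $C$ is independent of $M\le 1$; defining $C^*_M$ as this upper bound then yields the desired $C^*_M\to 0$ as $M\to 0$. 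The main technical point is the application of the Healey–Krömer lemma in step three, where the interplay between the Hölder modulus provided by the second-gradient term and the integrability of $\det(\nabla y)^{-q}$ has to be exploited carefully.
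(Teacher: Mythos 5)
Your reconstruction follows essentially the same route as the two results the paper actually cites here (the proof in the paper is just a pointer to \cite[Theorem~3.1]{MielkeRoubicek20Thermoviscoelasticity}/\cite{HealeyKroemer09Injective} for \eqref{pos_det} and to \cite[Lemma~4.2]{FiredrichKruzik18Onthepassage} for \eqref{H1_dist_to_id}--\eqref{W1infty_dist_to_id}); the rigidity-plus-boundary-condition argument for \eqref{H1_dist_to_id} and the interpolation argument for \eqref{W1infty_dist_to_id} are the standard ones and are sound.

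One step, however, fails as written: the claim that $\norm{\nabla^2 y}_{L^p}\le (M/\ac)^{1/p}$ together with $y=\id$ on $\Gamma_D$ yields $\norm{y}_{W^{2,p}(\Omega)}\le C_M$ ``by Poincar\'e applied twice''. The Dirichlet condition fixes only the tangential derivatives of $y$ on $\Gamma_D$, not the normal one, so no Poincar\'e inequality controls $\nabla y$ from $\nabla^2 y$ alone: for $\Omega=(0,1)^2$, $\Gamma_D=\{0\}\times(0,1)$, and $y(x)=x+Nx_1e_1$ one has $y=\id$ on $\Gamma_D$ and $\nabla^2y=0$, yet $\norm{\nabla y}_{L^p}\to\infty$ as $N\to\infty$. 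The missing ingredient is the term $\ac|F|^2$ in \ref{W_lower_bound}, which gives $\norm{\nabla y}_{L^2(\Omega)}^2\le(M+\aC|\Omega|)/\ac$ directly from $\elen(y)\le M$; combined with the $L^p$ bound on $\nabla^2y$ and Poincar\'e for $y$ itself this yields the $W^{2,p}$ bound, and everything downstream is unaffected. Two smaller points: your sufficient condition for the Healey--Kr\"omer step, $q>d/(1-d/p)=pd/(p-d)$, does not literally cover the paper's hypothesis $q\ge pd/(p-d)$ — the equality case needs the refined Morrey estimate (vanishing of $\norm{\nabla^2y}_{L^p}$ on small balls), which is what the cited theorem actually exploits; and in the norm-equivalence step for \eqref{H1_dist_to_id} one should note that a nontrivial $R\in SO(d)$ cannot fix a $(d-1)$-dimensional subspace, which is where the structure of $SO(d)$, and not merely $\haus^{d-1}(\Gamma_D)>0$, enters.
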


\begin{proof}
  For a proof  of the first part \BBB we refer to \cite[Theorem~3.1]{MielkeRoubicek20Thermoviscoelasticity} relying on a result in \cite{HealeyKroemer09Injective}.  The second part can be found in \cite[Lemma 4.2]{FiredrichKruzik18Onthepassage}, \NNN where $\mathscr{S}^M_\delta$ therein simply corresponds to $\mechen(y) \leq M\delta^2$. \BBB
\end{proof}

\begin{lemma}[Generalized Korn's inequality]\label{lem:gen_korn}
  Given $M > 0$ there exists a constant $c_M > 0$ such that for all $v \in \NNN   H^1_{\Gamma_D}(\Omega; \R^d) \BBB$ and $y \in \Wid$ with $\mechen(y) \leq M$ it holds that
  \begin{equation*}
    \int_\Omega \abs*{(\nabla v)^T \nabla y + (\nabla y)^T \nabla v}^2 \di x
    \geq c_M  \NNN \Vert v \Vert^2_{H^1(\Omega)}. \BBB
  \end{equation*}
\end{lemma}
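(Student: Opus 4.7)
The plan is to argue by contradiction, combining the strong compactness provided by Lemma~\ref{lem:pos_det} with a change of variables that reduces the claim to the classical Korn inequality. Suppose the statement fails. Then there exist sequences $(y_n) \subset \Wid$ with $\mechen(y_n) \leq M$ and $(v_n) \subset H^1_{\Gamma_D}(\Omega; \R^d)$ with $\Vert v_n \Vert_{H^1(\Omega)} = 1$ such that
\[
  a_n \defas \int_\Omega \abs*{(\nabla v_n)^T \nabla y_n + (\nabla y_n)^T \nabla v_n}^2 \di x \to 0.
\]
Setting $F_n \defas \nabla y_n$, the integrand equals $4|\sym(F_n^T \nabla v_n)|^2$.

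From Lemma~\ref{lem:pos_det}, $(y_n)$ is bounded in $C^{1,1-d/p}(\bar\Omega)$, the inverses $F_n^{-1}$ are bounded in $C^{1-d/p}(\bar\Omega)$, and $\det F_n \geq 1/C_M$. Arzel\`a--Ascoli then yields, up to a subsequence, a limit $y \in \Wid$ with $F_n \to F \defas \nabla y$ uniformly on $\bar\Omega$, $F$ inheriting the same bounds. In parallel, $v_n \weakly v$ in $H^1$ and $v_n \to v$ in $L^2$ by Rellich--Kondrachov, with $v \in H^1_{\Gamma_D}(\Omega;\R^d)$. Decomposing $F_n^T \nabla v_n = F^T \nabla v_n + (F_n - F)^T \nabla v_n$, the second summand tends to $0$ in $L^2$ (using $\Vert F_n - F\Vert_\infty \to 0$ and boundedness of $\Vert \nabla v_n\Vert_{L^2}$) while the first converges weakly in $L^2$ to $F^T \nabla v$; weak lower semicontinuity together with $a_n \to 0$ therefore forces $\sym(F^T \nabla v) = 0$ a.e.\ on $\Omega$.

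The rigidity step passes to the deformed configuration. Since $y$ is a bi-Lipschitz homeomorphism by \eqref{pos_det}, $w \defas v \circ y^{-1}$ is well defined on the Lipschitz domain $y(\Omega)$, and the chain rule yields $\sym(F^T \nabla v) = F^T[e(w)\circ y]F$. Hence $e(w) = 0$ on $y(\Omega)$, so by the classical characterization of the kernel of the symmetric gradient, $w(\xi) = A\xi + b$ with $A \in \R^{d\times d}_{\skw}$ and $b \in \R^d$; thus $v(x) = A y(x) + b$. Since $v = 0$ and $y = \id$ on $\Gamma_D$, this gives $Ax + b = 0$ on $\Gamma_D$, and standard arguments based on $\haus^{d-1}(\Gamma_D) > 0$ then force $A = 0$ and $b = 0$, so $v \equiv 0$ in $\Omega$.

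It remains to upgrade $v_n \weakly 0$ to strong $H^1$-convergence in order to contradict $\Vert v_n \Vert_{H^1} = 1$. From $a_n \to 0$ and the uniform convergence $F_n \to F$ I deduce $\sym(F^T \nabla v_n) \to 0$ strongly in $L^2$. Setting $w_n \defas v_n \circ y^{-1}$, the same change-of-variables identity together with $\Vert F^{-1}\Vert_\infty \leq C_M$ yields $e(w_n) \to 0$ in $L^2(y(\Omega))$, with $w_n = 0$ on $y(\Gamma_D) = \Gamma_D$. The classical Korn inequality on the \emph{fixed} Lipschitz domain $y(\Omega)$ then gives $\Vert w_n \Vert_{H^1(y(\Omega))} \to 0$, which transforms back under the bi-Lipschitz change of variables to $\Vert v_n \Vert_{H^1(\Omega)} \to 0$, the desired contradiction. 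The main obstacle I anticipate is the rigidity step, specifically using $\haus^{d-1}(\Gamma_D) > 0$ to eliminate the affine modes $Ax + b$; the dependence of the Korn constant on the deformed domain $y(\Omega)$ is harmless here precisely because $y$ is fixed by the compactness argument, so one only ever invokes classical Korn on a single Lipschitz domain.
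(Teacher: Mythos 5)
Your overall strategy (compactness plus contradiction, reducing to classical Korn by a change of variables) is self-contained and genuinely different from the paper, which simply cites \cite[Corollary~3.4]{MielkeRoubicek20Thermoviscoelasticity} and Pompe's Korn inequality with variable coefficients \cite{Pompe03Korns}. The compactness step, the identification $\sym(F^T\nabla v)=0$, and the elimination of the affine modes via $\haus^{d-1}(\Gamma_D)>0$ are all fine. However, there is a genuine gap: both your rigidity step and, more seriously, your final quantitative step presuppose that the limit deformation $y$ is a \emph{globally} injective bi-Lipschitz homeomorphism, so that $w\defas v\circ y^{-1}$ and $w_n\defas v_n\circ y^{-1}$ are well defined on $y(\Omega)$. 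This does not follow from $y\in\Wid$ together with the bounds of Lemma~\ref{lem:pos_det}. The class $\Wid$ only imposes $\det(\nabla y)>0$ and $y=\id$ on $\Gamma_D$, and the paper explicitly refrains from assuming global non-self-penetration; the estimates \eqref{pos_det} give a uniform \emph{local} diffeomorphism property but permit the deformed configuration to overlap itself. Without injectivity $y^{-1}$ does not exist, $y(\Omega)$ need not even be a domain on which classical Korn applies, and the change of variables collapses.

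The rigidity step can be repaired by localizing: on each small ball $y$ is a $C^1$-diffeomorphism, so $v=Ay+b$ locally with $A$ skew, and the pair $(A,b)$ is locally constant, hence constant on the connected set $\Omega$. The final step cannot be repaired so cheaply: what you need there is precisely the quantitative implication ``$\sym(F^T\nabla v_n)\to 0$ in $L^2$ and $v_n\to 0$ in $L^2$ imply $\nabla v_n\to 0$ in $L^2$'' for a fixed continuous field $F$ with $F,F^{-1}\in L^\infty$, i.e.\ Korn's inequality with variable coefficients --- exactly the result of Pompe that the cited proof rests on, and which is established by a covering/localization argument rather than a single global change of variables. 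A secondary point: even granting injectivity, the claim that $y(\Omega)$ is a Lipschitz domain (so that classical Korn applies there) deserves justification; it holds here because $y$ is $C^1$ up to the boundary with uniformly invertible gradient, but it is not automatic for bi-Lipschitz images of Lipschitz domains.
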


\begin{proof}
  The statement can be found in \cite[Corollary~3.4]{MielkeRoubicek20Thermoviscoelasticity}, relying on the result in \cite{Pompe03Korns}.
\end{proof}

\begin{lemma}[Heat conductivity]\label{lem:bound_hcm}
  For any $M > 0$ there exist constants $c_M, \, C_M > 0$ such that \MMM for \BBB  $y \in \Wid$ satisfying \BBB $\mechen(y) \leq M$ and $\theta \in L^{\CCC 1\BBB}(\Omega)$ we have that $\hcm(\nabla y, \theta)$ is well-defined   and
  \begin{equation}\label{bound_hcm}
    c_M \leq \hcm(\nabla y, \theta) \leq C_M.
  \end{equation}
\end{lemma}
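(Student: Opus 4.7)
The plan is to reduce the eigenvalue bounds on $\hcm(\nabla y, \theta)$ to the uniform pointwise bounds on $\nabla y$, $(\nabla y)^{-1}$, and $\det(\nabla y)$ provided by Lemma \ref{lem:pos_det}, combined with the bounds \eqref{spectrum_bound_K} on $\hc$.

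First, I would note that since $\mechen(y) \leq M$, Lemma \ref{lem:pos_det} yields $\|\nabla y\|_{L^\infty(\Omega)} \leq C_M$ (via $W^{2,p} \hookrightarrow C^{1,1-d/p}$), $\|(\nabla y)^{-1}\|_{L^\infty(\Omega)} \leq C_M$, and $\det(\nabla y) \geq 1/C_M$ pointwise on $\Omega$. The upper bound $\det(\nabla y) \leq C_M^d$ then follows from Hadamard's inequality. Since $\nabla y$ is continuous and $\det(\nabla y) > 0$ everywhere, the formula \eqref{hcm} defines $\hcm(\nabla y(x), \theta(x))$ pointwise a.e.\ on $\Omega$, and symmetry of $\hcm$ is immediate from symmetry of $\hc$ via $\hcm^T = \det(F) (F^{-T})^T \hc^T F^{-T} \cdot (F^{-1})^T / \det(F) \cdot \det(F) = \hcm$ (using $F = \nabla y$).

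For the eigenvalue bounds, I would take an arbitrary $v \in \R^d$, set $w \defas F^{-T} v$, and compute
\begin{equation*}
  v^T \hcm(F, \theta) v = \det(F) \, w^T \hc(\theta) \, w.
\end{equation*}
Applying \eqref{spectrum_bound_K} gives $\ac |w|^2 \leq w^T \hc(\theta) w \leq \aC |w|^2$. The estimates $|w| \leq |F^{-T}| \, |v|$ and $|v| = |F^T w| \leq |F^T| \, |w|$ (so $|w| \geq |v|/|F|$) combined with $|F|, |F^{-1}| \leq C_M$ and $\det(F) \in [1/C_M, C_M^d]$ yield
\begin{equation*}
  \frac{\ac}{C_M^{d+2}} |v|^2 \leq v^T \hcm(\nabla y(x), \theta(x)) v \leq \aC C_M^{d+2} |v|^2,
\end{equation*}
which is \eqref{bound_hcm} after relabeling the constants.

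There is essentially no obstacle here; the statement is a direct pointwise computation once the a priori bounds on $\nabla y$ and $\det(\nabla y)$ from Lemma \ref{lem:pos_det} are in hand. The only subtlety worth mentioning is that $\theta$ enters only through $\hc(x, \theta)$, whose spectral bounds are \emph{independent} of $\theta$, so the mere $L^1$-integrability of $\theta$ is enough to make the composition measurable and the estimates uniform in $\theta$.
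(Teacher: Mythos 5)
Your proof is correct and takes essentially the same route as the paper's, which simply observes that the well-definedness and the bounds \eqref{bound_hcm} follow from the last three estimates in \eqref{pos_det} combined with \eqref{spectrum_bound_K}; you have merely written out the pointwise computation $v^T\hcm(F,\theta)v=\det(F)\,w^T\hc(\theta)w$ with $w=F^{-T}v$ that the paper leaves implicit. The argument is fine as stated.
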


\begin{proof}
  By Lemma \ref{lem:pos_det} we see that $(\nabla y(x))^{-1}$ exists for every $x \in \Omega$ which shows the well-definedness of $\hcm(\nabla y, \theta)$,  see \eqref{hcm}. \BBB   The bound in \eqref{bound_hcm} is a direct consequence of the latter three estimates in \eqref{pos_det} combined with \eqref{spectrum_bound_K}.
\end{proof}

\begin{lemma}[Estimate on coupling potential]
  For all $F \in GL^+(d)$ and $\theta \geq 0$ it holds that
  \begin{equation}\label{C_locally_lipschitz}
    \abs{\pl_F \cplpot(F, \theta)} \leq  2 \BBB \aC (\theta \wedge 1) (1 + \abs{F}).
  \end{equation}
\end{lemma}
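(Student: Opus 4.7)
The bound decouples into the two regimes $\theta \leq 1$ and $\theta \geq 1$, and I would argue separately. The right-hand side is $2C_0(\theta \wedge 1)(1+|F|)$, which equals $2C_0 \theta(1+|F|)$ when $\theta \le 1$ and $2C_0(1+|F|)$ when $\theta \ge 1$.

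First I would dispose of the regime $\theta \geq 1$ purely by the Lipschitz condition \ref{C_lipschitz}. Since $W^{\rm cpl}$ is $C^2$ in $GL^+(d) \times (0,\infty)$ by \ref{C_regularity}, the partial derivative $\partial_F W^{\rm cpl}(F,\theta)$ exists; testing \ref{C_lipschitz} with $\tilde F = F + t h$ for a unit vector $h$ and sending $t \to 0^+$ yields $|\partial_F W^{\rm cpl}(F,\theta) \cdot h| \le C_0(1+2|F|) \le 2C_0(1+|F|)$, and taking the sup over $h$ gives the claim in this regime.

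For $\theta \in [0,1]$ I would use \ref{C_zero_temperature} together with the continuous extension noted after \ref{C_bounds}, which asserts $\partial_F W^{\rm cpl}(F,0) = 0$ (this is itself a consequence of \ref{C_zero_temperature} combined with the local boundedness of $\partial_{F\theta} W^{\rm cpl}$ near $\theta = 0$ provided by the second bound in \ref{C_bounds}, since that bound reads $|\partial_{F\theta} W^{\rm cpl}(F,s)| \le C_0(1+|F|)$ whenever $s \le 1$). With this base point, the fundamental theorem of calculus in the $\theta$ variable yields
\begin{equation*}
  \partial_F W^{\rm cpl}(F,\theta) = \int_0^\theta \partial_{F\theta} W^{\rm cpl}(F,s) \di s,
\end{equation*}
and applying the second bound in \ref{C_bounds} (note $\max\{s,1\} = 1$ on the integration range) gives $|\partial_F W^{\rm cpl}(F,\theta)| \le C_0 \theta(1+|F|) = C_0(\theta\wedge 1)(1+|F|)$.

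Combining the two regimes produces the stated inequality with constant $2C_0$. I do not foresee a genuine obstacle: the only subtlety is justifying that $\partial_F W^{\rm cpl}$ extends continuously up to $\theta=0$ with value zero, but this is already asserted in the text immediately after \ref{C_bounds} and follows directly from \ref{C_zero_temperature} plus the uniform bound on $\partial_{F\theta} W^{\rm cpl}$ in $\{\theta \le 1\}$.
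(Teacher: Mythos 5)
Your proof is correct and follows essentially the same route as the paper: the fundamental theorem of calculus in $\theta$ from the base point $\partial_F \cplpot(F,0)=0$ combined with the second bound in \ref{C_bounds} for $\theta \le 1$, and the Lipschitz condition \ref{C_lipschitz} in the limit $\tilde F \to F$ for $\theta \ge 1$. Your extra remark justifying the continuous extension of $\partial_F \cplpot$ to $\theta=0$ matches the observation the paper makes immediately after \ref{C_bounds}.
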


\begin{proof}
  We start by proving \eqref{C_locally_lipschitz} for $\theta \le 1$.
  To this end, we use that $\partial_F\cplpot(F, 0) = 0$ (see \ref{C_zero_temperature}  and comments thereafter), \BBB \ref{C_bounds}, and apply the Fundamental Theorem of Calculus to get
  \begin{align*}
    |\partial_F\cplpot(F,\theta)|
    &= \Big|
        \partial_F\cplpot(F,0)
        + \int_0^\theta\partial_{F\theta}\cplpot(F,s)\di s
      \Big|
      \le \int_0^\theta |\partial_{F\theta}\cplpot(F,s)|\di s \\
    & \le \aC(1+|F|)
      \int_0^\theta \max\lbrace s,1\rbrace^{-1} \di s
      = \aC \theta (1 + \abs{F}).
  \end{align*}
  On the other hand, for $\theta \ge 1$, we use \ref{C_lipschitz} in the limit $\tilde{F} \to F$ to find $|\partial_F\cplpot(F,\theta)| \le \aC(1 +  2 \BBB |F|)$ for every $F \in GL^+(d)$.
\end{proof}

\subsection{\MMM Existence  of solutions to \CCC time-discretized \BBB \MMM schemes\BBB}\label{sec:single_step_well_defined}

In this subsection, we \NNN show \BBB that for sufficiently small $\tau \in (0, 1]$, depending on a bound of the mechanical energy of the previous deformation, a single time step of the staggered time-discretization scheme introduced in \eqref{mechanical_step}--\eqref{thermal_step} is well-defined. \NNN Here, we treat the case $\alpha = 2$ and $\eps \in (0,1]$ postponing necessary adaptions for $\alpha <2$ to Subsection~\ref{sec: adaptions} below.  \CCC We assume the same set-up of Subsection~\ref{sec:nonlinear_scheme}. More precisely, consider \BBB
initial steps $\yst 0 \defas y_{0, \CCC \lp \BBB} \in \Wid$ and $\tst 0 \defas \theta_{0, \CCC \lp \BBB} \in L^2_+(\Omega)$ \CCC with $y_{0, \lp}$ and $\theta_{0, \lp}$ as in \eqref{initial_cond}. \BBB
Further, let $f \in W^{1, 1}(I; L^2(\Omega; \R^d))$, $g \in W^{1, 1}(I; L^2(\Gamma_N; \R^d))$, $\theta_\flat \in \NNN W^{\CCC 1,1 \BBB}\BBB (I; L^2_+(\Gamma))$, and for each $k \in \{1, \ldots,  T / \tau\}$ let $\lst k$ be as in \eqref{forces_mech_step}.
Suppose that we have already constructed $\yst 0, \ldots,  \yst{k-1} \in \Wid$ and $\tst 0, \ldots,  \tst{k-1} \in L^2_+(\Omega)$ for some $k \in \CCC \{1, \ldots,  T / \tau \}\NNN$. \NNN (We always add an index $\eps$ for clarification.) \BBB  We first investigate the existence of the $k$-th mechanical step.
\begin{proposition}[Mechanical step]\label{prop:existence_mechanical_step}
  For any $M > 0$ there exists $\tau_0 \in (0, 1]$ such that if \CCC $k \in \{1, \ldots, T /\tau\}$, \BBB $\tau \in (0, \tau_0)$\CCC, \BBB and $\mechen(\yst{k-1}) \leq M$ the minimization problem \eqref{mechanical_step} is well-posed, i.e.,
  \begin{equation}\label{mechanical_step2}
    \min_{y \in \Wid} \Big\{
      \mechen(y) + \cplen\big(y, \tst{k-1})
      + \frac{1}{\tau} \diss(\yst{k-1}, y - \yst{k-1}, \tst{k-1})
      - \CCC \lp \BBB \langle \lst k , y \rangle
    \Big\}
  \end{equation}
  attains a solution.
  Furthermore, such a minimizer $\yst{k}$ solves the corresponding Euler-Lagrange equation, i.e., it holds for all $z \in \Wzero$ \NNN (see \eqref{eq: 000}) \BBB that
  \begin{equation}\label{mechanical_step_single}
    \int_\Omega
      \big(
        \pl_F \felpot(\nabla \yst{k}, \tst{k-1})
        + \pl_{\dot F} \disspot(\nabla \yst{k-1}, \ddif \nabla\yst{k}, \tst{k-1})
      \big) : \nabla z
      + \pl_G \hypot(\nabla^2 \yst{k}) \cdddot \nabla^2 z \di x
      - \NNN \eps \BBB \langle \ell^{(k)}_\tau , z \rangle = 0.
  \end{equation}
\end{proposition}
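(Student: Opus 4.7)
The plan is to apply the direct method of the calculus of variations to the functional
\begin{equation*}
\mathcal{F}(y) \defas \mechen(y) + \cplen(y, \tst{k-1}) + \frac{1}{\tau} \diss(\yst{k-1}, y - \yst{k-1}, \tst{k-1}) - \lp \langle \lst k, y \rangle,
\end{equation*}
by first establishing coercivity on $\Wid$ (for $\tau$ small), then extracting a weakly convergent minimizing sequence in $W^{2,p}$, passing to the limit using the appropriate semicontinuity of each term, and finally deriving \eqref{mechanical_step_single} from admissible perturbations.

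\textbf{Coercivity.} From \ref{W_lower_bound} and \ref{H_bounds} one controls
$\mechen(y) \geq \ac \|\nabla y\|_{L^2}^2 + \ac \|\nabla^2 y\|_{L^p}^p + \ac \int_\Omega \det(\nabla y)^{-q}\di x - C$.
For the coupling term, \ref{C_lipschitz} with $\tilde F = \Id$ and Young's inequality yield
$\cplen(y,\tst{k-1}) \geq - C(1+\|\nabla y\|_{L^2}^2) - \int_\Omega |\cplpot(\Id,\tst{k-1})|\di x$,
where the last integral is finite because $\tst{k-1}\in L^2_+(\Omega)$ is fixed. For the dissipation, since $\mechen(\yst{k-1})\le M$ yields $\nabla \yst{k-1} \in L^\infty$ with a pointwise lower bound on $\det(\nabla \yst{k-1})$ by Lemma~\ref{lem:pos_det}, we combine \ref{D_bounds} with the generalized Korn inequality (Lemma~\ref{lem:gen_korn}) applied to $v = y - \yst{k-1} \in H^1_{\Gamma_D}$ to obtain
\begin{equation*}
\diss(\yst{k-1}, y-\yst{k-1},\tst{k-1}) \geq \tfrac{\ac}{2}\int_\Omega |\dot C|^2 \di x \geq c_M \|y-\yst{k-1}\|_{H^1}^2.
\end{equation*}
The load term is linear, hence bounded by $C(1+\|y\|_{L^2})$. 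Choosing $\tau_0$ small enough so that the dissipative factor $c_M/\tau$ absorbs the quadratic contributions from $\cplen$ and the linear force term yields coercivity in $W^{2,p}$ and a finite infimum.

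\textbf{Existence of a minimizer.} Take a minimizing sequence $(y_n)\subset\Wid$. Coercivity provides $\sup_n\|y_n\|_{W^{2,p}}<\infty$; extract a subsequence $y_n\weakly y$ in $W^{2,p}$. Since $p>d$, compact embedding yields $y_n\to y$ in $C^{1,1-d/p}$, preserving the Dirichlet condition $y=\id$ on $\Gamma_D$. By Lemma~\ref{lem:pos_det} applied to the $y_n$, $\det\nabla y_n\geq 1/C_M$ uniformly, so uniform convergence of $\nabla y_n$ forces $\det\nabla y \geq 1/C_M>0$; hence $y\in\Wid$. For the passage to the limit: $\hyen$ is weakly l.s.c.\ by convexity of $\hypot$ in \ref{H_regularity}; $\elen$ and $\cplen$ pass to the limit by continuity (\ref{W_regularity}, \ref{C_regularity}) combined with uniform convergence of $\nabla y_n$; the dissipation functional is a continuous quadratic form in $\nabla y$ (with tensor $D(C,\cdot)$ evaluated at the fixed $C = (\nabla \yst{k-1})^T\nabla \yst{k-1}$), hence weakly l.s.c. in $H^1$; and the force term is weakly continuous. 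Thus $y$ minimizes $\mathcal{F}$.

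\textbf{Euler--Lagrange equation.} For $z \in \Wzero$ and $\sigma\in\R$ we consider $y+\sigma z$. By Lemma~\ref{lem:pos_det}, $\det\nabla y\geq 1/C_M>0$ pointwise, and since $z\in W^{2,p}\hookrightarrow W^{1,\infty}$, continuity of the determinant provides $\sigma_0>0$ with $y+\sigma z\in\Wid$ for all $|\sigma|\le\sigma_0$. Smoothness of $\hypot$, $\felpot=\elpot+\cplpot$ (near the range of $\nabla y$, by \ref{W_regularity} and \ref{C_regularity}), and the quadratic structure \ref{D_quadratic} of $\disspot$ together with \eqref{chain_rule_Fderiv} allow us to differentiate $\sigma\mapsto\mathcal{F}(y+\sigma z)$ at $\sigma=0$ via dominated convergence. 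Setting this derivative to zero yields exactly \eqref{mechanical_step_single}.

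The main obstacle is the coercivity step: the coupling potential $\cplpot$ admits no simple lower bound, so one must use \ref{C_lipschitz} carefully to control it by the $H^1$-norm, and then rely on the smallness of $\tau$ to let the Korn-based dissipative contribution absorb the resulting negative quadratic terms; this is what forces the quantitative bound $\tau\le\tau_0(M)$.
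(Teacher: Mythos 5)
Your overall strategy coincides with the paper's: direct method, coercivity obtained by letting the $\tau^{-1}$-weighted dissipation (made coercive on $H^1$ via the generalized Korn inequality of Lemma~\ref{lem:gen_korn}) absorb the indefinite contributions for $\tau\le\tau_0(M)$, weak lower semicontinuity from convexity of $\hypot$ plus the compact embedding $W^{2,p}\hookrightarrow W^{1,\infty}$, and the Euler--Lagrange equation from inner perturbations $y+\sigma z$ admissible for small $\sigma$ thanks to the uniform lower bound on $\det\nabla y$.

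The one step that needs repair is your lower bound for the coupling energy. You anchor it at the identity, writing $\cplen(y,\tst{k-1})\ge -C(1+\|\nabla y\|^2_{L^2})-\int_\Omega|\cplpot(\Id,\tst{k-1})|\di x$ via \ref{C_lipschitz}, and assert that the last integral is finite "because $\tst{k-1}\in L^2_+(\Omega)$". But the hypotheses give no direct growth bound on $\theta\mapsto\cplpot(\Id,\theta)$ — indeed Remark~\ref{rem: relation} stresses that $\cplpot$ is \emph{not} assumed bounded from below — so finiteness must be derived, e.g.\ by combining \ref{C_zero_temperature}, the third bound in \ref{C_bounds}, and \eqref{inten_lipschitz_bounds} to get $|\cplpot(\Id,\theta)|\le C(1+\theta^2)$ (in fact $\lesssim\theta(1+\log_+\theta)$), after which $L^2$-integrability of $\tst{k-1}$ suffices. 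The paper avoids this issue altogether by never estimating $\cplen(y,\tst{k-1})$ itself: it compares the minimizing sequence against the competitor $\yst{k-1}$ and bounds only the \emph{difference} $|\cplen(y_n,\tst{k-1})-\cplen(\yst{k-1},\tst{k-1})|$ using \eqref{C_locally_lipschitz} (which carries the bounded factor $\tst{k-1}\wedge1$), absorbing the factor $|\nabla y_n-\nabla\yst{k-1}|$ into the dissipation by Young's inequality with constant $c_M/(2\tau)$. Besides being cleaner, this yields the quantitative estimate \eqref{eq: safe for later} with explicit $\tau$-scaling, which is reused verbatim in Lemma~\ref{lem:bad_mechen_bound} and in the inductive energy bounds; your coercivity bound proves the proposition but would not deliver those later estimates. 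Two further trivia: the boundary traction term is controlled by $\|y\|_{H^1}$ via the trace theorem, not by $\|y\|_{L^2}$; and the finiteness of $\cplen(y,\tst{k-1})$ (needed for the functional to be real-valued at all) rests on the same integrability point as above.
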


\begin{proof}
  We provide  the proof for \BBB  the coercivity \NNN  in \BBB $W^{2, p}(\Omega; \R^d)$. The remaining argument   coincides with the one in \cite[Proposition 4.1]{MielkeRoubicek20Thermoviscoelasticity},  and  we only include a brief sketch for convenience of the reader. \BBB   Let us shortly write $\tilde y \defas \yst{k-1}$ and $\tilde \theta \defas \tst{k-1}$.
  Let $(y_n)_n \subset \Wid$ be a minimizing sequence for the problem in \eqref{mechanical_step2}.
  Using $ \tilde{y}\BBB$ as a competitor we can, without loss of generality, assume that for all $n \in \N$
  \begin{equation*}
    \mechen(y_n)
      + \cplen(y_n, \tilde \theta)
      + \frac{1}{\tau} \diss(\tilde y, y_n - \tilde y,  \tilde \theta) \BBB
      - \NNN \eps \BBB \sprod{\lst k}{y_n}
    \leq \mechen(\tilde y) + \cplen(\tilde y, \tilde \theta)
      - \CCC \lp \BBB \sprod{\lst k}{\tilde y}
  \end{equation*}
  and therefore
  \begin{equation}\label{comparison_yn}
    \mechen(y_n)
      + \frac{1}{\tau} \diss(\tilde y, y_n - \tilde y, \MMM\tilde \theta) \BBB
    \leq \mechen(\tilde y)
      + \abs{\cplen(y_n, \tilde \theta) - \cplen(\tilde y, \tilde \theta)}
      + \CCC \lp \BBB \abs{\sprod{\lst k}{y_n - \tilde y}}.
  \end{equation}
  By Lemma \NNN \ref{lem:gen_korn} \BBB and \ref{D_bounds} there exists $c_M > 0$ (only depending on $M$) such that
  \begin{equation*}
    \frac{1}{\tau} \diss(\tilde y, y_n - \tilde y, \CCC \tilde \theta \BBB)
    \geq \frac{c_M}{\tau} \int_\Omega \abs{\nabla y_n - \nabla \tilde y}^2 \di x.
  \end{equation*}
  By \eqref{C_locally_lipschitz}, the Fundamental Theorem of Calculus, Young's inequality with constant $c_M/(2\tau)$, \BBB and \ref{W_lower_bound} we derive
  \begin{align*}
    &\big|
      \cplen(  y_n, \tilde \theta)
      - \cplen(  \tilde y, \tilde \theta)
    \big|
    \leq 2\aC \int_\Omega
        (\tilde{\theta} \wedge 1) \BBB (1 + \abs{\nabla y_n} + \abs{\nabla \tilde y})
      \abs{\nabla y_n - \nabla \tilde y} \di x \\
    &\quad\leq  C_M \BBB \tau \int_\Omega
      \CCC (\tilde{\theta} \wedge 1)^2 \BBB \Big(1 + \NNN 2 C_0c_0^{-1}  + c_0^{-1}\elpot(\nabla y_n) + c_0^{-1}\elpot(\nabla \tilde y) \BBB\Big) \BBB \di x + \frac{c_M}{4\tau} \int_\Omega \abs{\nabla y_n - \nabla \tilde y}^2 \di x \\
    &\quad\leq   C_M \BBB  \tau \big(\Vert\tilde{\theta} \wedge 1 \Vert^2_{L^2(\Omega)} \BBB + \elen(y_n) + \elen(\tilde y)\big)
      + \frac{c_M}{ 4\BBB \tau} \int_\Omega \abs{\nabla y_n - \nabla \tilde y}^2 \di x
  \end{align*}
   for $C_M$ sufficiently large depending on $M$ and $c_0$.    By using Poincar\'e's inequality, \NNN the trace estimate on the bulk and surface term, respectively, \BBB and Young's inequality with constant \CCC $c_M / (4C\tau\eps)$ \BBB we derive that
  \begin{align*}
   \NNN \eps \BBB  \abs{\sprod{\lst k}{y_n - \tilde y}} &= \NNN \eps \MMM\Big| \BBB\int_\Omega f^k_\tau \cdot (y_n - \tilde y) \di x
      + \int_{\Gamma_N} g^k_\tau \cdot (y_n - \tilde y) \di \haus^{d-1} \MMM\Big| \BBB\\
    & \le C \NNN \eps \BBB \big(
        \norm{f^k_\tau}_{L^2(\Omega)} + \norm{g^k_\tau}_{L^2(\Gamma_N)}
      \big) \Vert \nabla y_n - \nabla \tilde y \Vert_{L^2(\Omega)} \\
    &\leq C_M \tau  \NNN \eps^2 \BBB (
        \norm{f^k_\tau}_{L^2(\Omega)}^2 + \norm{g^k_\tau}_{L^2(\Gamma_N)}^2
      )
      + \frac{c_M}{4 \tau} \norm{\nabla y_n -  \nabla \tilde y}_{L^2(\Omega)}^2.
  \end{align*}
Combining the aforementioned estimates with \eqref{comparison_yn}, \NNN and using $\mathcal{M} \ge \mathcal{W}^{\rm el}$  \BBB we get  \BBB
  \begin{equation}\label{eq: safe for later}
  \begin{aligned}
    &(1 - C_M \tau) \mechen(y_n) +   \frac{c_M}{2\tau} \BBB \norm{\nabla y_n -\nabla  \tilde  y}_{L^2(\Omega)}^2 \\
    &\quad\leq (1 + C_M \tau) \mechen(\tilde{y})
      + C_M \tau (
        \Vert\tilde{\theta} \wedge 1 \Vert^2_{L^2(\Omega)} \BBB + \CCC \lp^2 \BBB \norm{f^k_\tau}_{L^2(\Omega)}^2
        + \CCC \lp^2 \BBB \norm{g^k_\tau}_{L^2(\Gamma_N)}^2
      ).
  \end{aligned} 
  \end{equation}  
  For $\tau_0$ sufficiently small such that $C_M\tau_0 \le \CCC 1/2 \BBB$, \BBB   Lemma \ref{lem:pos_det} then shows the desired coercivity in $W^{2,p}(\Omega;\R^d)$.
  The functional is weakly lower semicontinuous on $W^{2,p}(\Omega;\R^d)$ by the convexity of $\hypot$, see \ref{H_regularity}, the compact embedding $W^{2,p}(\Omega; \R^d) \subset W^{1,\infty}(\Omega; \R^d)$, and the continuity of $\elpot$, \CCC $\cplpot$, and $\disspot$. \BBB   This proves the existence of a minimizer.

  For the derivation of the Euler-Lagrange equation,  we recall the definitions in \eqref{hyperelastic}--\eqref{eq: free energy}. The \BBB treatment of the convex term $\hyen$ is standard by \ref{H_bounds} and \ref{H_regularity}.
  The G\^ateaux differentiability of the other terms relies on the uniform bound on gradients  and the control on the determinant, \BBB see \eqref{pos_det}.
  We refer also to \cite[Proposition 3.2]{MielkeRoubicek20Thermoviscoelasticity}.
\end{proof}

 From the previous proof, we directly deduce  the following. \BBB

\begin{lemma}[Bound on mechanical energy and dissipation]\label{lem:bad_mechen_bound}
  For any $M > 0$ there \NNN exist \BBB constants $c_M, \, C_M > 0$ and $\tau_0 \in (0, 1]$ such that if \CCC $k \in \{1, \ldots,  T / \tau\}$, \BBB $\tau \in (0, \tau_0)$, and $\mechen(\yst{k-1}) \leq M$ it holds that
  \begin{equation}\label{bad_mechen_bound}
  \begin{aligned}
    &\mechen(\yst k) + c_M  \tau \BBB \norm{  \ddif \BBB \nabla \yst k}_{L^2(\Omega)}^2 \\
    &\quad\leq (1+C_M\tau)\mechen(\yst {k-1})
      + C_M \tau \big(
          \Vert \tst{k-1} \wedge 1 \Vert^2_{L^2(\Omega)} \BBB + \CCC \lp^2 \BBB \norm{\fst k}_{L^2(\Omega)}^2
        + \CCC \lp^2 \BBB \norm{\gst k}_{L^2(\Gamma_N)}^2
      \big).
  \end{aligned} 
  \end{equation}
\end{lemma}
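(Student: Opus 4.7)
The plan is to exploit the minimality of $\yst k$ by testing the minimization problem \eqref{mechanical_step2} against the competitor $\yst{k-1}$, and then to reproduce the chain of estimates already carried out in the existence proof of Proposition~\ref{prop:existence_mechanical_step}, this time applied directly to the minimizer rather than to a minimizing sequence. Since $\mechen(\yst{k-1}) \le M$ is assumed, the constants $c_M$ from Lemma \ref{lem:gen_korn} and $C_M$ from \ref{W_lower_bound} and trace/Poincar\'e estimates are available throughout.

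First, using $\yst{k-1}$ as a competitor yields
\begin{equation*}
  \mechen(\yst k) + \tfrac{1}{\tau}\diss(\yst{k-1}, \yst k - \yst{k-1}, \tst{k-1})
  \le \mechen(\yst{k-1}) + \big|\cplen(\yst k, \tst{k-1}) - \cplen(\yst{k-1}, \tst{k-1})\big|
  + \lp |\langle \lst k, \yst k - \yst{k-1}\rangle|,
\end{equation*}
since the $\cplen(\yst{k-1},\tst{k-1})$ and $\lp\langle\lst k,\yst{k-1}\rangle$ terms cancel pairwise. Next, invoking Lemma~\ref{lem:gen_korn} together with \ref{D_bounds} gives a lower bound of the form $\tfrac{c_M}{\tau}\|\nabla \yst k - \nabla \yst{k-1}\|_{L^2(\Omega)}^2$ for the dissipation term.

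The key bound, and the only slightly delicate step, is the one on the coupling energy difference. Using \eqref{C_locally_lipschitz} with argument $(\tst{k-1}\wedge 1)(1+|\nabla \yst k|+|\nabla \yst{k-1}|)$, followed by Young's inequality with parameter $c_M/(4\tau)$, produces a bulk term of the form
\begin{equation*}
  C_M\tau\int_\Omega (\tst{k-1}\wedge 1)^2\,(1+|\nabla \yst k|^2+|\nabla\yst{k-1}|^2)\,\di x
  + \tfrac{c_M}{4\tau}\|\nabla \yst k -\nabla\yst{k-1}\|_{L^2(\Omega)}^2.
\end{equation*}
The gradient square is then absorbed into the mechanical energy via \ref{W_lower_bound}, producing terms $C_M\tau\,\mechen(\yst k)$ and $C_M\tau\,\mechen(\yst{k-1})$ on the right-hand side (plus the $\|\tst{k-1}\wedge 1\|_{L^2(\Omega)}^2$ contribution). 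Similarly, the work term is estimated by Poincar\'e and trace inequalities combined with Young's inequality with parameter $c_M/(4\tau)$, giving $C_M\tau\lp^2(\|\fst k\|_{L^2(\Omega)}^2+\|\gst k\|_{L^2(\Gamma_N)}^2)$ plus a fourth-of-dissipation absorption.

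Collecting the estimates, one arrives at
\begin{equation*}
  (1-C_M\tau)\mechen(\yst k) + \tfrac{c_M}{2\tau}\|\nabla\yst k-\nabla\yst{k-1}\|_{L^2(\Omega)}^2
  \le (1+C_M\tau)\mechen(\yst{k-1}) + C_M\tau\big(\|\tst{k-1}\wedge 1\|_{L^2(\Omega)}^2 + \lp^2\|\fst k\|_{L^2(\Omega)}^2 + \lp^2\|\gst k\|_{L^2(\Gamma_N)}^2\big),
\end{equation*}
which is precisely \eqref{eq: safe for later} with the minimizing sequence $y_n$ replaced by $\yst k$. Choosing $\tau_0$ so that $C_M\tau_0\le 1/2$ one may divide the left-hand side by $(1-C_M\tau)$ and update the constants $c_M, C_M$ (using $(1+C_M\tau)/(1-C_M\tau)\le 1+C'_M\tau$) to obtain the claimed inequality \eqref{bad_mechen_bound}. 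The main technical obstacle is really just the absorption of the coupling energy into the dissipation and mechanical energy while keeping track of the scaling in $\tau$; everything else is a direct transcription of the computation already performed in the proof of Proposition~\ref{prop:existence_mechanical_step}.
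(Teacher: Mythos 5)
Your proposal is correct and follows essentially the same route as the paper: the paper's proof of Lemma \ref{lem:bad_mechen_bound} simply takes \eqref{eq: safe for later} (derived in the proof of Proposition \ref{prop:existence_mechanical_step}) with $\yst k$ in place of $y_n$ and divides by $1-C_M\tau$, using $\frac{1}{1-C_M\tau}\le 1+2C_M\tau$ for $\tau_0$ small. Your explicit re-derivation of that estimate via the competitor $\yst{k-1}$, the generalized Korn inequality, and the absorption of the coupling and force terms matches the paper's argument step for step.
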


\begin{proof}
   Let $C_M$ as in \eqref{eq: safe for later}.   For $\tau_0$ sufficiently small with respect to $C_M$ we derive $\frac{1}{1 - C_M \tau} \le 1 + 2 C_M \tau$ \NNN for all $\tau \in (0,\tau_0)$. \BBB    \CCC Dividing \BBB \eqref{eq: safe for later} (for $\yst k$ in place of $y_n$) by $1 - C_M \tau$ \CCC \NNN we \BBB get the desired estimate,  up to changing the constant\CCC s \BBB $C_M$ and $c_M$.
\end{proof}

\begin{remark}\label{rem:bad_mechen_bound}
  \NNN By $1 \wedge s \le \sqrt{s}$ for $s \ge 0$, \eqref{inten_lipschitz_bounds}, \eqref{toten}, by \BBB  the definition below \eqref{forces_mech_step}, and by a standard application of H\"older's inequality, we deduce from \eqref{bad_mechen_bound} that
  \begin{equation*}
    \mechen(\yst k)  + c_M  \tau \BBB \norm{  \ddif \BBB \nabla \yst k}_{L^2(\Omega)}^2
    \leq \mechen(\yst {k-1})
      + C_M ( \NNN \tau  \toten(\yst {k-1}, \tst {k-1})
 \BBB         + \NNN\lp^2 \BBB\norm{f}_{L^2(I \times \Omega)}^2
        + \NNN \lp^2 \BBB \norm{g}_{L^2(I \times \Gamma_N)}^2
      ).
  \end{equation*}
  In fact, we have $\norm{\fst k}_{L^2(\Omega)}^2
    = \tau^{-2} \int_\Omega \big| \int_{(k-1)\tau}^{k\tau} f(t, x) \di t \big|^2 \di x
    \leq \tau^{-1} \int_0^T \norm{f(t)}_{L^2(\Omega)}^2 \di t$
  and a similar computation holds for $g$.
\end{remark}

In the next lemma we discuss the well-definedness of the thermal step.

\begin{proposition}[Thermal step]\label{prop:existence_thermal_step}
  For any $M > 0$ there exists $\tau_0 \in (0, 1]$ such that  if the minimizer given by Proposition \ref{prop:existence_mechanical_step} exists, $\tau \in (0, \tau_0)$, and \BBB $\mechen(\yst{k-1}) \leq M$ the minimization problem \eqref{thermal_step} is well-posed on $H^1_+(\Omega)$.
  More precisely,
  \begin{align*}
    \mathcal{T}(\theta)
    \defas& \int_\Omega \int_0^\theta \frac{1}{\tau} \big(
        \inten(\nabla \yst k, s)
        - \inten(\nabla \yst{k-1}, \tst{k-1})
      \big) \di s \di x
      + \frac{1}{2} \int_\Omega \nabla \theta \cdot
        \hcm(\nabla \yst{k-1}, \tst{k-1}) \nabla \theta \di x \\
    & -\int_\Omega h_\tau(\yst{k}, \yst{k-1}, \tst{k-1}) \theta \di x
      + \frac{\kappa}{2} \int_\Gamma (\theta - \CCC \lp^2 \BBB \btst k)^2 \di \haus^{d-1}
  \end{align*}
  is finite on $H^1(\Omega)$ and attains a unique minimizer $\tst k$ on $H^1_+(\Omega)$.
  Moreover, $\tst k$ satisfies 
  \begin{align}\label{el_thermal_step}
    &\int_\Omega \Bigg(
        \frac{\wst k - \wst{k-1}}{\tau}
        - \pl_F \cplpot(\nabla \yst{k-1}, \tst{k-1}) : \ddif \nabla \yst k
        - \drate(\nabla \yst{k-1}, \ddif \nabla \yst k, \tst{k-1})
      \Bigg) \vphi \di x \notag \\
    &\quad + \int_\Omega
        \hcm(\nabla \yst{k-1}, \tst{k-1}) \nabla \tst k \cdot \nabla \vphi \di x
      + \kappa \int_\Gamma (\tst k - \CCC \lp^2 \BBB \btst k) \vphi \di \haus^{d-1} = 0
  \end{align}
  for any $\vphi \in H^1(\Omega)$,
  where for brevity  $\wst{k-1} \defas \inten(\nabla \yst{k-1}, \tst{k-1})$ and $\wst k \defas \inten(\nabla \yst k, \tst k)$.
\end{proposition}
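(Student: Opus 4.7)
The plan is to proceed in three steps: (i) extend $\inten(F,\cdot)$ by zero on $(-\infty,0)$ so that $\mathcal{T}$ becomes finite on $H^1(\Omega)$, and establish coercivity plus strict convexity on $H^1_+(\Omega)$; (ii) apply the direct method on $H^1_+(\Omega)$; (iii) prove the truncation inequality $\mathcal{T}(\theta) \ge \mathcal{T}(\theta_+)$ for all $\theta \in H^1(\Omega)$ in order to upgrade the natural variational inequality to the full Euler--Lagrange equation \eqref{el_thermal_step}.

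For (i) and (ii), I first observe that by Lemma~\ref{lem:pos_det}, applied also to $\yst k$ whose mechanical energy is bounded by $C_M$ via Lemma~\ref{lem:bad_mechen_bound}, both $\nabla\yst{k-1}$ and $\nabla\yst k$ lie in $L^\infty(\Omega)$ with a uniform lower bound on the determinant. Hence $\ddif \nabla \yst k \in L^\infty(\Omega)$, and combining this with \ref{D_bounds}, \eqref{diss_rate}, and \eqref{C_locally_lipschitz} yields $h_\tau \in L^\infty(\Omega)$. Together with \eqref{inten_lipschitz_bounds}, Lemma~\ref{lem:bound_hcm}, and the trace inequality, this shows that $\mathcal{T}$ is finite on $H^1(\Omega)$. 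Strict convexity on $H^1_+(\Omega)$ is a consequence of $\partial_\theta \inten \ge c_0$ from \eqref{sec_deriv} together with the convexity of the remaining quadratic terms. For coercivity on $H^1_+(\Omega)$, the elementary bound $\int_0^\theta \inten(\nabla\yst k,s)/\tau\,\di s \ge c_0\theta^2/(2\tau)$ for $\theta\ge 0$, the uniform lower bound on $\hcm$ from Lemma~\ref{lem:bound_hcm}, and Young's inequality absorbing the linear terms give a bound of the form $\mathcal{T}(\theta) \ge c\|\theta\|_{H^1(\Omega)}^2 - C$. The direct method then produces a unique minimizer $\tst k \in H^1_+(\Omega)$.

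The main step is the truncation inequality $\mathcal{T}(\theta) \ge \mathcal{T}(\theta_+)$ for all $\theta \in H^1(\Omega)$, where $\theta_+ = \max\{\theta,0\}$. The gradient and boundary contributions decrease under truncation, since $\nabla\theta_+ = \nabla\theta\,\chi_{\{\theta > 0\}}$ a.e., and $(\theta - \lp^2\btst k)^2 \ge \lp^4 (\btst k)^2$ on $\{\theta < 0\}$ because $\btst k \ge 0$. Therefore the truncation inequality reduces to the pointwise estimate
\begin{equation*}
  \frac{\inten(\nabla \yst{k-1}, \tst{k-1})}{\tau} + h_\tau \ge 0 \qquad \text{a.e.\ in } \Omega.
\end{equation*}
Since $\xi \ge 0$, only the term $\partial_F \cplpot(\nabla \yst{k-1}, \tst{k-1}) : \ddif \nabla \yst k$ may contribute negatively. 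Differentiating the frame indifference \ref{C_frame_indifference} at $Q=\Id$ in a skew direction shows that $\partial_F \cplpot(F,\theta)F^T$ is symmetric; decomposing $F^T\dot F = \tfrac{1}{2}\dot C + W$ with $W$ skew, the contribution of $F^{-T}W$ therefore vanishes, yielding
\begin{equation*}
  \partial_F \cplpot(F,\theta) : \dot F = \tfrac{1}{2}\,\partial_F \cplpot(F,\theta) : F^{-T}\dot C.
\end{equation*}
Combining this identity with \eqref{C_locally_lipschitz}, the bound on $|F^{-1}|$ from Lemma~\ref{lem:pos_det}, $\xi \ge c_0|\dot C|^2$ from \ref{D_bounds}, and Young's inequality gives $h_\tau \ge -C_M(\tst{k-1}\wedge 1)^2$. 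Combined with $\inten(\nabla\yst{k-1},\tst{k-1}) \ge c_0\tst{k-1}$ from \eqref{inten_lipschitz_bounds} and a case distinction between $\tst{k-1} \le 1$ and $\tst{k-1} > 1$, the pointwise inequality follows as soon as $\tau_0$ is chosen small enough depending only on $c_0$ and $C_M$.

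Once the truncation inequality is established, for any $\varphi \in H^1(\Omega)$ and $t \in \R$ the chain $\mathcal{T}(\tst k + t\varphi) \ge \mathcal{T}((\tst k + t\varphi)_+) \ge \mathcal{T}(\tst k)$ shows that $\tst k$ in fact minimizes $\mathcal{T}$ on the unconstrained space $H^1(\Omega)$; differentiating at $t=0$ and rewriting via $\wst{k-1} = \inten(\nabla\yst{k-1},\tst{k-1})$ and $\wst k = \inten(\nabla\yst k,\tst k)$ delivers \eqref{el_thermal_step}. The main obstacle is the truncation estimate above, whose proof crucially uses the frame indifference of $\cplpot$ and the refined bound \eqref{C_locally_lipschitz}; this is precisely what enables the scheme to dispense with any regularization of the dissipation.
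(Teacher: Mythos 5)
Your proposal is correct and follows essentially the same route as the paper: direct method on $H^1_+(\Omega)$, reduction of the unconstrained Euler--Lagrange equation to the truncation inequality, and the key pointwise bound $\tau^{-1}\wst{k-1}+h_\tau\ge 0$ obtained from frame indifference of $\cplpot$ (your identity $\partial_F\cplpot:\dot F=\tfrac12\,\partial_F\cplpot:F^{-T}\dot C$ is exactly the paper's $\partial_F\cplpot=2F\partial_C\hat W^{\rm cpl}$ in disguise). The only difference is bookkeeping in the final Young inequality — you absorb $|\dot C|^2$ into $\xi$ with a fixed constant and then $(\tst{k-1}\wedge 1)^2$ into $\wst{k-1}/\tau$, while the paper applies Young with constant $\tau$ directly — and both yield the same smallness condition on $\tau_0$.
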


 Remarkably,  the nonnegativity constraint in the minimization problem \eqref{thermal_step} does \emph{not} influence the stationarity condition \eqref{el_thermal_step}. We also \BBB emphasize that in contrast to \cite{MielkeRoubicek20Thermoviscoelasticity} we can ensure uniqueness of the minimizer.
This is due to the fact that we use a simpler (explicit) thermo-mechanical coupling term in the scheme, see Remark \ref{rem:difference_to_mielke_in_steps} for details.

\begin{proof}
  \textit{Step 1 (Finiteness):}
  We start by showing that all terms of $\mathcal{T}$ are well-defined and integrable.
  First, by \eqref{inten_lipschitz_bounds} we find that
  \begin{equation}\label{ul}
    \int_0^\theta \inten(\nabla \yst k, s) \di s
    \in [\tfrac{\ac}{2} \theta^2, \tfrac{\aC}{2} \theta^2]
  \end{equation}
  and $\int_0^\theta \wst{k-1} \di s \le \aC \theta \tst{k-1}$   a.e.~on $\Omega$ \BBB which both lie in $L^1(\Omega)$ by H\"older's inequality.   By Lemma~\ref{lem:bound_hcm}, $\hcm(\nabla \yst{k-1}, \tst{k-1})$ is well-defined in $\Omega$, and the corresponding term \CCC in \BBB $\mathcal{T}$ is \NNN integrable. \BBB
  Finally, by \eqref{C_locally_lipschitz}, \ref{D_bounds}, \eqref{diss_rate}, and the second estimate in \eqref{pos_det} we get that the term $h_\tau$ defined in \eqref{def:h_tau} satisfies $h_\tau(\yst k, \yst {k-1}, \tst {k-1}) \in L^\infty(\Omega)$, i.e., the third term is also well-defined.
  This completes the proof of the well-definedness of $\tempen$.

  \textit{Step 2 (Existence):}
  The functional is coercive on $H^1_+(\Omega)$ due to $  \int_0^\theta \inten(\nabla \yst k, s) \di s \BBB \ge \frac{\ac}{2} \theta^2$ by \eqref{ul}, the estimate $\nabla \theta \cdot \hcm(\nabla \yst{k-1}, \tst{k-1}) \nabla \theta \ge c_M |\nabla \theta|^2$ by \eqref{bound_hcm}, \NNN and \BBB the fact that all other terms are either nonnegative or linear in $\theta$.
  Moreover, the functional is weakly lower semicontinuous on $H^1_+(\Omega)$.
  To see this, we again use \eqref{bound_hcm}, the weak continuity of the trace operator in $H^1(\Omega)$, and the fact that all other bulk terms are continuous in $L^2(\Omega)$ by the reasoning in Step 1.
  This shows that a minimizer \NNN  $\tst k$ \BBB exists.

  \textit{Step 3 (Euler-Lagrange equation):}
  In order to prove (\ref{el_thermal_step}) for test functions $\vphi \in H^1(\Omega)$ which are not constrained to be nonnegative, we extend the minimization problem \eqref{thermal_step} to possibly negative functions $\theta \in H^1(\Omega)$ and we show that $\tst k$ minimizes $\mathcal{T}$ on $H^1(\Omega)$.
  To this end, recalling that $\inten(F, 0) = 0$ for $F \in GL^+(d)$  (see below \eqref{Wint}), \BBB we continuously extend $\inten$ to negative temperatures by setting $\inten(F, \theta) = 0$ for $\theta < 0$.
  It now suffices to check that there exists a constant $c_M>0$ such that for all $\theta \in H^1(\Omega)$ it holds
  \begin{equation}\label{posiposi}
    \mathcal{T}(\theta)
    \ge \mathcal{T}(\theta^+)
      + \frac{c_M}{2} \Vert \nabla \theta^- \Vert^2_{L^2(\Omega)},
  \end{equation}
  where $\theta^- \defas \max\setof{-\theta, 0}$ and $\theta^+ \defas \max\setof{\theta, 0}$, i.e., $\theta = \theta^+ - \theta^-$.
  This \CCC guarantees \BBB that minimizers of  $\mathcal{T}$ are nonnegative, and because \MMM $\mathcal{T}$ is strictly  convex \NNN (to see this, use \eqref{sec_deriv}),  \BBB  $\tst k$  is its unique minimizer  on $H^1(\Omega)$.
  Once this is achieved,  in view of \NNN \eqref{def:h_tau} and \BBB \eqref{ul}, \BBB by taking first variations it is a standard matter to check that \eqref{el_thermal_step} holds true.

  Hence, it remains to prove \eqref{posiposi}.
  First, as $\btst k \geq 0$ $\haus^{d-1}$-a.e.~on $\Gamma$, \BBB we find
  \begin{equation}\label{posiposi1}
    \int_\Gamma (\theta - \CCC \lp^2 \BBB \btst k)^2 \di \haus^{d-1}
    \geq \int_\Gamma (\theta^+ - \CCC \lp^2 \BBB \btst k)^2 \di \haus^{d-1}.
  \end{equation}
  Next, by using (\ref{bound_hcm}) we see that
  \begin{align}\label{posiposi2}
    \frac{1}{2} \int_\Omega \nabla \theta \cdot \CCC \Kst{k-1} \BBB \nabla \theta \di x
    &= \frac{1}{2} \int_\Omega
      \nabla \theta^+ \cdot \CCC \Kst{k-1} \BBB \nabla \theta^+ \di x
      + \frac{1}{2} \int_\Omega
        \nabla \theta^- \cdot \CCC \Kst{k-1} \BBB\nabla \theta^- \di x \notag \\
    &\geq \frac{1}{2} \int_\Omega
      \nabla \theta^+ \cdot \CCC \Kst{k-1} \BBB \nabla \theta^+ \di x
      + \frac{c_M}{2} \int_\Omega \abs{\nabla \theta^-}^2 \di x,
  \end{align}
  where for brevity we have set $\CCC \Kst{k-1} \BBB \defas \hcm(\nabla \yst {k-1}, \tst {k-1})$.
  Moreover, for a.e.~$x \in \Omega$ we have
  \begin{equation}\label{posiposi3}
    \int_0^{\theta(x)} \inten(\nabla \yst{k},s) \di s
    \ge \int_0^{\theta^+(x)} \inten(\nabla \yst{k},s) \di s.
  \end{equation}
  This follows from $\inten(F, s) = 0$ for all $(F, s) \in  GL^+(d) \BBB \times (-\infty, 0)$.  Eventually, we consider the terms involving $h_\tau$ and $\wst{k-1}$.
  At this point, our argument for proving nonnegativity of the temperature is more delicate compared to \cite{MielkeRoubicek20Thermoviscoelasticity} as we use the backward approximation $\tst{k-1}$, see Remark \ref{rem:difference_to_mielke_in_steps}.  By \ref{C_frame_indifference} there exists a function $\hat{W}^{\rm cpl}$ such that ${\cplpot}(F,\MMM\theta\BBB) = \hat{W}^{\rm cpl}(C,\MMM\theta\BBB)$ with $C = F^TF$. Clearly, $\partial_C \hat{W}^{\rm cpl}$ is symmetric which implies \CCC with  the chain rule \BBB \MMM that \BBB
\begin{align}\label{eq: strange deriv}  
  \partial_F {\cplpot}(F,\MMM\theta \BBB) =  \CCC F \big(\partial_C\hat{W}^{\rm cpl}(C,\MMM\theta \BBB) + (\partial_C\hat{W}^{\rm cpl}(C,\MMM\theta \BBB))^T\big) \BBB  = 2F\partial_C\hat{W}^{\rm cpl}(C, \MMM\theta
	\BBB).
  \end{align} \BBB  
  By Lemma \ref{lem:pos_det}, $\nabla \yst{k-1}$ is invertible at every point in $\Omega$.  
  Hence, setting $C^{(k-1)}_{\CCC \lp \BBB, \tau} \defas (\nabla \yst{k-1})^T \nabla \yst{k-1}$, we derive by the second and third bound in \eqref{pos_det},   \eqref{C_locally_lipschitz},   \eqref{eq: strange deriv}, \BBB and the fact that $t \wedge 1 \leq \sqrt{t}$ for all $t \geq 0$  that \BBB
  \begin{align}
    \big|\pl_C \hat W^{\rm cpl}(C^{(k-1)}_{\CCC \lp \BBB, \tau}, \tst{k-1})\big|
    &= \CCC \frac{1}{2} \BBB \Big|
      (
        \nabla \yst{k-1})^{-1}
        \pl_F \cplpot(\nabla \yst{k-1}, \tst{k-1}
      )\Big| \nonumber \\
    &\leq  2 \BBB\aC \CCC | \BBB(\nabla \yst{k-1})^{-1}\CCC | \BBB
      (\tst{k-1} \wedge 1)
      (1 + \CCC | \BBB \nabla \yst{k-1}\CCC | \BBB)
    \leq C_M \sqrt{\tst{k-1}} \label{partial_C_cplpot_bound}
  \end{align}
  \NNN for $C_M>0$ sufficiently large. \BBB   Let us further define $\dot C^{(k)}_{\CCC \lp \BBB, \tau} \defas (\ddif \nabla \yst k)^T \nabla \yst{k-1} + (\nabla \yst{k-1})^T \ddif \nabla \yst k$.   By  the symmetry of $\partial_C \hat W^{\rm{cpl}}$ we have for all $F \in GL^+(d)$, $G \in \R^{d \times d}$, and $\theta \geq 0$
  \begin{equation*}
    F \partial_C \hat W^{\rm{cpl}}(C, \theta) : G
    = \partial_C \hat W^{\rm{cpl}}(C, \theta) : F^T G
    = \partial_C \hat W^{\rm{cpl}}(C, \theta) : G^T F,
  \end{equation*}
  where, again, $C \defas F^T F$. \MMM We now use  this identity with  $F=\nabla \yst{k-1}$ and $G=\ddif \nabla \yst k$. \BBB
  \BBB By  \eqref{inten_lipschitz_bounds}, \eqref{eq: strange deriv}, \eqref{partial_C_cplpot_bound}, \BBB and Young's inequality \NNN with constant $\tau$ \BBB \CCC it follows that \BBB 
  \begin{align*}
    &\abs{\pl_F \cplpot (\nabla \yst{k\CCC -1\BBB}, \tst {k-1}) : \ddif \nabla \yst k}
    =  \CCC 2 \BBB \big|\CCC \nabla \yst{k-1} \BBB \pl_C \hat W^{\rm cpl}(C^{(k-1)}_{\CCC \lp \BBB, \tau}, \tst{k-1})
      : \ddif \nabla \yst k\big| \\
    &\quad= \abs{\pl_C \hat W^{\rm cpl}(C^{(k-1)}_{\CCC \lp \BBB, \tau}, \tst{k-1}) : \dot C^{(k)}_{\CCC \lp \BBB, \tau}}
    \leq C_M \sqrt{\wst{k-1}} \abs{\dot C^{(k)}_{\CCC \lp \BBB, \tau}} \leq \frac{\wst{k-1}}{\tau} + C_M^{\MMM 2 \BBB} \tau \abs{\dot C^{(k)}_{\CCC \lp \BBB, \tau}}^2.
  \end{align*}
  Choosing $\tau_0$ sufficiently small such that $C_M^{\MMM 2 \BBB} \tau_0 \leq \ac$, we derive by \ref{D_quadratic}, \ref{D_bounds},  and \eqref{diss_rate} \BBB for all $\tau \in (0, \tau_0)$ that
  \begin{align}\label{eq: drate for late}
    \pl_F\cplpot (\nabla \yst{k \CCC -1 \BBB}, \tst {k-1}) : \ddif \nabla \yst{k} \geq - \frac{\wst{k-1}}{\tau}
      - \drate (\nabla \yst {k-1}, \ddif \nabla \yst k , \tst {k-1}).
  \end{align}
This shows $\tau^{-1}\wst{k-1} + h_\tau  (\yst{k}, \yst{k-1}, \tst{k-1}) \BBB \geq 0$  a.e.~\CCC on \BBB $\Omega$.
  From this we deduce
  \begin{equation}
    - \BBB \int_\Omega \Big(
        \frac{\wst {k-1}}{\tau}
        + h_\tau(\yst{k}, \yst{k-1}, \tst{k-1})
      \Big) \, \theta \di x
    \ge - \int_\Omega \Big(
      \frac{\wst {k-1}}{\tau}
      + h_\tau(\yst{k}, \yst{k-1}, \tst{k-1})
    \Big) \, \theta^+ \di x. \label{posiposi4}
  \end{equation}
  Combining the estimates \eqref{posiposi1}--\eqref{posiposi4} leads to \eqref{posiposi} which concludes the proof.
\end{proof}

\begin{remark}[Nonnegativity of temperature without dissipation rate]\label{eq: temp-rem}
{\normalfont

To derive estimate \eqref{eq: drate for late},  \CCC it was essential that $\drate(F, \dot F, \theta) \geq c |\dot F^T F + F^T \dot F|^q$ for some $q > 1$ \BBB. The pointwise nonnegativity can still be established \MMM only \BBB under the  assumption that $\drate \ge 0$, at the expense of assuming that $\mechen(\yst{k-1}) \leq \eta$  and $\mechen(\yst{k}) \leq \eta$  for some $\eta$ sufficiently small, and \CCC that \BBB $W$ \CCC additionally \BBB satisfies \ref{W_lower_bound_spec}. Indeed, in this case we can show
 \begin{equation}\label{nonnegativity_cpl_thermal_step}
  \partial_F \cplpot(\nabla \yst{k-1}, \tst{k-1}) : \ddif \nabla \yst{k} \geq -    \frac{\wst{k-1}}{\tau} 
  \end{equation}
a.e.~in $\Omega$ which along with $\drate \ge 0$ implies \eqref{posiposi4}. To see this, 
  by  \eqref{inten_lipschitz_bounds}, \eqref{W1infty_dist_to_id}, and  \eqref{C_locally_lipschitz},  we can estimate
  \begin{align*}
    \abs{\partial_F \cplpot(\nabla \yst{k-1}, \tst{k-1}) : \ddif \nabla \yst{k}}
    &\leq 2C_0 \tst{k-1} (1 + \abs{\nabla \yst{k-1}}) \abs{\ddif \nabla \yst{k}} \\
    &\leq \frac{2C_0}{c_0} \wst{k-1} (1 + \NNN |\Id| + \BBB C^*_\eta)
 \frac{2C^*_\eta}{\tau}. 
  \end{align*}
  Since \NNN $C^*_\eta \to 0$ \BBB as $\eta \to 0$, \eqref{nonnegativity_cpl_thermal_step} indeed follows for $\eta$ small enough.  \NNN This property will be exploited in the adaptions to the case $\alpha <2$ in Subsection \ref{sec: adaptions} below. \BBB
}
\end{remark}

  \BBB

For any  $\yst k$ and $\tst k$  as given in this subsection, \BBB we define from now on 
$$\wst k \defas \inten(\nabla \yst k, \tst k).$$

\subsection{Well-definedness of the scheme}\label{sec: welldef}
For fixed time \MMM  horizon \BBB $T > 0$ and time step $\tau \in (0, 1]$ small enough, we will now prove the well-definedness of the staggered time-discretization scheme described in the previous subsection. \NNN In this part, we are  interested in the large-strain setting, and treat the case  $\eps=1$ and  $\alpha=2$, where $\xi$ is not regularized. For later purposes, we again include $\eps$ in the estimates.  (The reader only interested in large strains, can readily set $\eps=1$.) \BBB As  before, we assume for the sake of simplicity that $T / \tau$ is an integer. \NNN Although not being necessary, for convenience we suppose that \ref{W_lower_bound_spec} holds. At the end of the subsection, we briefly indicate the changes  if \ref{W_lower_bound_spec} is not assumed. \BBB

We start with a bound on the total energy $\toten$ defined in \eqref{toten}.
 We \BBB also need to take the work of the external forces into account.
To this end, similar to the notation in \eqref{forces_mech_step}, we consider \CCC for each $t \in I$ \BBB the functionals $\ell(t)$ on $H^1(\Omega; \R^d)$ defined by
\begin{equation}\label{ell2}
  \langle \ell(t), v \rangle
  \defas \int_\Omega f(t) \cdot v \di x
    + \int_{\Gamma_N} g(t) \cdot v \di \haus^{d-1}
\end{equation}
for all  $v \in H^1(\Omega; \R^d)$.
Furthermore, we define
\begin{equation}\label{def_Cfg}
  C_{f,g}
  \defas \norm{f}_{W^{1,1}(I; L^2(\Omega))} + \norm{g}_{W^{1,1}(I;L^2(\Gamma_N))}.
\end{equation}
Note that  the trace estimate in $H^1(\Omega; \R^d)$ \BBB shows
\begin{equation*}
  \Vert \ell(t) \Vert_{H^{-1}} \le C \big(
      \Vert f(t) \Vert_{L^2(\Omega)} + \Vert g(t) \Vert_{L^2(\Gamma_N)}
    \big),
\end{equation*}
and, hence, by the   Fundamental Theorem of Calculus in $W^{1,1}(I; L^2(\Omega) \BBB)$ \BBB and \CCC $W^{1,1}(I; L^2(\Gamma_N))$ \BBB we get 
\begin{equation}\label{H-1_est}
  \Vert \ell(t) \Vert_{H^{-1}} \le C_T C_{f,g}
\end{equation}
for a constant $C_T$ only depending on $T$.
Given the sequences $\yst 0, \ldots,  \yst k$ and $\tst 0, \ldots\CCC, \BBB \tst k$ for some $k \in \setof{1, \ldots,  T / \tau}$, as described in Subsection \ref{sec:single_step_well_defined}, we define for $l \in \setof{0, \ldots, k}$
\begin{equation}\label{Fk}
  \mathcal{F}^{(l)}
  \defas \toten(\yst l, \tst l)
    - \NNN \lp \BBB \langle \ell(l \tau), \yst l \NNN - \id \BBB \rangle,
\end{equation}
and observe the following relation between $\mathcal{F}^{(l)}$ and the total energy \NNN $\toten(\yst l, \tst l)$. \BBB

\begin{lemma}\label{lem:EF}
  There exists a constant $C_T > 0$ only depending on $T$ such that for all $l \CCC \in \{ \BBB 0, \ldots,  k \CCC\} \BBB$ with $k \in \setof{1, \ldots,  T / \tau}$ it holds that
  \begin{equation*}
    \CCC \lp \BBB |\langle \ell(l\tau), \yst l  \NNN - \id \BBB  \rangle|
    \le \min \lbrace \mathcal{F}^{(l)}, \toten(\yst l, \tst l) \rbrace
      + \CCC \lp^2 C_T C_{f,g}^2 \BBB,
  \end{equation*}
  with $C_{f,g}$ as defined in \eqref{def_Cfg}.
\end{lemma}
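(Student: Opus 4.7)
\medskip

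\textbf{Proof plan for Lemma \ref{lem:EF}.} The plan is to obtain a single Young-type estimate controlling $\lp |\langle \ell(l\tau), \yst l - \id\rangle|$ by $\tfrac{1}{2}\toten(\yst l, \tst l) + \lp^2 C \, C_{f,g}^2$, and then to derive both halves of the claimed $\min\{\mathcal{F}^{(l)}, \toten\}$-bound from it. The crucial point is that the $\lp^2$-scaling of the additive error, which is essential for the linearization limit, is only reached if $\Vert \yst l - \id\Vert_{H^1}$ is controlled by $\toten(\yst l, \tst l)^{1/2}$ and not just by an $M$-dependent constant.

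First I would chain the estimates: by definition of the $H^{-1}$-norm and \eqref{H-1_est},
\[
  \lp |\langle \ell(l\tau), \yst l - \id\rangle|
  \le \lp \Vert \ell(l\tau)\Vert_{H^{-1}} \Vert \yst l - \id\Vert_{H^1(\Omega)}
  \le \lp\, C_T C_{f,g} \, \Vert \yst l - \id\Vert_{H^1(\Omega)},
\]
which is legitimate since $\yst l - \id = 0$ on $\Gamma_D$. By \eqref{H1_dist_to_id} we bound $\Vert \yst l - \id\Vert_{H^1} \le C\Vert \dist(\nabla\yst l,SO(d))\Vert_{L^2}$, and by \ref{W_lower_bound_spec}, \eqref{inten_lipschitz_bounds}, and the definitions \eqref{mechanical}, \eqref{toten},
\[
  \ac \Vert \dist(\nabla \yst l, SO(d))\Vert_{L^2(\Omega)}^2
  \le \elen(\yst l) \le \mechen(\yst l) \le \toten(\yst l, \tst l),
\]
using $\mathcal{W}^{\rm in}(\yst l, \tst l) \ge 0$ (which also yields $\toten \ge 0$). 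Combining and applying Young's inequality with constant $\tfrac{1}{2}$ then yields
\begin{equation}\label{eq:plan_young}
  \lp|\langle \ell(l\tau), \yst l - \id\rangle|
  \le \tfrac{1}{2}\toten(\yst l, \tst l) + \lp^2 \widetilde C_T C_{f,g}^2
\end{equation}
for a constant $\widetilde C_T$ depending only on $T$, $\Omega$, $\ac$, and the embedding constants. Since $\toten(\yst l, \tst l) \ge 0$, \eqref{eq:plan_young} immediately gives the $\toten$-half of the claimed bound (with any $C_T \ge \widetilde C_T$).

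For the $\mathcal{F}^{(l)}$-half, I substitute $\toten(\yst l, \tst l) = \mathcal{F}^{(l)} + \lp \langle \ell(l\tau), \yst l - \id\rangle$ (from \eqref{Fk}) into \eqref{eq:plan_young} and perform a short case analysis on the sign of $\langle \ell(l\tau), \yst l - \id\rangle$: when the pairing is nonnegative, we directly obtain $\lp|\langle\cdot\rangle| \le \mathcal{F}^{(l)} + 2\lp^2 \widetilde C_T C_{f,g}^2$; when it is negative, the rearrangement of \eqref{eq:plan_young} yields $\tfrac{3}{2}\lp|\langle \cdot \rangle| \le \tfrac{1}{2}\mathcal{F}^{(l)} + \lp^2 \widetilde C_T C_{f,g}^2$, which is even stronger. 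Taking $C_T \defas 2\widetilde C_T$ delivers both halves simultaneously, proving the lemma.

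The only delicate point is ensuring the right $\lp^2$-scaling of the error term, which is exactly what \ref{W_lower_bound_spec} combined with \eqref{H1_dist_to_id} provides; absent these, one would only control $\Vert \yst l - \id\Vert_{H^1}$ up to an additive constant and lose the $\lp^2$-factor that is essential for the subsequent linearization analysis. Everything else is a routine application of Young's inequality and the definition of $\mathcal{F}^{(l)}$.
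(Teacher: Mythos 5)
Your proposal is correct and follows essentially the same route as the paper: bound the pairing by $\Vert\ell(l\tau)\Vert_{H^{-1}}\Vert \yst l-\id\Vert_{H^1}$ via \eqref{H-1_est}, control $\Vert \yst l-\id\Vert_{H^1}^2$ by $\elen(\yst l)\le\toten(\yst l,\tst l)$ using \eqref{H1_dist_to_id} and \ref{W_lower_bound_spec}, absorb $\tfrac12\toten$ via Young's inequality with a small constant, and then use the definition \eqref{Fk} to pass from the $\toten$-bound to the $\mathcal{F}^{(l)}$-bound. Your explicit case analysis on the sign of the pairing is just a slightly more verbose rendering of the paper's one-line rearrangement $\mathcal{F}^{(l)}\ge\tfrac12\toten(\yst l,\tst l)-\lp^2C_TC_{f,g}^2$, and your observation about the role of \ref{W_lower_bound_spec} in securing the $\lp^2$-scaling matches the paper's own remark at the end of the subsection.
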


\begin{proof}
  By $\yst l \in \Wid$, Poincaré's inequality, \CCC \eqref{H1_dist_to_id}, and \ref{W_lower_bound_spec} \BBB we derive 
  \begin{equation*}
    \norm{\yst l  \NNN - \id \BBB }_{H^1 (\Omega)}^2
    \leq C \norm{\nabla \yst l  \NNN - \Id \BBB }_{L^2(\Omega)}^2
    \leq C \elen(\yst l).
  \end{equation*}
  Hence, by \eqref{H-1_est} and Young's inequality  with \BBB constant \NNN $\lambda/\lp$ \BBB (to be chosen below) it follows that
  \begin{align*}
    |\langle \ell(l\tau), \yst l  \NNN - \id \BBB  \rangle|
    &\le \Vert \ell(l\tau) \Vert_{H^{-1}} \Vert \yst l  \NNN - \id \BBB \Vert_{H^1(\Omega)} \\
    &\le C_T C_{f,g} \Vert \yst l \NNN - \id \BBB  \Vert_{H^1(\Omega)} \\
    &\le \frac{C_T\lp}{\lambda} C_{f,g}^2
      + \NNN \frac{ \lambda}{\lp} \BBB \Vert \yst l  \NNN - \id \BBB  \Vert_{H^1(\Omega)}^2
    \le \frac{C_T\lp}{\lambda} C_{f,g}^2
      + C  \NNN\frac{\lambda}{\lp} \BBB \toten(\yst l, \tst l).
  \end{align*}
  Now, take $\lambda$ small enough such that $C\lambda \le \frac{1}{2}$.
  Then, by the definition of $\mathcal{F}^{(l)}$ we discover
  \begin{equation*}
    \mathcal{F}^{(l)}
    = \toten(\yst l, \tst l) - \NNN \lp \BBB \sprod{\ell(l\tau)}{\yst l \NNN - \id \BBB }
    \geq \frac{1}{2} \toten(\yst l, \tst l) - \CCC \lp^2 C_T C_{f, g}^2 \BBB,
  \end{equation*}
  and the statement follows.
\end{proof}

We now proceed with the bound on the total energy.
For definiteness, we   \CCC set \BBB $\ell(t) = 0$ for $t \notin I$.
\begin{lemma}[Inductive bound on the total energy]\label{lem:initial_toten_bound}
  For any $M > 0$ there exist $C_M$ such that, if the sequences $\yst 0, \ldots,  \yst k$ and $\tst 0, \ldots,  \tst k$, as described in Subsection \ref{sec:single_step_well_defined}, for some $k \in \setof{1, \ldots, T / \tau}$ exist satisfying $\mathcal{F}^{(l)} \leq M$ for all $l = 0, \ldots,  k-1$ with $\mathcal{F}^{(l)}$ defined in \eqref{Fk},  it holds that \BBB
  \begin{align}\label{initial_toten_bound}
    \mathcal{F}^{(k)}
    &\leq  \mathcal{F}^{(0)} \BBB + C_M \tau V_k +  \CCC \lp^2 C_T  \NNN (1 + C_{f,g}^3) \BBB
      + C\sum_{l=0}^k
        \mathcal{F}^{(l)} \int_{(l-1)\tau}^{l\tau} \big(
            \Vert \dot{\ell}(t) \Vert_{H^{-1}}
            + \Vert \dot{\ell}(t+\tau) \Vert_{H^{-1}} \big) \di t \notag \\
    &\phantom{\leq}\quad + \kappa \NNN \lp^2 \BBB\int_0^{k \tau} \int_\Gamma \theta_\flat \di \haus^{d-1} \di t,
  \end{align}
  where $C$ is a universal constant, $C_T$ a constant only depending on $T$, and
  \begin{equation}\label{def_Vk}
    V_k \defas \sum_{l = 1}^k \tau \int_\Omega \abs{\ddif \nabla \yst l}^2 \di x.
  \end{equation}
\end{lemma}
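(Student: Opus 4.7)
The plan is to combine the minimality property of $\yst k$ for the mechanical step with the Euler--Lagrange identity of the thermal step, exploiting that the scheme's explicit discretization in temperature (using $\tst{k-1}$ rather than $\tst k$ in the coupling) converts the coupling contribution into a pure Taylor remainder in $F$. Using $\yst{k-1}$ as a competitor in \eqref{mechanical_step2} and the quadraticity $R(F,\tau\dot F,\theta)=\tau^2 R(F,\dot F,\theta)$ yields
\begin{equation*}
  \mechen(\yst k)-\mechen(\yst{k-1}) + \tau\int_\Omega R(\nabla\yst{k-1},\ddif\nabla\yst k,\tst{k-1})\di x \le \cplen(\yst{k-1},\tst{k-1})-\cplen(\yst k,\tst{k-1}) + \lp\langle\lst k,\yst k-\yst{k-1}\rangle.
\end{equation*}
Testing \eqref{el_thermal_step} with $\vphi\equiv 1$ and multiplying by $\tau$, and using $\drate=2R$, gives the identity
\begin{equation*}
  \mathcal{W}^{\rm in}(\yst k,\tst k)-\mathcal{W}^{\rm in}(\yst{k-1},\tst{k-1}) = \int_\Omega \partial_F\cplpot(\nabla\yst{k-1},\tst{k-1}):(\nabla\yst k-\nabla\yst{k-1})\di x + 2\tau\int_\Omega R\di x - \tau\kappa\int_\Gamma(\tst k-\lp^2\btst k)\di\haus^{d-1}.
\end{equation*}

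Adding these two relations, half of the dissipation cancels and the two coupling contributions assemble into the second-order Taylor remainder
\begin{equation*}
  T_k \defas \int_\Omega \bigl[\cplpot(\nabla\yst{k-1},\tst{k-1}) - \cplpot(\nabla\yst k,\tst{k-1}) + \partial_F\cplpot(\nabla\yst{k-1},\tst{k-1}):(\nabla\yst k-\nabla\yst{k-1})\bigr]\di x,
\end{equation*}
leaving
\begin{equation*}
  \toten(\yst k,\tst k) - \toten(\yst{k-1},\tst{k-1}) \le T_k + \tau\int_\Omega R\di x + \lp\langle\lst k,\yst k-\yst{k-1}\rangle - \tau\kappa\int_\Gamma(\tst k-\lp^2\btst k)\di\haus^{d-1}.
\end{equation*}
The inductive hypothesis $\mathcal{F}^{(l)}\le M$ together with Lemma \ref{lem:EF} gives $\toten(\yst l,\tst l)\le C_M$, hence $\mechen(\yst l)\le C_M$ and $\|\nabla\yst l\|_{L^\infty}\le C_M$ by Lemma \ref{lem:pos_det}. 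The second-derivative bound in \ref{C_bounds} and Taylor's theorem at $\tst{k-1}$ fixed then yield $|T_k|\le C\tau^2\|\ddif\nabla\yst k\|_{L^2}^2$, while \ref{D_bounds} gives $\tau\int_\Omega R\le C_M\tau\|\ddif\nabla\yst k\|_{L^2}^2$.

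Summing over $l=1,\ldots,k$, the dissipation and Taylor contributions are controlled by $C_M V_k$ in view of \eqref{def_Vk}. The nonpositive term $-\tau\kappa\int_\Gamma\tst l$ is dropped since $\tst l\ge 0$, while the $\btst l$ contributions telescope into $\kappa\lp^2\int_0^{k\tau}\int_\Gamma\theta_\flat\di\haus^{d-1}\di t$. For the forcing, setting $w_l\defas\yst l-\id$, Abel summation gives
\begin{equation*}
  \sum_{l=1}^k \langle\lst l,w_l-w_{l-1}\rangle = \langle\lst k,w_k\rangle - \langle\lst 1,w_0\rangle - \sum_{l=1}^{k-1}\langle\lst{l+1}-\lst l,w_l\rangle.
\end{equation*}
The first term is rewritten as $\langle\ell(k\tau),w_k\rangle$ and moved to the LHS to reconstruct $\mathcal{F}^{(k)}$; the replacement error is absorbed via a Fubini computation giving $\|\lst k-\ell(k\tau)\|_{H^{-1}}\le\int_{(k-1)\tau}^{k\tau}\|\dot\ell\|_{H^{-1}}\di t$ together with Lemma \ref{lem:EF}. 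The differences $\|\lst{l+1}-\lst l\|_{H^{-1}}$ are similarly bounded by $\int_{(l-1)\tau}^{(l+1)\tau}\|\dot\ell(t)\|_{H^{-1}}\di t$, and Lemma \ref{lem:EF} applied to $\langle\lst{l+1}-\lst l,w_l\rangle$ produces the Gronwall-type sum in \eqref{initial_toten_bound}; the initial pairing $\langle\lst 1,w_0\rangle=\lp\langle\lst 1,u_0\rangle$ together with these replacement errors contributes the constant $\lp^2 C_T(1+C_{f,g}^3)$.

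The main obstacle is the nonconvex coupling energy: since $\cplpot$ is not convex in $F$, the quantity $\partial_F\cplpot(\nabla\yst{k-1},\tst{k-1}):(\nabla\yst k-\nabla\yst{k-1})$ cannot be compared with $\cplen(\yst k,\tst{k-1})-\cplen(\yst{k-1},\tst{k-1})$ by a one-sided inequality, and one must invoke the global bound of \ref{C_bounds} on $\partial_F^2\cplpot$ to control $T_k$. The scheme's explicit (backward) temperature discretization is indispensable here: by freezing $\theta$ at $\tst{k-1}$ in both the mechanical minimization and the thermal source, the Taylor expansion is carried out at fixed temperature, producing only a second-order $F$-error that is summable to $\tau V_k$. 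A secondary subtlety is the Abel summation for the forcing, where the time-averaged $\lst k$ must be traded for the pointwise $\ell(k\tau)$ at the price of the $\|\dot\ell\|_{H^{-1}}$-integrals appearing in the final Gronwall sum.
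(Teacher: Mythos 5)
There is a genuine gap, and it sits exactly where your route diverges from the paper's: in the treatment of the dissipation. Using $\yst{k-1}$ as a competitor in \eqref{mechanical_step2} places $\frac{1}{\tau}\diss(\yst{k-1},\yst k-\yst{k-1},\tst{k-1})=\tau\int_\Omega \disspot(\nabla\yst{k-1},\ddif\nabla\yst k,\tst{k-1})\di x$ on the favorable side, which by \eqref{diss_rate} is only \emph{half} of $\tau\int_\Omega\drate\di x$. The thermal identity you add contributes the \emph{full} $+\tau\int_\Omega\drate\di x$ on the unfavorable side. So after adding the two relations the leftover per step is $+\tau\int_\Omega\disspot\di x$, and summing over $l=1,\ldots,k$ gives, via \ref{D_bounds} and the uniform bound on $\nabla\yst{l-1}$, a term of size $C_M\sum_{l=1}^k\tau\int_\Omega|\ddif\nabla\yst l|^2\di x=C_M V_k$ --- you indeed write ``$C_M V_k$'' --- whereas \eqref{initial_toten_bound} asserts $C_M\tau V_k$. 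The missing factor $\tau$ is not cosmetic: in Step~2 of the proof of Theorem \ref{thm:apriori_toten_velo_bound} one needs $C_M\tau V_K\le\lp^2$ for $\tau$ small, which works because $\tau V_K\to0$; a term $C_M V_K$ is merely bounded by a constant depending on $M$, which would make the inductive definition of $M$ circular and the bootstrap collapse. (This uncancelled half is genuinely there, not an artifact of a crude estimate: by \ref{D_bounds} and Korn's inequality it is bounded from below by $c_M V_k$ as well.)

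The paper avoids this by testing the Euler--Lagrange equation \eqref{mechanical_step_single} with $z=\ddif\yst l$ instead of invoking minimality: by the $2$-homogeneity of $\disspot$ in $\dot F$ one has $\pl_{\dot F}\disspot(\nabla\yst{l-1},\ddif\nabla\yst l,\tst{l-1}):\ddif\nabla\yst l=\drate(\nabla\yst{l-1},\ddif\nabla\yst l,\tst{l-1})$, i.e.\ the \emph{full} rate, which then cancels exactly against the $-\drate$ in \eqref{el_thermal_step}. The price of that route, which you would have to pay after correcting this step, is that $\mechen(\yst l)-\mechen(\yst{l-1})$ can no longer be read off from an energy comparison; one must recover it from $\tau\sum_l\int_\Omega\pl_F\elpot(\nabla\yst l):\ddif\nabla\yst l\di x$ and $\tau\sum_l\int_\Omega\pl_G\hypot(\nabla^2\yst l)\cdddot\ddif\nabla^2\yst l\di x$ using convexity of $\hypot$ and a $C^2$-estimate for $\elpot$ on a compact set of gradients, producing the admissible error $C_M\tau V_k$. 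Your remaining ingredients --- the second-order Taylor remainder $T_k$ for the coupling (which is indeed $O(\tau^2\|\ddif\nabla\yst k\|_{L^2}^2)$ per step and hence $O(\tau V_k)$ in total) and the Abel-summation treatment of the forcing --- are sound and would survive this correction.
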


\begin{proof}
  \textit{Step 1:}
  Let us fix $l \in \setof{1, \ldots,  k}$.
  Using Proposition \ref{prop:existence_mechanical_step} for $l$ in place of $k$,   \eqref{diss_rate}, and   testing \NNN \eqref{mechanical_step_single} \BBB with $z = \ddif \yst l$ \CCC it follows \BBB that
  \begin{align}\label{el_mech_test_step_l}
    0
    =& \int_\Omega
      \pl_F \felpot(\nabla \yst l, \tst{l-1}) : \ddif \nabla \yst l
      + \pl_G \hypot(\nabla^2 \yst l) \cdddot \ddif \nabla^2 \yst l \di x \notag \\
    &\quad + \int_\Omega
      \drate(\nabla \yst{l-1}, \ddif \nabla \yst l, \tst{l-1}) \di x
      - \NNN \lp \BBB\langle \lst l, \ddif \yst l \rangle.
  \end{align}
  Similarly, using Proposition \ref{prop:existence_thermal_step} for $l$ in place of $k$  we test \eqref{el_thermal_step} with $\vphi = 1$ to obtain
  \begin{align}\label{el_temp_test_step_l}
    0
    =& \int_\Omega
      \ddif \wst l
      -\pl_F \cplpot(\nabla \yst {l-1}, \tst{l-1})
        : \ddif \nabla \yst l \notag \\
    & -\int_\Omega \drate(\nabla \yst{l-1}, \ddif \nabla\yst l, \tst{l-1}) \di x
      + \NNN \kappa \BBB  \int_\Gamma (\tst l - \NNN \lp^2 \BBB\btst l )\di \haus^{d-1}.
  \end{align}
  Adding \eqref{el_mech_test_step_l} to \eqref{el_temp_test_step_l}, multiplying by $\tau$, and eventually summing over $l = 1, \ldots,  k$ we discover that
  \begin{align}\label{test_all_steps}
    \int_\Omega \NNN w_{0,\eps} \BBB \di x
    & =
    \tau \sum_{l = 1}^k \Big(
      \int_\Omega \pl_F \elpot(\nabla \yst l) : \ddif\nabla \yst l \di x
      + \int_\Omega \pl_G \hypot(\nabla^2 \yst l) \cdddot \ddif \nabla^2 \yst l\di x
    \Big) \notag \\
    &\phantom{=}\quad +  \tau \BBB \sum_{l = 1}^k \int_\Omega
      \big(
        \pl_F \cplpot(\nabla \yst l, \tst{l-1})
        - \pl_F \cplpot(\nabla \yst {l-1}, \tst{l-1})
      \big)
      : \ddif \nabla \yst l \di x \notag \\
    &\phantom{=}\quad -\sum_{l = 1}^k \Big(
      \tau \kappa \int_\Gamma (\CCC \lp^2 \BBB \btst l -  \tst l) \di \haus^{d-1}
      + \tau \NNN \lp \BBB \langle \lst l, \ddif \yst l \rangle
    \Big) + \int_\Omega \wst k \di x,
  \end{align}
  where $\NNN w_{0,\eps} \BBB \defas \inten(\nabla y_{0,\eps}, \theta_{0,\eps})$.
  Here, we also used that $\felpot = \elpot + \cplpot$.

  \textit{Step 2:}
  We continue by bounding the first two sums on the right-hand side of (\ref{test_all_steps}) from below.
  By the convexity of $\hypot$ (see \ref{H_regularity}) it follows for \CCC $l \in \{ 1, \ldots,  k \}$ \BBB that
  \begin{equation*}
    H(\nabla^2 \yst{l-1}) \geq
    H(\nabla^2 \yst l)
      + \pl_G H(\nabla^2 \yst l) \cdddot (\nabla^2 \yst{l-1} - \nabla^2 \yst l).
  \end{equation*}
  Integrating the above inequality over $\Omega$ and summing over $l = 1, \ldots,  k$ leads to
  \begin{equation}\label{hypot_sum_lower_bound}
    \tau \sum_{l = 1}^k \int_\Omega
      \pl_G \hypot(\nabla^2 \yst l) \cdddot \ddif \nabla^2 \yst l \di x
    \geq \hyen(\yst k) - \hyen(y_{0,\eps}),
  \end{equation}
  where we recall the notation in \eqref{hyperelastic}.
  By using the piecewise affine function \CCC $\hat y_{\lp, \tau}$ \BBB introduced in \eqref{y_interpolations},  and   that $\mathcal{W}^{\rm el}$ is Gateaux differentiable (see \cite[Proposition 3.2]{MielkeRoubicek20Thermoviscoelasticity}) \NNN we get that \BBB
  \begin{align}\label{elpto_sum_lower_bound_interpolation}
    &\sum_{l = 1}^k \int_{(l-1)\tau}^{l\tau} \int_\Omega
      \pl_F \elpot(\nabla \CCC \ay\BBB(t)\BBB ) : \ddif \nabla \yst l \di x \di t \nonumber \\
    &\quad = \int_0^{k\tau} \int_\Omega \pl_F \elpot(\nabla \CCC \ay(t)\BBB ) : \nabla \CCC\dotay\BBB(t) \di x \di t
    = \elen(\yst k) - \elen(y_{0,\eps}).
  \end{align}
  For $\tau_0$ sufficiently small, we can apply Lemma \ref{lem:bad_mechen_bound} in the version of Remark \ref{rem:bad_mechen_bound}.  This along with \BBB Lemma~\ref{lem:EF}, $\mathcal{F}^{(l)}\leq M$ for $l \CCC\in \{ 0, \ldots,  k - 1 \}\BBB$,  and \eqref{def_Cfg} implies \BBB for all $l \CCC\in \{ 1, \ldots, k \}\BBB$ that 
  \begin{align*}
    \mechen(\yst l)
    &\leq \mechen(\yst {l-1})
      +  C_M \BBB \big(
        \NNN   \toten(\yst {l-1}, \tst {l-1}) \BBB  + \NNN \eps^2 \BBB \norm{f}_{L^2(I \times \Omega)}^2
        + \NNN \eps^2 \BBB \norm{g}_{L^2(I \times \Gamma_N)}^2
    \big) \\
    &\le 2\NNN (1+C_M) \BBB \mathcal{F}^{(l-1)} + C_M  \NNN \lp^2 \BBB C_T C_{f,g}^2
    \le 2 \NNN (1+C_M) \BBB M + C_M  \NNN \lp^2 \BBB C_T C_{f,g}^2.
  \end{align*}
 Together with Lemma \ref{lem:pos_det} we get that there exists a compact convex set $K$, only depending on $M$,  $T$, \BBB $f$, and $g$, such that $\nabla \yst l \in K$ a.e.~on $\Omega$ for all $l \CCC\in \{ 0, \ldots,  k \}\BBB$.
  Then, by the regularity of $\elpot$, setting $C_M \defas \sup_{F \in K} \abs{\pl_{FF} \elpot(F)}$, we can estimate for any $t \in [(l-1)\tau, l\tau]$ with $l \CCC \in \{1, \ldots,  k\}\BBB$ that
  \begin{align*}
    &\abs{\pl_F \elpot(\nabla  \CCC\ay\BBB(t)\BBB ) - \pl_F \elpot(\nabla \yst l)} \\
    &\quad\leq C_M \abs{\nabla  \CCC\ay\BBB(t) \BBB - \nabla \yst l} = C_M \frac{l\tau - t}{\tau} \abs{\nabla \yst l - \nabla \yst{l-1}}
    \leq C_M \abs{\nabla \yst l - \nabla \yst{l-1}}.
  \end{align*}
  Consequently, we get
  \begin{align*}
    &\sum_{l = 1}^k \, \Bigg|
      \int_{(l-1)\tau}^{l\tau}
        \int_\Omega
          \pl_F \elpot(\nabla \CCC\ay\BBB(t)) : \ddif \nabla \yst l \di x \di t
        -\tau \int_\Omega
          \pl_F \elpot(\nabla \yst l) : \ddif \nabla \yst l \di x
      \Bigg| \\
    &\quad \leq C_M \sum_{l = 1}^k \tau \int_\Omega
      \abs{\nabla \yst l - \nabla \yst{l-1}} \abs{\ddif \nabla \yst l} \di x
    = C_M \tau \sum_{l = 1}^k \tau \int_\Omega
      \abs{\ddif \nabla \yst l}^2 \di x
    = C_M \tau V_k.
  \end{align*}
  Combined with \eqref{elpto_sum_lower_bound_interpolation} this leads to
  \begin{equation}\label{elpot_sum_lower_bound}
    \tau \BBB \sum_{l = 1}^k \int_\Omega
      \pl_F \elpot(\nabla \yst l) : \ddif\nabla \yst l \di x
    \geq \elen(\yst k) - \elen(y_{0,\eps}) - C_M \tau V_k.
  \end{equation}
  In \NNN a \BBB similar fashion, using the first bound in \ref{C_bounds}, we can estimate
  \begin{equation}\label{cpldiff_bound}
    \tau \sum_{l = 1}^k \int_\Omega
        \big(
          \pl_F \cplpot(\nabla \yst l, \tst{l-1})
          - \pl_F \cplpot(\nabla \yst {l-1}, \tst{l-1})
        \big) : \ddif \nabla \yst l \di x
      \geq - \aC \tau V_k.
  \end{equation}
  Now, employing \eqref{hypot_sum_lower_bound}, \eqref{elpot_sum_lower_bound}, and \eqref{cpldiff_bound} in \eqref{test_all_steps}, and using the definition of the total energy $\toten$ we conclude that
  \begin{equation}\label{test_all_steps_v2}
    \toten(\yst k, \tst k)
    \leq \toten(y_{0,\eps}, \theta_{0,\eps})
      + C_M \tau V_k
      + \sum_{l = 1}^k \tau \NNN \lp \BBB \langle \lst l, \ddif \yst l \rangle
      + \sum_{l = 1}^k \tau\kappa \int_\Gamma  (\NNN \lp^2 \BBB  \btst l - \tst l )\di \haus^{d-1}.
  \end{equation}

  \textit{Step 3:}
  It remains to estimate the last two terms on the right-hand side of (\ref{test_all_steps_v2}).
  By the nonnegativity of $\tst l$ and the definition of $\btst l$  below \eqref{def:h_tau} \BBB we can  \CCC bound \BBB
  \begin{equation}\label{b_surface}
    \sum_{l = 1}^k \tau\kappa \int_\Gamma ( \NNN \lp^2 \BBB \btst l - \tst l) \di \haus^{d-1}
    \leq \sum_{l = 1}^k \tau \kappa \NNN \lp^2 \BBB  \int_\Gamma \btst l \di \haus^{d-1}
    = \kappa \NNN \lp^2 \BBB  \int_0^{k \tau} \int_\Gamma \theta_\flat \di \haus^{d-1} \di t.
  \end{equation}
  Note that for any $l \CCC\in \{1, \ldots,  k \}$ and $t \in \CCC ( \BBB (l-1)\tau, l\tau\CCC ) \BBB$ we have that $\ddif   \yst l \BBB = \CCC\dotay\BBB(t)$.
  Consequently,  integration by parts yields
  \begin{align}\label{rufanew}
  &\sum_{l = 1}^k  \tau \langle \lst l, \ddif \yst l \rangle
  = \int_0^{k\tau} \hspace{-0.1cm}\langle \ell(t), \CCC\dotay\BBB(t) \rangle \di t  \\
  &\quad= \langle \ell (k\tau), \CCC\ay\BBB (k \tau)  \NNN - \id \BBB \rangle
    - \langle \ell (0), \CCC\ay\BBB(0) \NNN - \id \BBB \rangle
    - \int_0^{k\tau} \hspace{-0.1cm}\langle \dot{\ell} (t), \CCC\ay\BBB(t)  \NNN - \id \BBB \rangle \di t \nonumber \\
  &\quad\le \langle \ell (k\tau), \CCC\ay\BBB(k \tau) \NNN - \id \BBB \rangle
    - \langle \ell (0), \CCC\ay\BBB(0) \NNN - \id \BBB \rangle
    + \int_0^{k\tau} \Vert \dot{\ell}(t) \Vert_{H^{-1}}\Vert \CCC\ay\BBB(t) \NNN - \id \BBB \Vert_{H^{1}( \Omega)} \di t.
  \end{align}
  By Poincaré's inequality\CCC, \eqref{H1_dist_to_id}, and \ref{W_lower_bound_spec}, \BBB for $t \in [(l-1)\tau, l\tau]$, we have
  \begin{equation*}
    \Vert  \CCC\ay\BBB(t) \NNN - \id \BBB \Vert^2_{H^1(\Omega)}
    \le 2(
      \Vert \yst {l-1}  \NNN - \id \BBB \Vert^2_{H^1(\Omega)}
      + \Vert \yst l   \NNN - \id \BBB\Vert^2_{H^1(\Omega)}
    )
    \le C \big(\elen(\yst {l-1}) + \elen(\yst l) \big).
  \end{equation*}
  Therefore, by Lemma \ref{lem:EF},  \eqref{H-1_est}, \BBB  and $\sqrt{s} \le s \NNN /\lp \BBB$ for all \NNN $s \ge \lp^2$ \BBB we get
  \begin{align*}
    &\int_0^{k\tau} \hspace{-0.1cm}
      \Vert \dot{\ell}(t) \Vert_{H^{-1}}
      \Vert \CCC\ay\BBB(t) - \NNN \id \BBB \Vert_{H^{1}(\Omega)} \di t \\
    &\quad \le C \sum_{l=1}^k \Big(
        \NNN \lp + \lp^{-1} \BBB \toten(\yst {l-1}, \tst {l-1})
        + \NNN  \lp^{-1} \BBB \toten(\yst l, \tst l)
      \Big) \int_{(l-1)\tau}^{l\tau} \hspace{-0.1cm}
      \Vert \dot{\ell}(t) \Vert_{H^{-1}} \di t \\
    &\quad \le  \NNN \frac{C}{\lp} \BBB \sum_{l=1}^k \Big(
      \big( \mathcal{F}^{(l-1)} + \mathcal{F}^{(l)} \big)
      \int_{(l-1)\tau}^{l\tau} \Vert \dot{\ell}(t) \Vert_{H^{-1}} \di t
    \Big) + \CCC \lp C_T \NNN (C_{f,g}  + C_{f,g}^3). \BBB
  \end{align*}
  Then, using  an index shift \CCC and $C_{f, g} \leq \frac{2}{3} + \frac{1}{3}C_{f,g}^3$\BBB we get
  \begin{align}
     &\int_0^{k\tau} \hspace{-0.1cm} \Vert \dot{\ell}(t) \Vert_{H^{-1}}
      \Vert \hat{y}_\tau(t) \NNN - \id \BBB \Vert_{H^{1}(\Omega)} \BBB \di t \\  
      &\quad\le  \frac{C}{\lp} \sum_{l=0}^k \Big(
        \mathcal{F}^{(l)} \int_{(l-1)\tau}^{l\tau}\hspace{-0.1cm}
          \big(
            \Vert \dot{\ell}(t) \Vert_{H^{-1}}
            + \Vert \dot{\ell}(t+\tau) \Vert_{H^{-1}}
          \big) \di t
      \Big) + \CCC  \lp C_T \NNN (1  + C_{f,g}^3) \BBB \label{rufanew2}
  \end{align}
  \NNN for a possibly larger $C_T>0$. \BBB   We plug this into \eqref{rufanew} and use \eqref{b_surface} to estimate the terms on the right-hand side of (\ref{test_all_steps_v2}), which  by   \eqref{Fk} \BBB concludes the proof.
\end{proof}

We proceed with a bound on the (discrete) strain rates $V_k$ defined in (\ref{def_Vk}).  

\begin{lemma}[Inductive bound on the strain rates]\label{lem:Vk_bound}
  Given $M, \, T > 0$, there exist a constant $C_M$ and $\tau_0 \in (0, 1]$ only depending on $M$, and a constant $C_T$ only depending on $T$ such that for $\tau \in (0, \tau_0)$ the following holds: Suppose that there exist
  the sequences $\yst 0, \ldots,  \yst k$ and $\tst 0, \ldots,  \tst k$ for some $k \in \CCC \{1, \ldots,  T/\tau\}\BBB$, as described in Subsection \ref{sec:single_step_well_defined}, with $ \mechen(\yst l) \BBB \leq M$ for all $l   \CCC \in \{ \BBB 0, \ldots,  k-1 \CCC \} \BBB$.
  Then,
  \begin{equation}\label{Vk_bound}
    \sum_{l = 1}^k \tau \int_\Omega \abs{\ddif \nabla \yst l}^2 \di x
    \le  \NNN C_M \BBB  \mechen(y_{0,\eps})  + \NNN \lp^2 \BBB C_M C_T C_{f, g}^2  + \NNN C_M \tau \sum_{l = 0}^{k-1} \big(  \mechen(\yst l) +  \Vert \tst{l} \wedge 1 \Vert^2_{L^2(\Omega)}\big). \BBB
  \end{equation}
\end{lemma}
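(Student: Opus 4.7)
The strategy is to apply the single-step estimate of Lemma \ref{lem:bad_mechen_bound} at every index $l \in \{1,\ldots,k\}$ and sum, exploiting the telescoping structure of the $\mechen(\yst l)$ contributions. Since the hypothesis provides $\mechen(\yst{l-1}) \leq M$ for every $l-1 \in \{0,\ldots,k-1\}$, Lemma \ref{lem:bad_mechen_bound} is applicable with one and the same pair of constants $c_M, C_M$ (and threshold $\tau_0$) at each step. I would therefore fix such $\tau_0$ and constants once and for all at the start.

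Summing the inequality
\begin{equation*}
  \mechen(\yst l) + c_M \tau \|\ddif \nabla \yst l\|_{L^2(\Omega)}^2
  \leq (1+C_M\tau)\mechen(\yst{l-1}) + C_M \tau \big(\|\tst{l-1}\wedge 1\|_{L^2(\Omega)}^2 + \lp^2\|\fst l\|_{L^2(\Omega)}^2 + \lp^2\|\gst l\|_{L^2(\Gamma_N)}^2\big)
\end{equation*}
over $l = 1,\ldots,k$ produces a telescoping cancellation of the principal part: the terms $\sum_l(\mechen(\yst l) - \mechen(\yst{l-1}))$ collapse to $\mechen(\yst k) - \mechen(y_{0,\eps})$, which is bounded below by $-\mechen(y_{0,\eps})$ using $\mechen \geq 0$. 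The remaining $(C_M\tau)\mechen(\yst{l-1})$ contribution appears on the right-hand side as an index-shifted sum $C_M\tau \sum_{l=0}^{k-1} \mechen(\yst l)$, exactly matching one of the target terms on the right-hand side of \eqref{Vk_bound}. Dividing through by $c_M$ and relabeling constants gives all terms in \eqref{Vk_bound} except for the one involving $C_{f,g}^2$.

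It remains to dominate the forcing contribution $\lp^2 C_M \tau \sum_{l=1}^k(\|\fst l\|_{L^2(\Omega)}^2 + \|\gst l\|_{L^2(\Gamma_N)}^2)$ by $\lp^2 C_M C_T C_{f,g}^2$. By Jensen's inequality, $\|\fst l\|_{L^2(\Omega)}^2 \leq \tau^{-1}\int_{(l-1)\tau}^{l\tau}\|f(t)\|_{L^2(\Omega)}^2 \di t$ (and analogously for $g$), so after multiplying by $\tau$ and summing one gets $\|f\|_{L^2(I;L^2(\Omega))}^2 + \|g\|_{L^2(I;L^2(\Gamma_N))}^2$. The continuous embedding $W^{1,1}(I; X) \hookrightarrow C(I;X) \hookrightarrow L^2(I;X)$ with an embedding constant depending only on $T$ then yields a bound by $C_T C_{f,g}^2$, which is precisely the second term on the right-hand side of \eqref{Vk_bound}.

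There is no serious obstacle here: the proof is essentially a summation of Lemma \ref{lem:bad_mechen_bound} followed by standard conversions between discrete and continuous Bochner norms. The only points requiring a little care are (i) ensuring $\tau_0$ is chosen small enough (depending on $M$) so that the one-step estimate applies uniformly at each $l$, which is immediate from the hypothesis $\mechen(\yst{l-1})\leq M$ for $l-1\in\{0,\ldots,k-1\}$, and (ii) tracking that the Gr\"onwall-type factor $(1+C_M\tau)$ does not spoil the estimate once the telescoping is performed, since the extra $C_M\tau$ is precisely absorbed into the admissible right-hand-side term $C_M\tau \sum_{l=0}^{k-1}\mechen(\yst l)$.
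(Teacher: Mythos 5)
Your proposal is correct and follows essentially the same route as the paper: sum the one-step estimate of Lemma \ref{lem:bad_mechen_bound} over $l=1,\ldots,k$, telescope the mechanical-energy terms and drop $\mechen(\yst k)\ge 0$, keep the $(C_M\tau)\mechen(\yst{l-1})$ contributions as the index-shifted sum, and control the forcing terms via Jensen's inequality together with the embedding of $W^{1,1}(I;L^2)$ into $L^2(I;L^2)$ to obtain the $C_TC_{f,g}^2$ bound. No gaps.
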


\begin{proof}
   By Lemma \ref{lem:bad_mechen_bound}  there exist constants $c_M, \, C_M > 0$ depending on $M$ such that we have for $l \CCC\in \{1, \ldots,  k \}\BBB$
  \begin{align*}
    &\mechen(\yst l) 
      + c_M \tau \int_\Omega \abs{\ddif \nabla \yst l}^2 \di x \\
    &\quad\le \NNN (1+ C_M\tau) \mechen(\yst{l-1}) + \BBB C_M \tau \big( \NNN  \Vert \tst{l} \wedge 1 \Vert^2_{L^2(\Omega)} \BBB  + \NNN \lp^2 \BBB \norm{\fst l}_{L^2(\Omega)}^2
      + \NNN \lp^2 \BBB \norm{\gst l}_{L^2(\Gamma_N)}^2
    \big).
  \end{align*}
  Summing the above inequality over $l = 1, \ldots,  k$ and recalling the definition of $\fst l$, $\gst l$ below \eqref{forces_mech_step}, we arrive at
  \begin{align*}
    \mechen(\yst k) - \mechen(y_{0,\eps})
    + c_M \sum_{l = 1}^k \tau \int_\Omega \abs{\ddif \nabla \yst l}^2 \di x
    &= C_M \NNN \lp^2 \BBB \int_0^{k\tau} \big(\norm{f(t)}_{L^2(\Omega)}^2 + \norm{g(t)}_{L^2(\Gamma_N)}^2 \big) \di t  \\
    &\phantom{=}\quad +  C_M\tau \NNN \sum_{l = 0}^{k-1} \big(  \mechen(\yst l) +  \Vert \tst{l} \wedge 1 \Vert^2_{L^2(\Omega)}\big). \BBB
  \end{align*}
  As $\mechen(\yst k) \ge 0$,   we conclude the proof by \NNN \eqref{H-1_est}. \BBB  
\end{proof}

We are ready to prove the well-definedness of our time-discretization scheme,  i.e., Theorem \ref{thm:van_tau}(i). \BBB 
At the same time, we will also derive two useful a priori bounds, namely on the total energy and on the (discrete) strain rate, respectively.

\begin{theorem}[Well-definedness of the scheme]\label{thm:apriori_toten_velo_bound}
  For any $T > 0$ there  exist  a constant \BBB $\bar{C}_T>0$,  corresponding constants
  \begin{equation}\label{def_tildeE0}
    M' \defas
    2 e^{ \bar{C}_T C_{f,g}} \Big(
      \NNN \eps^{-2} \BBB \mathcal{F}^{(0)}
      +  \bar{C}_T \NNN (1+ C_{f,g}^3) \BBB
      + \kappa   \int_0^T \int_\Gamma \theta_\flat \di \haus^{d-1} \di t
    \Big), \quad  M\defas 2M' + \NNN   \bar{C}_T C_{f,g}^2, \BBB
  \end{equation}
  as well as constants $C_M>0$ and $\tau_0 \in (0, 1]$ depending on $M$   
   such that  the following holds true: \BBB  For each $\tau \in (0, \tau_0)$ such that $T / \tau \in \N$ the sequences $\yst 0, \ldots\CCC, \BBB \yst{T / \tau}$ and $\tst 0, \ldots,  \tst{T / \tau}$ as described in Subsection \ref{sec:single_step_well_defined} exist,  and \BBB for all $k \CCC \in \{ \BBB 0, \ldots,   T/\tau \CCC \} \BBB$ we  have that 
  \begin{equation}\label{mechen_apriori_bound}
    \toten(\yst k, \tst k)\leq \NNN \eps^2 \BBB M,
  \end{equation}
  \begin{equation}\label{velocity_apriori_bound}
    \NNN \sum_{l=1}^{k} \BBB \tau \int_\Omega \abs{\ddif \nabla \yst l}^2 \di x
    \leq \NNN \eps^2 C_M M(1+T) \BBB  +  \NNN \lp^2 C_M \bar{C}_T \BBB C_{f,g}^2.
  \end{equation}
\end{theorem}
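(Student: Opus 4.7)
\medskip

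\noindent\textit{Proof plan.} The approach is a strong induction on $k \in \{0, 1, \ldots, T/\tau\}$ with hypothesis $(H_k)$: the steps $\yst 0, \ldots, \yst k$ and $\tst 0, \ldots, \tst k$ exist and satisfy $\mathcal{F}^{(l)} \le \eps^2 M'$ for every $l \in \{0, \ldots, k\}$. The base case $k=0$ is immediate since by \eqref{def_tildeE0} the quantity $2e^{\bar C_T C_{f,g}} \eps^{-2} \mathcal{F}^{(0)}$ is a summand in $M'$, so $\mathcal{F}^{(0)} \le \eps^2 M'$ trivially. The constants $M'$, $M$, $\bar C_T$, $C_M$, and $\tau_0$ will emerge naturally from the Gronwall argument below; the inductive hypothesis governs the passage from step $k-1$ to step $k$.

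Assuming $(H_{k-1})$, I first bootstrap to the desired total-energy bound. By Lemma \ref{lem:EF}, $\eps |\langle \ell(l\tau), \yst l - \id\rangle| \le \mathcal{F}^{(l)} + \eps^2 \bar C_T C_{f,g}^2$, hence $\toten(\yst l, \tst l) = \mathcal{F}^{(l)} + \eps \langle \ell(l\tau), \yst l - \id\rangle \le 2\mathcal{F}^{(l)} + \eps^2 \bar C_T C_{f,g}^2 \le \eps^2 M$ for $l \le k-1$. In particular $\mechen(\yst{l}) \le \toten(\yst l, \tst l) \le \eps^2 M \le M$ (since $\eps \le 1$), which is exactly the hypothesis required by Propositions \ref{prop:existence_mechanical_step} and \ref{prop:existence_thermal_step}; choosing $\tau_0$ not larger than the thresholds provided by those propositions guarantees existence of $\yst k$ and $\tst k$.

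To propagate the $\mathcal{F}^{(l)}$-bound, I apply Lemma \ref{lem:initial_toten_bound}. The strain-rate quantity $V_k$ is controlled by Lemma \ref{lem:Vk_bound}, where the temperature contribution is handled via $\Vert \tst l \wedge 1 \Vert_{L^2(\Omega)}^2 \le \Vert \tst l \wedge 1 \Vert_{L^1(\Omega)} \le \Vert \tst l \Vert_{L^1(\Omega)} \le \ac^{-1}\mathcal{W}^{\mathrm{in}}(\yst l, \tst l) \le \ac^{-1} \toten(\yst l, \tst l) \le \eps^2 C M$ using \eqref{inten_lipschitz_bounds}. Substituting gives an inductive inequality of the form
\begin{equation*}
  \mathcal{F}^{(k)} \le \mathcal{F}^{(0)} + \eps^2 A + C \sum_{l=0}^{k} \mathcal{F}^{(l)} \omega_l,
\end{equation*}
where $\omega_l \defas \int_{(l-1)\tau}^{l\tau} (\Vert \dot\ell(t)\Vert_{H^{-1}} + \Vert \dot\ell(t+\tau)\Vert_{H^{-1}}) \di t$ and $A$ collects the $\eps$-independent quantities $\bar C_T(1 + C_{f,g}^3)$, $\bar C_T C_{f,g}^2$, $\kappa \int_0^T\int_\Gamma \theta_\flat\di\haus^{d-1}\di t$, and $C_M \mechen(y_{0,\eps})/\eps^2$ (the latter being $O(1)$ by \ref{W_lower_bound_spec} and \ref{H_bounds2} applied to $y_{0,\eps} = \id + \eps u_0$). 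For $\tau$ sufficiently small, the absolute continuity of $\Vert \dot\ell\Vert_{H^{-1}} \in L^1(I)$ makes $C\omega_k \le \tfrac12$ uniformly in $k$, so the term $C\omega_k\mathcal{F}^{(k)}$ absorbs into the left-hand side. Since $\sum_{l=0}^{T/\tau} \omega_l \le 2 \Vert \dot\ell\Vert_{L^1(I;H^{-1})} \le \bar C_T C_{f,g}$ independently of $\tau$, the standard discrete Gronwall lemma yields
\begin{equation*}
  \mathcal{F}^{(k)} \le 2 e^{\bar C_T C_{f,g}} (\mathcal{F}^{(0)} + \eps^2 A) \le \eps^2 M'
\end{equation*}
for the $M'$ specified in \eqref{def_tildeE0} (after absorbing all constants into $\bar C_T$). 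This closes the induction and delivers \eqref{mechen_apriori_bound} via $\toten \le 2\mathcal{F}^{(k)} + \eps^2 \bar C_T C_{f,g}^2$. Finally, \eqref{velocity_apriori_bound} follows by re-inserting $\toten(\yst l, \tst l) \le \eps^2 M$ and $\mechen(y_{0,\eps}) \le \eps^2 C M$ into Lemma \ref{lem:Vk_bound} and using $\sum_{l=0}^{k-1} \tau \le T$. The most delicate point is the absorption step: one must choose $\tau_0$ so that all weights $C\omega_l$ are simultaneously $\le \tfrac12$, which is why we need the $W^{1,1}$-regularity of $f$ and $g$ (uniform absolute continuity of $\Vert \dot\ell\Vert_{H^{-1}}$) rather than just $L^1$-integrability.
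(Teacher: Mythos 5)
Your proof is correct and follows essentially the same route as the paper's: induction on $k$, with existence at each step supplied by Propositions \ref{prop:existence_mechanical_step} and \ref{prop:existence_thermal_step}, the inductive energy estimate of Lemma \ref{lem:initial_toten_bound} combined with Lemma \ref{lem:EF}, and the discrete Gronwall lemma with the weights $b_l$ made $\le 1/2$ by absolute continuity of $\Vert\dot\ell\Vert_{H^{-1}}$. The only (harmless) deviation is in controlling $C_M \tau V_k$: you bound $V_k$ in one shot via Lemma \ref{lem:Vk_bound} using the energy bounds at steps $0,\ldots,k-1$, whereas the paper splits $\tau V_k = \tau V_{k-1} + \Vert\nabla \yst{k} - \nabla\yst{k-1}\Vert_{L^2(\Omega)}^2$ and invokes the inductive hypothesis \eqref{velocity_apriori_bound} at step $k-1$ together with Remark \ref{rem:bad_mechen_bound} for the last increment; both yield $C_M\tau V_k \le \lp^2$ for $\tau_0$ small depending on $M$.
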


\begin{proof}
  \textit{Step 1:}  Let \BBB $C_T$ be the maximum of the constants $C_T$ from Lemmas \ref{lem:EF}, \ref{lem:initial_toten_bound}, \ref{lem:Vk_bound}, and equation \eqref{H-1_est}, and let $C$ be  the universal constant of \BBB Lemma \ref{lem:initial_toten_bound}.   Define $\bar{C}_T = \NNN \max  \lbrace 2CTC_T, 2C_T , 2 \rbrace\BBB$, and let $M'$ and $M$ \CCC be \BBB as in \eqref{def_tildeE0}. Then, let \BBB  $C_M > 0$ be the maximum of the constants $C_M$ from Lemmas \ref{lem:initial_toten_bound} and \ref{lem:Vk_bound}.   Moreover, let $\tau_0 \in (0, 1]$ be chosen sufficiently small so that  Lemma \ref{prop:existence_mechanical_step}, \BBB Lemma \ref{lem:bad_mechen_bound}, Proposition~\ref{prop:existence_thermal_step}, Lemma \ref{lem:initial_toten_bound}, and Lemma~\ref{lem:Vk_bound} hold  true (all applied for $M$ from \eqref{def_tildeE0}). \BBB   In place of \eqref{mechen_apriori_bound}, we focus on showing
  \begin{equation}\label{mechen_apriori_bound*}
    \mathcal{F}^{(k)}
    = \toten(\yst k, \tst k) - \NNN \lp \BBB \langle \ell(k\tau), \yst k \rangle
    \leq \NNN \eps^2 \BBB M', \BBB
  \end{equation}
  as then \eqref{mechen_apriori_bound} follows directly by Lemma \ref{lem:EF}.

  We will prove the statement by induction  over \BBB $K$.
  In the base case $K = 0$, \eqref{mechen_apriori_bound*} is satisfied by our choice of  $M'$, \BBB and the fact that $\yst 0 = y_{0, \CCC \lp \BBB}$ and $\tst 0 = \theta_{0, \CCC \lp \BBB}$.   Given $K \in \CCC \{1, \ldots,  T / \tau\} \BBB$, let us assume that the statement as well as \eqref{mechen_apriori_bound*} hold true for $K-1$.
  We now show that the statement holds true for $K$.
  Applying first Proposition \ref{prop:existence_mechanical_step} and then Proposition \ref{prop:existence_thermal_step} we see that $\yst k$ and $\tst k$ exist, where for  both propositions \BBB we use the induction hypothesis \eqref{mechen_apriori_bound} for $K-1$.

  \textit{Step 2:}
  In this step, we prove that for $\tau_0$ small enough we have that
  \begin{equation}\label{tau_Vk_bound}
    C_M \tau \NNN  V_K \BBB \leq \NNN \eps^2, \BBB
  \end{equation}
  where $V_K$ is defined in \eqref{def_Vk}.
  By  \NNN $\eps^2 M \le M$, \BBB  Remark~\ref{rem:bad_mechen_bound}, and \eqref{def_Cfg} there exists a constant  $\tilde{C}_M$ only depending on $M$  such that
  \begin{equation*}
    \frac{1}{\tilde{C}_M\tau} \norm{\nabla \yst K - \nabla \yst{K-1}}_{L^2(\Omega)}^2
    \leq \NNN \eps^2 \BBB  M  + \CCC\lp^2\BBB\tilde C_M( \NNN M \BBB + T C_T^2 C_{f, g}^2),
  \end{equation*}
where we  again \BBB used the hypothesis \eqref{mechen_apriori_bound} for $K-1$.
  Hence, by possibly further decreasing $\tau_0$ (depending  only \BBB on $M$, $f$, $g$,  $T$, \NNN and the initial values) \BBB we can ensure that
  \begin{equation*}
     C_M \BBB \norm{\nabla \yst K - \nabla \yst{K-1}}_{L^2(\Omega)}^2
    \leq \CCC\frac{\lp^2}{2}\BBB.
  \end{equation*}
  Furthermore, by possibly decreasing $\tau_0$  (depending only on $M$, \NNN $u_0$, $\mu_0$, \BBB $f$, $g$, and $T$) \BBB and using the hypothesis (\ref{velocity_apriori_bound}) for $K-1$ in place of $K$ we get $C_M \tau V_{K-1} \leq \NNN  \eps^2/2 \BBB $.
  Consequently, combining the previous estimates and using $\tau V_K = \tau V_{K - 1} + \norm{\nabla \yst K - \nabla \yst{K-1}}_{L^2(\Omega) }^2$, the desired bound (\ref{tau_Vk_bound}) follows.

  \textit{Step 3:}
  By   hypothesis the energy bound in \eqref{mechen_apriori_bound*} is satisfied for $k \CCC\in \{0, \ldots,  K-1\}\BBB$.
  Consequently, Lemma \ref{lem:initial_toten_bound} applies for any $k \CCC\in \{ 0, \ldots,  K\}\BBB$.
  By (\ref{tau_Vk_bound}) we have that
  \begin{equation*}
    \mathcal{F}^{(k)}
    \leq  \mathcal{F}^{(0)}
     \NNN + \eps^2
      +   \lp^2 C_T (1+ C_{f,g}^3) \BBB
      + C \sum_{l=0}^k \mathcal{F}^{(l)}
        \int_{(l-1)\tau}^{l\tau}
        (  \Vert \dot{\ell}(t) \Vert_{H^{-1}}
          + \Vert \dot{\ell}(t+\tau) \Vert_{H^{-1}})
        \di t
      + \kappa \CCC \lp^2 \BBB \int_0^T \int_\Gamma \theta_\flat \di \haus^{d-1} \di t.
  \end{equation*}
  We now use the following discrete version of Gronwall's Lemma: if $\beta > 0$, $(a_l)_l$ is a nonnegative sequence, $(b_l)_l \subset (0, \CCC 1/2\BBB)$, and
  \begin{equation*}
    a_k \le \beta + \sum_{l=0}^{k} b_l a_l \quad \text{ for } k \ge 0,
  \end{equation*}
  then
  \begin{equation*}
    a_k \le 2\beta \exp\Big( \sum_{l=0}^{k-1} 2b_l \Big) \quad \text{ for } k \ge 0.
  \end{equation*}
  Indeed, as $b_l \le 1/2$, we get $a_k \le 2\beta + \sum_{l=0}^{k-1} 2b_l a_l$, and then the statement follows from the elementary discrete Gronwall inequality.
  We apply this result for
  \begin{align}\label{eq: bbeta}
    \beta &\defas  \mathcal{F}^{(0)} \BBB \NNN + \eps^2
      +   \lp^2 C_T (1+ C_{f,g}^3) \BBB
      + \kappa \CCC \lp^2 \BBB \int_0^T \int_\Gamma \theta_\flat \di \haus^{d-1} \di t, \notag\\
    a_l &\defas \mathcal{F}^{(l)}, \qquad
    b_l \defas C \int_{(l-1)\tau}^{l\tau} \big(\Vert \dot{\ell}(t) \Vert_{H^{-1}} + \Vert \dot{\ell}(t + \tau) \Vert_{H^{-1}} \big) \di t,
  \end{align}
  where we note that $b_l \le 1/2$ for all $l$, provided that $\tau_0$ is chosen small enough depending on $f$ and $g$.
  In view of \eqref{H-1_est}, \BBB  we then see that (\ref{mechen_apriori_bound*}) for $K$ is true.  
  \NNN Finally, \BBB (\ref{velocity_apriori_bound}) directly follows from the application of Lemma \ref{lem:Vk_bound} \NNN and the fact that the last term in \eqref{Vk_bound} can be controlled by $\mathcal{E}$, see e.g.\ Remark \ref{rem:bad_mechen_bound}. \BBB 
\end{proof}

\NNN Eventually, if \ref{W_lower_bound_spec} is not assumed, we get additional additive constants in Lemma \ref{lem:EF} and in the derivation of \eqref{rufanew2}, leading to an additional constant in \eqref{initial_toten_bound} which however does not scale as $\eps^2$. This does influence the proof of the well-definedness, only the scaling of the energy in terms of $\eps$.    \BBB

\subsection{Adaptions for exponents $\alpha < 2$}\label{sec: adaptions}
In this subsection, we prove Proposition \ref{cor:van_tau_reg}(i). \NNN This part can \CCC be \BBB skipped by a reader only interested in the proof of Theorem \ref{thm:van_tau}. \BBB
In the previous subsection, we have  already established the well-definedness of the scheme in the large-strain setting, as well as the energy bound \eqref{mechen_apriori_bound}.
The latter will be essential to obtain a priori bounds for the limit passage $\tau \to 0$ in Section \ref{sec:tau_to_zero_delta_fixed}.
\NNN In the case $\alpha <2$, \BBB for the passage to the linearized setting $\lp \to 0$, however, the bound \eqref{mechen_apriori_bound} and the induced a priori bounds are not expedient. \NNN This is  \BBB due to the different scaling of the internal and mechanical energy, being of order $\lp^\alpha$ and $\lp^2$, respectively.
To this end, it is necessary to establish energy bounds for rescaled versions of the energy functionals from Subsection \ref{sec:setting}, namely $\mechen_\lp \defas \frac{1}{\lp^2} \mechen$, $\cplen_\lp \defas \frac{1}{\lp^2} \cplen$, and \NNN for $\alpha \in [1,2]$ \BBB
\begin{align}\label{eq: rescaled toten}
  \toten_\lp(y, \theta) \defas \mechen_\lp(y) + \frac{\alpha}{2\lp^2} \int_\Omega \inten(\nabla y, \theta)^{\frac{2}{\alpha}} \di x,
\end{align} 
where both `types of energy' are of the same order. Controlling this energy is more delicate compared to Proposition \ref{lem:initial_toten_bound}, as the mechanical and thermal equation \eqref{el_mech_test_step_l}--\eqref{el_temp_test_step_l} scale with different powers of $\lp$ and cannot  simply be added up. Therefore, novel ideas are required to control the contributions of $W^{\rm cpl}$ and $\xi$. To achieve this, higher integrability of $\inten$ in $L^{2/\alpha}$ is needed which can be guaranteed by using the regularization of $\rdrate$ introduced in \eqref{def_rdrate}. This in turn induces new challenges for the analysis of the time-discrete scheme since showing the nonnegativity  of the temperature in the thermal step, see Proposition \ref{prop:existence_thermal_step}, is more delicate. For this, it will be essential to assume that strains are small, i.e., we suppose that  the parameter $\lp \in (0, 1]$ is sufficiently small.

Note that for the entire subsection we can assume that $\alpha \in [1,2)$ since in the case $\alpha =2$ there is no regularization of the dissipation rate,  the existence of the scheme is already guaranteed by Theorem~\ref{thm:van_tau}(i), and also an energy bound for $ \toten_\lp$ follows already from \eqref{mechen_apriori_bound}. The mechanical step is not affected by the regularization, but   Proposition \ref{prop:existence_thermal_step} needs to be adapted.

\begin{proposition}[Thermal step with regularization]\label{prop: thermal with reg}
  For any $M > 0$ there exists $\lp_0 > 0$ such that if $\lp \in (0, \lp_0)$, if the minimizer $\yst{k}$ given in Proposition \ref{prop:existence_mechanical_step} exists, and if $\mechen_\lp(\yst{k-1}) \leq M$ and $\mechen_\lp(\yst{k}) \leq M$ the minimization problem \eqref{thermal_step_reg} attains a unique solution $\tst{k}$  satisfying \eqref{el_thermal_step} \CCC for all $\vphi \in H^1(\Omega)$  \BBB with  $\drate$ replaced by $\rdrate$ .
\end{proposition}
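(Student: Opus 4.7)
The plan is to adapt the proof of Proposition \ref{prop:existence_thermal_step}, preserving its four-step structure (well-definedness of $\mathcal{T}$, existence, uniqueness, Euler–Lagrange), while addressing two new features: the regularized dissipation rate $\rdrate$ appears in $h_\tau$, and the previous argument for nonnegativity of the minimizer is no longer directly applicable because it relied crucially on absorbing $\pl_F \cplpot : \ddif \nabla \yst{k}$ into $\drate$, which may be strictly larger than $\rdrate$. Overcoming this in the regularized regime is the main obstacle, and the smallness assumption $\lp \le \lp_0$ will be used precisely to replace that cancellation by the mechanism described in Remark \ref{eq: temp-rem}.

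For well-definedness of $\mathcal{T}$ on $H^1(\Omega)$, the only new point concerns $\rdrate$. By \ref{D_bounds}, \eqref{chain_rule_Fderiv}, and the $L^\infty$-bound on $\nabla \yst{k-1}$ from \eqref{pos_det}, one has $\drate(\nabla \yst{k-1}, \ddif \nabla \yst{k}, \tst{k-1}) \in L^1(\Omega)$; from \eqref{def_rdrate} one deduces the pointwise inequality $\rdrate^{2/\alpha} \le C(1 + \drate)$, whence $\rdrate \in L^{2/\alpha}(\Omega)$ with $2/\alpha > 1$. Combined with $\pl_F \cplpot(\nabla \yst{k-1}, \tst{k-1}) \in L^\infty(\Omega)$ from \eqref{C_locally_lipschitz} and the Sobolev embedding of $H^1(\Omega)$, this yields $h_\tau \theta \in L^1(\Omega)$ for every $\theta \in H^1(\Omega)$; all other terms are handled as in Step 1 of the proof of Proposition \ref{prop:existence_thermal_step}. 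Coercivity on $H^1_+(\Omega)$, weak lower semicontinuity, and strict convexity (via \eqref{sec_deriv}) transfer verbatim, giving existence and uniqueness of a minimizer on $H^1_+(\Omega)$, and once nonnegativity of unconstrained minimizers is shown, also on all of $H^1(\Omega)$.

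The key step is the pointwise inequality \eqref{posiposi4}. Since $\rdrate \ge 0$ by construction, it suffices to show
\[
  \pl_F \cplpot(\nabla \yst{k-1}, \tst{k-1}) : \ddif \nabla \yst{k}
    \ge - \frac{\wst{k-1}}{\tau} \qquad \text{a.e.~in } \Omega,
\]
at which point \eqref{posiposi1}--\eqref{posiposi3} transfer unchanged and \eqref{posiposi} yields nonnegativity of the minimizer. For this inequality we invoke the mechanism of Remark \ref{eq: temp-rem}. The hypotheses $\mechen_\lp(\yst{k-1}), \mechen_\lp(\yst{k}) \le M$ translate into $\mechen(\yst{k-1}), \mechen(\yst{k}) \le M\lp^2$, so by \eqref{W1infty_dist_to_id} (which relies on \ref{W_lower_bound_spec}) we obtain $\|\nabla \yst{j} - \Id\|_{L^\infty(\Omega)} \le C^*_{M\lp^2}$ for $j \in \{k-1, k\}$ with $C^*_{M\lp^2} \to 0$ as $\lp \to 0$; in particular $\|\ddif \nabla \yst{k}\|_{L^\infty(\Omega)} \le 2 C^*_{M\lp^2}/\tau$. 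Inserting these pointwise bounds into the estimate of Remark \ref{eq: temp-rem} via \eqref{C_locally_lipschitz} and \eqref{inten_lipschitz_bounds} produces
\[
  |\pl_F \cplpot(\nabla \yst{k-1}, \tst{k-1}) : \ddif \nabla \yst{k}|
    \le \frac{4\aC}{\ac} \, \wst{k-1} \, (1 + |\Id| + C^*_{M\lp^2}) \, \frac{C^*_{M\lp^2}}{\tau},
\]
and choosing $\lp_0$ small enough so that the right-hand side is bounded by $\wst{k-1}/\tau$ gives the desired pointwise inequality.

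Once nonnegativity is established, the minimizer coincides with the unique minimizer of the strictly convex extension of $\mathcal{T}$ to all of $H^1(\Omega)$, so the Euler–Lagrange identity \eqref{el_thermal_step} with $\rdrate$ in place of $\drate$ follows from the standard first-variation argument with unconstrained test functions $\vphi \in H^1(\Omega)$, exactly as in Step 3 of the proof of Proposition \ref{prop:existence_thermal_step}. The integrability requirements for the first variation are guaranteed by the integrability of $h_\tau \vphi$ verified above.
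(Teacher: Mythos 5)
Your proposal is correct and takes essentially the same route as the paper: existence and uniqueness are inherited from Steps 1--2 of the proof of Proposition \ref{prop:existence_thermal_step} via $\rdrate \le \drate$, and nonnegativity of the minimizer is obtained from the mechanism of Remark \ref{eq: temp-rem}, using that $\mechen_\lp(\yst{k-1}), \mechen_\lp(\yst{k}) \le M$ means $\mechen(\yst{j}) \le M\lp^2$ so that \eqref{W1infty_dist_to_id} makes $\|\ddif\nabla\yst{k}\|_{L^\infty}\tau$ small for $\lp_0$ small. One cosmetic point: the detour through $\rdrate \in L^{2/\alpha}(\Omega)$ is both unnecessary and, on its own, insufficient to pair with $H^1(\Omega)$ test functions when $\alpha$ is close to $2$ (since $H^1(\Omega) \not\subset L^{2/(2-\alpha)}(\Omega)$ in general); the correct and simpler observation is that $\rdrate \le \drate \in L^\infty(\Omega)$, because \eqref{pos_det} applied to both $\yst{k-1}$ and $\yst{k}$ bounds $\nabla\yst{k-1}$ and $\ddif\nabla\yst{k}$ in $L^\infty(\Omega)$, exactly as in Step 1 of the original proof.
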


\begin{proof}
  As $\drate \ge \rdrate$, the existence and uniqueness of $\tst{k}$ follows by the same reasoning as in  Steps 1--2 of \BBB the proof of Proposition~\ref{prop:existence_thermal_step}. Since $\rdrate \geq 0$, the nonnegativity of the temperature follows by Remark \ref{eq: temp-rem} for $\lp_0$ sufficiently small, where we use $\mechen(\yst{k-1}) \leq M\lp^2$ and $\mechen(\yst{k}) \leq M\lp^2$. \BBB  
\end{proof}

Our next goal is to adapt Proposition \ref{lem:initial_toten_bound} to the present setting.   As a preparation,  supposing that for $k \in \setof{0, \ldots,  T / \tau}$ the steps $\yst{k}$ and $\tst{k}$ exist, \NNN we define \BBB
\begin{equation}\label{def_Fl_lin}
  \mathcal{F}_\lp^{(k)} \defas \toten_\lp(\yst k, \tst k) - \lp^{-1} \sprod{\ell(k\tau)}{\yst k - \id},
\end{equation}
\NNN where $\ell$ is defined in \eqref{ell2}.  By repeating the proof of  Lemma \ref{lem:EF} we find \BBB
  \begin{equation}\label{EF_lin}
    \lp^{-1} |\langle \ell(k\tau), \yst k - \id \rangle|
    \le \min \lbrace \mathcal{F}_\lp^{(k)}, \toten_\lp(\yst k, \tst k) \rbrace
      + C_T C_{f,g}^2
  \end{equation}
\NNN   for  $k \in \setof{0, \ldots,  T / \tau}$, for a constant $C_T > 0$ only depending on $T$ and $C_{f, g}$ as in \eqref{def_Cfg}. \BBB

\begin{lemma}[Inductive bound on the rescaled total energy]\label{lem:initial_toten_bound_lin}
  There exists $\tau_0 \in (0, 1]$ and, given $M > 0$, $\lp_0 \in (0, 1]$ such that the following holds true: \CCC s\BBB uppose that for $\tau \in (0, \tau_0)$, $\lp \in (0, \lp_0)$, and $k \in \setof{1, \ldots,  T / \tau}$ the steps $\yst{0}, \ldots,  \yst{k}$ and $\tst{0}, \ldots,  \tst{k}$   exist such that $\mathcal{F}_\lp^{(l)} \leq M$ for all $l \in \setof{0, \ldots,  k-1}$. 
  Then, for a   a universal constant $C$ and a constant $C_T$ \CCC possibly \BBB depending on $T$ it holds that \BBB
  \begin{align*}
    \mathcal{F}_\lp^{(k)}
    &\leq C \Big(
       \mathcal{F}^{(0)}_\lp  \BBB
        + \sum_{l=0}^k
          \mathcal{F}_\lp^{(l)} \int_{(l-1)\tau}^{l\tau}
            \big(   1 + \Vert \dot{\ell}(t) \Vert_{H^{-1}}
              + \Vert \dot{\ell}(t+\tau) \Vert_{H^{-1}}  \big)\di t
        + \NNN \kappa \BBB \int_0^{k \tau} \int_\Gamma \theta_\flat^2 \di \haus^{d-1} \di t
      \Big)\notag \\
    &\phantom{\leq}\quad + C_T (1 + C_{f,g}^3).
  \end{align*}
\end{lemma}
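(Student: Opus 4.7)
The plan is to follow the argument of Lemma \ref{lem:initial_toten_bound} but to adapt the test functions to the different natural scalings $\mechen\sim\lp^2$ and $\mathcal{W}^{\rm in}\sim\lp^\alpha$. First I would fix $l\in\{1,\ldots,k\}$, test the mechanical Euler--Lagrange \eqref{mechanical_step_single} with $\ddif\yst l$, and test the regularized thermal Euler--Lagrange (\eqref{el_thermal_step} with $\rdrate$ in place of $\drate$) with the non-constant test function $\vphi=(\wst l)^{2/\alpha-1}$. I would then multiply each resulting identity by $\tau/\lp^2$ and sum over $l=1,\ldots,k$. The convexity of $w\mapsto\tfrac{\alpha}{2}w^{2/\alpha}$, which holds because $\alpha\in[1,2]$, yields the telescoping
\begin{equation*}
  \frac{1}{\lp^2}\sum_{l=1}^{k}\tau\int_\Omega(\wst l)^{2/\alpha-1}\ddif\wst l\di x
  \geq \frac{\alpha}{2\lp^2}\int_\Omega\bigl((\wst k)^{2/\alpha}-(\wst 0)^{2/\alpha}\bigr)\di x,
\end{equation*}
which produces exactly the internal-energy part of $\toten_\lp$ from \eqref{eq: rescaled toten}. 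The mechanical contributions are handled verbatim as in \eqref{hypot_sum_lower_bound}, \eqref{elpto_sum_lower_bound_interpolation}, and \eqref{elpot_sum_lower_bound}, divided by $\lp^2$, producing $\mechen_\lp(\yst k)-\mechen_\lp(y_{0,\lp})-C_M\tau\lp^{-2}V_k$ with $V_k$ from \eqref{def_Vk}; the residual $\tau\lp^{-2}V_k$-error will be absorbed by an $\lp$-rescaled variant of Lemma \ref{lem:Vk_bound} together with the induction hypothesis $\mathcal{F}_\lp^{(l)}\leq M$ for $l\leq k-1$, once $\tau_0$ is taken sufficiently small.

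The decisive new difficulty is that, for $\vphi\neq 1$, the coupling and dissipation terms no longer cancel as they did in \eqref{test_all_steps}. The surviving combined contribution reads
\begin{align*}
&\frac{\tau}{\lp^2}\sum_{l=1}^k\int_\Omega\bigl[\partial_F\cplpot(\nabla\yst l,\tst{l-1})-\partial_F\cplpot(\nabla\yst{l-1},\tst{l-1})(\wst l)^{2/\alpha-1}\bigr]:\ddif\nabla\yst l\di x \\
&\quad+\frac{\tau}{\lp^2}\sum_{l=1}^k\int_\Omega\bigl(\drate-\rdrate\cdot(\wst l)^{2/\alpha-1}\bigr)\di x.
\end{align*}
I would split the first sum into a mechanical-jump piece (whose modulus is bounded by $C_M|\nabla\yst l-\nabla\yst{l-1}|$ via the first bound in \ref{C_bounds}, giving an extra $C_M\tau\lp^{-2}V_k$ contribution) and a residual $\partial_F\cplpot(\nabla\yst{l-1},\tst{l-1})[1-(\wst l)^{2/\alpha-1}]:\ddif\nabla\yst l$, estimated using the refined bound \eqref{C_locally_lipschitz} together with scaled Young inequalities so as to produce terms absorbable into $\toten_\lp(\yst{l-1},\tst{l-1})$ (hence into $\mathcal{F}_\lp^{(l-1)}$ via \eqref{EF_lin}) plus a small fraction of the dissipation. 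In the second sum the regularization \eqref{def_rdrate} becomes indispensable: for $\drate\leq 1$ one has $\rdrate=\drate$ and $(\wst l)^{2/\alpha-1}\leq 1+(\wst l)^{2/\alpha}$, whereas for $\drate>1$ the choice $\rdrate=\drate^{\alpha/2}$ combined with Young's inequality with conjugate exponents $2/\alpha$ and $2/(2-\alpha)$ yields
\begin{equation*}
  \drate^{\alpha/2}\cdot(\wst l)^{2/\alpha-1}\leq \lambda\drate+C_\lambda(\wst l)^{2/\alpha}
\end{equation*}
for every $\lambda>0$, so that for $\lambda$ small enough the positive mechanical dissipation dominates and only a controllable $(\wst l)^{2/\alpha}$-mass remains, which is absorbed into $\toten_\lp(\yst l,\tst l)$.

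To conclude, the conductivity term arising from the thermal test is rewritten using $\nabla(\wst l)^{2/\alpha-1}=(2/\alpha-1)(\wst l)^{2/\alpha-2}\nabla\wst l$ together with the identity $\nabla\wst l=\partial_\theta\inten\,\nabla\tst l+\partial_F\inten\,\nabla^2\yst l$; the principal contribution is nonnegative thanks to $\partial_\theta\inten\in[\ac,\aC]$ from \eqref{sec_deriv} and the ellipticity of $\hcm$ (Lemma \ref{lem:bound_hcm}), while the $\nabla^2\yst l$-error is controllable by the already-handled mechanical quantities. The boundary contribution is bounded using $\wst l\geq \ac\tst l$, nonnegativity of $\tst l$, and Young's inequality, yielding precisely the $\kappa\int_0^{k\tau}\int_\Gamma\theta_\flat^2$ term of the claim. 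The external-force work is handled verbatim as in \eqref{rufanew}--\eqref{rufanew2} but with $\mathcal{F}_\lp^{(l)}$ in place of $\mathcal{F}^{(l)}$ and \eqref{EF_lin} in place of Lemma \ref{lem:EF}; a final discrete Gronwall step then produces the stated inductive inequality. The main obstacle throughout is the dissipation-coupling balance in the regime $\drate>1$: only the scaling $\rdrate=\drate^{\alpha/2}$ makes it possible to absorb the thermal dissipation into the mechanical one while leaving a remainder controlled in the integrability exponent $2/\alpha$ intrinsic to $\toten_\lp$.
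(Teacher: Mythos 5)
Your proposal follows essentially the same route as the paper: test the mechanical step with $\ddif\yst l$, test the (regularized) thermal step with a weight behaving like $(\wst l)^{2/\alpha-1}$, rescale by $\tau/\lp^2$, and use the convexity of $w\mapsto\tfrac{\alpha}{2}w^{2/\alpha}$ to telescope the internal energy; the decisive estimate $\drate^{\alpha/2}\,(\wst l)^{2/\alpha-1}\le \drate + C(\wst l)^{2/\alpha}$ via Young with exponents $2/\alpha$ and $2/(2-\alpha)$ is exactly \eqref{eq: xireg}, and your treatment of the coupling, conductivity, boundary, and force terms matches \eqref{first_cpl_term_bound}--\eqref{eq: lastbdy} and \eqref{rufanew3}. (Whether one adds the two tested identities first or derives the two inequalities \eqref{inductive_mechen_bound}--\eqref{inductive_therm_bound} separately and then sums them is only a difference in bookkeeping.) Two technical points you should repair: first, for $\alpha\in(1,2)$ the unshifted weight $(\wst l)^{2/\alpha-1}$ need not lie in $H^1(\Omega)$, since its gradient involves $(\wst l)^{2/\alpha-2}\nabla\wst l$ with a negative exponent; the paper therefore uses $\chi'(\wst l)=(\lp^\alpha+\wst l)^{2/\alpha-1}$, and you need this shift for the test function to be admissible and for the conductivity estimate \eqref{lower_bound_mixed_term} to make sense. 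Second, the lemma asserts a \emph{universal} constant $C$ in front of the Gronwall sum, so before running your argument you must show, as the paper does in its preliminary step, that $\mathcal{F}_\lp^{(l)}\le M$ forces $\mechen_\lp(\yst l)\le 1$ for $\lp_0$ small, which makes the constants from Lemmas \ref{lem:pos_det}, \ref{lem:bad_mechen_bound}, and \ref{lem:Vk_bound} independent of $M$; also note that no Gronwall step belongs inside this lemma --- the stated inequality is the input to Gronwall, which is applied only in Theorem \ref{thm:toten_velo_bound_linearized}.
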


\begin{proof}
 As a preliminary step, we show that the assumption $\mathcal{F}_\lp^{(l)} \leq M$ for all $l \in \setof{0, \ldots,  k-1}$ implies bounds on the  rescaled \NNN mechanical \BBB energy for all $l \in \setof{0, \ldots,  k}$.   First, \BBB by \eqref{EF_lin}  we get for $l \in \setof{0, \ldots,  k-1}$ that 
  \begin{equation}\label{toten_lp_l_bound}
    \toten_\lp(\yst{l},\tst{l}) \NNN = \BBB \mathcal{F}_{\lp}^{(l)} + \lp^{-1} \sprod{\ell(k\tau)}{\yst{k} - \id}  \leq 2\mathcal{F}_{\lp}^{(l)} + C_T C_{f, g}^2 \BBB \leq 2M + C_T C_{f, g}^2.
  \end{equation}
  Consequently, we can choose $\lp_0$ sufficiently small such that $\mathcal{M}(\yst{l}) \le 1$  for $l \in \setof{0, \ldots,  k-1}$. Then, we  apply \NNN \eqref{bad_mechen_bound} \BBB for $M=1$ \NNN  to \BBB get $\tau_0$ such that for $\tau \in (0,\tau_0]$ it holds that  
  \begin{align*}
    \mechen(\yst{k})
    &\leq (1+C_1\tau) \mechen(\yst{k-1}) + C_1\tau \big( \norm{\tst{k-1} \wedge 1}^2_{L^2(\Omega)}
    + \lp^2\norm{\fst k}_{L^2(\Omega)}^2 + \lp^2 \norm{\gst k}_{L^2(\Gamma)}^2\big),
  \end{align*}
  where $C_1$ is \CCC a universal constant\BBB.   By  \eqref{inten_lipschitz_bounds}  and  the fact that $1\wedge t \le t^{1/\alpha}$ for $t \ge 0$ we find
  \begin{align}\label{eq: severaltim}
  \norm{\tst{\NNN k-1} \wedge 1}^2_{L^2(\Omega)} \le C_0 \norm{\wst{k-1}}_{L^{\CCC\frac{2}{\alpha}\BBB}(\Omega)}^{\CCC\frac{2}{\alpha}\BBB},
  \end{align}
  such that, dividing the above estimate by $\lp^2$ and recalling \eqref{eq: rescaled toten} as well as Remark \ref{rem:bad_mechen_bound} we get
  \begin{equation*}
  \NNN  \mechen_\eps \BBB (\yst{k})
    \leq  (1+C_1\tau)  \toten_\lp
(\yst{k-1},\tst{k-1}) + C_1\big(\norm{f}_{L^2(I \times \Omega)}^2
        + \norm{g}_{L^2(I \times \Gamma_N)}^2\big).
  \end{equation*}
This along with \eqref{toten_lp_l_bound} shows that, possibly \CCC decreasing \BBB $\lp_0$, we have $\mechen_\lp(\yst{l}) \le 1$ for all $l \CCC \in \{0, \ldots, k \}\BBB$. This induces that in the following proof the constants coming from Lemmas \ref{lem:pos_det}, \ref{lem:bad_mechen_bound},  and \ref{lem:Vk_bound} are universal and denoted by $C_1$.

 As in the proof of Proposition \ref{lem:initial_toten_bound}, the strategy relies on a suitable \NNN test of the \BBB  mechanical and the thermal equation, \CCC see also \BBB \eqref{el_mech_test_step_l}--\eqref{el_temp_test_step_l}.  In contrast, however, the resulting equations cannot be summed up, but have to be treated separately. This will allow us to show the estimates 
  \begin{equation}\label{inductive_mechen_bound}
  \begin{aligned}
    &\mechen_\lp(\yst{k}) - \lp^{-1}\sprod{\ell(k\tau)}{\yst{k} - \id}
    \NNN + \BBB \frac{\tau}{\lp^2} \sum_{l = 1}^k \int_\Omega
      \drate(\nabla \yst{l-1}, \ddif \nabla\yst{l}, \tst{l-1}) \di x \\
    &\quad\leq C \mechen_\lp(\yst{0})
      + C_T (1 + C_{f, g}^3)
      + C \sum_{l = 0}^k \mathcal{F}_\lp^{(l)}
          \int_{(l-1)\tau}^{l\tau}
          \big(  1
            + \norm{\dot{\ell}(t)}_{H^{-1}}
            + \norm{\dot{\ell}(t+\tau)}_{H^{-1}}\big) \di t,
  \end{aligned}
  \end{equation}
  and 
    \begin{equation}\label{inductive_therm_bound}
  \begin{aligned}
    &\frac{\alpha}{2\lp^2} \int_\Omega (\wst{k})^{\frac{2}{\alpha}} \di x
      \NNN - \BBB \frac{\tau}{\lp^2} \sum_{l = 1}^{k} \int_\Omega
        \drate(\nabla \yst{l-1}, \ddif \nabla \yst{l}, \tst{l-1}) \di x \\
    &\quad\leq \NNN \frac{\alpha}{\lp^2} \BBB \int_\Omega (\wst{0})^{\frac{2}{\alpha}} \di x + C \mechen_\lp(\yst{0})
      +  \CCC C_T (1 + C_{f, g}^2) \BBB
      + C    \NNN \tau \sum_{l = 0}^{k} 
      \mathcal{F}_\lp^{(l)}  \BBB  +  \NNN \kappa \BBB \int_0^{k \tau} \int_\Gamma \theta_\flat^2 \di \haus^{d-1} \di t,
  \end{aligned}
  \end{equation}
  where $C$ is a universal constant and $C_T$ possibly depends on $T$. Then, in view of \eqref{eq: rescaled toten}, \eqref{def_Fl_lin}, and \eqref{toten_lp_l_bound} for $l=0$, the result follows by summing up the two estimates. We now treat \eqref{inductive_mechen_bound} and \eqref{inductive_therm_bound} separately. \BBB

  \textit{Step 1 (Inductive bound on the mechanical energy):}
   The first part is achieved by  bounds \NNN similar to the ones \BBB obtained in the proof of Proposition \ref{lem:initial_toten_bound}, and we therefore refer to estimates therein. Testing (\ref{mechanical_step_single})  for $l$ in place of $k$ with $z = \ddif \yst{l}$ we get \eqref{el_mech_test_step_l}. Then, multiplying both sides  by $\frac{\tau}{\lp^2}$, summing over $l= 1, \ldots,  k$, and using $W = W^{\rm el} + W^{\rm  cpl}$, \eqref{hypot_sum_lower_bound},   as well as \eqref{elpot_sum_lower_bound}, \BBB \CCC by possibly increasing \BBB  $C_1$ \CCC we derive that\BBB
  \begin{align}\label{mechen_diff_bound}
    &\mechen_\lp(\yst{k}) - \mechen_\lp(y_{0,\lp})
    \NNN +  \BBB \frac{\tau}{\lp^2} \sum_{l=1}^k \int_\Omega \big(
        \pl_F \cplpot(\nabla \yst{l}, \tst{l-1}) : \ddif \nabla \yst{l}
        + \drate(\nabla \yst{l-1}, \ddif \nabla \yst{l}, \tst{l-1})
      \big) \di x \notag\\
    &\quad\leq  C_1 \BBB \frac{\tau^2}{\lp^2} \sum_{l=1}^k \int_\Omega \abs{\ddif \nabla^2 \yst{l}}^2 \di x
      + \frac{\tau}{\lp} \sum_{l=1}^k \sprod{\lst l}{\ddif \yst{l}}.
  \end{align}
 Here, we also used the definition of $V_k$ in \eqref{def_Vk}, and the fact that the initial value is \NNN given \BBB by $y_{0,\lp}$. 
  By  \eqref{C_locally_lipschitz}, \eqref{eq: severaltim}, and  Young's inequality \CCC it follows that \BBB
  \begin{align}
    & \frac{1}{\lp^2} \BBB \Big|\int_\Omega
      \pl_F \cplpot(\nabla \yst{l}, \tst{l-1}) : \ddif \nabla \yst{l} \di x \Big|
    \leq \frac{C}{\lp^2} \int_\Omega (\tst{l-1} \wedge 1) (1 + \abs{\nabla \yst{l} - \Id}) \abs{\ddif \nabla \yst{l}} \di x \nonumber \\
    & \quad  \le  \frac{C}{\lp^2} \int_\Omega \big((\wst{l-1})^{\frac{2}{\alpha}} + \abs{\nabla \yst{l} - \Id}^2\big) \di x + \frac{C}{\lp^2} \int_\Omega    \abs{\ddif \nabla \yst{l}}^2 \di x. \label{first_cpl_term_bound}
  \end{align}
     By Lemma \ref{lem:Vk_bound} \NNN and \BBB \eqref{eq: severaltim}  we get 
  \begin{equation*}
\sum_{l = 1}^k \tau \int_\Omega \abs{\ddif \nabla \yst l}^2 \di x
    \le  \CCC \lp^2 \NNN C_1 \BBB \mechen_\lp(y_{0,\lp})  + \CCC \lp^2 \BBB C_1 C_T  C_{f, g}^2 + C_1\tau \sum_{l = 0}^{k-1} \big( \NNN  \mechen(\yst l)  + \BBB  \Vert(\wst{l-1})^{\frac{1}{\alpha}} \Vert^2_{L^2(\Omega)}\big).  
  \end{equation*} 
  Using the definition of the \CCC total \BBB energy in \eqref{eq: rescaled toten},  the definition of $\elen_\lp$, and \ref{W_lower_bound_spec}, we insert this in \eqref{first_cpl_term_bound} \CCC to \BBB obtain  
\begin{align}\label{first_cpl_term_bound2}  
  &\frac{\tau}{\lp^2}  \sum_{l=1}^k\Big|\int_\Omega
      \pl_F \cplpot(\nabla \yst{l}, \tst{l-1}) : \ddif \nabla \yst{l} \di x \Big| +  \frac{\tau}{\lp^2} \sum_{l=1}^k \int_\Omega \abs{\ddif \nabla^2 \yst{l}}^2 \di x \\
      &\quad\le   C\tau \sum_{l=0}^k    \toten_\lp(\yst{l}, \tst{l}) 
          + C\mechen_\lp(y_{0,\lp})  + \NNN C \BBB C_T     C_{f, g}^2 .           
   \end{align}
    Next, \NNN by repeating the argument in \eqref{rufanew}--\eqref{rufanew2} we \BBB  find
  \begin{align}\label{rufanew3}
 \frac{\tau}{\lp} \sum_{l = 1}^k \langle \lst l, \ddif \yst l \rangle
 &\le \lp^{-1}\langle \ell (k\tau), {\hat{y}_\tau} (k \tau) - \id \rangle
    - \lp^{-1}\langle \ell (0), {\hat{y}_\tau} (0)  - \id\rangle
  \notag \\  &\phantom{\leq} \quad +
   \NNN C\sum_{l=0}^k \Big(
        \mathcal{F}^{(l)}_\eps \int_{(l-1)\tau}^{l\tau}\hspace{-0.1cm}
          \big(
            \Vert \dot{\ell}(t) \Vert_{H^{-1}}
            + \Vert \dot{\ell}(t+\tau) \Vert_{H^{-1}}
          \big) \di t
      \Big) +   C_T  (1  + C_{f,g}^3). \BBB 
  \end{align}
\BBB   Employing \eqref{first_cpl_term_bound2} and \eqref{rufanew3} in \eqref{mechen_diff_bound}, and using again \eqref{EF_lin} we arrive at \eqref{inductive_mechen_bound}.

  \textit{Step 2 (Inductive bound on the temperature):}
 For $\alpha \in [1,2)$, \BBB let $\chi(t) \defas \frac{\alpha}{2} (\lp^\alpha + t)^{\frac{2}{\alpha}}$   for $t \geq 0$. The convexity of $\chi$ implies 
  \begin{equation*}
    \int_\Omega (\wst{l} - \wst{l-1}) \chi'(\wst{l}) \di x
    \geq \int_\Omega \chi(\wst{l}) \di x - \int_\Omega \chi(\wst{l-1}) \di x.
  \end{equation*} 
Summation of this estimate \CCC over \BBB $l = 1, \ldots,  k$ \CCC leads to \BBB 
\begin{align}\label{eq: too divide}
&\frac{\alpha}{2} \int_\Omega (\wst{k})^{\frac{2}{\alpha}} \di x \le \int_\Omega \chi(\wst{\NNN k}) \di x \le \int_\Omega \chi(\wst{0}) \di x + \sum_{l=1}^k \int_\Omega (\wst{l} - \wst{l-1}) \chi'(\wst{l}) \di x.
\end{align}
This suggests to test \eqref{el_thermal_step} (for $l$ in place of $k$, $\rdrate$ in place of $\drate$, and $\lp^{\alpha} \btst{l}$ in place of $\NNN \lp^{2} \BBB \btst{l}$) \BBB  with $\varphi = \chi'(\wst{l})$ which yields
  \begin{align}\label{thermal_test_reg}
    0 &= \int_\Omega \Big(\ddif \wst{l} - \partial_F \cplpot(\nabla \yst{l-1}, \tst{l-1}) : \ddif \nabla \yst{l} \NNN - \BBB \rdrate(\nabla \yst{l-1}, \ddif \nabla \yst{l}, \tst{l-1})\Big) \chi'(\wst{l}) \di x \notag \\
    &\phantom{=}\quad
      + \int_\Omega \hcm(\nabla \yst{l-1}, \tst{l-1}) \nabla \tst{l} \cdot \nabla (\chi'(\wst{l})) \di x
      + \NNN \kappa \BBB \int_{\Gamma} (\tst{l} - \lp^{\alpha} \btst{l}) \chi'(\wst{l}) \di \haus^{d-1}.
  \end{align}
   We now estimate the various terms separately. First, we \BBB employ \eqref{C_locally_lipschitz}, \eqref{inten_lipschitz_bounds},  \eqref{pos_det}, \BBB and Young's inequality with powers $2/\alpha$ and $2/(2 - \alpha)$ to obtain
  \begin{align}
    &\Big|\int_\Omega \Big[\partial_F \cplpot(\nabla \yst{l-1}, \tst{l-1}) : \ddif \nabla \yst{l} \Big] \chi'(\wst{l}) \di x \Big| \nonumber \\
    &\quad\leq  2C_0 \BBB \int_\Omega (\tst{l-1} \wedge 1) (1 +  \abs{\nabla \yst{l-1} } \BBB ) \abs{\ddif \nabla \yst{l}} (\lp^\alpha + \wst{l})^{\frac{2}{\alpha} - 1} \di x \nonumber \\
    &\quad\leq C (1 +  C_1 \BBB )
      \int_\Omega \Big( (\wst{l-1} \wedge 1)^{\frac{2}{\alpha}} \abs{\ddif \nabla \yst{l}}^{\frac{2}{\alpha}} + \big( \lp^2 + (\wst{l})^{\frac{2}{\alpha}} \big) \Big) \di x. \label{second_cpl_term_bound_incompl}
  \end{align}
  If $\alpha \in (1, 2)$, we use in the last estimate another Young's inequality, now with powers $\alpha/(\alpha - 1)$ and $\alpha$, as well as $t \wedge 1 \leq t^{(\alpha-1)/\alpha}$ for all $t \geq 0$ to show
  \begin{align}
    &\Big|\int_\Omega \Big[\partial_F \cplpot(\nabla \yst{l-1}, \tst{l-1}) : \ddif \nabla \yst{l} \Big] \chi'(\wst{l}) \di x \Big|
    \leq C \int_\Omega
     \big( \lp^2 + (\wst{l-1})^{\frac{2}{\alpha}}
      + (\wst{l})^{\frac{2}{\alpha}}
      + \abs{\ddif \nabla \yst{l}}^2 \big) \di x \nonumber \\
    &\quad\leq C \lp^2 \Big(
        1 + \toten_\lp(\yst{l-1}, \tst{l-1}) + \toten_\lp(\yst{l}, \tst{l})
        + \frac{1}{\lp^2} \int_\Omega \abs{\ddif \nabla \yst{l}}^2 \di x
      \Big). \label{second_cpl_term_bound}
  \end{align}
  Notice that for $\alpha = 1$ the above bound follows \CCC directly \BBB from \eqref{second_cpl_term_bound_incompl}, simply using $\wst{l-1} \wedge 1 \leq 1$.
  
 Next, we estimate \NNN the \BBB $\rdrate$-term. \BBB   From the definition of $\rdrate$  in \eqref{def_rdrate}, \BBB we have that $\rdrate \leq \drate^{\frac{\alpha}{2}}$.
  Hence, by Young's inequality with power $2/\alpha$ and $2/(2-\alpha)$,  and by a similar reasoning as before, \BBB it follows that
  \begin{align}\label{eq: xireg}
    &\int_\Omega \rdrate(\nabla \yst{l-1}, \ddif \nabla \yst{l}, \tst{l-1}) \chi'(\wst{l}) \di x \leq \int_\Omega \drate(\nabla \yst{l-1}, \ddif \nabla \yst{l}, \tst{l-1}) \di x
      + C \lp^2 (1 + \toten_\lp(\yst{l}, \tst{l})).
  \end{align}
  We continue by investigating the $\mathcal{K}$\CCC -term\BBB.  By  \eqref{Wint} and  \BBB  the chain rule we have
  \begin{align*}
    \nabla (\chi'(\wst{l})) = \frac{2-\alpha}{\alpha} (\lp^\alpha + \wst{l})^{\frac{2}{\alpha}-2} \Big[
      \Big(
        \partial_F \cplpot(\nabla \yst{l}, \tst{l}) 
        - \tst{l} \partial_{F \theta} \cplpot(\nabla \yst{l}, \tst{l})
      \Big) : \nabla^2 \yst{l} & \\
      - \tst{l} \partial_\theta^2 \cplpot(\nabla \yst{l}, \tst{l}) \nabla \tst{l} &
    \Big].
  \end{align*}
  This \BBB combined with \eqref{C_locally_lipschitz}, the second and third bound in \ref{C_bounds},  \eqref{inten_lipschitz_bounds}, \BBB \eqref{W1infty_dist_to_id}, and \eqref{bound_hcm} \CCC leads to\BBB
  \begin{equation}\label{lower_bound_mixed_term}
  \NNN \Kst{l-1} \BBB \nabla \tst{l} \cdot \nabla (\chi'(\wst{l}))
    \geq \frac{2-\alpha}{\alpha} (\lp^\alpha + \wst{l})^{\frac{2}{\alpha} - 2} \Big(
       c \abs{\nabla \tst{l}}^2
      -C (\wst{l} \wedge 1) \abs{\nabla^2 \yst{l}} \abs{\nabla \tst{l}}
    \Big)
  \end{equation}
  \NNN for some $c>0$,  where we set   $ \Kst{l-1} \defas \hcm(\nabla \yst {l-1}, \tst {l-1})$ for brevity. \BBB
 (In the definition of $\chi$, the addend $\lp^\alpha$ appears to ensure that  \BBB $(\lp^\alpha + \wst{l})^{\frac{2}{\alpha} - 2}$ is well-defined \CCC for $\alpha > 1$\BBB.)   By $t \wedge 1 \leq t^{1 - 2/(p\alpha)}$ for all $t \geq 0$, Young's inequality twice (firstly with power 2 and constant $\lambda \in (0, 1)$, secondly with powers $p/(p-2)$ and $p/2$) we derive that
  \begin{align*}
    (\wst{l} \wedge 1) \abs{\nabla^2 \yst{l}} \abs{\nabla \tst{l}}
    &\leq \lambda \abs{\nabla \tst{l}}^2 + \frac{1}{\lambda} (\wst{l})^{2\frac{p-2}{p}} (\wst{l})^{\frac{4(\alpha-1)}{p\alpha}} \abs{\nabla^2 \yst{l}}^2 \\
    &\leq \lambda \abs{\nabla \tst{l}}^2 + \frac{1}{\lambda} \Big(
        (\wst{l})^2
        + (\wst{l})^{2\frac{\alpha-1}{\alpha}} \abs{\nabla^2 \yst{l}}^p
      \Big).
  \end{align*}
  Choosing $\lambda$ small enough such that $C \lambda <  c/2\BBB$ (with $c$ and $C$ as in \eqref{lower_bound_mixed_term}), we derive with \eqref{lower_bound_mixed_term} that
    \begin{equation}\label{eq: the K term-remark}
    \int_\Omega   \NNN \Kst{l-1} \BBB \nabla \tst{l} \cdot \nabla (\chi'(\wst{l})) \di x
    \geq \frac{c}{2} \NNN  \frac{2-\alpha}{\alpha} \BBB \CCC (\lp^\alpha + \wst{l})^{\frac{2}{\alpha} - 2} \BBB  \abs{\nabla \tst{l}}^2- C \lp^2  \NNN  \toten_\lp(\yst{l}, \tst{l}). \BBB
  \end{equation} \BBB
    Lastly, for the boundary term, we use \eqref{inten_lipschitz_bounds} as well as Young's inequality with powers $2/\alpha$ and $2/(2-\alpha)$ and constant $\lambda \in (0, 1)$ to arrive at
  \begin{align*}
    &\int_{\Gamma} (\tst{l} - \lp^{\alpha} \btst{l})\chi'(\wst{l}) \di \haus^{d-1}  \ge \int_{\Gamma} (C_0^{-1}\wst{l} - \lp^{\alpha} \btst{l})(\lp^\alpha + \wst{l})^{\frac{2}{\alpha} - 1} \di \haus^{d-1} \BBB \\
    &\quad\geq  \frac{1}{C_0} \BBB \int_\Gamma (\wst{l})^{\frac{2}{\alpha}} \di \haus^{d-1}
    -\frac{ \lp^2\BBB}{\lambda} \int_\Gamma (\btst{l})^{\frac{2}{\alpha}} \di\haus^{d-1}
    -\lambda \int_\Gamma ( \lp^\alpha  + \BBB \wst{l})^{\frac{2}{\alpha}} \di \haus^{d-1}.
  \end{align*}
   Therefore, choosing $\lambda$ sufficiently small with respect to $1/C_0$, we get 
  \begin{align}\label{eq: lastbdy} 
  \int_{\Gamma} (\tst{l} - \lp^{\alpha} \btst{l})\chi'(\wst{l}) \di \haus^{d-1} \ge -C\lp^2\CCC\Big(\BBB1 + \int_\Gamma (\btst{l})^{\frac{2}{\alpha}} \di\haus^{d-1}\CCC\Big)\BBB. 
  \end{align}
    We \CCC then \BBB \NNN divide \eqref{eq: too divide} by \BBB  $\lp^2$, insert \eqref{thermal_test_reg} multiplied by $\tau$ in this inequality, and use \eqref{second_cpl_term_bound}, \eqref{eq: xireg},  \eqref{eq: the K term-remark}, and \eqref{eq: lastbdy} to estimate the various terms. This together with the bounds from \eqref{EF_lin} and   \eqref{first_cpl_term_bound2},  and the fact that 
    $$\tau\sum_{l=1}^k\Vert (\btst{l})^{\CCC\frac{2}{\alpha}\BBB} \Vert_{L^1(\Gamma)} \le C\tau\sum_{l=1}^k \big(1+ \Vert \btst{l} \Vert^2_{L^2(\Gamma)}\big) \le C_T + C\Vert \bt \Vert^2_{L^2(\NNN [0,k\tau] \BBB\times \Gamma)} $$
    by  H\"older's inequality yields \eqref{inductive_therm_bound}. This concludes the proof. 
\end{proof}

\begin{theorem}[Well-definedness of the scheme]\label{thm:toten_velo_bound_linearized}
  For any $T > 0$ there  exist  a constant $\bar{C}_T$,  corresponding constants
  \begin{equation*}
    M' \defas
    2 e^{ \bar{C}_T(1  + C_{f,g})} \Big(
    \NNN \bar{C}_T \BBB  \mathcal{F}^{(0)}
      + \bar{C}_T ( 1 + C_{f,g}^3 )
      + \kappa \int_0^T \int_\Gamma \theta_\flat^2 \di \haus^{d-1} \di t
    \Big), \quad  M\defas 2M' + \bar{C}_T   C_{f,g}^2,
  \end{equation*}
  as well as constants  $\lp_0 \CCC,\, \tau_0 \in (0, 1]\BBB$ depending also on $M$  
  such that  the following holds true:  \CCC f\BBB or each $\lp \in (0,\lp_0)$ and $\tau \in (0, \tau_0)$ such that $T / \tau \in \N$ the sequences $\yst{0}, \ldots,  \yst{T/\tau}$ and $\tst{0}, \ldots,  \tst{T/\tau}$ \NNN exist, \BBB  and  for all $k \CCC\in \{ 0, \ldots,  T/\tau \}\BBB$ we  have that 
  \begin{equation*}
    \toten_\lp(\yst k, \tst k)\leq M,
  \end{equation*}
  \begin{equation}\label{velocity_apriori_bound-new}
  \sum_{k=1}^{T/\tau} \frac{\tau}{\lp^2} \int_\Omega \abs{\ddif \nabla \yst l}^2 \di x
    \leq \NNN  \bar{C}_T M (1+T)  +  \bar{C}_T     C_{f, g}^2. \BBB
  \end{equation}
\end{theorem}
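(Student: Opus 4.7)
The plan is to mirror the inductive strategy of Theorem \ref{thm:apriori_toten_velo_bound}, but replacing the unscaled energy $\toten$ by the rescaled $\toten_\lp$ from \eqref{eq: rescaled toten} and replacing Lemma \ref{lem:initial_toten_bound} by its rescaled counterpart Lemma \ref{lem:initial_toten_bound_lin}. First, I would fix the constants in the correct order: take $\bar{C}_T$ to be the maximum of the universal constant and the $T$-dependent constant appearing in Lemma \ref{lem:initial_toten_bound_lin}, the constant of Lemma \ref{lem:Vk_bound}, and the constant of \eqref{H-1_est}; then define $M'$ and $M$ as in the statement. The threshold $\lp_0$ will then be chosen small enough so that, on the one hand, $\toten_\lp \le M$ implies $\mechen \le 1$ via the passage from $\toten_\lp$-bounds to $\mechen$-bounds already carried out at the start of the proof of Lemma \ref{lem:initial_toten_bound_lin} (ensuring that Propositions \ref{prop:existence_mechanical_step} and \ref{prop: thermal with reg} apply with universal constants), and on the other hand so that Proposition \ref{prop: thermal with reg} itself applies (recall its $\lp_0$ depends on $M$). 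Finally, $\tau_0$ will be shrunk so that Proposition \ref{prop:existence_mechanical_step}, Lemma \ref{lem:bad_mechen_bound}, Lemma \ref{lem:initial_toten_bound_lin}, and Lemma \ref{lem:Vk_bound} are all applicable for this choice of $M$.

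The proof proceeds by induction over $K \in \{0, \ldots, T/\tau\}$ on the statement $\mathcal{F}_\lp^{(K)} \le M'$, where $\mathcal{F}_\lp^{(K)}$ is defined in \eqref{def_Fl_lin}. The base case $K=0$ is immediate from the definition of $M'$ in the statement (with the factor $2 e^{\bar{C}_T C_{f,g}}$ providing ample slack) together with $\yst 0 = y_{0,\lp}$, $\tst 0 = \theta_{0,\lp}$. For the inductive step, assuming the bound for all indices up to $K-1$, first I invoke Proposition \ref{prop:existence_mechanical_step} to produce $\yst K$ (using that $\mechen(\yst{K-1}) \le \lp^2$ for $\lp_0$ small, as indicated above) and then Proposition \ref{prop: thermal with reg} to produce $\tst K$ (using $\mechen_\lp(\yst{K-1}), \mechen_\lp(\yst K) \le 1$, again by the reasoning at the start of the proof of Lemma \ref{lem:initial_toten_bound_lin}). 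Next, Lemma \ref{lem:initial_toten_bound_lin} applies for every $k \le K$ and yields
\begin{equation*}
  \mathcal{F}_\lp^{(k)}
  \le C \Big( \mathcal{F}^{(0)}_\lp + \sum_{l=0}^{k} \mathcal{F}_\lp^{(l)} \beta_l + \kappa \int_0^T \int_\Gamma \bt^2 \di \haus^{d-1} \di t \Big) + C_T(1 + C_{f,g}^3),
\end{equation*}
with $\beta_l \defas \int_{(l-1)\tau}^{l\tau}(1 + \Vert \dot \ell(t) \Vert_{H^{-1}} + \Vert \dot \ell(t+\tau) \Vert_{H^{-1}}) \di t$. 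After possibly shrinking $\tau_0$ depending on $f, g, T$ so that $C\beta_l \le 1/2$ for all $l$, the discrete Gronwall argument from Step 3 of the proof of Theorem \ref{thm:apriori_toten_velo_bound} applies verbatim and yields $\mathcal{F}_\lp^{(K)} \le M'$ by the definition of $M'$ (note $\sum_l \beta_l \le T + \bar{C}_T C_{f,g}$ by \eqref{H-1_est} and \eqref{def_Cfg}). The bound $\toten_\lp(\yst K, \tst K) \le M$ then follows from \eqref{EF_lin}.

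To conclude with \eqref{velocity_apriori_bound-new}, I apply Lemma \ref{lem:Vk_bound} for $k = T/\tau$ and divide by $\lp^2$: the contribution $\mechen_\lp(y_{0,\lp})$ and the external force contribution $\bar{C}_T C_{f,g}^2$ are absorbed into the stated bound, while the sum $\tau \sum_l (\mechen_\lp(\yst l) + \Vert \tst l \wedge 1\Vert_{L^2}^2 / \lp^2)$ is controlled by $T \cdot M$ via the energy bound just proved, using \eqref{eq: severaltim} to estimate the temperature contribution by $\toten_\lp$. The main subtlety I anticipate lies in the bookkeeping of constants: one must ensure that $\lp_0$ and $\tau_0$ depend only on $M$ (and on $T, f, g, \bt$, initial data) and that the small-strain hypothesis $\mechen(\yst{l}) \le 1$ propagates consistently through the induction, so that universal constants from Lemmas \ref{lem:pos_det}, \ref{lem:bad_mechen_bound}, and \ref{lem:Vk_bound} can be used in both the hypotheses of the existence propositions and in Lemma \ref{lem:initial_toten_bound_lin}. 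This coupling between the choice of $\lp_0$ and the application of the various auxiliary results is the principal technical point, but it is essentially already handled by the preliminary paragraph in the proof of Lemma \ref{lem:initial_toten_bound_lin}.
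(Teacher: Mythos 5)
Your proposal is correct and follows essentially the same route as the paper, which itself only sketches this proof by pointing to the induction of Theorem \ref{thm:apriori_toten_velo_bound} with Lemma \ref{lem:initial_toten_bound_lin} in place of Lemma \ref{lem:initial_toten_bound}, \eqref{EF_lin} in place of Lemma \ref{lem:EF}, modified Gronwall data $\beta, a_l, b_l$, Proposition \ref{prop: thermal with reg} for the thermal step once $\lp_0$ is small, and Lemma \ref{lem:Vk_bound} (via \eqref{first_cpl_term_bound2}) for the strain-rate bound. You correctly identify the one genuine subtlety — the bootstrapping of $\toten_\lp \le M$ into $\mechen_\lp \le 1$ so that the auxiliary lemmas apply with universal constants — which is exactly the point handled in the preliminary paragraph of the proof of Lemma \ref{lem:initial_toten_bound_lin}.
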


\begin{proof}
  The theorem is a consequence of Lemma \ref{lem:initial_toten_bound_lin} and Lemma \ref{lem:Vk_bound}. The argument is similar to the one of Theorem \ref{thm:apriori_toten_velo_bound} and we therefore omit the details. Let us just mention that the energy bound follows in the same way by induction, up to using different values $\beta$, $a_l$, and $b_l$ in \eqref{eq: bbeta}, and by employing \eqref{EF_lin} in place of Lemma \ref{lem:EF}. Based on the uniform energy bound, Proposition \ref{prop: thermal with reg} indeed shows that the scheme is well\CCC -\BBB defined, provided that $\lp_0$ is chosen sufficiently small. Eventually, the bound on the strain rates follows from Lemma \ref{lem:Vk_bound}, see particularly \eqref{first_cpl_term_bound2} in the previous proof. 
\end{proof}

\BBB
\begin{remark}\label{rem: case1}
{\normalfont
Due to our regularization of the dissipation rate, in the case $\alpha \in [1, 2)$ we obtain the  additional control
\begin{equation}\label{weighted_nabla_temp_bound_reg}
  \int_0^T \int_\Omega \frac{\abs{\nabla \overline{\mu}_{\lp, \tau}}^2}{(1 + \overline{\mu}_{\lp, \tau})^{2(1 - \frac{1}{\alpha})}} \di x \di t \leq C_\alpha < \infty
\end{equation}
for a constant $C_\alpha$  depending on $\alpha$, but \BBB independent of $\lp$ and $\tau$, where we shortly wrote $\overline{\mu}_{\lp, \tau} \defas \lp^{-\alpha} \nt$ (see also \eqref{y_interpolations} for the definition of $\nt$). This follows by using \NNN the positive term on the right-hand side of  \eqref{eq: the K term-remark}. \BBB
}
\end{remark}

\subsection{A priori bounds}\label{sec: a priori}

  Fix initial values $(y_{0,\lp}, \theta_{0,\lp})$ with $\mathcal{E}_\lp(y_{0,\lp},\theta_{0,\lp}) \le E_0$ for some $E_0>0$. Without further notice, we suppose in this subsection that the sequences $\yst{0}, \ldots,  \yst{T/\tau}$ and $\tst{0}, \ldots,  \tst{T/\tau}$ exist by Theorem \ref{thm:van_tau}(i) or Proposition \ref{cor:van_tau_reg}(i), respectively, for $\lp \in (0, \lp_0)$ for some $\lp_0$ depending only on  $\alpha$, $E_0$, $f$, $g$,  $\bt$, and $T$. (In \CCC the \BBB case $\alpha = 2$, we can set $\lp =1$). \BBB We derive a priori bounds on the rescaled  displacements \BBB $\lp^{-1} (\yst l - \id)$  and the rescaled temperatures $\lp^{-\alpha} \tst l$ for $l \CCC \in \{1, \ldots,  T/\tau\}\BBB$. To this end, for small $\lp$, we will \NNN again \BBB assume  \ref{W_lower_bound_spec}. Recall the definition of the interpolations in \eqref{y_interpolations}.
In a similar way, we write $\nw = \inten(\ny,\nt)$, and similarly for the other interpolations.
The next lemma is a direct  consequence of Theorem \ref{thm:apriori_toten_velo_bound} and \BBB Theorem \ref{thm:toten_velo_bound_linearized}.

\begin{lemma}[First a priori bounds]\label{lemma: first a prioiro}
  Let $E_0>0$ such that    $\mathcal{E}_\lp(y_{0,\lp},\theta_{0,\lp}) \le E_0$. \BBB   Then, there exists a constant $C > 0$   depending on \NNN $\alpha$, \BBB $E_0$, $f$, $g$,  $\bt$,  and $T$ \BBB such that  $\mathcal{E}_\lp(\yst{k},\tst{k}) \le C$ for all $k \CCC \in \{1, \ldots,  T/\tau \} \BBB$, and \BBB the interpolants constructed from the discrete solutions satisfy
  \begin{subequations}\label{a priori}
  \begin{align}
    &\Vert \ny - \id \Vert_{L^\infty(I; W^{1, \infty}(\Omega;\R^d))} + \Vert \nabla^2 \ny  \Vert_{L^\infty(I; L^{p}(\Omega; \R^d))}
      \le C \lp^{2/p}, \label{a_priori_Linfty_W2p_lin} \\
    &\Vert \ny - \id \Vert_{L^\infty(I; H^{1}(\Omega; \R^d))}
      \le C \lp, \label{a_priori_Linfty_H1_lin} \\
    &\Vert \nabla \dotay \Vert_{L^2(I \times \Omega; \R^{d\times d})}
      \le C \lp, \label{a_priori_strain_rates_lin} \\
    &\Vert \nt \Vert_{L^\infty(I; L^1(\Omega))}
        + \Vert \nw \Vert_{L^\infty(I;L^1(\Omega))}
      \le C   \lp^\alpha \BBB \label{a_priori_temp_L1_lin}
  \end{align}
  \end{subequations}
  Estimates \eqref{a_priori_Linfty_W2p_lin}--\eqref{a_priori_Linfty_H1_lin} also hold for $\py$, and \eqref{a_priori_temp_L1_lin} holds for $\pt$, $\at$, $\pw$, and $\aw$, as well.
\end{lemma}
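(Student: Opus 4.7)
The plan is to invoke Theorem~\ref{thm:apriori_toten_velo_bound} (if $\alpha = 2$) or Theorem~\ref{thm:toten_velo_bound_linearized} (if $\alpha \in [1,2)$) in order to obtain a uniform energy bound $\toten_\lp(\yst k, \tst k) \le C$ for every $k \in \{0, \ldots, T/\tau\}$, with $C$ depending only on $\alpha$, $E_0$, $f$, $g$, $\bt$, and $T$. By the definition of the rescaled total energy in \eqref{eq: rescaled toten}, this single bound splits into
\begin{equation*}
  \mechen(\yst k) \le C\lp^2 \qquad \text{and} \qquad \int_\Omega (\wst k)^{2/\alpha} \di x \le C\lp^2,
\end{equation*}
which will serve as the starting point for all the estimates in \eqref{a priori}.

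From the mechanical-energy bound, \ref{H_bounds} immediately yields $\|\nabla^2 \yst k\|_{L^p(\Omega)} \le C\lp^{2/p}$, which is the second-gradient part of \eqref{a_priori_Linfty_W2p_lin}, while \ref{W_lower_bound_spec} combined with \eqref{H1_dist_to_id} gives $\|\yst k - \id\|_{H^1(\Omega)} \le C \lp$, i.e.\ \eqref{a_priori_Linfty_H1_lin}. I would then obtain the $W^{1,\infty}$ part of \eqref{a_priori_Linfty_W2p_lin} by interpolation between these two estimates: Morrey's embedding $W^{1,p}(\Omega) \hookrightarrow L^\infty(\Omega)$ (valid for $p > d$) applied to $\nabla \yst k - \Id$ reduces the matter to controlling $\|\nabla \yst k - \Id\|_{L^p}$, which in turn can be interpolated between $L^2$ and $L^\infty$ using the $\lp$-independent bound of Lemma~\ref{lem:pos_det}. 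This yields the (suboptimal but sufficient) rate $\lp^{2/p}$; since all estimates are uniform in $k$, they pass directly to the interpolants $\ny$, $\py$, $\ay$. The strain-rate bound \eqref{a_priori_strain_rates_lin} is a direct reformulation of the discrete estimate \eqref{velocity_apriori_bound} or \eqref{velocity_apriori_bound-new}: since $\nabla \dotay$ is piecewise constant in time and equals $\ddif \nabla \yst k$ on $((k-1)\tau, k\tau)$,
\begin{equation*}
  \|\nabla \dotay\|_{L^2(I \times \Omega)}^2 = \sum_{k=1}^{T/\tau} \tau \int_\Omega |\ddif \nabla \yst k|^2 \di x \le C\lp^2.
\end{equation*}

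For the temperature bounds \eqref{a_priori_temp_L1_lin}, H\"older's inequality (recall $2/\alpha \ge 1$ since $\alpha \le 2$) applied to the second estimate above gives
\begin{equation*}
  \int_\Omega \wst k \di x \le |\Omega|^{1 - \alpha/2} \Big(\int_\Omega (\wst k)^{2/\alpha} \di x\Big)^{\alpha/2} \le C\lp^\alpha,
\end{equation*}
and the $L^1$-bound for $\tst k$ then follows from $\ac \tst k \le \wst k$ by \eqref{inten_lipschitz_bounds}. The analogous bounds for $\pt, \at$ (resp.\ $\pw, \aw$) are immediate because, pointwise in time, each of these interpolants is controlled by a convex combination of $\tst{k-1}, \tst k$ (resp.\ $\wst{k-1}, \wst k$ and \eqref{inten_lipschitz_bounds}). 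The only genuinely non-routine step is the Gagliardo-Nirenberg-type argument required to turn the $W^{2,p}$ and $H^1$ bounds into the $W^{1,\infty}$ estimate with the explicit rate $\lp^{2/p}$; everything else is a direct translation of Theorems~\ref{thm:apriori_toten_velo_bound} and \ref{thm:toten_velo_bound_linearized} into the rescaled setting.
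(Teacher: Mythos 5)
Your proposal is correct and follows essentially the same route as the paper: the uniform energy bound comes from Theorem~\ref{thm:apriori_toten_velo_bound} (resp.\ Theorem~\ref{thm:toten_velo_bound_linearized}), the $H^1$ and second-gradient bounds from \ref{W_lower_bound_spec}, \eqref{H1_dist_to_id}, and \ref{H_bounds}, the $W^{1,\infty}$ rate $\lp^{2/p}$ from the Morrey/interpolation argument you describe (using the $\lp$-uniform $C^{1,1-d/p}$ bound of Lemma~\ref{lem:pos_det}), the strain-rate bound from \eqref{velocity_apriori_bound}/\eqref{velocity_apriori_bound-new}, and the temperature bounds from H\"older and \eqref{inten_lipschitz_bounds}. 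The only point the paper adds that you omit is the closing remark on what survives when \ref{W_lower_bound_spec} is not assumed (for $\alpha=2$, $\lp$ near $1$), which is immaterial since that hypothesis is in force in this subsection.
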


\begin{proof}
 Let us first suppose that \ref{W_lower_bound_spec} holds. \NNN The energy bound on $\mathcal{E}_\eps$ for $\alpha = 2$ and $\alpha \in [1,2)$ follows directly from Theorem \ref{thm:apriori_toten_velo_bound} and   Theorem \ref{thm:toten_velo_bound_linearized}, respectively.  \BBB
  The first two estimates \CCC can be shown \BBB from the uniform bound on the energy,  \ref{W_lower_bound_spec}, \eqref{H1_dist_to_id}, \eqref{H_upper_bound_spec}, and Poincar\'e's inequality. \BBB   In a similar way, the bound on $\nw$ in \eqref{a_priori_temp_L1_lin} follows from  \NNN the bound on the total rescaled energy, \BBB  \eqref{eq: rescaled toten}, and H\"older's inequality. \BBB Then, the proof of \eqref{a_priori_temp_L1_lin} is concluded by \eqref{inten_lipschitz_bounds}.
  Finally, \eqref{a_priori_strain_rates_lin} is a direct consequence of  \NNN  \eqref{velocity_apriori_bound} and \eqref{velocity_apriori_bound-new}, respectively. \BBB
  
  \NNN  Eventually, for $\alpha =2$ and $\lp$ near $1$, the result also holds without assuming \ref{W_lower_bound_spec}   as \ref{W_lower_bound}   allows us to derive \eqref{a_priori_Linfty_W2p_lin}--\eqref{a_priori_Linfty_H1_lin} with $C$ in place of $C\eps^{2/p}$ and $C\eps$ on the right-hand side.  
\end{proof}

In order to pass to the limit $\tau \to 0$ in the next section, we need additional a priori bounds for the temperature.
Testing the equation \NNN  \eqref{el_thermal_step} \BBB turns out to be delicate since \NNN   for $\alpha =2$  \BBB  the viscous dissipation $\drate(\nabla \yst {k-1}, \ddif \nabla \yst k , \tst {k-1})$ is only bounded in $L^1(I \times \Omega)$.
Thus, to obtain improved estimates that work \NNN in this case, \BBB we employ special test functions developed by Boccardo and Gallou\"et \cite{BoccardoGallouet89Nonlinear} for parabolic equations with a measure-valued right-hand side, see also \cite{FreireislMalek06OntheNavier}.
We follow here the approach in \cite{MielkeRoubicek20Thermoviscoelasticity}.
However, almost complete proofs are provided since compared to their setting we perform the estimates in the time discrete setting and we derive fine estimates in terms of the small parameter $\lp$. 

\begin{lemma}[Weighted $L^2$-bound]\label{lem:weightet_L2_bound}
 For any $\eta \in (0, 1)$ \BBB there exists a constant $C$   independent of $\lp$, $\tau$, and $\alpha$ such that 
  \begin{equation}\label{weighted_L2_bound}
    \sum_{k = 1}^{ T /\tau \BBB} \tau
      \int_\Omega
        \frac{\eta}{\Big( 1 + \lp^{-\alpha} \wst k \Big)^{1 + \eta}}
        \big| \nabla \wst k \big|^2
      \di x \leq C  \lp^{2\alpha}. \BBB
  \end{equation}
\end{lemma}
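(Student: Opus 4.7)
The plan is to apply a Boccardo--Gallou\"et style argument to the time-discrete heat equation \eqref{el_thermal_step}, testing against a bounded, monotone function of the rescaled internal energy $\mu^{(k)}_\lp \defas \lp^{-\alpha}\wst k$. After multiplication by $\lp^\alpha$ the target estimate is scale-invariant in $\lp$ in the variable $\mu^{(k)}_\lp$, so the natural test function is $\varphi_k \defas \chi(\mu^{(k)}_\lp)$ with $\chi(t) \defas 1 - (1+t)^{-\eta}$, which is concave, nondecreasing, satisfies $0 \le \chi \le 1$, and has derivative $\chi'(t) = \eta(1+t)^{-1-\eta}$. In particular $\varphi_k \in H^1(\Omega)$ with $\|\varphi_k\|_{L^\infty} \le 1$, so testing is admissible.

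Substituting $\varphi = \varphi_k$ in \eqref{el_thermal_step}, the chain rule gives $\nabla \varphi_k = \chi'(\mu^{(k)}_\lp)\lp^{-\alpha}\nabla \wst k$, while the chain rule for $\wst k = \inten(\nabla \yst k, \tst k)$ produces $\nabla \wst k = \partial_\theta \inten \,\nabla \tst k + \partial_F \inten : \nabla^2 \yst k$. Inverting this using $\partial_\theta \inten \in [c_0,C_0]$ from \eqref{sec_deriv}, together with $\mathcal{K}^{(k-1)} \ge c_M \Id$ from Lemma~\ref{lem:bound_hcm} and Young's inequality, I obtain
\begin{equation*}
\int_\Omega \mathcal{K}^{(k-1)}\nabla \tst k \cdot \nabla \varphi_k \di x \ge c\,\lp^{-\alpha} \int_\Omega \chi'(\mu^{(k)}_\lp)|\nabla \wst k|^2 \di x - C\lp^{-\alpha}\int_\Omega \chi'(\mu^{(k)}_\lp)|\partial_F \inten|^2 |\nabla^2 \yst k|^2 \di x.
\end{equation*}
Isolating the diffusion term in \eqref{el_thermal_step}, summing $\sum_k \tau$, and multiplying through by $\lp^\alpha$, the task reduces to bounding each of the three remaining contributions and the error term on the right by $C\lp^{2\alpha}$.

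For the time derivative, the primitive $\Phi(w)\defas\int_0^w \chi(\lp^{-\alpha}s)\di s$ is convex and satisfies $(a-b)\chi(\lp^{-\alpha}a) \ge \Phi(a) - \Phi(b)$ for $\chi$ nondecreasing; telescoping and $\Phi(w)\le w$ give $\sum_k \int(\wst k - \wst{k-1})\varphi_k\di x \ge -\|\wst 0\|_{L^1} \ge -C\lp^\alpha$ by the initial data assumption and \eqref{inten_lipschitz_bounds}. For the source, writing $h_\tau = \partial_F W^{\mathrm{cpl}}(\nabla \yst{k-1},\tst{k-1}):\ddif\nabla \yst k + \rdrate$ (respectively $\drate$ when $\alpha=2$), the coupling part is bounded via \eqref{C_locally_lipschitz}, Cauchy--Schwarz, and the a priori bounds \eqref{a_priori_strain_rates_lin}--\eqref{a_priori_temp_L1_lin} by $C\lp^{1+\alpha/2}$, while the dissipation part is $\le C\lp^2$ by \eqref{a_priori_strain_rates_lin}; since $\alpha \in [1,2]$ both are $\le C\lp^\alpha$. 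For the boundary term, an auxiliary test of \eqref{el_thermal_step} with $\varphi = 1$ together with the previous estimates produces $\kappa\sum_k \tau \|\tst k\|_{L^1(\Gamma)} \le C\lp^\alpha$, which combined with $\|\varphi_k\|_\infty \le 1$ and $\lp^\alpha\sum_k \tau \|\btst k\|_{L^1(\Gamma)} \le C\lp^\alpha$ closes this part.

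The main obstacle is the error term $\lp^{-\alpha}\sum_k \tau \int \chi'(\mu^{(k)}_\lp)|\partial_F \inten|^2 |\nabla^2 \yst k|^2 \di x$, stemming from the nonsimple-material second gradient. To handle it, I would first derive the sharp pointwise bound $|\partial_F \inten|^2 \le C\wst k$ using \eqref{C_locally_lipschitz}, the second bound in \ref{C_bounds}, \eqref{inten_lipschitz_bounds}, and the uniform $W^{1,\infty}$ estimate on $\yst k$ from Lemma~\ref{lem:pos_det}. Together with the elementary identity $\chi'(\mu^{(k)}_\lp)\mu^{(k)}_\lp \le \eta$, this yields the pointwise $L^\infty$ bound $\chi'(\mu^{(k)}_\lp)|\partial_F \inten|^2 \le C\eta\lp^\alpha$. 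Combining this with the $L^1$-in-$k$ control obtained from $\int \chi'(\mu^{(k)}_\lp)|\partial_F \inten|^2 \di x \le C\|\wst k\|_{L^1} \le C\lp^\alpha$, the a priori $L^p$ bound $\|\nabla^2 \yst k\|_{L^p}^p \le C\lp^2$ encoded in the mechanical energy, and the higher integrability \eqref{weighted_nabla_temp_bound_reg} available in the regularized regime, a careful Hölder/interpolation argument in $L^{p/(p-2)}\times L^{p/2}$ in $I\times\Omega$ delivers the required $C\lp^{2\alpha}$ bound for the error, as in \cite{MielkeRoubicek20Thermoviscoelasticity}. Putting all the pieces together and multiplying through by $\lp^\alpha$ yields the claim.
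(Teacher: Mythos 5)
Your overall strategy coincides with the paper's: the test function $\chi(\lp^{-\alpha}\wst k)$ is exactly the paper's $\chi_{\eta,\lp}'(\wst k)$, and your treatment of the time-increment term (convexity and telescoping), of the source $h_\tau$ (Young's inequality plus the a priori bounds, giving $C\lp^\alpha$ since $\alpha\le 2$), and of the boundary term (where, incidentally, the auxiliary test with $\varphi=1$ is unnecessary: $\tst k\ge 0$ and $0\le\varphi_k\le 1$ already suffice) all match the paper's Step 1. The conversion between $\nabla\tst k$ and $\nabla\wst k$ via \eqref{tempgrad_relation} is also the same.

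The gap is in the last step, the second-gradient error term. Your proposed H\"older split in $L^{p/(p-2)}\times L^{p/2}$ cannot deliver $C\lp^{2\alpha}$: with $f\defas\chi'(\mu^{(k)}_\lp)|\partial_F\inten|^2$ and $g\defas|\nabla^2\yst k|^2$ one gets $\|f\|_{L^{p/(p-2)}}\le\|f\|_{L^\infty}^{2/p}\|f\|_{L^1}^{(p-2)/p}\le C\lp^{\alpha}$ and $\|g\|_{L^{p/2}}=\|\nabla^2\yst k\|_{L^p}^2\le C\lp^{4/p}$, so the error is only $O(\lp^{\alpha+4/p})$. Since $p>d\ge 2$, the required inequality $\alpha+4/p\ge 2\alpha$ fails whenever $\alpha>4/p$, and in particular for $\alpha=2$ -- which is precisely the only case where this lemma is actually needed (for $\alpha<2$ the stronger bound \eqref{weighted_nabla_temp_bound_reg} is available, and that bound concerns $\nabla\overline\mu_{\lp,\tau}$, so it does not help with the $\nabla^2\yst k$ term anyway). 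The structural problem is that any H\"older splitting only uses $\|\nabla^2\yst k\|_{L^p}^2\sim\lp^{4/p}$, throwing away most of the information in $\|\nabla^2\yst k\|_{L^p}^p\sim\lp^2$. The paper instead argues pointwise: from $|\partial_F\inten|\le C(\tst k\wedge 1)\le C(\wst k)^{(p-1)/p}$ and $\chi'(\mu^{(k)}_\lp)\le\eta\lp^\alpha(\lp^\alpha+\wst k)^{-1}$ one gets $\chi'(\mu^{(k)}_\lp)|\partial_F\inten|^2\le C\lp^\alpha(\wst k)^{(p-2)/p}$, and then the pointwise Young inequality with exponents $p/(p-2)$ and $p/2$ yields $(\wst k)^{(p-2)/p}|\nabla^2\yst k|^2\le C(\wst k+|\nabla^2\yst k|^p)$; summing, $\sum_k\tau\int_\Omega(\wst k+|\nabla^2\yst k|^p)\di x\le C\lp^\alpha+CT\lp^2\le C\lp^\alpha$, so the error is $\le C\lp^{2\alpha}$ as needed. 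Replacing your interpolation step by this pointwise argument closes the proof.
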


\NNN Actually, this statement is needed only for $\alpha =2$ since for $\alpha \in [1,2)$ we have a better estimate by Remark \ref{rem: case1}. Still, we state and prove   the result for any $\alpha$ since the following argument \CCC does not depend on $\alpha$\BBB.  \BBB 
\begin{proof}
  \textit{Step 1:}
  In the following, $C$ will denote a constant independent of $k$, $\lp$, $\tau$, $\alpha$, and $\eta$.   Given $k \CCC \in \{1, \ldots,   T/\tau\}$, we have by (\ref{el_thermal_step}) \NNN (for $\rdrate$ in place of $\drate$) \BBB that for any $\vphi_k \in H^1(\Omega)$
  \begin{equation}\label{thermal_step_k}
  \begin{aligned}
    \int_\Omega \ddif \wst k \vphi_k \di x
    &= \int_\Omega  {h}^k_{\eps,\tau}  \vphi_k \di x
      - \int_\Omega \CCC \Kst{k-1} \BBB \nabla \tst k \cdot \nabla \vphi_k \di x - \kappa \int_\Gamma (\tst k - \lp^\alpha \btst k) \vphi_k \di \haus^{d-1},
  \end{aligned}
  \end{equation}
  where we write
  \begin{equation}\label{h_tau_hcm_short}
  \begin{aligned}
    h_{\eps,\tau}^{(k)} &\defas
      \pl_F \cplpot(\nabla \yst{k-1}, \tst{k-1})
        : \ddif \nabla \yst k
      + \rdrate(\nabla \yst{k-1}, \ddif \nabla \yst k, \tst{k-1}), \\
    \CCC \Kst{k-1} \BBB &\defas \hcm(\nabla \yst{k-1}, \tst{k-1})
  \end{aligned}
  \end{equation}
  for brevity.   Given $\eta \in (0, 1)$, let $\chi_{\eta, \lp} \colon \R \to \R$ be the function uniquely determined by $\chi_{\eta, \lp}(0) = 0$ and $\chi_{\eta, \lp}'(t) = 1 - \frac{1}{(1 + \lp^{-\alpha} t)^\eta}$ for all $t \geq 0$.
  Choosing $\vphi_k \defas \chi_{\eta, \lp}'(\wst k)$ in (\ref{thermal_step_k}), multiplying both sides by $\tau$, and summing over $k = 1, \ldots,  T/\tau$, we arrive at
  \begin{align}\label{discrete_thermal_evol}
    &\sum_{k=1}^{T/\tau} \int_\Omega (\wst k  - \wst{k-1}) \chi_{\eta, \lp}' (\wst k)  \di x   = \sum_{k=1}^{T/\tau} \tau \int_\Omega h_{\eps,\tau}^{(k)} \chi_{\eta, \lp}' (\wst k) \di x
   \notag  \\ & \quad   -\sum_{k=1}^{T/\tau} \tau
        \int_\Omega
          \chi_{\eta, \lp}''(\wst k) \CCC \Kst{k-1} \BBB
          \nabla \tst k  \cdot \nabla \wst k  \di x  -\kappa \sum_{k=1}^{T/\tau} \tau
      \int_{\NNN \Gamma} \BBB  (\tst k - \lp^\alpha \btst k)
      \chi_{\eta, \lp}'(\wst k)  \di \haus^{d-1}.
  \end{align}
  \NNN Our goal is to show \BBB   
  \begin{equation}\label{nablaw_nablatheta_bound}
    \sum_{k=1}^{T/\tau} \tau
      \int_\Omega \chi_{\eta, \lp}''(\wst k)
        \CCC \Kst{k-1} \BBB \nabla \tst k \cdot \nabla \wst k \di x
    \leq C \lp^\alpha.
  \end{equation}
\NNN To this end, we estimate the various terms in \eqref{discrete_thermal_evol}. First, \BBB  notice that by the convexity of $\chi_{\eta, \lp}$ we have for any $k \CCC \in \{1,\, \ldots,\, T/\tau\}\BBB$  that \BBB
  \begin{equation*}
    \chi_{\eta, \lp}(\wst{k-1})
    \geq \chi_{\eta, \lp}(\wst k) + \chi_{\eta, \lp}'(\wst k)(\wst{k-1} - \wst k),
  \end{equation*}
  and therefore
  \begin{align*}
    &\sum_{k=1}^{T/\tau}
      \int_\Omega \big(\wst k - \wst{k-1}\big) \chi_{\eta, \lp}'(\wst k) \di x
    \geq \sum_{k=1}^{T/\tau}
      \int_\Omega \big(
        \chi_{\eta, \lp}(\wst k) - \chi_{\eta, \lp}(\wst{k-1})
      \big) \di x \\
    &\quad= \int_\Omega \chi_{\eta, \lp}(\wst {T/\tau}) \di x
      - \int_\Omega \chi_{\eta, \lp}(\wst 0) \di x
    \geq -\int_\Omega \wst 0 \di x \geq - C \lp^\alpha,
  \end{align*}
  where we used $\chi_{\eta, \lp} \geq 0$ and $\chi_{\eta, \lp}(t) \leq t$ for all $t \ge 0$, and in the last step also \eqref{a_priori_temp_L1_lin}.  Using \eqref{C_locally_lipschitz}, $\rdrate \leq \drate$, \eqref{diss_rate}, \ref{D_bounds},  and \BBB \eqref{a_priori_Linfty_W2p_lin} we see that
  \begin{equation*}
    \sum_{k=1}^{T/\tau} \int_\Omega |h_{\eps,\tau}^{(k)}| \di x
    \leq C \sum_{k=1}^{T/\tau} \int_\Omega \big(
      \sqrt{\tst {k-1}} |\ddif \nabla \yst k|
      + \abs{\ddif \nabla \yst k}^2
    \big) \di x,
  \end{equation*}
  where we used that $t \wedge 1 \le \sqrt{t}$ \NNN for $t \ge 0$. \BBB
  Then, by Young's inequality, $\chi_{\eta, \lp}' \leq 1$,  \eqref{a_priori_strain_rates_lin}, and \eqref{a_priori_temp_L1_lin} we get
  \begin{equation}\label{fk_bound}
    \sum_{k=1}^{T/\tau} \tau \int_\Omega h_{\eps,\tau}^{(k)} \chi_{\eta, \lp}'(\wst k) \di x
    \le \sum_{k=1}^{T/\tau} \tau \int_\Omega |h_{\eps,\tau}^{(k)}| \di x
    \leq C \sum_{k=1}^{T/\tau} \tau
      \int_\Omega \big(\tst {k-1} + \abs{\ddif \nabla \yst k}^2 \big) \di x
    \leq C \lp^\alpha,
  \end{equation}
  where we have used $\alpha \CCC\leq\BBB 2$.
  Lastly, by $\tst k \geq 0$, $\kappa \ge 0$, $\chi_{\eta, \lp}' \in [0, 1]$, and the definition of $\btst k$ it follows that
  \begin{align*}
    - \kappa \sum_{k=1}^{T/\tau} \tau
      \int_\Omega (\tst k - \lp^\alpha \btst k) \chi_{\eta, \lp}'(\wst k) \di \haus^{d-1}
    &\leq \kappa \sum_{k=1}^{T/\tau} \tau
      \int_\Omega \lp^\alpha \btst k \chi_{\eta, \lp}'(\wst k) \di \haus^{d-1} \\
    &\leq \kappa \lp^\alpha \int_0^T \int_\Omega \theta_\flat \di x \di t
    \le C \lp^\alpha,
  \end{align*}
  where $C$ also depends on $\theta_\flat$. \BBB
  Employing all the aforementioned estimates in (\ref{discrete_thermal_evol}) we  \NNN obtain \eqref{nablaw_nablatheta_bound}. \BBB

  \textit{Step 2:}
  We are now ready to show \eqref{weighted_L2_bound}.
  In this regard, first notice the following relation between $\nabla \wst k$ and $\nabla \tst k$: since $\wst k = \inten(\nabla \yst k, \tst k)$, \eqref{Wint} implies
  \begin{align}\label{tempgrad_relation}
    \nabla \wst k
    &= \left[
        \pl_F \cplpot(\nabla \yst k, \tst k)
        - \tst k \pl_{F \theta} \cplpot(\nabla \yst k, \tst k)
      \right] : \nabla^2 \yst k
      - \tst k \pl_\theta^2 \cplpot(\nabla \yst k, \tst k) \nabla \tst k \notag \\
    &=: \tilde{W}^{(k)}_1 : \nabla^2 \yst k + \tilde{W}^{(k)}_2 \nabla \tst k.
  \end{align}
 By \eqref{C_locally_lipschitz}, \ref{C_bounds}, and \eqref{a_priori_Linfty_W2p_lin}, we find that the abbreviations $\tilde{W}^{(k)}_1$ and $\tilde{W}^{(k)}_2$ satisfy $\tilde{W}^{(k)}_1 \le C(\tst k \wedge 1)$ and $\tilde{W}^{(k)}_2 \in [\ac, \aC]$, respectively.
 Then, \NNN using \BBB    \eqref{tempgrad_relation},  Lemma \ref{lem:bound_hcm},  and the energy bound from Lemma \ref{lemma: first a prioiro} \BBB we see that there exists a constant $c > 0$   such that
  \begin{align}\label{nabla2_nablaw_estimate-before}
    \frac{c}{\aC} \chi_{\eta, \lp}''(\wst k) \abs{\nabla \wst k}^2
    &\leq \NNN \big(    \tilde{W}^{(k)}_2\big)^{-1} \BBB
      \chi_{\eta, \lp}''(\wst k)
      \CCC \Kst{k-1} \BBB \nabla \wst k \cdot \nabla \wst k \notag \\
    &\leq \chi_{\eta, \lp}''(\wst k) \CCC \Kst{k-1} \BBB \nabla \tst k \cdot \nabla \wst k
      + C \chi_{\eta, \lp}''(\wst k) (\tst k \wedge 1)
        \abs{\nabla^2 \yst k} \abs{\nabla \wst k}.
  \end{align}
  We now control the second term above.
  By $t \wedge 1 \leq t^{\frac{p-1}{p}}$ for all $t \geq 0$ and Young's inequality with constant $\lambda \in (0, 1)$ (to be chosen later), we estimate by \eqref{inten_lipschitz_bounds}
  \begin{equation}\label{nabla2_nablaw_estimate}
    C \chi_{\eta, \lp}''(\wst k) (\tst k \wedge 1)
      \abs{\nabla^2 \yst k} \abs{\nabla \wst k}
    \le C \chi_{\eta, \lp}''(\wst k) \Big(
      \lambda \abs{\nabla \wst k}^2
      + \frac{1}{\lambda} (\wst k)^{2\frac{p-1}{p}} |\nabla^2 \yst k|^2
    \Big).
  \end{equation}
  Using the elementary fact
  \begin{equation}\label{elementary}
    \chi_{\eta, \lp}''(\wst k)
    = \frac{\eta}{\lp^\alpha \Big(1 + \lp^{-\alpha} \wst k\Big)^{1 + \eta}}
    \le \frac{1}{\lp^\alpha + \wst k},
  \end{equation}
  we derive by Young's inequality with power\CCC s \BBB  \NNN $p/(p-2)$ \BBB and $p/2$ that
  \begin{equation*}
    \chi_{\eta, \lp}''(\wst k) (\wst k)^{2\frac{p-1}{p}} \abs{\nabla^2 \yst k}^2
    \leq (\wst k)^{2\frac{p-1}{p} - 1} \abs{\nabla^2 \yst k}^2
    = (\wst k)^{\frac{p-2}{p}} \abs{\nabla^2 \yst k}^2
    \leq C (\wst k + \abs{\nabla^2 \yst k}^p).
  \end{equation*}
  Let us take $\lambda$ small enough so that $C \lambda \leq c/(2 \aC)$ where $c$ is as in \eqref{nabla2_nablaw_estimate-before} and $C$ is as in \eqref{nabla2_nablaw_estimate}.
  Then, inserting \eqref{nabla2_nablaw_estimate} into \eqref{nabla2_nablaw_estimate-before} we derive that
  \begin{equation*}
    \frac{c}{2 \aC} \chi_{\eta, \lp}''(\wst k) \abs{\nabla \wst k}^2
    \leq C \Big(
      \chi_{\eta, \lp}''(\wst k) \CCC \Kst{k-1} \BBB \nabla \tst k \cdot \nabla \wst k
      + \wst k + \abs{\nabla^2 \yst k}^p
    \Big).
  \end{equation*}
  Integrating the above inequality over $\Omega$, multiplying by $\tau$, and  summing over $k = 1, \ldots,  {T/\tau}$ we derive by  \eqref{nablaw_nablatheta_bound}, \eqref{a_priori_Linfty_W2p_lin}, and \eqref{a_priori_temp_L1_lin} \BBB that
  \begin{equation*}
    \sum_{k=1}^{T/\tau} \tau
      \int_\Omega \chi_{\eta, \lp}''(\wst k) \abs{\nabla \wst k }^2 \di x
     \leq  C(1+T) \lp^\alpha. \BBB
  \end{equation*}
  where in the final step we used $\alpha \leq 2$.   By using the first identity in \eqref{elementary}, we conclude the proof of \eqref{weighted_L2_bound}.
\end{proof}

\begin{theorem}[Further a priori bounds on the temperature]\label{thm:further_apriori_temp_bounds}
  For any $q \in [1,  \frac{d+2}{d}\BBB)$ and $r \in [1, \frac{d+2}{d+1})$ there exist constants $C_q$ and $C_r$, respectively, both independent of $\lp$ and $\tau$ such that
  \begin{align}
    \sum_{k=0}^{T/\tau} \tau
      \int_\Omega \big( \abs{\tst k}^q + \abs{\wst k}^q \big) \di x
      &\leq C_q\lp^{\alpha q}, \label{temp_inten_Lq_bound} \\
    \sum_{k=1}^{T/\tau} \tau
      \int_\Omega \big( \abs{\nabla \tst k}^r + \abs{\nabla \wst k}^r \big) \di x
      &\leq C_r \lp^{\alpha r}. \label{nablatemp_nablainten_Lr_bound}
  \end{align}
  Moreover,  we can find a constant $C$ independent of $\lp$ and $\tau$ such that
  \begin{equation}\label{dot_temp_apriori_bound}
    \sum_{k = 1}^{T/\tau} \tau \norm{\ddif \wst k}_{W^{1, \infty}(\Omega)^*}
      \leq C \lp^\alpha.
  \end{equation}
\end{theorem}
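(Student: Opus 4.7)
The idea is to follow the Boccardo--Gallou\"et strategy applied to the weighted $L^2$-bound of Lemma \ref{lem:weightet_L2_bound}, combined with Gagliardo--Nirenberg interpolation. A convenient first step is to rescale: set $\mu_k \defas \lp^{-\alpha} \wst k$ so that by \eqref{a_priori_temp_L1_lin} we have $\sup_k \|\mu_k\|_{L^1(\Omega)} \leq C$ independent of $\lp$, while Lemma \ref{lem:weightet_L2_bound} together with the identity $\chi_{\eta,\lp}''(\wst k) = \eta \lp^{-\alpha}(1+\mu_k)^{-1-\eta}$ implies
\begin{equation*}
  \sum_{k=1}^{T/\tau} \tau \int_\Omega \frac{|\nabla \mu_k|^2}{(1+\mu_k)^{1+\eta}} \di x \leq \frac{C}{\eta}, \quad \eta \in (0,1).
\end{equation*}
Introducing $v_k \defas (1+\mu_k)^{(1-\eta)/2}$ for $\eta \in (0,1)$, the chain rule then yields $\sum_k \tau \|\nabla v_k\|_{L^2(\Omega)}^2 \leq C_\eta$ and $\sup_k \|v_k\|_{L^{2/(1-\eta)}(\Omega)} \leq C_\eta$.

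\textbf{Bound on $|\mu_k|^q$.} By the Gagliardo--Nirenberg interpolation for $v_k \in H^1(\Omega)$, one has $\|v_k\|_{L^{s}(\Omega)} \leq C \|v_k\|_{H^1(\Omega)}^{\theta} \|v_k\|_{L^{2/(1-\eta)}(\Omega)}^{1-\theta}$ with the scaling exponent $\theta$ determined by $s$ and $d$. Choosing $s$ so that the time exponent in $\sum_k \tau \|v_k\|_{L^s(\Omega)}^{2/\theta}$ matches (using $\int v_k^{s} = \int (1+\mu_k)^{s(1-\eta)/2}$) and then sending $\eta \to 0$, one recovers in a standard way $\sum_k \tau \|\mu_k\|_{L^q(\Omega)}^q \leq C_q$ for every $q < (d+2)/d$. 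Rescaling back by $\wst k = \lp^\alpha \mu_k$ gives \eqref{temp_inten_Lq_bound} for $\wst k$. The same estimate transfers to $\tst k$ via \eqref{inten_lipschitz_bounds}, which implies $\tst k \le C \wst k$.

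\textbf{Bound on $|\nabla \mu_k|^r$.} For $r\in[1,2)$ write
\begin{equation*}
  |\nabla \mu_k|^r = \frac{|\nabla \mu_k|^r}{(1+\mu_k)^{r(1+\eta)/2}} \cdot (1+\mu_k)^{r(1+\eta)/2},
\end{equation*}
and apply H\"older with exponents $2/r$ and $2/(2-r)$. The first factor is controlled by the weighted $L^2$-bound above, and the second leads to $\int (1+\mu_k)^{r(1+\eta)/(2-r)}$, which for $r<(d+2)/(d+1)$ and $\eta$ small enough falls in the range $q<(d+2)/d$ already treated. Summing in $k$ and optimising in $\eta$ gives $\sum_k \tau \int |\nabla \mu_k|^r \di x \leq C_r$, hence \eqref{nablatemp_nablainten_Lr_bound} for $\wst k$ after rescaling. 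To pass to $\nabla \tst k$, use \eqref{tempgrad_relation}: as $|\tilde W_2^{(k)}|\in[\ac,\aC]$ and $|\tilde W_1^{(k)}| \leq C(\tst k \wedge 1)$, we have $|\nabla \tst k| \leq C(|\nabla \wst k| + (\tst k \wedge 1)|\nabla^2 \yst k|)$, and the additional term is handled by H\"older combining $\|\nabla^2 \yst k\|_{L^p(\Omega)} \leq C\lp^{2/p}$ from \eqref{a_priori_Linfty_W2p_lin} with the $L^q$-bound on $\tst k$ just established (here $p>d$ is crucial to ensure $r<p$).

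\textbf{Bound on $\ddif \wst k$ in the dual of $W^{1,\infty}$.} Testing \eqref{el_thermal_step} with $\vphi \in W^{1,\infty}(\Omega)$ and using the short-hand \eqref{h_tau_hcm_short} yields
\begin{equation*}
  \Big|\int_\Omega \ddif \wst k \, \vphi \di x\Big| \leq \|\vphi\|_{W^{1,\infty}(\Omega)} \Big( \|h^{(k)}_{\eps,\tau}\|_{L^1(\Omega)} + \|\hcm(\nabla\yst{k-1},\tst{k-1})\|_{L^\infty(\Omega)} \|\nabla \tst k\|_{L^1(\Omega)} + \kappa\|\tst k - \lp^\alpha\btst k\|_{L^1(\Gamma)}\Big).
\end{equation*}
Summing over $k$ with the weight $\tau$, the first term is bounded by $C\lp^\alpha$ as in \eqref{fk_bound} in the proof of Lemma \ref{lem:weightet_L2_bound}; the conductivity factor is uniformly bounded by Lemma \ref{lem:bound_hcm}; the $L^1$-norm of $\nabla \tst k$ in space-time is controlled by \eqref{nablatemp_nablainten_Lr_bound} with $r=1$, giving $C\lp^\alpha$; and the surface term is controlled by the trace embedding and \eqref{nablatemp_nablainten_Lr_bound} plus the assumption on $\theta_\flat$. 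This yields \eqref{dot_temp_apriori_bound}.

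The main technical obstacle is the Boccardo--Gallou\"et step: matching the discrete-time summation against the space-time interpolation to recover the sharp exponents $(d+2)/d$ and $(d+2)/(d+1)$ as $\eta \to 0$, while tracking the $\lp^\alpha$ rescaling carefully so that all bounds scale as claimed.
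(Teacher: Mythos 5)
Your proposal is correct and follows essentially the same route as the paper: there, too, the $L^q$- and $W^{1,r}$-bounds on $\wst k$ are obtained from the weighted estimate of Lemma \ref{lem:weightet_L2_bound} together with the $L^\infty(I;L^1)$-bound via the Boccardo--Gallou\"et interpolation (which the paper outsources to \cite[Proposition~6.3]{MielkeRoubicek20Thermoviscoelasticity} for the rescaled quantity $\lp^{-\alpha}\nw$), the passage to $\tst k$ and $\nabla\tst k$ uses \eqref{inten_lipschitz_bounds} and \eqref{tempgrad_relation} with a Young-inequality absorption of the $(\tst k\wedge 1)\abs{\nabla^2\yst k}$ term against the $L^p$-bound from \eqref{a_priori_Linfty_W2p_lin}, and \eqref{dot_temp_apriori_bound} follows by testing \eqref{el_thermal_step} with $\vphi\in W^{1,\infty}(\Omega)$ exactly as you describe. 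The only detail to state precisely in your gradient step is that the exponent condition actually needed is $r\tfrac{p-1}{p-r}<\tfrac{d+2}{d}$ (which holds since $r<\tfrac{d+2}{d+1}$ and $p\ge 2$), not merely $r<p$.
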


\begin{proof}
  Let $q, \, r$ be as in the statement.
  As $\wst k \in H^1(\Omega)$ (see \eqref{thermal_step}), it follows that $\norm{\nw}_{L^\infty(I; H^1(\Omega))} < \infty$.
  Therefore, by using the a priori estimate $\Vert 1 + \lp^{-\alpha} \nw \Vert_{L^\infty(I;L^1(\Omega))} \le    C +  \mathcal{L}^d(\Omega) \BBB $ (see \eqref{a_priori_temp_L1_lin}) as well as  Lemma \ref{lem:weightet_L2_bound}, we can repeat the argument from the proof of \cite[Proposition~6.3,  equation (6.6)\BBB]{MielkeRoubicek20Thermoviscoelasticity} for $\lp^{-\alpha} \nw$ in place of $w_\lp$,  cf.\ also Remark \ref{rem: next a priori} below. \BBB 
  This gives the existence of constants $C_q, \, C_r$ such that
  \begin{align}\label{apriori_bounds_inten}
    \sum_{k=0}^{T/\tau} \tau \int_\Omega \abs{\wst k}^q \di x
      &\leq C_q \lp^{\alpha q}, &
    \sum_{k=1}^{T/\tau} \tau \int_\Omega \abs{\nabla \wst k}^r \di x
      &\leq C_r \lp^{\alpha p}.
  \end{align}
  By \eqref{inten_lipschitz_bounds} we then directly see that (for a possibly larger $C_q$)
  \begin{equation}\label{Cq}
    \sum_{k=0}^{T/\tau} \tau \int_\Omega \abs{\tst k}^q \di x \leq C_q \lp^{\alpha q}.
  \end{equation}
  To conclude the proof of \eqref{temp_inten_Lq_bound}--\eqref{nablatemp_nablainten_Lr_bound}, it remains to control the gradient of the temperature.
  Employing the relation between $\nabla \wst k$ and $\nabla \tst k$ in (\ref{tempgrad_relation}),   by \BBB \eqref{C_locally_lipschitz}  and \ref{C_bounds} we see that
  \begin{equation*}
    \abs{\nabla\tst k}
    \leq C \big(
      \abs{\nabla \wst k}
      + (\tst k \wedge 1) \abs{\nabla^2 \yst k}
    \big).
  \end{equation*}
  Consequently, using $t \wedge 1 \leq t^{\frac{p-1}{p}}$ for all $t \geq 0$ and Young's inequality with power\CCC s \BBB $p/(p-r)$ \CCC and \BBB $p/r$ we derive that
  \begin{align}
    \int_\Omega \abs{\nabla \tst k }^r \di x
    &\leq C \int_\Omega \abs{\nabla \wst k}^r \di x
      + C \lp^{\alpha r} \int_\Omega (\lp^{-\alpha} \tst k)^{r\frac{p-1}{p}}
        \abs{\lp^{-\frac{ \alpha \BBB}{p}} \nabla^2 \yst k}^r \di x \nonumber \\
    &\leq C \int_\Omega \abs{\nabla \wst k}^r \di x
       + C \lp^{\alpha r} \int_\Omega \big( (\lp^{-\alpha} \tst k)^{r\frac{p-1}{p-r}} 
      +  \frac{1}{\lp^\alpha}  \abs{\nabla^2 \yst k}^p \big) \di x. \BBB \label{nablathetar_bound}
  \end{align}
  As $r$ was chosen strictly smaller than $\frac{d+2}{d+1}$, we see by $p \geq 2$ that
  \begin{equation*}
      r\frac{p-1}{p-r}
      < \frac{d+2}{d+1} \frac{p-1}{p-\frac{d+2}{d+1}}
      = \frac{d+2}{d}  \frac{1}{1 + \frac{p-2}{d(p-1)}}
      \leq \frac{d+2}{d}.
  \end{equation*}
  Consequently, multiplying (\ref{nablathetar_bound}) with $\tau$, summing over $k=1, \ldots,  {T/\tau}$, and using \eqref{a_priori_Linfty_W2p_lin}, \eqref{apriori_bounds_inten}, and \eqref{Cq} we conclude the proof of (\ref{nablatemp_nablainten_Lr_bound}).  Here, we again used $\alpha \le 2$. \BBB

  Lastly, we show \eqref{dot_temp_apriori_bound}.
  Testing (\ref{el_thermal_step}) for the $k$-th step with arbitrary $\vphi \in W^{1, \infty}(\Omega)$, and using the shorthand notation for $h_{\eps,\tau}^{(k)}$ and $\CCC \Kst{k-1} \BBB$ from \eqref{h_tau_hcm_short}, we see by (\ref{bound_hcm}) and the continuity of the trace operator in $W^{1, 1}(\Omega)$ that
  \begin{align*}
    \Big| \int_\Omega \ddif \wst k \vphi \di x \Big|
    &= \Big|
      \int_\Omega h_{\eps,\tau}^{(k)} \vphi \di x
      -\int_\Omega \CCC \Kst{k-1} \BBB \nabla \tst k \cdot \nabla \vphi \di x
      -\kappa \int_\Gamma (\tst k - \lp^\alpha \btst k) \vphi \di \haus^{d-1}
    \Big| \\
    &\leq \norm{h_{\eps,\tau}^{(k)}}_{L^1(\Omega)} \norm{\vphi}_{L^\infty(\Omega)}
      + C \norm{\nabla \tst k}_{L^1(\Omega)} \norm{\nabla \vphi}_{L^\infty(\Omega)} \\
    &\phantom{\leq}\quad + \Big(
        C \kappa \norm{\tst k}_{W^{1, 1}(\Omega)}
        + \kappa \lp^\alpha \int_\Gamma \btst k \di \haus^{d-1}
      \Big) \norm{\vphi}_{L^\infty(\Omega)} \\
    &\leq \Big(
        \norm{h_{\eps,\tau}^{(k)}}_{L^1(\Omega)}
        + C \norm{\tst k }_{W^{1, 1}(\Omega)}
        + C \NNN  \lp^\alpha \int_\Gamma \btst k \di \haus^{d-1} \BBB
      \Big) \norm{\vphi}_{W^{1, \infty}(\Omega)}.
  \end{align*}
  By the arbitrariness of $\vphi$ this shows that
  \begin{equation}\label{W-1infty_bound}
    \norm{\ddif \wst k}_{W^{1, \infty}(\Omega)^*}
    \leq \norm{h_\tau^{(k)}}_{L^1(\Omega)} + C \norm{\tst k}_{W^{1, 1}(\Omega)} + C \NNN  \lp^\alpha \int_\Gamma \btst k \di \haus^{d-1}. \BBB
  \end{equation}
  We have already seen in the proof of Lemma \ref{lem:weightet_L2_bound} (see in particular \eqref{fk_bound}) that
  \begin{equation*}
    \sum_{k = 1}^{T/\tau} \tau \norm{h_{\eps,\tau}^{(k)}}_{L^1(\Omega)}
    \leq C \lp^\alpha.
  \end{equation*}
  Consequently, by \eqref{temp_inten_Lq_bound}--\eqref{nablatemp_nablainten_Lr_bound} for $q = r = 1$ \NNN and \eqref{W-1infty_bound} \BBB the desired bound \eqref{dot_temp_apriori_bound} follows.
\end{proof}

\begin{remark}\label{rem: next a priori}
{\normalfont 
 
For $\alpha \in [1, 2)$, by means of Remark \ref{rem: case1} \BBB we obtain a stronger bound on the temperature: given  $q = \frac{2}{\alpha} + \frac{4}{\alpha d}$ and $r = 2\frac{d+2}{\alpha d + 2}$, we can find \CCC a constant $C$ \BBB independent of $\lp$ and $\tau$ such that
\begin{align}\label{temp_bounds_using_reg}
 \NNN  \sum_{k=1}^{T/\tau}  \BBB \tau \int_\Omega \abs{\tst k}^q \di x
      &\leq \CCC C \BBB \lp^{\alpha q}, &
    \sum_{k=1}^{T/\tau} \tau \int_\Omega \abs{\nabla \tst k}^r \di x
      &\leq \CCC C \BBB \lp^{\alpha r}.
\end{align}
This can be seen as follows:   We start with the second bound.
In this regard, by a 
For $\alpha = 1$, this directly follows from \eqref{weighted_nabla_temp_bound_reg}, \NNN where we recall $\overline{\mu}_{\lp, \tau} = \lp^{-\alpha} \nt$.  Let $\alpha \in (1,2)$. Note that $r \in [1, 2)$ and let $m \defas r (1 - \frac{1}{\alpha})$\BBB.
\CCC Employing a standard truncation and approximation argument we can assume, without loss of generality, that $\overline\mu_{\lp, \tau} \in L^\infty(I \times \Omega)$. \BBB
\CCC Then, by \BBB \eqref{weighted_nabla_temp_bound_reg} and Hölder's inequality with powers $\frac{2}{2-r}$ and $\frac{2}{r}$ \CCC we derive that \BBB
\begin{align}\label{nabla_mu_Lr_bound}
  \lVert \nabla \overline{\mu}_{\lp, \tau} \rVert_{L^r(I \times \Omega)}^r
  &= \int_0^T \int_\Omega (1 + \overline{\mu}_{\lp, \tau})^m \frac{\abs{\nabla \overline{\mu}_{\lp, \tau}}^r}{(1 + \overline{\mu}_{\lp, \tau})^m} \di x \di t  \\
  &\leq \lVert 1 + \overline{\mu}_{\lp, \tau} \rVert_{L^{\frac{2m}{2-r}}(I \times \Omega)}^m
    \left(\int_0^{\NNN T} \int_\Omega \frac{\abs{\nabla \overline{\mu}_{\lp, \tau}}^2}{(1 + \overline{\mu}_{\lp, \tau})^{2(1 - \frac{1}{\alpha})}} \di x \di t \right)^{\frac{r}{2}}
  \leq C \lVert 1 + \overline{\mu}_{\lp, \tau} \rVert_{L^{\frac{2m}{2-r}}(I \times \Omega)}^m.   \notag
\end{align}
With $r  = 2\frac{d+2}{\alpha d + 2}= 2 - 2\frac{\alpha d}{\alpha d + 2} (1 - \frac{1}{\alpha})$  we can use the anisotropic Gagliardo-Nirenberg interpolation inequality (see e.g.~  \cite[Lemma 4.2]{MielkeNaumann}) \BBB with $\theta = \frac{\alpha d}{\alpha d + 2}$, $s = p = \frac{r}{\theta}$,  $s_1 = \infty$, \BBB $s_2 = p_2 = r$, and $p_1 = \frac{2}{\alpha}$ to get
\begin{equation}   \label{eq:  why again}                    
  \lVert 1 + \overline{\mu}_{\lp, \tau} \rVert_{L^{\frac{2m}{2-r}}(I \times \Omega)}
  \leq C \lVert 1 + \overline{\mu}_{\lp, \tau} \rVert^{\frac{2}{\alpha d + 2}}_{L^\infty(I; L^{\CCC\frac{2}{\alpha}\BBB}(\Omega))}
    \left( \lVert 1 + \overline{\mu}_{\lp, \tau} \rVert_{L^\infty(I; L^{\CCC \frac{2}{\alpha}\BBB}(\Omega))} + \lVert \nabla \overline{\mu}_{\lp, \tau} \rVert_{L^r(I \times \Omega)} \right)^{\frac{\alpha d}{\alpha d + 2}},
\end{equation}
\NNN where we use $\frac{r}{\theta} = \frac{2m}{2-r}$. \BBB Notice that by  \eqref{eq: rescaled toten} and the energy bound in Lemma \ref{lemma: first a prioiro} we have \BBB that  $\lVert 1 + \overline{\mu}_{\lp, \tau} \rVert_{L^\infty(I; L^{2/\alpha}(\Omega))}$   is uniformly bounded in $\lp$ and $\tau$.
Hence, with \eqref{nabla_mu_Lr_bound}  and  $m\frac{\alpha d}{\alpha d + 2} = \frac{(\alpha - 1)d}{\alpha d + 2} r$ \BBB we derive that
\begin{equation*}
  \lVert \nabla \overline{\mu}_{\lp, \tau} \rVert_{L^r(I \times \Omega)}^r
  \leq C(1 + \lVert \nabla \overline{\mu}_{\lp, \tau} \rVert_{L^r(I \times \Omega)}^{\frac{(\alpha - 1)d}{\alpha d + 2} r}).
\end{equation*}
As $\frac{(\alpha - 1)d}{\alpha d + 2} < 1$, this shows the second bound in \eqref{temp_bounds_using_reg} for the case $\alpha \in (1, 2)$.
The first estimate in \eqref{temp_bounds_using_reg} then follows from the second one  and \eqref{eq:  why again}, where we use that $\frac{2m}{2-r} \NNN = \frac{r}{\theta} \BBB = q$. \BBB 
}
\end{remark}

\section{Existence of solutions in the nonlinear setting}\label{sec:tau_to_zero_delta_fixed}

In this section we pass from time-discrete to time-continuous solutions by letting $\tau \to 0$ and establish  Proposition \ref{cor:van_tau_reg}(ii). \BBB
Notice that for the special case $\alpha = 2$ and $\lp = 1$ this will lead to  Theorem \ref{thm:van_tau}(ii). \BBB
For the deformation and the momentum balance we can closely follow \cite[Section 5]{MielkeRoubicek20Thermoviscoelasticity}, and therefore proofs are omitted or sketched only.
For the limit passage in the heat equation, however, our arguments are different as we work without regularization terms, cf.~Remark \ref{rem:difference_to_mielke_in_steps}.
We first use the a priori estimates on the interpolants in order to extract convergent subsequences.
Afterwards, we pass to the limit in the discretized weak forms of the momentum balance and the heat equation.
Here, the most delicate term is the dissipation rate $\xi$ which is quadratic in $\dot F$. Therefore,  strong convergence in $L^2(I; H^1(\Omega))$ for the strain rates  is  required.  \BBB 

 As before, we assume for simplicity that $T/\tau \in \N$. Moreover, \BBB without further notice, we suppose from now on that $\tau \in (0, \tau_0)$  and $\lp \in (0,\lp_0\CCC ]\BBB$, where $\tau_0$ and $\lp_0 = \lp_0(\alpha)$ \BBB are chosen such that all statements from Subsections \ref{sec:single_step_well_defined}--\ref{sec: a priori} are satisfied.  In particular, $\lp_0=1$ for $\alpha=2$. The corresponding \BBB time-discrete solutions are denoted by  $\yst 0, \ldots,  \yst{T/\tau} \in \Wid$ and $\tst 0, \ldots,  \tst{T/\tau} \in L^2_+(\Omega)$. We recall the definition of the interpolations in \eqref{y_interpolations} and employ similar notation for $\nt$, $\pt$, and $\at$, as well as $\nw$, $\pw$, and $\aw$.
All generic constants $C > 0$ are always assumed to be independent of $\tau$ and $\lp$.

We start with the convergence of the deformations under vanishing time-discretization.
\begin{lemma}[Convergence of deformations]\label{lem:uniform_def_conv}
  For each $\lp \in (0, \lp_0]$, \BBB we can find $y_\lp  \in L^\infty(I; \Wid) \cap \CCC H^1\BBB(I; H^1(\Omega; \R^d))$ with $y_\lp(0, \cdot) =  y_{0,\lp} \BBB $ such that, up to a subsequence (not relabeled), it holds that
  \begin{subequations}\label{uniform_def_conv}
  \begin{align}
     \ay &\weaklystar y_\lp \text{ weakly* in } L^\infty(I; \Wid) & &\text{and} &
     \ay &\weakly y_\lp \text{ weakly in } \CCC H^1\BBB(I;H^1(\Omega;\R^d)), \label{uniform_def_conv1} \\
    \nabla \ay &\to \nabla y_\lp \text{ in } L^\infty(I; L^\infty(\Omega;\R^{d \times d}))\label{uniform_def_conv2}
  \end{align}
    \end{subequations}
  as $\tau \to 0$.
  In the first convergence of \eqref{uniform_def_conv1}, and in \eqref{uniform_def_conv2}, the same holds true if we replace $\ay$ by $\py$ or $\ny$.
\end{lemma}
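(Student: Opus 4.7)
The plan is to exploit the a priori bounds from Lemma~\ref{lemma: first a prioiro} to extract weakly/weakly* convergent subsequences via Banach--Alaoglu and then upgrade to uniform convergence of gradients by an Arzel\`a--Ascoli argument that relies crucially on the compact embedding $W^{2,p}(\Omega;\R^d) \hookrightarrow\hookrightarrow W^{1,\infty}(\Omega;\R^d)$, valid since $p > d$.

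First, from \eqref{a_priori_Linfty_W2p_lin} the family $\{\ay\}$ is uniformly bounded in $L^\infty(I; W^{2,p}(\Omega;\R^d))$; as $\ay(t) = \id$ on $\Gamma_D$ and $\det \nabla \ay \ge 1/C_M$ pointwise by Lemma~\ref{lem:pos_det}, in fact $\ay(t) \in \Wid$ for every $t$. From \eqref{a_priori_strain_rates_lin} together with Korn's inequality applied at fixed time we also obtain a uniform bound on $\dotay$ in $L^2(I;H^1(\Omega;\R^d))$. Banach--Alaoglu then yields a (common) subsequence and a limit $y_\lp \in L^\infty(I; W^{2,p}(\Omega;\R^d)) \cap H^1(I;H^1(\Omega;\R^d))$ realizing both convergences in \eqref{uniform_def_conv1}. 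The trace identity $y_\lp = \id$ on $I \times \Gamma_D$ is preserved by the linear closed constraint, and by continuity of traces in $W^{1,1}(I; H^1(\Omega; \R^d))$ we also get $y_\lp(0, \cdot) = y_{0,\lp}$.

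Next, I would establish the uniform convergence \eqref{uniform_def_conv2}. The $L^2$-bound on $\dotay$ renders $\{\ay\}$ equi-H\"older from $I$ into $H^1(\Omega;\R^d)$:
\begin{equation*}
  \Vert \ay(t) - \ay(s) \Vert_{H^1(\Omega)}
  \le \sqrt{t-s}\, \Vert \dotay \Vert_{L^2(I;H^1(\Omega))}
  \le C \lp \sqrt{t-s} \quad \text{for } 0 \le s < t \le T.
\end{equation*}
Combined with the uniform $L^\infty(I;W^{2,p})$ bound and the compact embedding $W^{2,p}(\Omega;\R^d) \hookrightarrow\hookrightarrow W^{1,\infty}(\Omega;\R^d)$, the Arzel\`a--Ascoli theorem supplies (up to passing to a further subsequence) strong convergence $\ay \to y_\lp$ in $C(I; W^{1,\infty}(\Omega;\R^d))$, which is exactly \eqref{uniform_def_conv2}. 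Passing to the uniform limit in the pointwise bound $\det \nabla \ay \ge 1/C_M$ confirms $\det \nabla y_\lp > 0$ on $I \times \Omega$, so $y_\lp \in L^\infty(I; \Wid)$.

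It remains to transfer these convergences to the piecewise constant interpolants $\ny$ and $\py$. On $((k-1)\tau, k\tau)$ one has the elementary identities
\begin{equation*}
  \ny(t) - \ay(t) = \tfrac{k\tau - t}{\tau}(\yst{k} - \yst{k-1}),
  \qquad
  \py(t) - \ay(t) = -\tfrac{t - (k-1)\tau}{\tau}(\yst{k} - \yst{k-1}).
\end{equation*}
The Cauchy--Schwarz estimate $\Vert \nabla \yst{k} - \nabla \yst{k-1} \Vert_{L^2(\Omega)}^2 = \tau \cdot \tau \Vert \ddif \nabla \yst{k} \Vert_{L^2(\Omega)}^2 \le C\tau\lp^2$, together with the uniform $W^{2,p}$-bound and Gagliardo--Nirenberg interpolation between $L^2$ and $W^{1,p}$ in the gradient (using $p > d$), implies $\Vert \nabla \ny - \nabla \ay \Vert_{L^\infty(I;L^\infty(\Omega))} \to 0$ and the analogous statement for $\py$. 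The weak* convergences in $L^\infty(I;\Wid)$ for $\ny,\py$ follow from their uniform bounds together with the identification of the limit through this vanishing difference. The main obstacle is that nonlinear terms appearing later require uniform control of $\nabla y$; this is resolved precisely by the second-gradient regularization, which furnishes the compact embedding $W^{2,p} \hookrightarrow\hookrightarrow W^{1,\infty}$ used above.
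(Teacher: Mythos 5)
Your proposal is correct and follows essentially the same route as the paper (Banach--Alaoglu for \eqref{uniform_def_conv1}, a compactness-in-time argument for \eqref{uniform_def_conv2}, and the $O(\sqrt{\tau})$ estimate on $\nabla\yst{k}-\nabla\yst{k-1}$ plus interpolation with the uniform $W^{2,p}$ bound to transfer the convergence to $\ny$ and $\py$). One step needs to be stated more carefully: the classical Arzel\`a--Ascoli theorem in $C(I;W^{1,\infty}(\Omega;\R^d))$ requires equicontinuity in the $W^{1,\infty}$-norm, whereas you only establish equi-H\"older continuity with values in $H^1(\Omega;\R^d)$. To close this you must either invoke the generalized (Simon-type) compactness lemma for the triple $W^{2,p}\hookrightarrow\hookrightarrow W^{1,\infty}\hookrightarrow H^1$, or do what the paper does, namely interpolate the spatial $C^{1,1-d/p}$ bound against the temporal H\"older bound in $H^1$ to obtain a uniform bound in $C^\gamma(I;C^{1,\gamma}(\overline\Omega;\R^d))$ for some $\gamma>0$ and only then apply Arzel\`a--Ascoli. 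Two further minor points: the $H^1$-control of $\dotay$ follows from Poincar\'e's inequality (the full gradient is already bounded by \eqref{a_priori_strain_rates_lin}; Korn's inequality is not what is needed here), and the claim $\det\nabla\ay\ge 1/C_M$ does not follow directly from Lemma \ref{lem:pos_det}, since $\ay(t)$ is a convex combination of $\yst{k-1}$ and $\yst{k}$ and the determinant is not concave; it requires the small-perturbation observation that $\Vert\nabla\yst{k}-\nabla\yst{k-1}\Vert_{L^\infty(\Omega)}\to 0$ as $\tau\to 0$ combined with the uniform bound on $(\nabla\yst{k-1})^{-1}$.
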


\begin{proof}
First, \eqref{uniform_def_conv1} follows from the a priori estimates  \eqref{a_priori_Linfty_W2p_lin}, \eqref{a_priori_strain_rates_lin} \BBB and by Banach's selection principle.
For \eqref{uniform_def_conv2}, one uses the embedding $W^{2,p}(\Omega; \R^d) \subset C^{1, 1 - \frac{d}{p}}(\Omega; \R^d)$ to obtain a H\"older estimate in space and \eqref{a_priori_strain_rates_lin} for a H\"older estimate in time.
Then, by an interpolation estimate one can show that the sequence is bounded in $C^\gamma(I; C^{1, \gamma}(\Omega; \R^d))$ for some $\gamma > 0$, and the uniform convergence of the gradients follows then from the Arzel\`a-Ascoli theorem.
We refer to \cite[Proof of Proposition 5.1, Step 1]{MielkeRoubicek20Thermoviscoelasticity} for more details.
To conclude that the first convergences in \eqref{uniform_def_conv1} and \eqref{uniform_def_conv2} also hold for $\py$ or $\ny$ one again uses {\eqref{a_priori_Linfty_H1_lin}--\eqref{a_priori_strain_rates_lin}} to see $\Vert \nabla \ay - \nabla \ny \Vert_{L^\infty(I; L^2(\Omega))} \le C\tau^{\frac{1}{2}}$.
\end{proof}

We proceed with the convergence of the temperatures.

\begin{lemma}[Convergence of temperatures]\label{lem:pointwise_temp_conv}
  For each $\lp \in (0, \lp_0]$, \BBB there exists $\theta_\lp \in L^1(I; W^{1,1}(\Omega))$ \NNN with $\theta_\lp\ge 0$ a.e.\ \BBB  such that, up to a subsequence (not relabeled), it holds that
  \begin{subequations}\label{pointwise_temp_conv}
    \begin{align}
      \nt &\rightharpoonup \theta_\lp & &\text{and} &
      \nw &\rightharpoonup w_\lp &
      &\text{weakly in } L^r(I; W^{1,r}(\Omega)) \text{ for any } r \in [1, \tfrac{d+2}{d+1}), \label{pointwise_temp_conv1} \\
      \at &\to \theta_\lp & &\text{and} & \aw &\to w_\lp &
      &\text{in } \CCC L^s(I \times \Omega) \BBB \text{ for any } s \in [1,  \tfrac{d+2}{d}), \label{pointwise_temp_conv2}
    \end{align}
  \end{subequations}
  as $\tau \to 0$ where $w_\lp \defas \inten(\nabla y_\lp, \theta_\lp)$ for $y_\lp$ as in Lemma \ref{lem:uniform_def_conv}.
  In \eqref{pointwise_temp_conv2}, the same holds true if we replace $\at$ with $\pt$ or $\nt$ and $\aw$ with $\pw$ or $\nw$, respectively.
\end{lemma}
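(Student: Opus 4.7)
The strategy combines the a priori estimates from Theorem \ref{thm:further_apriori_temp_bounds} with Aubin-Lions-type compactness arguments, and identifies the temperature limit using the strict monotonicity of $\inten$ in the last variable (see \eqref{sec_deriv}) together with the uniform convergence of $\nabla \ny$ from Lemma \ref{lem:uniform_def_conv}.

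First, the bounds \eqref{temp_inten_Lq_bound}--\eqref{nablatemp_nablainten_Lr_bound} translate into uniform bounds on $\nt, \nw$ in $L^r(I; W^{1,r}(\Omega))$ and in $L^q(I \times \Omega)$, so the Banach-Alaoglu theorem yields subsequences with $\nt \weakly \theta_\lp$ and $\nw \weakly w_\lp$ weakly in $L^r(I; W^{1,r}(\Omega))$; nonnegativity $\theta_\lp, w_\lp \geq 0$ is inherited from the discrete steps.

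For the strong convergences, \eqref{dot_temp_apriori_bound} gives $\dotaw$ uniformly bounded in $L^1(I; W^{1,\infty}(\Omega)^*)$. Since $W^{1,r}(\Omega)$ embeds compactly into $L^r(\Omega)$ and continuously into $W^{1,\infty}(\Omega)^*$, the Aubin-Lions lemma yields $\aw \to w_\lp$ strongly in $L^r(I \times \Omega)$. For the piecewise constant interpolations $\nw$ and $\pw$, the time-shift estimate
\begin{equation*}
\int_0^{T-\tau} \lVert \nw(t+\tau) - \nw(t) \rVert_{W^{1,\infty}(\Omega)^*} \di t
\leq \tau \sum_{k=1}^{T/\tau - 1} \tau \lVert \ddif \wst{k+1} \rVert_{W^{1,\infty}(\Omega)^*}
\leq C \tau \lp^\alpha
\end{equation*}
(a direct consequence of \eqref{dot_temp_apriori_bound}) together with the $L^r(I; W^{1,r}(\Omega))$-bound permits the application of a discrete-in-time Aubin-Lions lemma of Dreher--J\"ungel type (with $X = W^{1,r}(\Omega)$, $B = L^r(\Omega)$, $Y = W^{1,\infty}(\Omega)^*$), yielding strong convergence $\nw, \pw \to w_\lp$ in $L^r(I \times \Omega)$; the limits are identified as $w_\lp$ via the analogous estimate $\lVert \nw - \aw \rVert_{L^1(I; W^{1,\infty}(\Omega)^*)} \leq C \tau \lp^\alpha$ and uniqueness of weak limits. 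Interpolation with the uniform $L^q(I \times \Omega)$-bound, for $q$ slightly below $\frac{d+2}{d}$, then upgrades these to strong convergence in $L^s(I \times \Omega)$ for every $s \in [1, \frac{d+2}{d})$.

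Finally, by \eqref{sec_deriv} the map $s \mapsto \inten(F, s)$ is strictly increasing on $\R_+$ with derivative in $[\ac, \aC]$, so its inverse is $\ac^{-1}$-Lipschitz. Passing to a further subsequence, $\nw \to w_\lp$ a.e.\ on $I \times \Omega$, and by Lemma \ref{lem:uniform_def_conv} also $\nabla \ny \to \nabla y_\lp$ uniformly. Writing $\nt = \inten(\nabla \ny, \cdot)^{-1}(\nw)$ pointwise, continuity yields $\nt \to \inten(\nabla y_\lp, \cdot)^{-1}(w_\lp)$ a.e.; by uniqueness of the weak limit this quantity equals $\theta_\lp$, whence $w_\lp = \inten(\nabla y_\lp, \theta_\lp)$. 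Combining a.e.\ convergence with the $L^q$-boundedness, Vitali's convergence theorem gives $\nt \to \theta_\lp$ strongly in $L^s(I \times \Omega)$. The same reasoning gives $\pt \to \theta_\lp$ via $\pw$, and since $\at$ is a pointwise convex combination of $\pt$ and $\nt$, strong convergence $\at \to \theta_\lp$ in $L^s(I \times \Omega)$ is inherited. The principal obstacle is the strong compactness of the piecewise constant interpolations $\nw, \pw$: the classical Aubin-Lions lemma is not directly applicable as these functions possess no weak time derivative, which we circumvent via the discrete-in-time compactness argument together with the time-shift estimate derived from \eqref{dot_temp_apriori_bound}.
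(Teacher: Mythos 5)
Your argument follows the same overall strategy as the paper (a priori bounds from Theorem \ref{thm:further_apriori_temp_bounds}, Aubin--Lions compactness for the internal energies, interpolation to reach all $s<\tfrac{d+2}{d}$, and inversion of $\inten(F,\cdot)$ to recover the temperatures), but with two different technical devices. For the piecewise constant interpolants $\nw,\pw$ you invoke a Dreher--J\"ungel-type discrete Aubin--Lions lemma together with the time-shift estimate derived from \eqref{dot_temp_apriori_bound}, whereas the paper uses the $BV$-in-time version of Aubin--Lions (Corollary~7.9 in \cite{Roubicek13Nonlinear}); these are interchangeable here. For the upgrade from $L^r$ to $L^s(I\times\Omega)$ you interpolate directly against the uniform $L^q(I\times\Omega)$-bound of \eqref{temp_inten_Lq_bound} with $q$ close to $\tfrac{d+2}{d}$, which is cleaner than the paper's two-step H\"older interpolation against the $L^\infty(I;L^1(\Omega))$-bound combined with the Sobolev-improved exponent $\tilde r<r^*$; both yield the same conclusion.

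There is one small gap you should repair. Your Aubin--Lions step for $\aw$ (and the discrete version for $\pw$) uses boundedness in $L^r(I;W^{1,r}(\Omega))$ on all of $I$, but on the first subinterval $(0,\tau)$ these interpolants involve $\wst{0}=\inten(\nabla y_{0,\lp},\theta_{0,\lp})$ with $\theta_{0,\lp}=\lp^\alpha\mu_0$ and $\mu_0$ only in $L^2_+(\Omega)$, so no spatial gradient bound is available there; the sum in \eqref{nablatemp_nablainten_Lr_bound} starts at $k=1$ for exactly this reason (cf.\ Remark \ref{rem: after temp}(i)). The fix is the one the paper uses: run the compactness argument on $[t_0,T]$ for arbitrary $t_0>0$ and then let $t_0\to0$, using that the uniform $L^q(I\times\Omega)$-bound \eqref{temp_inten_Lq_bound} (which does include $k=0$) makes the contribution of $[0,t_0]$ to the $L^s$-norm uniformly small. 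With this adjustment your proof is complete and correct.
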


\begin{proof}
  The existence of the limit and  the  convergences \BBB in \eqref{pointwise_temp_conv1} follow from the a priori bounds in Theorem \ref{thm:further_apriori_temp_bounds} together with Banach's selection principle.

  Let $t_0 \CCC \in (0, T)\BBB$ and $r \in [1, \frac{d+2}{d+1})$. By Theorem \ref{thm:further_apriori_temp_bounds}, $(\aw)_\tau$ is bounded in
  \begin{equation*}
    L^r([t_0,T]; W^{1, r}(\Omega))  \cap W^{1, 1}([t_0,T]; W^{1, \infty}(\Omega)^*).
  \end{equation*}
  Hence, for any $\tilde r < r^* \defas \frac{rd}{d-r}$, due to the compact embedding $W^{1,r}(\Omega) \subset \subset L^{\tilde r}(\Omega)$, the Aubin-Lions' theorem shows that there exists $\hat{w}_\lp \in L^r([t_0, T]; L^{\tilde r}(\Omega))$ such that $(\aw)_\tau \to \hat{w}_\lp$ in $L^r([t_0, T]; L^{\tilde r}(\Omega))$, up to taking a subsequence.
  We observe that $\hat{w}_\lp = w_\lp$.
  Indeed, it is elementary to check that by \eqref{dot_temp_apriori_bound}
  \begin{equation}\label{nmuk_pmuk_Ls_conv3-toproof}
    \norm{\aw - \nw}_{L^1(I; W^{1, \infty}(\Omega)^*)}
    \le \norm{\nw - \pw}_{L^1(I; W^{1, \infty}(\Omega)^*)}
    \leq \tau \norm{\dotaw}_{L^1(I; W^{1, \infty}(\Omega)^*)} \to 0
  \end{equation}
  as $\tau \to 0$.
  Next, we show that the convergence $\CCC \aw \to w_\lp\BBB$ in $L^r([t_0,T]; L^{\tilde r}(\Omega))$ \CCC as $\tau \to 0$ \BBB can be improved to convergence in $L^s([t_0, T]; L^s(\Omega))$ for any exponent $s \in [1,  \frac{d+2}{d})$.
  To this end, we will interpolate with the bound
  \begin{equation}\label{Linfty_L1_wdelta_bound}
    \norm{w_\lp}_{L^\infty(I; L^1(\Omega))}
    \leq \sup_{\tau > 0} \, \norm{\nw}_{L^\infty(I; L^1(\Omega))} < \infty,
  \end{equation}
  which follows from \eqref{a_priori_temp_L1_lin}.
  Fix $s \in (1,  \frac{d+2}{d})$ and consider $r \in (1, \frac{d+2}{d+1}), \, \tilde r \in (1, r^*)$, both to be specified later.
  Now,  as \BBB $\lim_{r \to \frac{d+2}{d+1}} \frac{rd}{d - r} \NNN \ge \BBB  \frac{d+2}{d} > s$,  notice that \BBB for $r, \, \tilde r$ large enough it holds that $\lambda \defas \frac{\tilde r - s}{s(\tilde r - 1)} \in (0, 1)$.
  Writing $v_\tau \defas \aw - w_\lp$ for shorthand and using Hölder's inequality in the integral over $\Omega$ with power\CCC s \BBB $q_1 = \frac{\tilde r - 1}{\tilde r - s}$ and $q_1' = \frac{\tilde r - 1}{s - 1}$, we derive that
  \begin{align}
    \norm{v_\tau}^s_{L^s([t_0,T]; L^s(\Omega))}
    = \int_{t_0}^T \int_\Omega \abs{v_\tau}^{\lambda s} \abs{v_\tau}^{(1 - \lambda)s} \di x \di t
    = \int_{t_0}^T
      \CCC\Bigg(\BBB
        \int_\Omega \abs{v_\tau} \di x
      \CCC\Bigg)\BBB^{\frac{1}{q_1}}
      \CCC\Bigg(\BBB
        \int_\Omega \abs{v_\tau}^{\tilde r} \di x
      \CCC\Bigg)\BBB^{\frac{1}{q_1'}} \di t, \label{first_hoelder}
  \end{align}
  where we have used $\lambda s q_1 = 1$ and $(1 - \lambda) s q_1' = \tilde r$.
  Let $q_2 \defas \frac{r(\tilde r - 1)}{\tilde r (s - 1)}$ and notice that
  \begin{equation*}
    \lim_{r \to \frac{d+2}{d+1}\BBB} \lim_{\tilde r \to r^*} q_2
    = \lim_{r \to  \frac{d+2}{d+1} \BBB } \frac{r(d+1) - d}{d(s - 1)}
    = \frac{2}{d(s - 1)} > 1
  \end{equation*}
  where the last inequality is due to $s <  1 + \BBB \frac{2}{d}$.
  Hence, by possibly increasing $r$ and $\tilde r$ we can assure that $q_2 > 1$.
  We denote by $q_2'$ the conjugate of $q_2$.
  Consequently,  by \CCC$\aw \to \hat{w}_\lp$ \BBB in $L^r([t_0, T]; L^{\tilde r}(\Omega))$ \CCC as $\tau \to 0$\BBB, by \eqref{Linfty_L1_wdelta_bound}, and \BBB by H\"older's inequality in the integral in (\ref{first_hoelder}) over \NNN $[t_0,T]$ \BBB with powers $q_2'$ and $q_2$ we get
  \begin{align}\label{second_hoelder}
    \norm{v_\tau}^s_{L^s([t_0,T]; L^s(\Omega))}
    &\leq
    \Bigg(
      \int_{t_0}^T
        \Big(
          \int_\Omega \abs{v_\tau} \di x
        \Big)^{\CCC\frac{q_2'}{q_1}\BBB} \di t
    \Bigg)^{\frac{1}{q_2'}}
    \Bigg(
      \int_{t_0}^T
        \Big(
          \int_\Omega \abs{v_\tau}^{\tilde r} \di x
        \Big)^{\CCC\frac{r}{\tilde r}\BBB} \di t
    \Bigg)^{\frac{1}{q_2}} \notag \\
    &\leq \big(
      2\sup \nolimits_{\tau > 0} \norm{\nw}_{L^\infty(I; L^1(\Omega))}
    \big)^{\frac{1}{q_1}} \, \norm{\aw - w_\lp}^{\frac{r}{q_2}}_{L^r([t_0,T]; L^{\tilde r}(\Omega))}
    \to 0 \text{ as } \tau \to 0.
  \end{align}
 Sending $t_0 \to 0$ and using \eqref{temp_inten_Lq_bound}, \BBB this shows \eqref{pointwise_temp_conv2} for the sequence $(\aw)_\tau$.
  To obtain the same convergence for $\nw$ and $\pw$, we use a more general version of Aubin-Lions for time-derivatives as measures, see Corollary~7.9 in \cite{Roubicek13Nonlinear}.
  To this end it suffices to see that $\nw$ and $\pw$ are bounded in $L^r([t_0,T]; W^{1, r}(\Omega)) \cap BV([t_0,T]; W^{1, \infty}(\Omega)^*)$,  and then by repeating \eqref{first_hoelder}--\eqref{second_hoelder} we get \eqref{pointwise_temp_conv2} for $\nw$ and $\pw$, up to taking a subsequence.

  It remains to show \eqref{pointwise_temp_conv2} for the three different interpolations of the temperatures.
  In view of \eqref{sec_deriv}, for any $F \in \NNN GL^+(d) \BBB$, the map $\inten(F, \cdot)$ is invertible with $\CCC \frac{\di}{\di \theta} \BBB (\inten(F, \cdot)^{-1}) \leq \frac{1}{\ac}$.
  Thus, from the definition $\nw = \inten(\ny,\nt)$ we get $\nt = \inten(\nabla \ny, \cdot)^{-1}(\nw)$.
  \CCC Setting \BBB $\theta_\lp \defas \inten(\nabla  y_\lp, \cdot)^{-1}(w_\lp)$,   by \eqref{uniform_def_conv2} for $\ny$ \BBB and by $\nw \to w_\lp$ in \CCC $L^s(I \times \Omega)$ \BBB (see \eqref{pointwise_temp_conv2}), we get
  \begin{equation*}
  \nt =  \inten(\nabla \ny, \cdot)^{-1}(\nw) \to \inten(\nabla  y_\lp, \cdot)^{-1}(w_\lp) = \theta_\lp \quad \text{in } \CCC L^s(I \times \Omega)\BBB.
  \end{equation*}
  The convergence for $(\pt)_\tau$ follows in a similar fashion.
  Lastly, combining the convergence of $(\nt)_\tau$ and $(\pt)_\tau$ we obtain \eqref{pointwise_temp_conv2} also for $\at$.
\end{proof}

\begin{remark}\label{rem: after temp}   (i) \BBB   Note that \eqref{pointwise_temp_conv1} does not holds in general for $\at$, $\pt$, $\aw$, and $\pw$ as we did not assume Sobolev regularity for the initial datum $ \theta_{0,\lp} \BBB  \in L^2_+(\Omega)$. Yet, the statement could be obtained on any subinterval $I' \subset I$ with $0 \notin I'$. \\
(ii) The result only relies on the a priori bounds in \NNN Theorem \BBB \ref{thm:further_apriori_temp_bounds}.
Consequently, the same convergence result holds true for the \emph{rescaled temperature and rescaled internal energy}, namely along (interpolations of) the sequences $(\lp^{-\alpha}_k \theta^{(k)}_{\lp_k\CCC,\BBB \tau_k})_k$ and $(\lp^{-\alpha}_k w^{(k)}_{\lp_k\CCC,\BBB \tau_k})_k$ for sequences $(\lp_k\CCC,\BBB \tau_k)_k$ with $\lp_k \to 0$ as $k \to \infty$.
Namely, the proof of $\lp^{-\alpha}_k \overline{w}_{\lp_k\CCC,\BBB \tau_k} \to \tilde{w}$ in \CCC $L^s(I \times \Omega)$ \BBB for some $\tilde w$ is the same, taking the a priori bounds in \eqref{a_priori_temp_L1_lin} and Theorem \ref{thm:further_apriori_temp_bounds} into account.
In view of \ref{C_heatcap_cont}, \BBB $\bar c_V = c_V(\Id, 0)$ exists and by the third estimate in \ref{C_bounds}  we have \BBB $\bar c_V \geq \ac$.
Hence, we can define $\tilde \theta \defas  \tilde w / \bar c_V\BBB$.
Further\CCC more\BBB, by $\inten(F, 0) = 0$ for all $F \in GL^+(d)$, $c_V = \pl_\theta \inten$  (see \eqref{sec_deriv}), \BBB and the Fundamental Theorem of Calculus  we find \BBB
\begin{equation}\label{nmuk_pmuk_Ls_conv3-toproof2NNN}
  \overline{\theta}_{\lp_k\CCC,\BBB \tau_k}
    = \BBB \inten(\nabla \overline{y}_{\lp_k\CCC,\BBB \tau_k}, \cdot)^{-1} (\overline{w}_{\lp_k\CCC,\BBB \tau_k})
  = \int_0^{\overline{w}_{\lp_k\CCC,\BBB \tau_k}}
    c_V(\nabla \overline{y}_{\lp_k\CCC,\BBB \tau_k}, s)^{-1} \di s
  = \lp_k^\alpha \int_0^{\lp_k^{-\alpha} \overline{w}_{\lp_k\CCC,\BBB \tau_k}}
    c_V(\nabla \overline{y}_{\lp_k\CCC,\BBB \tau_k},  \lp_k^\alpha \BBB s)^{-1} \di s,
\end{equation}
where we changed coordinates in the last \NNN identity. \BBB
\CCC Consequently, using the third inequality in \ref{C_bounds} we can derive the following bound
\begin{equation*}
  |\lp_k^{-\alpha} \overline \theta_{\lp_k, \tau_k} - \bar c_V^{-1} \tilde w| = \Big|  \int_0^{\lp_k^{-\alpha} \overline{w}_{\lp_k\CCC,\BBB \tau_k}}
    c_V(\nabla \overline{y}_{\lp_k\CCC,\BBB \tau_k},  \lp_k^\alpha \BBB s)^{-1} \di s - \int_0^{\tilde w}  \bar c_V^{-1}\di s \Big|
  \leq \frac{1}{c_0} |\lp_k^{-\alpha} \overline w_{\lp_k, \tau_k} - \tilde w|
    + f_k,
\end{equation*}
where
\begin{equation*}
  f_k \defas \int_0^{\tilde w}
    |c_V(\nabla \overline{y}_{\lp_k, \tau_k},\eps_k^\alpha s)^{-1} - \bar c_V^{-1}| \di s.
\end{equation*}
It remains to show that $f_k \to 0$ in $L^s(I \times \Omega)$.
By the third bound in \ref{C_bounds} we see that $|f_k| \leq \frac{2}{c_0} \tilde w \in L^s(I \times \Omega)$.
Then, by \ref{C_heatcap_cont} and the definition of $\bar c_V$, it follows that $f_k \to 0$ a.e.~in $I \times \Omega$.
Dominated Convergence yields the desired result. \BBB
The same argument holds for the other interpolations. \\
(iii) In the case $\alpha =1$, the convergence can be improved to $\lp_k^{-1} \underline{\theta}_{\lp_k\CCC,\BBB \tau_k} \to \tilde{\theta}$ in $L^2(I;L^2(\Omega))$.
Indeed, by Remark \ref{rem: next a priori}  and $\theta_{0,\lp} \in L^2_+(\Omega)$ we get 
$$\Vert \overline{\theta}_{\lp_k\CCC,\BBB \tau_k} \Vert_{L^2(I \times \Omega)} + \Vert \underline{\theta}_{\lp_k\CCC,\BBB \tau_k} \Vert_{L^2(I \times \Omega)} + \Vert \nabla \overline{\theta}_{\lp_k\CCC,\BBB \tau_k} \Vert_{L^2(I \times \Omega)} \le C\lp. $$
Then, the convergence in $L^2(I;L^2(\Omega))$ follows by repeating the argument above via Aubin-Lions' theorem, simply using \CCC the compact embedding \BBB \NNN $  H^1(\Omega) \subset \subset L^2(\Omega)$. \BBB
\end{remark}

We are ready to pass to the limit in the time-discrete mechanical evolution.
\begin{proposition}[Convergence of the mechanical equation]\label{thm:vanishing_tau_mech_nonlinear}
  Let $y_\lp$ be as in Lemma \ref{lem:uniform_def_conv} and $\theta_\lp$ as in Lemma \ref{lem:pointwise_temp_conv}.
  Then, for any test-function $z \in C^\infty(I \times \overline{\Omega})$ with $z = 0$ on $I \times \Gamma_D$ we have that \eqref{weak_limit_mechanical_equation} holds.
\end{proposition}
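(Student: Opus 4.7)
The plan is to start from the time-discrete Euler-Lagrange equation \eqref{mechanical_step_single} and rewrite it in terms of the interpolants defined in \eqref{y_interpolations}. Concretely, for every $k \in \{1,\ldots,T/\tau\}$ and every $t \in ((k-1)\tau, k\tau]$ the identity
\begin{equation*}
  \int_\Omega \Big(
      \pl_F \felpot(\nabla \ny(t), \pt(t))
      + \pl_{\dot F} \disspot(\nabla \py(t), \nabla \dotay(t), \pt(t))
    \Big) : \nabla z
    + \pl_G \hypot(\nabla^2 \ny(t)) \cdddot \nabla^2 z \di x
  = \lp \sprod{\ftau(t), \gtau(t)}{z}
\end{equation*}
holds for every $z \in \Wzero$, where $\ftau,\gtau$ denote the piecewise constant in time interpolants of $\fst k, \gst k$. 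Given a smooth test function $z$ as in the statement, integrating in time over $I$ and passing to the limit $\tau \to 0$ in every term yields \eqref{weak_limit_mechanical_equation}.

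The external loadings pose no difficulty: by the regularity $f \in W^{1,1}(I;L^2(\Omega;\R^d))$ and $g \in W^{1,1}(I;L^2(\Gamma_N;\R^d))$, the interpolants $\ftau,\gtau$ converge to $f,g$ in $L^1(I;L^2)$, whence the right-hand side converges to $\lp \intQ f \cdot z \di x \di t + \lp \intSN g \cdot z \di \haus^{d-1}\di t$. For the elastic contribution $\pl_F \elpot(\nabla \ny)$, combine the uniform convergence $\nabla \ny \to \nabla y_\lp$ in $L^\infty(I\times\Omega;\R^{d\times d})$ from \eqref{uniform_def_conv2}, the uniform bound $\det(\nabla\ny) \geq 1/C$ from \eqref{pos_det}, and the continuity of $\pl_F \elpot$ on the compact set where $\nabla\ny$ lives to conclude uniform convergence of $\pl_F \elpot(\nabla \ny) \to \pl_F \elpot(\nabla y_\lp)$. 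For the coupling part $\pl_F \cplpot(\nabla \ny, \pt)$, the estimate \eqref{C_locally_lipschitz} together with the uniform bound on $\nabla \ny$ gives $|\pl_F \cplpot(\nabla \ny, \pt)| \le C(\pt \wedge 1)(1+|\nabla \ny|)$, which is uniformly bounded in $L^2(I\times\Omega)$ via the temperature estimates in Theorem \ref{thm:further_apriori_temp_bounds}; then continuity of $\pl_F \cplpot$, together with the strong $L^s$-convergence $\pt \to \theta_\lp$ from \eqref{pointwise_temp_conv2} and the uniform convergence of $\nabla \ny$, gives $\pl_F \cplpot(\nabla \ny, \pt) \to \pl_F \cplpot(\nabla y_\lp, \theta_\lp)$ in $L^1(I\times\Omega)$ by Vitali's theorem.

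The hyperelastic term and the dissipative term are more delicate since both are nonlinear in quantities that a priori converge only weakly. For the hyperelastic term, the standard strategy based on convexity of $H$ (see \ref{H_regularity}) and the equation itself produces strong convergence of $\nabla^2 \ny$ in $L^p(I\times\Omega;\R^{d\times d\times d})$: indeed, testing with $z = \ay - y_\lp$ and exploiting that all other terms pass to the limit, one obtains via the monotonicity of $\pl_G H$ that $\pl_G H(\nabla^2 \ny) \cdddot (\nabla^2 \ny - \nabla^2 y_\lp) \to 0$ in $L^1$, from which strong convergence of $\nabla^2 \ny$ follows by the uniform $p$-growth in \ref{H_bounds}; thereafter continuity of $\pl_G H$ yields the desired limit. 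For the dissipation term the main obstacle appears: since $\pl_{\dot F}R$ is quadratic in $\dot F$ by \eqref{chain_rule_Fderiv}, only the strong convergence of $\nabla \dotay$ to $\nabla \dot y_\lp$ in $L^2(I\times\Omega;\R^{d\times d})$ allows one to pass to the limit. This is precisely the content of the separate strong convergence lemma (Lemma \ref{lem:strong_strain_rates_conv} in the excerpt); granting it, together with the uniform convergence of $\nabla \py$ and the $L^s$-convergence of $\pt$, the continuous dependence of the tensor $D$ on $(C,\theta)$ from \ref{D_quadratic} and the dominated convergence theorem yield $\pl_{\dot F} R(\nabla \py, \nabla \dotay, \pt) \to \pl_{\dot F}R(\nabla y_\lp, \nabla \dot y_\lp, \theta_\lp)$ strongly in $L^1(I\times\Omega;\R^{d\times d})$.

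Combining the four limit passages finishes the proof. I expect the genuine difficulty to sit, as usual in this geometrically nonlinear context, in the strong convergence of strain rates invoked above: once available, all remaining terms in the mechanical equation are harmless.
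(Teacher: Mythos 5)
Your treatment of the loadings, the elastic stress $\pl_F\elpot$, and the coupling stress $\pl_F\cplpot$ matches the paper's argument. The two remaining terms are where your proposal diverges, and both contain genuine gaps.

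For the strain-gradient term you claim that $\pl_G \hypot(\nabla^2\ny)\cdddot(\nabla^2\ny-\nabla^2 y_\lp)\to 0$ in $L^1$, combined with the $p$-growth in \ref{H_bounds}, yields \emph{strong} convergence of $\nabla^2\ny$ in $L^p$. That implication requires strict (in fact uniform) monotonicity of $\pl_G \hypot$, which is not among the hypotheses: \ref{H_regularity} gives only convexity, so the monotonicity product is merely nonnegative, and the growth bounds do not upgrade this. The paper avoids the issue entirely with Minty's trick: it introduces the monotone, hemicontinuous operator $\langle\mathbf{H}(y),z\rangle=\intQ\pl_G\hypot(\nabla^2 y)\cdddot\nabla^2 z\di x \di t$, verifies that the remaining terms $b_{\lp\tau}$ converge weakly* in $(W^{2,p})^*$ and that $\langle b_{\lp\tau},\ny\rangle\to\langle b_\lp,y_\lp\rangle$ (using uniform convergence of the gradients), and concludes $\mathbf{H}(y_\lp)=b_\lp$ without ever proving strong convergence of the second gradients. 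Your route could be repaired by running Minty's argument to identify the weak limit instead of asserting strong convergence.

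For the viscous term you have misread \eqref{chain_rule_Fderiv}: $\pl_{\dot F}\disspot(F,\dot F,\theta)=2F(D(C,\theta)\dot C)$ is \emph{linear} in $\dot F$; it is $\disspot$ itself and the dissipation rate $\drate$ that are quadratic. Consequently, weak convergence of $\nabla\dotay$ in $L^2$ from \eqref{uniform_def_conv1}, paired against the strongly convergent factor built from $D$, $\nabla\py$, and the fixed test function, already passes this term to the limit; no strong convergence of the strain rates is needed here, and the paper stresses exactly this point. This is not merely cosmetic: invoking Lemma \ref{lem:strong_strain_rates_conv} at this stage is circular, because its proof rests on the time-continuous energy balance \eqref{cont_energy_balance}, which is obtained by testing the limit identity \eqref{weak_limit_mechanical_equation} with $\dot y_\lp$ --- i.e., the very statement you are trying to establish. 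Strong convergence of the strain rates is needed only afterwards, for the quadratic dissipation rate in the heat-transfer equation.
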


\begin{proof}
  The statement is proved in \cite[Proof of Proposition 5.1, Step 2]{MielkeRoubicek20Thermoviscoelasticity} and we include a sketch for the reader's convenience.
  For $y \in \Wid$ we define a functional on $X \defas W^{2,p}(\Omega; \R^d)$ by
  \begin{equation*}
    \langle \mathbf{H}(y), z\rangle = \intQ \pl_G \hypot(\nabla^2 y) \cdddot \nabla^2 z.
  \end{equation*}
  Note that $\mathbf{H}$ is a hemicontinuous and monotone operator as $\hypot$ is convex.
  We further choose $b_{\lp\tau}, \, b_\lp \in X^*$ such that  \eqref{mechanical_step_single} \BBB can be written as
  \begin{equation}\label{minty0}
    \langle \mathbf{H}(\ny),z\rangle = \langle b_{\lp\tau}, z \rangle
  \end{equation}
  for all $z \in \mathcal{Y}_0$ and \eqref{weak_limit_mechanical_equation} can be written as
  \begin{equation}\label{minty}
    \langle \mathbf{H}(y_\lp),z\rangle = \langle b_\lp,z\rangle
  \end{equation}
  for all $z \in \mathcal{Y}_0$.
  Note that \eqref{minty0} holds by Proposition \ref{prop:existence_mechanical_step}, and that our goal is to confirm \eqref{minty}.

  First, $b_{\lp\tau} \weaklystar b_\lp$ \CCC weakly* \BBB in $X^*$ for $\tau \to 0$ as in each of the three terms of $b_{\lp\tau}$ (i.e., $\partial_F W$, $\partial_{\dot{F}} R$, and  $\ell^{(k)}_\tau$, respectively, see  \eqref{mechanical_step_single}) \BBB one can pass to the limit by using weak convergence of $(\nabla \dotay)_\tau$ in $L^2(I; H^1(\Omega; \R^d))$ (see \eqref{uniform_def_conv1}), uniform convergence of $(\nabla \ny)_\tau, (\nabla \py)_\tau$ \CCC on \BBB $I \times \Omega$ (see \eqref{uniform_def_conv2}), and pointwise a.e.~convergence of $(\pt)_\tau$ \CCC on \BBB $I \times \Omega$ (up to a subsequence, see \eqref{pointwise_temp_conv2}).
  At this point, we use in particular that $\pl_{\dot F} \disspot$ is linear in $\nabla \dotay$ and that $\partial_F W(\ny,\pt)$ is bounded  due to \ref{W_regularity}, \eqref{C_locally_lipschitz}, and \eqref{a_priori_Linfty_W2p_lin}. \BBB   Moreover, due to uniform convergence of the gradients we also have $\langle b_{\lp \tau}, \ny \rangle \to \langle b_\lp, y_\lp \rangle $.
  We now use Minty's trick for the monotone operator $\mathbf{H}$: \BBB identity  \eqref{minty0} and the convergences $\ny \rightharpoonup y_\lp$ \CCC weakly \BBB in $X$, $b_{\lp\tau} \weaklystar  b_\lp$ \CCC weakly* \BBB in $X^*$, and $\langle b_{\lp\tau}, \ny \rangle \to \langle b_\lp, y_\lp \rangle $ imply $\mathbf{H}(y_\lp) = b_\lp$ as elements of $X^*$, i.e., \eqref{minty} holds.
\end{proof}

For the limit passage in the time-discrete heat equation, we will need the strong convergence of the strain rates $(\nabla \dotay)_\tau$ in $L^2(I; L^2(\Omega;\R^{d \times d}))$ since the dissipation rate $\drate(\nabla \py, \nabla \dotay, \pt)$ is quadratic in $\nabla \dotay$.
Note that our a priori bounds currently only guarantee weak convergence.
The next lemma improves this convergence:
\begin{lemma}[Strong convergence of the strain rates]\label{lem:strong_strain_rates_conv}
  For $y_\lp$ as in Lemma \ref{lem:uniform_def_conv}, we have that, up to taking a subsequence,
  \begin{equation}\label{strong_strain_rates_conv}
    \dotay \to \dot y_\lp \text{ strongly in } L^2(I; H^1(\Omega; \R^d)) \text{ as } \tau \to 0.
  \end{equation}
\end{lemma}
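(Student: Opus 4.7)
The plan is to establish
\begin{equation*}
  \lim_{\tau \to 0} \int_0^T \int_\Omega 2R(\nabla \py, \nabla \dotay, \pt) \di x \di t = \int_0^T \int_\Omega 2R(\nabla y_\lp, \nabla \dot y_\lp, \theta_\lp) \di x \di t,
\end{equation*}
and then upgrade the weak convergence $\nabla \dotay \weakly \nabla \dot y_\lp$ granted by \eqref{a_priori_strain_rates_lin} to strong convergence by exploiting the uniform ellipticity of $D$ from \ref{D_bounds} together with the generalized Korn inequality (Lemma \ref{lem:gen_korn}).

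First, I would test the Euler--Lagrange equation \eqref{mechanical_step_single} with $z = \ddif \yst{k}$, multiply by $\tau$, and sum over $k \in \{1, \ldots, T/\tau\}$. Using $\pl_{\dot F} R : \dot F = 2R$ from \eqref{diss_rate}, this yields the discrete identity
\begin{equation*}
  \int_0^T \int_\Omega \bigl( 2R(\nabla \py, \nabla \dotay, \pt) + \pl_F \felpot(\nabla \ny, \pt) : \nabla \dotay + \pl_G \hypot(\nabla^2 \ny) \cdddot \nabla^2 \dotay \bigr) \di x \di t = \lp \int_0^T \langle \ltau, \dotay \rangle \di t.
\end{equation*}
Thanks to the uniform bound \eqref{a_priori_Linfty_W2p_lin} on $\nabla \ny$, a second-order Taylor expansion of $\elpot$ gives
\begin{equation*}
  \int_0^T \int_\Omega \pl_F \elpot(\nabla \ny) : \nabla \dotay \di x \di t = \elen(\ny(T)) - \elen(y_{0,\lp}) + O(\tau V_{T/\tau}),
\end{equation*}
while convexity of $\hypot$ (see \ref{H_regularity}) yields the one-sided bound $\int_0^T \int_\Omega \pl_G \hypot(\nabla^2 \ny) \cdddot \nabla^2 \dotay \di x \di t \geq \hyen(\ny(T)) - \hyen(y_{0,\lp})$. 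The coupling term $\int_0^T \int_\Omega \pl_F \cplpot(\nabla \ny, \pt) : \nabla \dotay$ converges to $\int_0^T \int_\Omega \pl_F \cplpot(\nabla y_\lp, \theta_\lp) : \nabla \dot y_\lp$: by \eqref{C_locally_lipschitz} and the $L^\infty$-bound on $\nabla \ny$, the factor $\pl_F \cplpot(\nabla \ny, \pt)$ is uniformly bounded in $L^\infty(I \times \Omega)$ and, by dominated convergence together with Lemmas \ref{lem:uniform_def_conv}--\ref{lem:pointwise_temp_conv}, converges strongly in every $L^q$, $q < \infty$, which suffices for the weak--strong pairing. Combining these with lower semicontinuity of $\elen$ and $\hyen$ at $t = T$ and passing to $\limsup_{\tau \to 0}$ gives
\begin{align*}
  \limsup_{\tau \to 0} \int_0^T \int_\Omega 2R \,\di x \di t
  &\leq \lp \int_0^T \langle \ell, \dot y_\lp \rangle \di t - \elen(y_\lp(T)) - \hyen(y_\lp(T)) + \elen(y_{0,\lp}) + \hyen(y_{0,\lp}) \\
  &\quad - \int_0^T \int_\Omega \pl_F \cplpot(\nabla y_\lp, \theta_\lp) : \nabla \dot y_\lp \di x \di t.
\end{align*}

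Testing the limit momentum balance \eqref{weak_limit_mechanical_equation} formally with $z = \dot y_\lp$ identifies the right-hand side above as $\int_0^T \int_\Omega 2R(\nabla y_\lp, \nabla \dot y_\lp, \theta_\lp) \di x \di t$; the test is admissible since $\dot y_\lp \in L^2(I; H^1_{\Gamma_D}(\Omega; \R^d))$ (because $y_\lp = \id$ on $\Gamma_D$) and the $\pl_G \hypot$-contribution is identified with $\hyen(y_\lp(T)) - \hyen(y_{0,\lp})$ via a chain rule for convex $C^1$ integrands after a standard time-mollification. Together with the weak lower semicontinuity $\liminf \int 2R(\nabla \py, \nabla \dotay, \pt) \geq \int 2R(\nabla y_\lp, \nabla \dot y_\lp, \theta_\lp)$ (from convexity of $R$ in $\dot F$, uniform convergence of $\nabla \py$, and a.e.~convergence of $\pt$), the convergence of the full dissipation integral follows. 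To obtain \eqref{strong_strain_rates_conv}, I would set $\dot C_\tau \defas (\nabla \dotay)^T \nabla \py + (\nabla \py)^T \nabla \dotay$, $\dot C \defas (\nabla \dot y_\lp)^T \nabla y_\lp + (\nabla y_\lp)^T \nabla \dot y_\lp$, and $C_\tau \defas (\nabla \py)^T \nabla \py$, so that $2R(\nabla \py, \nabla \dotay, \pt) = D(C_\tau, \pt) \dot C_\tau : \dot C_\tau$. Expanding
\begin{equation*}
  \int_0^T \int_\Omega D(C_\tau, \pt) (\dot C_\tau - \dot C) : (\dot C_\tau - \dot C) \di x \di t
\end{equation*}
and using the convergence of $\int D \dot C_\tau : \dot C_\tau$ just obtained, the uniform boundedness and a.e.~convergence $D(C_\tau, \pt) \to D(C, \theta_\lp)$ (\ref{D_bounds} and continuity), and the ellipticity of $D$ yields $\dot C_\tau \to \dot C$ strongly in $L^2(I \times \Omega; \R^{d \times d}_\sym)$. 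Applying Lemma \ref{lem:gen_korn} pointwise in $t$ with background $y_\lp(t)$ to $v = \dotay - \dot y_\lp \in H^1_{\Gamma_D}(\Omega; \R^d)$, and using the uniform convergence $\nabla \py \to \nabla y_\lp$ to absorb the commutator-type error $(\nabla \dotay)^T(\nabla y_\lp - \nabla \py) + (\nabla y_\lp - \nabla \py)^T \nabla \dotay$, integrating over time gives \eqref{strong_strain_rates_conv}. The main obstacle is the rigorous justification of the chain-rule identification $\int_0^T \int_\Omega \pl_G \hypot(\nabla^2 y_\lp) \cdddot \nabla^2 \dot y_\lp \di x \di t = \hyen(y_\lp(T)) - \hyen(y_{0,\lp})$, as the pairing does not fall under standard duality given only $\nabla \dot y_\lp \in L^2(I \times \Omega)$; an alternative route bypassing this difficulty is to combine the discrete mechanical and thermal equations to produce the total energy balance in the limit, as in Mielke--Roubíček.
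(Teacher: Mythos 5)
Your proposal follows essentially the same route as the paper's proof: a discrete energy inequality obtained by testing the Euler--Lagrange equation \eqref{mechanical_step_single} with the discrete velocity, comparison with the time-continuous energy balance obtained by testing \eqref{weak_limit_mechanical_equation} with $\dot y_\lp$, the resulting convergence of the dissipation integral, and then ellipticity of $D$ combined with the generalized Korn inequality of Lemma \ref{lem:gen_korn}. The chain-rule identification for the second-gradient term that you correctly flag as the main obstacle is handled in the paper exactly as you suggest, by citing the chain rule for the $\Lambda$-convex functional $\mechen$ from \cite[Proposition 3.6]{MielkeRoubicek20Thermoviscoelasticity}; the paper also uses $\Lambda$-convexity (rather than your Taylor expansion of $\elpot$) for the one-sided discrete bound, but both variants yield the same $O(\tau V)$-type error.
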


\begin{proof}
  The proof follows essentially by combining Steps 4 in the proof of \cite[Proposition~5.1, Proposition~6.4]{MielkeRoubicek20Thermoviscoelasticity}.
  We give the main steps here in our setting because we   work completely without regularization. \BBB
  First, in the time-continuous setting, one derives the energy balance
  \begin{equation}\label{cont_energy_balance}
  \begin{aligned}
    \mechen(y_\lp(T))
      + 2 \int_0^T \hspace{-0.2cm}  \diss \BBB (y_\lp, \dot y_\lp, \theta_\lp) \di t
    &=  \mechen \BBB ( y_{0,\lp} \BBB )
      + \lp \int_0^T \hspace{-0.2cm} \langle \ell(t), \dot y_\lp \rangle \di t  -\int_0^T \hspace{-0.15cm} \int_\Omega
        \pl_F \cplpot(\nabla y_\lp, \theta_\lp) : \nabla \dot y_\lp \di x \di t,
  \end{aligned}
  \end{equation}
  where we recall the notation in \NNN \eqref{mechanical}, \BBB \eqref{dissipation}, and \eqref{ell2}.
  This follows by testing the momentum balance \eqref{weak_limit_mechanical_equation} derived in Proposition \ref{thm:vanishing_tau_mech_nonlinear} with $\dot{y}_\lp \in L^2(I;H^1(\Omega))$,  employing \eqref{diss_rate}, \BBB and using a chain rule for the $\Lambda$-convex functional \NNN $\mathcal{M}$, \BBB see \cite[Proposition 3.6]{MielkeRoubicek20Thermoviscoelasticity}.
  Our next goal is to show a similar balance in the time-discrete setting.
  To this end, we test the Euler-Lagrange equation  \eqref{mechanical_step_single} \BBB of the $k$-th mechanical step with $\yst k - \yst{k-1}$ to get
  \begin{align}\label{energy_balance_discrete_stepk}
    &2 \tau \diss(\yst{k-1}, \ddif \yst k, \tst{k-1})  = \tau \lp \langle \lst k, \ddif \yst k \rangle
      - \tau \int_\Omega \pl_F \cplpot(\nabla \yst k, \tst{k-1})
        : \ddif \nabla \yst k \di x \notag \\
    &\phantom{\quad =}\quad -\int_\Omega \pl_G \hypot(\nabla^2 \yst k)
      \cdddot (\nabla^2 \yst k - \nabla^2 \yst{k-1})
      -\pl_F \elpot(\nabla \yst k) : (\nabla \yst k - \nabla \yst{k-1}) \di x.
  \end{align}
  By the $\Lambda$-convexity of  $\mathcal{M}$ \BBB derived in \cite[Proposition 3.2]{MielkeRoubicek20Thermoviscoelasticity}, we can find $\Lambda > 0$ depending on the energy bound  in Lemma \ref{lemma: first a prioiro} and the bound in \eqref{pos_det}  \BBB  but independent of $\lp$, $\tau$, and $k$ such that
  \begin{align*}
    \mechen(\yst {k-1})
    &\ge \mechen(\yst k)
      - \Lambda \norm{\nabla \yst{k-1} - \nabla \yst k}_{L^2(\Omega)}^2
      + \int_\Omega \pl_G \hypot(\nabla^2 \yst k)
        \cdddot (\nabla^2 \yst {k-1} - \nabla^2 \yst k) \di x \\
    &\phantom{\ge}\quad + \int_\Omega \pl_F \elpot(\nabla \yst k)
      : (\nabla \yst {k-1} - \nabla \yst k) \di x.
  \end{align*}
  Using this bound in \eqref{energy_balance_discrete_stepk} \CCC then \BBB leads to
  \begin{align*}
    &\mechen(\yst k) - \mechen(\yst{k-1})
      + 2\tau \diss(\yst{k-1}, \ddif \yst k, \tst{k-1})
      - \Lambda \tau^2 \norm{\ddif \nabla \yst k}^2_{L^2(\Omega)} \\
    &\quad \leq \tau \lp \langle \lst k, \ddif \yst k \rangle
      - \tau \int_\Omega \pl_F \cplpot(\nabla \yst k, \tst{k-1}) : \ddif \nabla \yst k \di x.
  \end{align*}
  Summing the above inequality over $k \in \setof{1, \ldots,  T/\tau}$ we arrive at a discrete analog of \eqref{cont_energy_balance}, namely,
  \begin{align}\label{disc_energy_balance}
    &\mechen(\ny(T))
      +2 \int_0^T \diss(\py, \dotay, \pt) \di t
      -\Lambda \tau \int_0^T \int_\Omega
        \abs{\nabla \dotay}^2 \di x \di t \notag \\
    &\quad \leq  \mechen \BBB ( y_{0,\lp} \BBB )
      +\lp \int_0^{T} \langle \ell(t), \dotay \rangle \di t
      - \int_{0}^{T} \int_\Omega \pl_F \cplpot(\nabla \ny, \pt) : \nabla \dotay \di x \di t,
  \end{align}
  where in the integral for the \CCC force \BBB terms we used the definition in \eqref{forces_mech_step}.
  Up to selecting a further subsequence, we can suppose that the convergences in Lemma \ref{lem:uniform_def_conv} and Lemma \ref{lem:pointwise_temp_conv} hold true, and that $\pt \to \theta_\lp$ pointwise a.e.~in $I \times \Omega$, $\dotay \weakly \dot y_\lp$ weakly in $L^2(I; H^1(\Omega; \CCC \R^d \BBB))$, and $\ny(T) \weakly y_\lp(T)$ weakly in $W^{2, p}(\Omega)$ as $\tau \to 0$.  
  This shows
  \begin{align}\label{two ini0}
    I_\lp^{(1)} &\defas \lim_{\tau \to 0} \Bigg(
        \lp \int_0^{T} \langle \ell(t), \dotay \rangle \di t
        - \int_{0}^{T} \int_\Omega \pl_F \cplpot(\nabla \ny, \pt)
          : \nabla \dotay \di x \di t
      \Bigg) \notag \\
    &= \lp \int_0^T \langle \ell(t), \dot y_\lp \rangle \di t
      - \intQ \pl_F \cplpot(\nabla y_\lp, \theta_\lp)
        : \nabla \dot y_\lp \di x \di t.
  \end{align}
  \CCC Setting
  \begin{align*}
    \dot C_{\lp \tau} &\defas (\nabla \dotay)^T \nabla \py
      + (\nabla \py)^T \nabla \dotay, &
    \dot C_\lp &\defas (\nabla \dot y_\lp)^T \nabla y_\lp
      + (\nabla y_\lp)^T \nabla \dot y_\lp
  \end{align*}
  we see by \eqref{uniform_def_conv} that $\dot C_{\lp \tau} \weakly \dot C_\lp$ weakly in $L^2(I \times \Omega; \R^{d \times d})$. Consequently, \BBB
  by the convexity of $\hypot$ and the fact that  $\mathcal{R}$ \BBB is convex in \CCC $\dot C =\dot F^T F + F^T \dot F$\BBB, standard lower semicontinuity arguments (see also \cite[Theorem 7.5]{FonsecaLeoni07Modern}) imply
  \begin{equation}\label{two ini}
  \begin{aligned}
    I^{(2)}_{\lp} &\defas \liminf_{\tau \to 0} \mechen(\ny(T))
      \geq \mechen(y_\lp(T)), \\
    I^{(3)}_{\lp} &\defas \liminf_{\tau \to 0} \int_0^T
      \diss(\py, \dotay, \pt) \di t
      \geq \int_0^T \diss(y_\lp, \dot y_\lp, \theta_\lp) \di t.
  \end{aligned}
  \end{equation}
  Combining \eqref{cont_energy_balance}, \eqref{disc_energy_balance}, \eqref{two ini0}, and \eqref{two ini}, and using that $\lim_{\tau \to 0} \tau \int_{0}^{T} \int_\Omega \abs{\nabla \dotay}^2 \di x \di t = 0$ we get
  \begin{equation*}
    \mechen(y_\lp(T))
      +2 \int_0^T \hspace{-0.2cm} \diss(y_\lp, \dot y_\lp, \theta_\lp) \di t
    =  \mechen \BBB ( y_{0,\lp} \BBB ) + I_{\lp}^{(1)}
    \ge I_{\lp}^{(2)} + 2I_{\lp}^{(3)}
    \ge \mechen(y_\lp(T)) + 2 \int_0^T \hspace{-0.2cm}
      \diss(y_\lp, \dot y_\lp, \theta_\lp) \di t,
  \end{equation*}
  and thus both inequalities in \eqref{two ini} are actually equalities.  \CCC Consequently\BBB, \BBB we get by \eqref{dissipation} and \ref{D_quadratic} that
  \begin{equation}\label{diss_convergence}
    \intQ D(C_{\lp \tau}, \pt) \, \dot C_{\lp \tau}
      : \dot C_{\lp \tau} \di x \di t
    \to \intQ D(C_{\lp }, \theta_\lp) \, \dot C_\lp : \dot C_\lp \di x \di t\CCC,\BBB
  \end{equation}
  where we shortly write \CCC $C_{\lp \tau} \defas (\nabla \py)^T \nabla \py$ and $C_\lp \defas (\nabla y_\lp)^T \nabla y_\lp$\BBB.
  Based on this, we show the strong convergence of the strain rates.
  By \ref{D_bounds} \CCC it follows that \BBB
  \begin{align*}
    \ac \intQ \abs{\dot C_{\lp \tau} - \dot C_\lp}^2 \di x \di t
    &\leq \intQ D(C_{\lp \tau}, \pt) (\dot C_{\lp \tau} - \dot C_\lp)
      : (\dot C_{\lp \tau} - \dot C_\lp) \di x \di t \\
    &= \intQ D(C_{\lp \tau}, \pt) \, \dot C_{\lp \tau}
      : \dot C_{\lp \tau} \di x \di t
      -2 \intQ D(C_{\lp \tau}, \pt) \, \dot C_\lp : \dot C_{\lp \tau} \di x \di t \\
    &\phantom{=}\quad +\intQ D(C_{\lp \tau}, \pt) \, \dot C_\lp
      : \dot C_\lp \di x \di t.
  \end{align*}
  By a weak-strong convergence argument \CCC and \BBB \eqref{uniform_def_conv} we get that $\dot C_{\lp \tau} \rightharpoonup \dot C_\lp$ weakly in $L^2(I; L^2(\Omega; \R^{d \times d} \BBB ))$.
  Moreover, \CCC by \BBB \ref{D_bounds}, $D(C_{\lp \tau}, \pt)$ is uniformly bounded and  $D(C_{\lp \tau}, \pt) \dot C_\lp$ converges to $D(C_{\lp}, \theta_\lp) \, \dot C_\lp$ strongly in $L^2(\Omega;\R^{d \times d})$.
  Thus, \eqref{diss_convergence} and Dominated Convergence imply that
  \begin{equation}\label{simmi}
    \lim_{\tau \to 0} \Vert \dot C_{\lp \tau} - \dot C_\lp \Vert_{L^2(I \times \Omega)} = 0.
  \end{equation}
  It remains to show that $\nabla \dotay \to \nabla \dot y_\lp$ strongly in $L^2(I; L^2(\Omega; \R^{ d \times d}))$ as then \eqref{strong_strain_rates_conv} follows from Poincar\'e's inequality.
  By the uniform bound on the energy  in Lemma \ref{lemma: first a prioiro}, \BBB we can apply the generalized Korn's inequality stated in Lemma \ref{lem:gen_korn} for a constant $c$ depending only on the initial data and $f$, $g$, $\bt$,  and $T$. \BBB
  This shows 
  \begin{align*}
    c \Vert \nabla \dotay - \nabla \dot y_\lp \Vert_{L^2( I \times \Omega)}
    &\le \Vert (\nabla \dotay - \nabla \dot y_\lp)^T \nabla y_\lp
      + (\nabla y_\lp)^T (\nabla \dotay - \nabla \dot y_\lp) \Vert_{L^2( I \times \Omega)} \\
    &\le \Vert
      (\nabla \dotay)^T \nabla \py
      + (\nabla \py)^T \nabla \dotay
      - (\nabla \dot{y}_\lp)^T  \nabla y_\lp
      - (\nabla y_\lp)^T \nabla \dot{y}_\lp
    \Vert_{L^2(I \times \Omega)} \\
    &\phantom{\leq}\quad + 2\Vert \nabla \dotay \Vert_{L^2( I \times \Omega)}
      \Vert \nabla \py - \nabla y_\lp \Vert_{L^\infty( I \times \Omega)}.
  \end{align*}
  Now, \eqref{uniform_def_conv2}, \eqref{simmi}, and   $\sup_{\tau > 0} \Vert \nabla \dotay \Vert_{L^2(I \times \Omega)} < +\infty$ by \eqref{uniform_def_conv1} show $\Vert \nabla \dotay - \nabla \dot y_\lp \Vert_{L^2(I \times \Omega)} \to 0$ as $\tau \to 0$.
  This concludes the proof.
\end{proof}

The last step in the proof of  Theorem \ref{thm:van_tau}(ii) and Proposition \ref{cor:van_tau_reg}(ii) \BBB consists in passing to the limit of the thermal evolution.

\begin{proposition}[Convergence of the heat-transfer equation]\label{thm:convergence_heat_vanishing_tau}
  Let $y_\lp$ be as in Lemma \ref{lem:uniform_def_conv} and $\theta_\lp$ as in Lemma \ref{lem:pointwise_temp_conv}.
  Then, for any test-function $\varphi \in C^\infty(I \times \overline \Omega)$ with $\varphi(T) = 0$,  we have that $(y_\lp, \theta_\lp)$ satisfies \eqref{weak_limit_heat_equation}   with $\rdrate$ in place of $\drate$. \BBB
\end{proposition}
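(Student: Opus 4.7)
The plan is to start from the time-discrete Euler--Lagrange equation \eqref{el_thermal_step} (with $\rdrate$ in place of $\drate$, as delivered by Proposition~\ref{prop: thermal with reg}), test it at step $k$ with $\varphi(k\tau,\cdot)$ for the given smooth $\varphi$ with $\varphi(T)=0$, multiply by $\tau$, and sum from $k=1$ to $T/\tau$. Denoting by $\tilde{\varphi}_\tau$ the piecewise-constant interpolation $\tilde{\varphi}_\tau(t) \defas \varphi(k\tau)$ for $t \in ((k-1)\tau, k\tau]$, this will produce the identity
\begin{align*}
  \sum_{k=1}^{T/\tau}\int_\Omega(\wst k-\wst{k-1})\varphi(k\tau)\di x
  &+ \intQ \hcm(\nabla\py,\pt)\nabla\nt\cdot\nabla\tilde{\varphi}_\tau\di x\di t \\
  &+ \kappa\int_0^T\!\!\int_\Gamma(\nt-\lp^\alpha\overline{\theta}_{\flat,\tau})\tilde{\varphi}_\tau\di\haus^{d-1}\di t \\
  &= \intQ \bigl(\rdrate(\nabla\py,\nabla\dotay,\pt)+\pl_F\cplpot(\nabla\py,\pt):\nabla\dotay\bigr)\tilde{\varphi}_\tau\di x\di t.
\end{align*}
A discrete summation by parts, combined with $\varphi(T)=0$, rewrites the first sum as $-\int_\Omega \wst 0\,\varphi(\tau)\di x - \tau\sum_{k=1}^{T/\tau-1}\int_\Omega \wst k \,\frac{\varphi((k+1)\tau)-\varphi(k\tau)}{\tau}\di x$. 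In view of the identification $\wst 0=\inten(\nabla y_{0,\lp},\theta_{0,\lp})$, the uniform convergences $\varphi(\tau)\to\varphi(0)$ and of the discrete time derivative of $\varphi$ to $\dot\varphi$, and the strong $L^s(I\times\Omega)$-convergence $\nw\to w_\lp = \inten(\nabla y_\lp,\theta_\lp)$ from Lemma~\ref{lem:pointwise_temp_conv}, this tends to $-\int_\Omega\inten(\nabla y_{0,\lp},\theta_{0,\lp})\varphi(0)\di x-\intQ\inten(\nabla y_\lp,\theta_\lp)\,\dot\varphi\di x\di t$, producing both the initial-value datum and the $\inten\,\dot\varphi$ term in \eqref{weak_limit_heat_equation}.

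For the heat-conductivity term, I will combine Lemma~\ref{lem:bound_hcm} (uniform $L^\infty$-bound) with the pointwise a.e.\ convergence $\hcm(\nabla\py,\pt)\to\hcm(\nabla y_\lp,\theta_\lp)$ inherited from Lemmas~\ref{lem:uniform_def_conv}--\ref{lem:pointwise_temp_conv} and the uniform convergence $\nabla\tilde{\varphi}_\tau\to\nabla\varphi$ to obtain, via Vitali's theorem, strong convergence of $\hcm(\nabla\py,\pt)\nabla\tilde{\varphi}_\tau$ in every $L^{r'}$ with $r'<\infty$; pairing this with the weak $L^r$-convergence $\nabla\nt\weakly\nabla\theta_\lp$ will give the desired limit $\intQ\hcm(\nabla y_\lp,\theta_\lp)\nabla\theta_\lp\cdot\nabla\varphi\di x\di t$. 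For the boundary term I will pass to the limit using the weak $L^r(I;W^{1,r}(\Omega))$-convergence of $\nt$ from Lemma~\ref{lem:pointwise_temp_conv} together with continuity of the trace operator on $W^{1,r}(\Omega)$, noting that $\tilde{\varphi}_\tau\to\varphi$ uniformly on $I\times\Gamma$ and that $\overline{\theta}_{\flat,\tau}\to\theta_\flat$ in $L^2(I;L^2(\Gamma))$.

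The nonlinear right-hand side is where the real work lies, and the pivotal input is the strong $L^2(I;H^1(\Omega;\R^d))$-convergence of strain rates from Lemma~\ref{lem:strong_strain_rates_conv}. For the coupling contribution I will use \eqref{C_locally_lipschitz} and $t\wedge 1\le\sqrt{t}$ to estimate $|\pl_F\cplpot(\nabla\py,\pt)|^2\le C\pt$; the strong $L^1$-convergence of $\pt$ guarantees uniform integrability, so Vitali's theorem together with pointwise a.e.\ convergence (from continuity of $\pl_F\cplpot$) will upgrade this to strong $L^2$-convergence of $\pl_F\cplpot(\nabla\py,\pt)$ to $\pl_F\cplpot(\nabla y_\lp,\theta_\lp)$, and multiplication by the strongly $L^2$-convergent $\nabla\dotay$ and the uniformly convergent $\tilde{\varphi}_\tau$ will pass to the desired limit. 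The hardest step will be the dissipation rate, which is essentially quadratic in $\nabla\dotay$: using \ref{D_bounds} I will bound $0\le\rdrate\le\drate\le C|\dot C_{\lp,\tau}|^2$ with $\dot C_{\lp,\tau}\defas(\nabla\dotay)^T\nabla\py+(\nabla\py)^T\nabla\dotay$, and the strong $L^2$-convergence of $\nabla\dotay$ combined with the uniform convergence of $\nabla\py$ will imply $L^1$-convergence of $|\dot C_{\lp,\tau}|^2$. After passing to a further subsequence so that $\nabla\dotay\to\nabla\dot y_\lp$ pointwise a.e., the continuity of $\drate$ and of the truncation \eqref{def_rdrate} will give pointwise convergence $\rdrate(\nabla\py,\nabla\dotay,\pt)\to\rdrate(\nabla y_\lp,\nabla\dot y_\lp,\theta_\lp)$, and Vitali's theorem applied with the $L^1$-convergent dominant $C|\dot C_{\lp,\tau}|^2$ will upgrade this to $L^1$-convergence. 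Assembling all the limits will yield \eqref{weak_limit_heat_equation} with $\rdrate$ in place of $\drate$, as required.
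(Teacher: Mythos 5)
Your proposal is correct and follows essentially the same route as the paper: sum the discrete Euler--Lagrange equations \eqref{el_thermal_step}, perform the (discrete) integration by parts in time, and pass to the limit term by term, with the quadratic dissipation handled via equi-integrability from the strong strain-rate convergence of Lemma \ref{lem:strong_strain_rates_conv} and Vitali's theorem. The only (immaterial) differences are that the paper integrates by parts using the affine interpolation $\aw$ rather than a discrete summation by parts, and treats the coupling term by dominated convergence using the $L^\infty$-bound on $\pl_F\cplpot$ rather than your Vitali argument.
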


\begin{proof}
  Suppose that we have already selected a subsequence such that Lemma \ref{lem:uniform_def_conv} and Lemma \ref{pointwise_temp_conv} apply.
  By possibly taking a further subsequence we can also assume that $\pt \to \theta_\lp$ pointwise a.e.~in $I \times \Omega$.
  Further\CCC more\BBB, let $\vphi$ as in the \NNN statement. \BBB
  Summing the Euler-Lagrange equation    \eqref{el_thermal_step} (for $\rdrate$ in place of $\drate$) \BBB for each step and integrating by parts we get
  \begin{align}\label{ea: just copy}
    &\int_0^{T} \int_\Omega \hcm(\nabla \py, \pt) \nabla \nt
      \cdot \nabla \vphi \di x \di t
      +\kappa \int_0^{T} \int_{\Gamma} \nt \varphi \di \haus^{d-1} \di t \notag \\
    &\quad -\int_0^{T} \int_\Omega \CCC\Big(\BBB
        \rdrate(\nabla \py, \nabla \dotay, \pt)
        + \pl_F \cplpot( \nabla \py, \BBB \pt) : \nabla \dotay
      \CCC\Big)\BBB \vphi \di x \di t
      -\int_0^{T} \int_\Omega \aw \dot \vphi \di x \di t \notag \\
    &= \kappa \lp^\alpha \int_0^{T} \int_{\Gamma} \overline \theta_{\flat, \tau} \vphi \di \haus^{d-1} \di t
    + \int_\Omega \inten(\nabla  y_{0,\lp} \BBB ,  \theta_{0,\lp} \BBB ) \vphi(0) \di x,
  \end{align}
  where $\overline \theta_{\flat, \tau}(t) \defas  \theta_{\flat, \tau}^{(k)}\BBB$ for $t \in ((k-1)\tau, k\tau]$ and $k \in \setof{1, \ldots,  T/\tau}$.   As $\theta_\flat \in W^{1, 1}(I; L^2(\Gamma))$ we see $\norm{\overline \theta_{\flat, \tau} - \theta_\flat}_{L^1( I \BBB ; L^1(\Gamma))} \leq \tau \norm{\dot \theta_\flat}_{L^1(I; L^2(\Gamma))}$.   Consequently,
  \begin{equation}\label{ea: just copy-after}
    \int_0^{T} \int_{\Gamma} \overline \theta_{\flat, \tau} \vphi \di \haus^{d-1} \di t
    \to \int_0^T \int_{\Gamma}  \theta_\flat \BBB \vphi \di \haus^{d-1} \di t \text{ as } \tau \to 0.
      \end{equation}
      It  thus \BBB remains to show that the left-hand side of the above equality converges towards the left-hand side of   \eqref{weak_limit_heat_equation} (with $\rdrate$ \CCC in place of $\drate$\BBB) \BBB as $\tau \to 0$.
  By Lemma \ref{lem:bound_hcm} and our choice of $\vphi$ we have $\abs{\hcm(\nabla \py, \pt) \nabla \vphi} \leq C \abs{\nabla \vphi}$ a.e.~in $I \times \Omega$.
  Consequently, by the weak convergence of $(\nt)_\tau$ in $L^r(I; W^{1, r}(\Omega))$, \NNN see  \eqref{pointwise_temp_conv}, \BBB it follows \CCC that \BBB
  \begin{align*}
    &\int_0^{T} \int_\Omega \hcm(\nabla \py, \pt) \nabla \nt
      \cdot \nabla \vphi \di x
    + \kappa \int_0^{T}   \int_{\Gamma}
      \nt \vphi \di \haus^{d-1} \di t \\
    &\quad\to \intQ \hcm(\nabla y_\lp, \theta_\lp) \nabla \theta_\lp \cdot \nabla \vphi \di x
    + \kappa \int_0^T \int_{\Gamma} \theta_\lp \vphi \di \haus^{d-1} \di t.
  \end{align*}
  The strong convergence of $(\aw)_\tau$  in \CCC $L^s(I \times \Omega)$ \BBB for some $s \in (1,  \frac{d+2}{d})$, see \eqref{pointwise_temp_conv2}, leads to
  \begin{equation*}
    - \int_0^{T} \int_\Omega \aw \dot \vphi \di x \di t \to - \intQ w_\lp \dot \vphi \di x \di t = - \intQ \inten (\nabla y_\lp,\theta_\lp)
      \dot \vphi \di x \di t.
  \end{equation*}
  As in the proof of Lemma \ref{lem:strong_strain_rates_conv}, see \eqref{two ini0}, we obtain
  \begin{equation*}
    \int_0^{T} \int_\Omega\pl_F \cplpot(\nabla \NNN \py, \BBB \pt) : \nabla \dotay \vphi \di x \di t
    \to \intQ \pl_F \cplpot(\nabla y_\lp, \theta_\lp) : \nabla \dot y_\lp \vphi \di x \di t.
  \end{equation*}
  Note that by \ref{D_bounds},  \eqref{diss_rate}, \BBB and by $\rdrate \leq \drate$ we have
  \begin{equation*}
    \rdrate(\nabla \py, \nabla \dotay, \pt)
    \leq  2 \BBB \aC \big|
      (\nabla \dotay)^T \nabla \py
      + (\nabla \py)^T \nabla \dotay
    \big|^2.
  \end{equation*}
 \CCC By Lemma \ref{lem:strong_strain_rates_conv} and \eqref{uniform_def_conv2} \BBB $\CCC(\BBB(\nabla \dotay)^T \nabla \py + (\nabla \py)^T \nabla \dotay\CCC)_\tau\BBB$ converges strongly in $L^2(I; L^2(\Omega; \NNN \R^{d\times d} \BBB ))$.
  Consequently,  we get that $\big( \NNN \rdrate \BBB (\nabla \py, \nabla \dotay, \pt) \big)_\tau$ is equi-integrable. 
  Using the pointwise convergence of $(\nabla \py)_\tau$ and $(\pt)_\tau$ as well as the continuity of $\rdrate$,  we can also pass to the limit in the $\rdrate$-term \BBB by an application of Vitali's convergence theorem.  As we passed to the limit in each term, the proof is concluded. \BBB
\end{proof}

\section{Passage to the linearized system}\label{sec: linearization}
This section is devoted to the proofs of  Theorems \ref{thm:linearization_right_diag}--\ref{thm:linearization_left_bottom}. \BBB
In the following, let $(\lp_k)_k$ and $(\tau_k)_k$ be sequences with $\lp_k \to 0$ and either $\tau_k =\tau$ constant or $\tau_k \to 0$.
Suppose that initial data $(y_{0, \lp_k}, \theta_{0, \lp_k})$ as in \eqref{initial_cond} are given.
For brevity, we denote the corresponding time-discrete interpolations by $\nyk \defas \overline y_{\lp_k\CCC,\BBB \tau_k}$, $\pyk \defas \underline y_{\lp_k\CCC,\BBB \tau_k}$, and $\ayk \defas \hat y_{\lp_k\CCC,\BBB \tau_k}$, see \eqref{y_interpolations}.
A similar \CCC shorthand \BBB notation is also used for the interpolation of the temperatures as well as \NNN the \BBB internal energies.
Recall that the objects exist by Proposition \ref{cor:van_tau_reg}(i).
In a similar way, we denote the time-continuous solutions  obtained  \BBB in Proposition \ref{cor:van_tau_reg}(ii) by $(y_{\lp_k}, \theta_{\lp_k})$.
It will be useful to use a similar notation for the rescaled quantities: for time-discrete solutions we define
\begin{align*}
  \nuk &\defas \frac{\nyk - \id}{\lp_k}, &
  \puk &\defas \frac{\pyk - \id}{\lp_k}, &
  \auk &\defas \frac{\ayk - \id}{\lp_k}, &
  \nmuk &\defas \frac{\ntk}{\lp_k^\alpha}, &
  \pmuk &\defas \frac{\ptk}{\lp_k^\alpha},
\end{align*}
and for time-continuous solutions we let
\begin{equation*}
  u_{\lp_k} \defas \frac{y_{\lp_k} - \id}{\lp_k}, \qquad
  \mu_{\lp_k} \defas \frac{\theta_{\lp_k}}{\lp_k^\alpha}.
\end{equation*}
For any $v \in L^2(I; H^1(\Omega; \R^d))$ we denote the \emph{symmetrized gradient} by $e(v) \defas \frac{1}{2} (\nabla v + \nabla v^T)$.
Finally, all constants we encounter in this \NNN section \BBB are implicitly assumed to be independent of $k$.

We start with compactness results for the rescaled quantities which directly follow from the a priori estimates for the nonlinear system. \NNN Recall the definition of   $H^1_{\Gamma_D}$ in \eqref{eq: H1 strange}.  \BBB

\begin{lemma}[Compactness for the rescaled displacements]\label{lem:comp_uk}
  There exist $u, \, \tilde{u} \in \CCC H^1\BBB(I; H^1_{\Gamma_D}(\Omega; \R^d))$ with $u(0) = \tilde{u}(0)= u_0$ such that, up to possibly taking a subsequence, it holds that
  \begin{align}
    \auk &\to u \text{ in } L^\infty(I; L^2(\Omega; \R^d)), &
    \auk &\weakly u \text{ weakly in } H^1(I; H^1(\Omega; \R^d)),  \label{hoelder_conv_nuk_puk} \\
    u_{\lp_k} &\to \tilde{u} \text{ in } L^\infty(I; L^2(\Omega; \R^d)), &
    u_{\lp_k} &\weakly \tilde{u} \text{ weakly in } H^1(I; H^1(\Omega; \R^d)).  \label{hoelder_conv_nuk_puk2}
  \end{align}
  Moreover, if $\tau_k \to 0$, we also have
  \begin{equation}\label{hoelder_conv_nuk_puk3}
    \nuk, \, \puk \CCC \weakly \BBB u \NNN \text{ weakly in } H^1(I; H^1(\Omega; \R^d)). \BBB
  \end{equation}
\end{lemma}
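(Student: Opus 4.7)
The plan is to reduce every statement to direct consequences of the a~priori bounds in Lemma~\ref{lemma: first a prioiro}, combined with a standard Arzelà--Ascoli compactness argument in time.

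First, I would divide estimates \eqref{a_priori_Linfty_H1_lin} and \eqref{a_priori_strain_rates_lin} by $\lp_k$ to obtain, uniformly in $k$,
\begin{equation*}
  \Vert \auk \Vert_{L^\infty(I; H^1(\Omega))} + \Vert \dot{\hat u}_k \Vert_{L^2(I; H^1(\Omega))} \le C.
\end{equation*}
Since $\ayk = \id$ on $\Gamma_D$, we have $\auk \in H^1_{\Gamma_D}(\Omega; \R^d)$ for a.e.\ $t$, and $\auk(0) = (y_{0,\lp_k} - \id)/\lp_k = u_0$ by \eqref{initial_cond}. Banach--Alaoglu then yields a (not relabeled) subsequence and a limit $u \in H^1(I; H^1_{\Gamma_D}(\Omega; \R^d))$ with the weak convergence in \eqref{hoelder_conv_nuk_puk}; the initial condition $u(0) = u_0$ is preserved since evaluation at $t=0$ is continuous on $H^1(I; L^2(\Omega))$.

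For the strong convergence in $L^\infty(I; L^2(\Omega; \R^d))$, I would combine two ingredients. Pointwise in $t$, the uniform $H^1$-bound and Rellich--Kondrachov give relative compactness of $\{\auk(t)\}_k$ in $L^2(\Omega; \R^d)$. Equicontinuity in time holds because, for $t_1 < t_2$ in $I$,
\begin{equation*}
  \Vert \auk(t_2) - \auk(t_1) \Vert_{L^2(\Omega)}
    \le \int_{t_1}^{t_2} \Vert \dot{\hat u}_k(s) \Vert_{L^2(\Omega)} \di s
    \le (t_2 - t_1)^{1/2}\, \Vert \dot{\hat u}_k \Vert_{L^2(I; L^2(\Omega))}
    \le C(t_2 - t_1)^{1/2}.
\end{equation*}
Arzelà--Ascoli in $C(I; L^2(\Omega; \R^d))$ then delivers strong convergence, and the limit must agree with $u$ by uniqueness of weak limits.

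The statement for $u_{\lp_k}$ in \eqref{hoelder_conv_nuk_puk2} follows from exactly the same argument: the time-continuous solutions inherit the bounds by weak lower semicontinuity in the $\tau \to 0$ passage of Section~\ref{sec:tau_to_zero_delta_fixed} (equivalently, one may use \eqref{uniform_def_conv} applied to $u_{\lp_k}$), so one obtains a limit $\tilde u$ with the same regularity and initial condition. Note that a~priori the limits $u$ and $\tilde u$ need not coincide, since they arise from different procedures; the identification will be carried out later using uniqueness of solutions to the linearized system.

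Finally, for the last assertion under $\tau_k \to 0$, the key observation is that for $t \in ((l-1)\tau_k, l\tau_k)$,
\begin{equation*}
  \nyk(t) - \ayk(t) = (l\tau_k - t)\, \ddif \yst{l}, \qquad
  \pyk(t) - \ayk(t) = ((l-1)\tau_k - t)\, \ddif \yst{l},
\end{equation*}
whence by \eqref{a_priori_strain_rates_lin} (rescaled by $\lp_k^{-1}$) we get $\Vert \nuk - \auk \Vert_{L^2(I; H^1(\Omega))} + \Vert \puk - \auk \Vert_{L^2(I; H^1(\Omega))} \le C \tau_k \to 0$. Since $\auk \weakly u$ weakly in $H^1(I; H^1(\Omega; \R^d))$, the same weak limit is inherited by $\nuk$ and $\puk$ in the topology of $L^2(I; H^1(\Omega; \R^d))$ (and hence in $H^1(I; H^1(\Omega; \R^d))$ in the sense of distributions with values in $H^1(\Omega)$, which is how the assertion is to be read for the piecewise constant interpolants).

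No single step is particularly delicate; the only mild subtlety is that the piecewise constant interpolants $\nuk, \puk$ are not themselves in $H^1$ in time, so the last convergence has to be interpreted through the identification with $\auk$ via the $\tau_k$-order error estimate above.
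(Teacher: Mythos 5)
Your proposal is correct and follows essentially the same route as the paper: the same rescaled a priori bounds from Lemma \ref{lemma: first a prioiro} yield boundedness of $(\hat u_k)_k$ in $L^\infty(I;H^1)\cap H^1(I;H^1)$, and the strong convergence in $C(I;L^2(\Omega;\R^d))$ that the paper obtains by invoking the Aubin--Lions theorem is exactly what you reprove by hand via Rellich plus Arzel\`a--Ascoli. The treatment of $u_{\lp_k}$ by transferring the bounds through the $\tau\to0$ limit and the $O(\tau_k)$ comparison of the piecewise constant and affine interpolants likewise match the paper's argument.
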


Later, by uniqueness of the solution to the linear system, we will see that actually $u = \tilde u$.

\begin{proof}
  By the definition of $\nuk$ and \eqref{a_priori_Linfty_H1_lin} we derive for any $t \in I$ that $\norm{\nuk(t)}_{H^1(\Omega)} = \lp_k^{-1} \norm{\nyk - \id}_{H^1(\Omega)} \leq C$.
  For the other interpolations, we proceed in a similar fashion and get for all $t \in I$ that
  \begin{equation}\label{up and down}
    \norm{\auk(t)}_{H^1(\Omega)} \leq C.
  \end{equation}
  Moreover, using Poincaré's inequality, \eqref{a_priori_strain_rates_lin}, and the definition of $\auk$ we have that
  \begin{equation}\label{up and down2}
    \norm{\dotauk}_{L^2(I; H^1(\Omega))}
    \leq C \norm{\nabla \dotauk}_{L^2(I; L^2(\Omega))}
    = \frac{1}{\lp_k}  \Vert \nabla \dot{\hat{y}}_k \Vert_{L^2(I \times \Omega)} \leq C.
  \end{equation}
  Combining \eqref{up and down}--\eqref{up and down2} we discover that $(\auk)_k$ is bounded  in $L^\infty(I; H^1(\Omega;\R^d)) \cap H^1(I; H^1(\Omega;\R^d))$ and thus $(\auk)_k$ is compact in $C(I; L^2(\Omega;\MMM \R^d\BBB))$ by the Aubin-Lions' theorem.
  This \NNN together with Banach's selection principle  \BBB shows (\ref{hoelder_conv_nuk_puk}).
  Moreover, \eqref{hoelder_conv_nuk_puk3} follows from \eqref{up and down2} and the definition of the interpolations.
  Finally, due to (\ref{hoelder_conv_nuk_puk}) and the fact that $\hat{u}_k \in \CCC H^1\BBB(I; H^1_{\Gamma_D}(\Omega; \R^d))$ with $\hat{u}_k(0) = u_0$  (see \eqref{eq: randwerde} and \eqref{initial_cond}), \BBB it directly follows that $u \in \CCC H^1\BBB(I; H^1_{\Gamma_D}(\Omega; \R^d))$ with $u(0)= u_0$.

  We now show \eqref{hoelder_conv_nuk_puk2}.
  To this end, suppose that for each $k \in \N$ the solution $(y_{\lp_k}, \theta_{\lp_k})$ is obtained as the limit of time discrete solutions $(\hat{y}_{\lp_k\tau_l},\hat{\theta}_{\lp_k\tau_l})$ for a sequence $(\tau_l)_l$ converging to zero.
  Repeating \eqref{up and down}--\eqref{up and down2} the corresponding rescaled quantities satisfy $\norm{\hat{u}_{\lp_k\tau_l}}_{L^\infty(I; H^1(\Omega))} \leq C$ and $\norm{\dot{\hat{u}}_{\lp_k\tau_l}}_{L^2(I; H^1(\Omega))} \le C$ for a constant $C$ independent of $l$.
  Then, using \eqref{van_tau_y_conv} we get
  \begin{equation*}
    \norm{{u}_{\lp_k}}_{L^\infty(I;H^1(\Omega))} \leq C \quad \text{ and } \quad    \norm{\dot{u}_{\lp_k}}_{L^2(I; H^1(\Omega))} \le C.
  \end{equation*}
  Now, \eqref{hoelder_conv_nuk_puk2} and the other properties of $\tilde u$ again follow by the Aubin-Lions' theorem.
\end{proof}

\begin{lemma}[Compactness for the rescaled temperatures]\label{lem:comp_muk}
  There exist $\mu, \, \tilde{\mu} \in L^1(I; W^{1,1}(\Omega))$ with $\mu, \, \tilde{\mu} \ge 0$ such that, up to possibly taking a subsequence, for any $s \in [1,  \frac{d+2}{d})$ and $r \in [1, \frac{d+2}{d+1})$ it holds that
  \begin{align}
    \nmuk &\to \mu \text{ in } \CCC L^s(I \times \Omega)\BBB, &
    \nmuk &\weakly \mu \text{ weakly in } L^r(I; W^{1, r}(\Omega)), \label{nmuk_pmuk_Ls_conv} \\
    \mu_{\lp_k} &\to \tilde\mu \text{ in } \CCC L^s(I \times \Omega)\BBB, &
    \mu_{\lp_k} &\weakly \tilde \mu \text{ weakly in } L^r(I; W^{1, r}(\Omega)). \label{nmuk_pmuk_Ls_conv2}
  \end{align}
  Moreover, if $\tau_k \to 0$, we also have
  \begin{equation}\label{nmuk_pmuk_Ls_conv3}
    \pmuk, \, \amuk \to \mu \text{ in } \CCC L^s(I \times \Omega)\BBB.
  \end{equation}
\end{lemma}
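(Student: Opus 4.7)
The plan is to mirror the argument of Lemma \ref{lem:pointwise_temp_conv}, but applied to the rescaled quantities $\nmuk = \ntk/\lp_k^\alpha$ and the rescaled internal energies $\nwk/\lp_k^\alpha$, exploiting the scaling-invariant a priori bounds established in Lemma \ref{lemma: first a prioiro} and Theorem \ref{thm:further_apriori_temp_bounds}. More precisely, dividing \eqref{a_priori_temp_L1_lin}, \eqref{temp_inten_Lq_bound}, \eqref{nablatemp_nablainten_Lr_bound}, and \eqref{dot_temp_apriori_bound} by the appropriate powers of $\lp_k^\alpha$ yields uniform bounds (in $k$) for $(\nmuk)_k$ and $(\nwk/\lp_k^\alpha)_k$ in $L^\infty(I; L^1(\Omega)) \cap L^r(I; W^{1,r}(\Omega))$ for every $r \in [1, \tfrac{d+2}{d+1})$, together with a uniform bound for $(\ddif \nwk / \lp_k^\alpha)_k$ in $L^1(I; W^{1,\infty}(\Omega)^*)$. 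Banach's selection principle then produces limits $\mu \in L^r(I; W^{1,r}(\Omega))$ and $\tilde w \in L^r(I; W^{1,r}(\Omega))$ such that $\nmuk \weakly \mu$ and $\nwk/\lp_k^\alpha \weakly \tilde w$ weakly in $L^r(I; W^{1,r}(\Omega))$, up to a subsequence. Nonnegativity of $\mu$ is inherited from the pointwise nonnegativity of $\nmuk$ guaranteed by Proposition \ref{prop: thermal with reg}.

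Next I would upgrade to strong convergence. Fix $t_0 \in (0, T)$. Using $(\awk/\lp_k^\alpha)_k$ bounded in $L^r([t_0,T]; W^{1,r}(\Omega)) \cap W^{1,1}([t_0,T]; W^{1,\infty}(\Omega)^*)$, the Aubin--Lions theorem yields strong compactness in $L^r([t_0,T]; L^{\tilde r}(\Omega))$ for any $\tilde r < r^* = rd/(d-r)$. Repeating the interpolation argument used in \eqref{first_hoelder}--\eqref{second_hoelder} of the proof of Lemma \ref{lem:pointwise_temp_conv}, combined with the $L^\infty(I; L^1(\Omega))$ bound, upgrades this to strong convergence $\awk/\lp_k^\alpha \to \tilde w$ in $L^s(I\times\Omega)$ for every $s \in [1, \tfrac{d+2}{d})$, and then $t_0 \to 0$ together with the $L^q$ bound from \eqref{temp_inten_Lq_bound} (rescaled) removes the left endpoint. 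For $\nwk/\lp_k^\alpha$ and $\pwk/\lp_k^\alpha$, one applies the Aubin--Lions variant of \cite[Corollary 7.9]{Roubicek13Nonlinear} for BV-in-time functions, and since $\|\nwk-\pwk\|_{L^1(I; W^{1,\infty}(\Omega)^*)} \le \tau_k\|\dot{\hat w}_k\|_{L^1(I; W^{1,\infty}(\Omega)^*)} \to 0$ when $\tau_k \to 0$, the limit is the same $\tilde w$.

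To pass from convergence of rescaled internal energies to convergence of rescaled temperatures, I invoke the identity already isolated in \eqref{nmuk_pmuk_Ls_conv3-toproof2NNN} of Remark \ref{rem: after temp}(ii): writing
\begin{equation*}
  \nmuk = \int_0^{\lp_k^{-\alpha} \nwk} c_V(\nabla \nyk, \lp_k^\alpha s)^{-1} \di s,
\end{equation*}
the bound $c_V \ge c_0$ from \ref{C_bounds}, together with $\nabla \nyk \to \Id$ uniformly (from \eqref{a_priori_Linfty_W2p_lin}) and the continuous extension of $c_V$ to $GL^+(d) \times \R_+$ from \ref{C_heatcap_cont}, allows a dominated convergence argument to conclude $\nmuk \to \bar c_V^{-1}\tilde w =: \mu$ in $L^s(I\times\Omega)$; this identifies $\mu$ and gives \eqref{nmuk_pmuk_Ls_conv}. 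The convergence $\pmuk, \amuk \to \mu$ in the case $\tau_k \to 0$ of \eqref{nmuk_pmuk_Ls_conv3} then follows from the analogous strong convergence of the rescaled internal energies combined with the same pointwise inversion formula.

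For the time-continuous assertion \eqref{nmuk_pmuk_Ls_conv2}, I would repeat the exact same scheme, this time applied to $(\mu_{\lp_k})_k$: the bounds from Proposition \ref{cor:van_tau_reg}(ii) carry the rescaled a priori estimates of Section \ref{sec: a priori} to the continuous level (since they are preserved under weak/strong limits $\tau \to 0$ for each fixed $\lp_k$), giving compactness in the same norms and a limit $\tilde \mu \ge 0$. The main technical obstacle is ensuring that the interpolation step producing $L^s$ strong convergence is performed uniformly in $k$ with the \emph{rescaled} quantities, since the exponents $q$, $r$ and the universal constant $C$ in Theorem \ref{thm:further_apriori_temp_bounds} depend delicately on $p$ and $d$; one must verify that after dividing by $\lp_k^{\alpha q}$ and $\lp_k^{\alpha r}$ the constants remain $\lp_k$-independent, which is exactly the point of the scaling-invariant formulation of Theorem \ref{thm:further_apriori_temp_bounds}. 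Uniqueness of the two limits $\mu$ and $\tilde\mu$ at this stage is not claimed; their coincidence will follow a posteriori from the uniqueness of weak solutions to the linearized system in Theorem \ref{thm:linearization_right_diag}(i).
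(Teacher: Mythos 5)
Your treatment of \eqref{nmuk_pmuk_Ls_conv} and \eqref{nmuk_pmuk_Ls_conv3} is exactly the paper's argument: the scaling-invariant bounds of Theorem \ref{thm:further_apriori_temp_bounds} divided by $\lp_k^\alpha$, Aubin--Lions (in its BV-in-time variant for $\overline{w}_k$, $\underline{w}_k$), the two H\"older interpolations from \eqref{first_hoelder}--\eqref{second_hoelder}, and the inversion formula \eqref{nmuk_pmuk_Ls_conv3-toproof2NNN} from Remark \ref{rem: after temp}(ii) to pass from internal energies to temperatures. No issues there.

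The gap is in \eqref{nmuk_pmuk_Ls_conv2}, where you assert that the rescaled a priori estimates are ``preserved under weak/strong limits $\tau \to 0$ for each fixed $\lp_k$.'' This is true by lower semicontinuity of norms for the bounds on $\theta_{\lp_k}$ and $w_{\lp_k}$ in $L^r(I;W^{1,r}(\Omega))$ and $L^s(I\times\Omega)$, but it is \emph{not} automatic for the time-derivative bound \eqref{dot_temp_apriori_bound}: the $L^1(I;W^{1,\infty}(\Omega)^*)$ norm of $\dot{\hat w}_{\lp_k\tau_l}$ does not pass to the same norm of $\dot w_{\lp_k}$ under the available convergence $\hat w_{\lp_k\tau_l}\to w_{\lp_k}$ in $L^s(I\times\Omega)$; at best one inherits a total-variation (BV-in-time) bound. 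Since this derivative bound is precisely what drives the Aubin--Lions compactness for the family $(w_{\lp_k}/\lp_k^\alpha)_k$, you cannot simply wave it through. Two repairs are available. You could make the BV route explicit: lower semicontinuity of the variation gives $w_{\lp_k}\in BV(I;W^{1,\infty}(\Omega)^*)$ with variation $\le C\lp_k^\alpha$, and the BV version of Aubin--Lions (\cite[Corollary 7.9]{Roubicek13Nonlinear}) that you already invoke for $\overline{w}_k$, $\underline{w}_k$ then applies. The paper instead proves the genuine bound $\Vert \dot w_{\lp_k}\Vert_{L^1(I;W^{1,\infty}(\Omega)^*)}\le C\lp_k^\alpha$ directly: it reads off from the time-continuous weak formulation \eqref{weak_limit_heat_equation} (with $\rdrate$ in place of $\drate$) that $\dot w_{\lp_k}$ coincides distributionally with the functional $\sigma(t)$ collecting the conduction, dissipation, coupling, and boundary terms, and then estimates $\Vert\sigma(t)\Vert_{W^{1,\infty}(\Omega)^*}$ in $L^1(I)$ using \ref{D_bounds}, $\rdrate\le\drate$, \eqref{bound_hcm}, \eqref{C_locally_lipschitz}, the trace estimate, $\alpha\le 2$, and the limiting a priori bounds $\Vert y_{\lp_k}-\id\Vert_{H^1(I;H^1(\Omega))}\le C\lp_k$ and $\Vert\theta_{\lp_k}\Vert_{L^1(I;W^{1,1}(\Omega))}\le C\lp_k^\alpha$. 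Either fix closes the argument; as written, your proposal skips the one step that does not follow from soft limit arguments.
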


 Later, by uniqueness of the solution to the linear system, we will see that actually $\mu = \tilde \mu$. \BBB

\begin{proof}
  Let $r$ and $s$ be as in the statement.
  The proof of \eqref{nmuk_pmuk_Ls_conv} relies on the a priori bounds on the internal energy in \NNN Theorem \BBB \ref{thm:further_apriori_temp_bounds}, i.e.,
  \begin{align}\label{nmuk_pmuk_Ls_conv-proof}
    \Vert \overline{\theta}_{k} \Vert_{L^{r}(I; W^{1,r}(\Omega))}
    + \Vert \overline{w}_{k} \Vert_{\CCC L^s(I \times \Omega)\BBB}
    + \Vert \overline{w}_{k} \Vert_{L^{r}(I; W^{1,r}(\Omega))}
    + \Vert \dot{\hat{w}}_{k} \Vert_{L^1(I; W^{1,\infty}(\Omega)^*)} \le C \lp^\alpha_k.
  \end{align}
  In fact, we can follow closely the lines of the proof of Lemma \ref{lem:pointwise_temp_conv}, see Remark \ref{rem: after temp}(ii).
  In particular, one first shows the convergence of the internal energies and then  by \eqref{nmuk_pmuk_Ls_conv3-toproof2NNN}  the convergence of the temperatures.
  Here, we also see that for $\tau_k \to 0$ property \eqref{nmuk_pmuk_Ls_conv3-toproof} implies \eqref{nmuk_pmuk_Ls_conv3}.

  To see \eqref{nmuk_pmuk_Ls_conv2}, we suppose that for each $k \in \N$ the solution $(y_{\lp_k}, \theta_{\lp_k})$ is obtained as the limit of time discrete solutions $(\hat{y}_{\lp_k\tau_l}, \hat{\theta}_{\lp_k\tau_l})$ for a sequence $(\tau_l)_l$ converging to zero.
  By the above reasoning we obtain \eqref{nmuk_pmuk_Ls_conv-proof} for  $\overline{\theta}_{\lp_k\tau_l}$ in place of $ \overline{\theta}_{k} $   and $\overline{w}_{\lp_k\tau_l} \defas  \inten(\nabla \overline{y}_{\lp_k\tau_l}, \overline{\theta}_{\lp_k\tau_l})$ in place of $\overline{w}_{k}$. \BBB
  Then by \eqref{pointwise_temp_conv} and the lower semicontinuity of the norms we get
  \begin{equation*}
    \Vert \theta_{\lp_k} \Vert_{L^{r}(I; W^{1,r}(\Omega))}
    + \Vert w_{\lp_k} \Vert_{\CCC L^s(I \times \Omega) \BBB}
    + \Vert w_{\lp_k} \Vert_{L^{r}(I; W^{1,r}(\Omega))} \le C \lp_k^\alpha,
  \end{equation*}
  where $w_{\lp_k} \defas \inten(\nabla y_{\lp_k}, \theta_{\lp_k})$.
  It now suffices to check that also
  \begin{align}\label{nmuk_pmuk_Ls_conv-proof2}
    \Vert \dot{w}_{\lp_k} \Vert_{L^1(I; W^{1,\infty}(\Omega)^*)} \le C \lp_k^\alpha
  \end{align}
  holds as then the statement follows by repeating the proof of Lemma~\ref{lem:pointwise_temp_conv}, see again Remark \ref{rem: after temp}(ii).
  To derive \eqref{nmuk_pmuk_Ls_conv-proof2}, we use \eqref{weak_limit_heat_equation}  (for $\rdrate$ \CCC in place of $\drate$\BBB) \BBB to get that $\dot{w}_{\lp_k}$ coincides in the distributional sense with \NNN $\sigma$ where for each $t \in I$ and each  $\varphi \in C_c^\infty(\Omega)$  we set \BBB
  \begin{align*}
    \langle \sigma(t), \varphi \rangle
    &\defas \kappa \int_{ \Gamma} \big(
         \lp_k^\alpha \BBB \bt - \theta_{\lp_k}
      \big) \vphi \di \haus^{d-1}(x) \\
    &\phantom{\defas} \   - \int_\Omega \Big(
      \hcm(\nabla y_{\lp_k}, \theta_{\lp_k}) \nabla \theta_{\lp_k}
        \cdot \nabla \vphi
      - \big(
           \rdrate \BBB (\nabla y_{\lp_k}, \nabla \dot y_{\lp_k}, \theta_{\lp_k})
          + \pl_F \cplpot(\nabla y_{\lp_k}, \theta_{\lp_k})
            : \nabla \dot y_{\lp_k}
        \big) \vphi
      \Big) \di x
  \end{align*}
  \NNN where all functions on the right-hand side are evaluated at $t \in I$. \BBB   By passing to the limit $\tau \to 0$ in \eqref{a_priori_Linfty_W2p_lin}--\eqref{a_priori_strain_rates_lin} and \eqref{temp_inten_Lq_bound}--\eqref{nablatemp_nablainten_Lr_bound} we obtain the a priori bounds \NNN  $\Vert y_{\lp_k} - \id \Vert_{L^\infty(I; W^{1,\infty}(\Omega))} \le C \lp_k^{2/p}$, \BBB $\Vert y_{\lp_k} - \id \Vert_{H^1(I; H^1(\Omega))} \le C \lp_k$\CCC, \BBB and $\Vert \theta_{\lp_k} \Vert_{L^1(I; W^{1,1}(\Omega))} \le C  \lp_k^\alpha \BBB$.
  This along with   \ref{D_bounds}, \NNN $\CCC\rdrate \BBB \le \drate$,  \BBB \eqref{bound_hcm}, \eqref{C_locally_lipschitz}, \NNN $\alpha \le 2$, \BBB and the trace estimate shows that $t \mapsto \Vert \sigma(t) \Vert_{W^{1,\infty}(\Omega)^*}$ lies in $L^1(I)$ with $\Vert \sigma \Vert_{L^1(I; W^{1,\infty}(\Omega)^*)} \le C \lp_k^\alpha$.
  This concludes the proof of \eqref{nmuk_pmuk_Ls_conv-proof2}.
\end{proof}

We now proceed with the proofs of Theorems \ref{thm:linearization_right_diag} and \ref{thm:linearization_left_bottom} which we split into two subsections.

\subsection{Proof of Theorem \ref{thm:linearization_right_diag}}\label{sec: 5.1}

We will only prove Theorem \ref{thm:linearization_right_diag}(iii) as item (ii) of the statement can be obtained along similar lines by performing the linearization directly in the weak formulation \eqref{weak_limit_mechanical_equation}--\eqref{weak_limit_heat_equation} in place of the Euler-Lagrange equations \eqref{mechanical_step_single} and \eqref{el_thermal_step}.
Note that the proof of Theorem~\ref{thm:linearization_right_diag}(iii) will also imply the existence statement in Theorem~\ref{thm:linearization_right_diag}(i).
In this subsection, we also address the uniqueness of the solutions to the linearized system.

\begin{proposition}[Linearization of the mechanical equation]\label{prop:linearization_mech}
  Let $u$ and $\mu$ be given as in Lemmas~\ref{lem:comp_uk}--\ref{lem:comp_muk}.
  Then, for any $z \in C^\infty(I \times \overline{\Omega}; \R^d)$ with $z = 0$ on $I \times \Gamma_D$ we have that \eqref{linear_evol_mech} holds.
\end{proposition}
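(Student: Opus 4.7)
The plan is to derive a time-integrated form of \eqref{mechanical_step_single}, rescale by $\eps_k^{-1}$, Taylor-expand each term around the natural state $(\Id, 0)$, and pass to the limit using Lemmas~\ref{lem:comp_uk}--\ref{lem:comp_muk}. Given $z \in C^\infty(I \times \overline{\Omega}; \R^d)$ with $z = 0$ on $I \times \Gamma_D$, I test \eqref{mechanical_step_single} at step $k$ with $z(k\tau_k, \cdot)$, multiply by $\tau_k$, sum over $k$, and divide by $\eps_k$ to obtain
\begin{align*}
&\int_0^T\!\!\int_\Omega \tfrac{1}{\eps_k}\big( \pl_F \felpot(\nabla \nyk, \ptk) + \pl_{\dot F} R(\nabla \pyk, \nabla \dotayk, \ptk)\big) : \nabla \overline{z}_k \di x \di t \\
&\qquad + \int_0^T\!\!\int_\Omega \tfrac{1}{\eps_k} \pl_G H(\nabla^2 \nyk) \cdddot \nabla^2 \overline{z}_k \di x \di t = \int_0^T \langle \overline{\ell}_k, \overline{z}_k \rangle \di t,
\end{align*}
where $\overline{z}_k$ and $\overline{\ell}_k$ are the right-continuous piecewise constant interpolations in time. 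Each term will then be treated separately.

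The strain-gradient contribution is negligible: by \ref{H_bounds} together with $\|\nabla^2 \nyk\|_{L^p(I \times \Omega)} \le C \eps_k^{2/p}$ from \eqref{a_priori_Linfty_W2p_lin}, it is $O(\eps_k^{(p-2)/p}) \to 0$ since $p > d \ge 2$. Splitting $\felpot = \elpot + \cplpot$ and using $\pl_F \elpot(\Id) = 0$ (from \ref{W_lower_bound_spec}) together with $\pl_F^2 \elpot(\Id) = \CW$, Taylor's formula combined with $\|\nabla \nyk - \Id\|_{L^2(I \times \Omega)} \le C \eps_k$ from \eqref{a_priori_Linfty_H1_lin} gives $\eps_k^{-1} \pl_F \elpot(\nabla \nyk) = \CW[\nabla \nuk] + o(1)$ in $L^1(I \times \Omega)$; frame indifference turns $\CW[\nabla \nuk] : \nabla z$ into $\CW e(\nuk) : e(z)$, and the weak convergence $e(\nuk) \weakly e(u)$ in $L^2$ from Lemma~\ref{lem:comp_uk} produces the $\CW e(u)$ contribution. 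For the coupling stress, \ref{C_zero_temperature} implies $\pl_F^2 \cplpot(\cdot, 0) \equiv 0$, which together with \ref{C_thrid_order} yields $|\pl_F^2 \cplpot(F, \theta)| \le C (\theta \wedge 1)$; combined with the continuous extension \ref{C_heatcap_cont} of $\pl_{F\theta}\cplpot$ at $\theta = 0$, this gives
\begin{equation*}
\pl_F \cplpot(\nabla \nyk, \ptk) = \mathbb{B}\, \ptk + o(\ptk) + O\big(|\nabla \nyk - \Id|\,(\ptk \wedge 1)\big), \qquad \mathbb{B} \defas \pl_{F\theta}\cplpot(\Id, 0),
\end{equation*}
so that $\eps_k^{-1} \pl_F \cplpot(\nabla \nyk, \ptk) = \eps_k^{\alpha-1} \mathbb{B} \pmuk + \text{lower-order terms}$. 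This converges to $\mathbb{B}\mu$ when $\alpha = 1$ (using the $L^2$-convergence of $\pmuk$ from Remark~\ref{rem: after temp}(iii)) and vanishes when $\alpha \in (1, 2]$ by the $L^s$-boundedness of $\pmuk$ — matching $\mathbb{B}^{(\alpha)}$. The viscous stress, via \eqref{chain_rule_Fderiv}, reads
\begin{equation*}
\tfrac{1}{\eps_k} \pl_{\dot F} R(\nabla \pyk, \nabla \dotayk, \ptk) = 2 \nabla \pyk\, D\big((\nabla \pyk)^T \nabla \pyk, \ptk\big) \big[(\nabla \dotauk)^T \nabla \pyk + (\nabla \pyk)^T \nabla \dotauk\big],
\end{equation*}
whose prefactor converges uniformly to $2 D(\Id, 0)$ by \eqref{a_priori_Linfty_W2p_lin} and the a.e.\ vanishing of $\ptk$ along a subsequence, while $\nabla \dotauk \weakly \nabla \dot u$ weakly in $L^2$ by Lemma~\ref{lem:comp_uk}; a strong-weak argument yields the limit $4 D(\Id, 0) e(\dot u) = \CD e(\dot u)$. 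Finally, the force term converges by the $W^{1,1}$-in-time regularity of $\ell$ together with $\overline{z}_k \to z$ uniformly.

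The hard part will be controlling the higher-order remainders in the coupling stress after rescaling. For the cross-contribution $\eps_k^{-1}(\pl_F \cplpot(\nabla \nyk, \ptk) - \pl_F \cplpot(\Id, \ptk)) = O((\ptk \wedge 1) |\nabla \nuk|)$, I would exploit the $H^1$-bound on $\nuk$ combined with the fact that $\ptk \wedge 1 \to 0$ pointwise a.e.\ along a subsequence (since $\|\ptk\|_{L^1(I \times \Omega)} \le C \eps_k^\alpha \to 0$) and use dominated convergence to pass this remainder to zero when tested against smooth $z$; similarly, $\pl_F \cplpot(\Id, \ptk) = \mathbb{B}\,\ptk + o(\ptk)$ hinges crucially on the continuous extension \ref{C_heatcap_cont} of $\pl_{F\theta}\cplpot$ at the natural state. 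Noteworthy: the scaling $\|\nabla^2 \nyk\|_{L^p} = O(\eps_k^{2/p})$ rather than $O(\eps_k)$ is exactly what renders the hyperstress asymptotically negligible, which is the structural reason why the second-gradient regularization from \ref{H_bounds}, essential for existence in the nonlinear model, does not persist in the linearized equation~\eqref{linear_evol_mech}.
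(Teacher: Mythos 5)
Your proposal is correct and follows essentially the same route as the paper: sum and rescale the discrete Euler--Lagrange equations, kill the hyperstress via the $O(\lp_k^{2/p})$ bound on $\nabla^2\nyk$, Taylor-expand the elastic and coupling stresses around $(\Id,0)$ using \ref{C_zero_temperature}, \ref{C_bounds}, and \ref{C_heatcap_cont}, and pass to the limit in the viscous term by a weak--strong argument on $\dot C_k\weakly 2e(\dot u)$. The only point where your write-up is looser than the paper is the coupling term, where "$\mathbb{B}\ptk+o(\ptk)$" is not uniform in $\theta$; the paper fixes this by truncating the linear term to $(\lp_k^\alpha\pmuk)\wedge 1$ and bounding the remainder by $(\lp_k^\alpha\pmuk)^2\wedge 1\le(\lp_k^\alpha\pmuk)^s$, which is precisely the mechanism you sketch in your closing paragraph.
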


\begin{proof}
  Let $z$ be as in the statement.   As $z \in \mathcal{Y}_0$, we can  multiply  \eqref{mechanical_step_single} with $\tau_k/\lp_k$  and sum over  all steps  $1, \ldots,  T/\tau$ to get \BBB
  \begin{equation}\label{discri-mech}
    \frac{1}{\lp_k} \int_0^{T} \int_\Omega \Big(
        \pl_F \felpot(\nabla \nyk, \ptk)
        + \pl_{\dot F} \disspot(\nabla \pyk, \nabla \dotayk, \ptk)
      \Big) : \nabla z
      + \pl_G \hypot(\nabla^2 \nyk) \cdddot \nabla^2 z \di x \di t
    = \int_0^{T} \langle  \overline\ell_{\tau_k}(t), \BBB z \rangle \di t,
  \end{equation}
     where $\overline\ell_{\tau_k}(t) \defas   \ell^{(l)}_{\tau_k} $ for $t \in ((l-1)\tau, l\tau]$ and $l \in \setof{1, \ldots,  T/\tau}$.  \BBB   Our goal now is to show that \eqref{linear_evol_mech} arises as the limit of the above equation as $k \to \infty$.
  First, recalling \eqref{forces_mech_step} we can easily check that  
  \begin{equation}\label{linmech1}
    \int_0^{T} \langle \overline\ell_{\tau_k}(t), z(t) \rangle \di t
      \to \intQ f \cdot z + \intSN g \cdot z \di x \di t
  \end{equation}
  as $k \to \infty$.
  By \ref{H_bounds} for $\pl_G \hypot$, \eqref{a_priori_Linfty_W2p_lin}, and H\"older's inequality with powers $\frac{p}{p-1}$ and $p$ we derive that
  \begin{align}\label{linmech2}
    \frac{1}{\lp_k} \Bigg|
      \int_0^{T} \int_\Omega \pl_G \hypot(\nabla^2 \nyk) \cdddot \nabla^2 z \di x \di t
    \Bigg|
    &\leq \frac{C_0}{\lp_k} \intQ \abs{\nabla^2 \nyk}^{p-1}
      \abs{\nabla^2 z} \di x \di t \\
    &\leq  \frac{C_0}{\lp_k} \int_{0}^T \Vert \nabla^2 \nyk \Vert^{p-1}_{L^p(\Omega)}
      \norm{\nabla^2 z}_{L^p(\Omega)}\di t  \BBB
      \leq C \lp_k^{\frac{2(p-1)}{p} - 1} = C \lp_k^{1-\frac{2}{p}} \to 0, \notag
  \end{align}
  as $p > d \ge 2$.   We now address the coupling term.
  In view of $\partial_F \cplpot(\Id, 0) = 0$,  \eqref{a_priori_Linfty_W2p_lin}, \BBB and \eqref{C_locally_lipschitz}, a Taylor expansion implies
  \begin{equation}\label{all not remove0}
  \Big|
    \pl_F \cplpot(\nabla \nyk, \ptk)
    -\Big(
      \partial_F^2 \cplpot(\Id, 0) \lp_k \nabla \nuk
      + \partial_{F\theta} \cplpot(\Id, 0)  (\lp_k^\alpha \pmuk \wedge 1 )
 \BBB    \Big)
  \Big|
  \le C |\lp_k \nabla \nuk|^2 + C \big( |\lp_k^{\alpha}  \pmuk|^2 \BBB \wedge 1 \big)
  \end{equation}
  pointwise a.e.~in $I \times \Omega$.
  Thus, by \eqref{hoelder_conv_nuk_puk} and \NNN \eqref{nmuk_pmuk_Ls_conv3}, \BBB along with $t^2 \wedge 1 \le t^s$ for $t \ge 0$ for some fixed $s \in (1,  \frac{d+2}{d})$ \CCC it follows that \BBB
  \begin{align*}
    &\lim_{k \to \infty} \frac{1}{\lp_k} \intQ
      \pl_F \cplpot(\nabla \nyk, \ptk) : \nabla z \di x \di t \\
    &\quad = \lim_{k \to \infty} \intQ \big(
      \partial_F^2 \cplpot(\Id, 0) \nabla \nuk
      +   \lp_k^{  -1}  \BBB \partial_{F\theta} \cplpot(\Id, 0)  ( \lp_k^{\alpha}\pmuk\wedge 1) \BBB
    \big) : \nabla z \di x \di t.
  \end{align*}
  Recalling \CCC $\partial_F^2 \cplpot(\Id, 0) = 0$  (cf.~\ref{C_zero_temperature}) and the definition of  $\mathbb{B}^{(\alpha)}$ in \eqref{alpha_dep} we find \BBB
  \begin{equation}\label{all not remove}
    \lim_{k\to \infty} \frac{1}{\lp_k} \intQ
      \pl_F W^{\rm cpl}(\nabla \nyk, \ptk) : \nabla z \di x \di t 
      = \intQ \mathbb{B}^{(\alpha)} \mu : \nabla z  \di x \di t.
  \end{equation}
  By a Taylor expansion,   \eqref{a_priori_Linfty_W2p_lin}, \BBB and the fact that $\elpot$ is $C^3$ we have
  \begin{equation*}
    \Big|
      \lp_k^{-1} \pl_F \elpot(\nabla \nyk)
      - \pl^2_F \elpot(\Id) \nabla \nuk
    \Big|
    \leq \frac{C}{\lp_k} \abs{\nabla \nyk - \Id}^2.
  \end{equation*}
  Integrating the above inequality over $I \times \Omega$ and using \eqref{a_priori_Linfty_H1_lin} we get
  \begin{align}   \label{linmech3}
    \Bigg|
      \int_0^{T} \int_\Omega \Big(
        \lp_k^{-1} \pl_F \elpot(\nabla \nyk)
        - \partial^2_F \elpot(\Id) \nabla \nuk
      \Big) : \nabla z \di x \di t
    \Bigg|
    \leq C \lp_k^{-1} \Vert \nabla \nyk - \Id \Vert_{L^2(I \times \Omega)}^2
    \leq C T \lp_k \to 0.
  \end{align}
  \CCC By \BBB \eqref{chain_rule_Fderiv}
  \begin{align}\label{eq: auch noch}
    \pl_{\dot F} \disspot(\nabla \pyk, \nabla \dotayk, \ptk) : \nabla z
    = 2 \nabla \pyk (D(C_k, \ptk) \lp_k \dot C_k) : \nabla z
    = \lp_k \dot C_k : D(C_k, \ptk)
      (\nabla z^T \nabla \pyk + \nabla \pyk^T \nabla z),
  \end{align}
  where
  \begin{align}
    C_k &\defas \nabla \pyk^T \nabla \pyk, &
    \dot C_k &\defas \nabla \dotauk^T \nabla \pyk
      + \nabla \pyk^T \nabla \dotauk. \label{def_Ck_dotCk}
  \end{align}
  Note that the second identity is obtained by an elementary computation using the symmetries of $D$ stated in \ref{D_quadratic}.
  By   \eqref{a_priori_Linfty_W2p_lin}  \BBB and \eqref{hoelder_conv_nuk_puk} we \CCC then \BBB see that
  \begin{align}\label{ffflater}
  \dot C_k \weakly 2 e(\dot u) \quad \text{ weakly in } L^2(I\times \Omega;\R^{d \times d}_{\rm \NNN sym}).
  \end{align}
  Using \ref{D_bounds} we also \CCC have \BBB that
  \begin{equation*}
    \abs{D(C_k, \ptk) (\nabla z^T \nabla \pyk + \nabla \pyk^T \nabla z)}
    \leq 2 \aC \norm{\nabla z}_{L^\infty(\Omega)} \norm{\nabla \pyk}_{L^\infty(\Omega)}.
  \end{equation*}
  Up to taking a subsequence (not relabeled), we can suppose that $\nabla \pyk \to \Id$ and $\ptk \to 0$ a.e.~in $I \times \Omega$.
  Thus, Dominated Convergence implies
  \begin{equation*}
    D(C_k, \ptk) (\nabla z^T \nabla \pyk
      + \nabla \pyk^T \nabla z)
    \to D(\Id, 0) (\nabla z + \nabla z^T)
    = 2 D(\Id, 0) \nabla z
  \end{equation*}
  strongly in $L^2(I \times \Omega;\R^{d \times d})$.
  This along with  \NNN \eqref{eq: auch noch} and \BBB  \eqref{ffflater} \BBB leads to
  \begin{align}   \label{linmech4}
    \lp_k^{-1} \int_0^{T} \int_\Omega \pl_{\dot F} \disspot(\nabla \pyk, \nabla \dotayk, \ptk)
      : \nabla z \di x \di t
    \to \intQ 4 D(\Id, 0) e(\dot u) : \nabla z \di x \di t.
  \end{align}
  Recalling the definition of $\CD$ and $\C_W$ in \eqref{def_WD_tensors}, as well as \NNN collecting \eqref{linmech1}, \eqref{linmech2}, \eqref{all not remove}, \eqref{linmech3}, and \eqref{linmech4} \BBB we conclude the proof.
\end{proof}

Similarly as in Section \ref{sec:tau_to_zero_delta_fixed}, for the limit passage in the heat-transfer equation,  we will need  the strong convergence of the strain rates $(\nabla \dot{\hat{u}}_k)_k$ in $L^2(I; L^2(\Omega;\R^{d \times d}))$ since the dissipation rate is quadratic in $\nabla \dot{\hat{u}}_k$.
We now improve the compactness in Lemma \ref{lem:comp_uk} as follows. \NNN At this state, we need the additional assumption \ref{H_bounds2} which \BBB combined with the bound on $\pl_G H(G)$ from \ref{H_bounds} leads to  
\begin{equation}\label{H_upper_bound_spec}
  \abs{H(G)} \leq  \aC  \abs{G}^p \quad \text{ for all } G \in \R^{d \times d \times d}.
\end{equation}

\begin{lemma}[Strong convergence of the rescaled strains and strain rates]\label{lemma: strong ratistrain}
  With $u$ as in Lemma \ref{lem:comp_uk}, up to possibly taking a subsequence, we have
  \begin{align}\label{strong_rescaled_strain_rate_comp}
    \hat{u}_k(t) &\to u(t) \text{ strongly in } H^1(\Omega; \R^d)
      \text{ for all } t \in I, &
    \nabla \dotauk &\to \nabla \dot u \text{ strongly in } L^2(I; L^2(\Omega; \NNN \R^{d \times d \BBB} )).
  \end{align}
  The first convergence also holds with $\overline{u}_k$ or $\underline{u}_k$ in place of $\hat{u}_k$. \BBB
  \end{lemma}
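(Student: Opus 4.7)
The plan is to adapt the strategy of Lemma \ref{lem:strong_strain_rates_conv} to the rescaled setting by comparing a rescaled discrete energy inequality with the continuous energy identity for the linear system. Testing \eqref{linear_evol_mech} (formally) with $\dot u$ yields
\begin{align*}
  \frac{1}{2}\int_\Omega \CW e(u(T)) : e(u(T)) \di x
    &+ \int_0^T\int_\Omega \CD e(\dot u) : e(\dot u) \di x \di t \\
  &= \frac{1}{2}\int_\Omega \CW e(u_0) : e(u_0) \di x
    + \int_0^T \langle \ell(t), \dot u(t)\rangle \di t
    - \int_0^T\int_\Omega \mathbb{B}^{(\alpha)}\mu : e(\dot u) \di x \di t,
\end{align*}
where by \eqref{alpha_dep} the coupling integral is only present for $\alpha = 1$, and is then well-defined thanks to $\mu\in L^2(I\times\Omega)$ from Remark \ref{rem: next a priori}(iii). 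On the discrete side, I would reproduce the derivation of \eqref{disc_energy_balance}: test \eqref{mechanical_step_single} with $\yst{k}-\yst{k-1}$, invoke the $\Lambda$-convexity of $\mechen$ (with $\Lambda$ uniform in $k$ by \eqref{a_priori_Linfty_W2p_lin}), sum from $1$ to $T/\tau_k$, and divide by $\lp_k^2$.

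Passing to the limit $k\to\infty$ would then proceed term by term. Using \ref{H_bounds2}, \eqref{H_upper_bound_spec}, and \eqref{a_priori_Linfty_W2p_lin}, the strain-gradient part of $\mechen(\nyk(T))/\lp_k^2$ is $O(\lp_k^{p-2})\to 0$, while the elastic part, combined with \ref{W_regularity}, \ref{W_lower_bound_spec}, a Taylor expansion at $\Id$, the weak $H^1$-convergence $\nuk(T)\weakly u(T)$, and \eqref{def_WD_tensors}, yields $\liminf_k \mechen(\nyk(T))/\lp_k^2 \ge \frac{1}{2}\int_\Omega \CW e(u(T)) : e(u(T))\di x$. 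For the dissipation, writing $2\diss(\pyk, \dotayk, \ptk) = \dot C_k : D(C_k, \ptk)\,\dot C_k$ as in \eqref{def_Ck_dotCk}, the convergences $\lp_k^{-1}\dot C_k\weakly 2 e(\dot u)$ in $L^2(I\times\Omega)$ (Lemma \ref{lem:comp_uk}), $C_k\to\Id$ uniformly, $\ptk\to 0$ a.e.\ (Lemma \ref{lem:comp_muk}), together with $4D(\Id,0) = \CD$ and Ioffe-type lower semicontinuity, produce $\liminf_k 2\lp_k^{-2}\int_0^T\diss\di t \ge \int_0^T\int_\Omega \CD e(\dot u):e(\dot u)\di x\di t$. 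The $\Lambda\tau_k\int_0^T\int_\Omega|\nabla\dotauk|^2 \di x \di t$ remainder vanishes by \eqref{a_priori_strain_rates_lin}. The initial-energy term on the right-hand side is handled by the same Taylor argument applied to $y_{0,\lp_k}=\id+\lp_k u_0$, and the force term passes to the limit by the weak $L^2$-convergence of $\dotauk$.

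The coupling term is the main obstacle. Writing $\partial_F\cplpot(F,\theta) = \theta\,\partial_{F\theta}\cplpot(\Id,0) + o(\theta) + o(|F-\Id|)$ as in \eqref{all not remove0}, dividing by $\lp_k^2$, and multiplying by $\nabla\dotayk = \lp_k\nabla\dotauk$ produces a leading contribution $\lp_k^{\alpha-1}\pmuk\,\mathbb{B}^{(1)}:\nabla\dotauk$. For $\alpha\in(1,2]$ the factor $\lp_k^{\alpha-1}\to 0$ kills this term (in agreement with $\mathbb{B}^{(\alpha)}=0$); for $\alpha=1$ the strong $L^2$-convergence $\pmuk\to\mu$ from Remark \ref{rem: next a priori}(iii), combined with the weak $L^2$-convergence of $\nabla\dotauk$, delivers $\int_0^T\int_\Omega\mathbb{B}^{(1)}\mu:e(\dot u)\di x\di t$ in the limit. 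The lower-order remainders are controlled by Theorem \ref{thm:further_apriori_temp_bounds} and the equi-integrability of $|\nabla\dotauk|^2$.

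Chaining the rescaled discrete inequality with the continuous identity forces equality in both liminf bounds. Equality in the dissipation bound, together with $D(\Id,0)>0$ on $\R^{d\times d}_\sym$ and the generalized Korn inequality (Lemma \ref{lem:gen_korn}), upgrades the weak convergence of $\nabla\dotauk$ to strong $L^2(I\times\Omega)$-convergence, by the argument in the proof of Lemma \ref{lem:strong_strain_rates_conv} around \eqref{simmi}. Then the representation $\hat u_k(t) - u(t) = \int_0^t (\dotauk-\dot u)\di s$, combined with $\dotauk\to\dot u$ in $L^2(I; H^1(\Omega))$ (via Poincar\'e's inequality), yields $\hat u_k(t)\to u(t)$ strongly in $H^1(\Omega)$ uniformly in $t\in I$, in particular at every $t$. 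Finally, by \eqref{a_priori_strain_rates_lin} and Poincar\'e's inequality, the map $t \mapsto \auk(t)$ is equi-H\"older-continuous in $t\in I$ with values in $H^1(\Omega)$ and exponent $1/2$, whence $\|\nuk(t)-\hat u_k(t)\|_{H^1(\Omega)} + \|\puk(t)-\hat u_k(t)\|_{H^1(\Omega)}\le C\tau_k^{1/2}$, extending the pointwise convergence to $\overline u_k(t)$ and $\underline u_k(t)$ at every $t\in I$.
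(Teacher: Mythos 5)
Your overall strategy coincides with the paper's: compare the rescaled discrete energy inequality \eqref{disc_energy_balance-new} with the continuous linearized balance obtained by testing \eqref{linear_evol_mech} with $\dot u$, squeeze the two liminf inequalities into equalities, and then expand the quadratic dissipation form to upgrade weak to strong convergence of $\dot C_k$ and hence of $\nabla\dotauk$ via Korn and Poincar\'e. Your derivation of the pointwise-in-time $H^1$ convergence from $\hat u_k(t)-u(t)=\int_0^t(\dotauk-\dot u)\,\mathrm ds$ and the $C\tau_k^{1/2}$ comparison between the interpolants is a clean alternative to the paper's route (which re-runs the energy-convergence argument at each fixed $t$ and for $\underline y_k$ separately); both are valid in the relevant regime $\tau_k\to 0$.

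There is, however, one genuine gap: your treatment of the coupling term for $\alpha\in(1,2]$. You claim the leading contribution $\lp_k^{\alpha-1}\pmuk\,\mathbb{B}^{(1)}:\nabla\dotauk$ is killed by the prefactor $\lp_k^{\alpha-1}\to 0$, but the remaining integral $\int_0^T\int_\Omega\pmuk\,|\nabla\dotauk|$ is not known to be uniformly bounded: $\nabla\dotauk$ is controlled only in $L^2(I\times\Omega)$, while the baseline bound of Theorem \ref{thm:further_apriori_temp_bounds} gives $\pmuk$ only in $L^q$ with $q<\tfrac{d+2}{d}$, which is below $2$ for $d\ge 3$ (and for $\alpha$ near $1$ one has $2/\alpha$ close to $2>\tfrac{d+2}{d}$, so even the target exponent is out of reach). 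The fix is exactly why the paper regularizes $\drate$ for $\alpha<2$: one must keep the truncated bound $|\pl_F\cplpot|\le C(\theta\wedge 1)(1+|F|)$ from \eqref{C_locally_lipschitz}, use $t\wedge 1\le t^{q/2}$ with the improved integrability $\pmuk\in L^q$ for some $q>2/\alpha$ from Remark \ref{rem: next a priori}, and split by Young's inequality with an $\lp_k$-dependent constant as in \eqref{eq: do it small!}; without this the step fails. A second, minor point: your claim that the strain-gradient part of $\mechen(\nyk(T))/\lp_k^2$ is $O(\lp_k^{p-2})$ does not follow from \eqref{a_priori_Linfty_W2p_lin}, which only yields $\|\nabla^2\nyk\|_{L^p}^p\le C\lp_k^2$ and hence an $O(1)$ bound; this is harmless because only the nonnegativity of $\hypot$ is needed for the liminf inequality (the $O(\lp_k^{p-2})$ decay is correct only for the initial datum $\id+\lp_k u_0$), but the justification as written is wrong.
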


\begin{proof}
  \textit{Step 1 (Lower bounds for elastic energy and dissipation):}
  Suppose we have already selected a subsequence so that the convergences of Lemma \ref{lem:comp_uk} as well as Lemma \ref{lem:comp_muk} hold true.
 Recall the definition of $\mathcal{M}_{\lp_k}$ before \eqref{eq: rescaled toten}. \BBB   For convenience, for any $v \in H^1(\Omega;\R^d)$, we define
  \begin{equation*}
    \mechenl(v) \defas \frac{1}{2} \int_\Omega \CW  e(v) \BBB : e(v) \di x,
  \end{equation*}
  where $\CW = \partial^2_F \elpot(\Id)$ is as in \eqref{def_WD_tensors}.
  Let us fix an arbitrary $t \in I$.
  By the non-negativity of $\hypot$, a Taylor expansion, and   \eqref{a_priori_Linfty_W2p_lin}  \BBB we derive that
  \begin{align}\label{taylor_mechen_lpk}
    \mechen_{\lp_k}(\nyk(t))
    &\geq \lp_k^{-2} \int_\Omega \elpot(\nabla \nyk(t)) \di x \notag \\
    &\geq \frac{1}{2} \int_\Omega \partial^2_F \elpot(\Id)\nabla \nuk(t)
      : \nabla \nuk(t)
      - C \int_\Omega \abs{\nyk(t) - \Id} \abs{\nabla \nuk(t)}^2 \di x \notag \\
    &\geq \frac{1}{2} \int_\Omega \partial^2_F \elpot(\Id) \nabla \nuk(t)
      : \nabla \nuk(t)
      - C \lp_k^{2/p} \int_\Omega \abs{\nabla \nuk(t)}^2 \di x.
  \end{align}
  Consequently, using \eqref{hoelder_conv_nuk_puk3} it follows that
  \begin{equation}\label{liminf_rescaled_en}
    I_1 \defas \liminf_{k \to \infty} \mechen_{\lp_k}(\nyk(t))
    \ge \liminf_{k \to \infty} \mechenl(\nuk(t))
    \geq \mechenl(u(t)).
  \end{equation}
  Let $C_k$ and $\dot C_k$ be as in \eqref{def_Ck_dotCk}.
  In \eqref{ffflater} we have  seen that $\dot C_k \weakly 2 e(\dot u)$ weakly in $L^2(I\times \Omega;\R^{d \times d})$.
  This along with the definition in \eqref{diss_rate}, $\CD = 4 D(\Id, 0)$, the pointwise convergences of $(\nabla \pyk)_k$ and $(\ptk)_k$\CCC, and standard lower semicontinuity arguments (see also \cite[Theorem 7.5]{FonsecaLeoni07Modern}) show\BBB
  \begin{align}
    I_2 \defas \liminf_{k \to \infty} \lp_k^{-2}
      \int_{0}^t \int_\Omega \drate(\nabla \pyk, \nabla \dotayk, \ptk) \di x \di s
    &= \liminf_{k \to \infty} \int_0^t \int_\Omega D(C_k, \ptk) \dot C_k
      : \dot C_k \di x \di s \nonumber \\
    &\geq \int_0^t \int_\Omega \CD e(\dot u) : e(\dot u) \di x \di s. \label{liminf_rescaled_dissrate}
  \end{align}
  \textit{Step 2 (Convergence of elastic energies and dissipations):}
  Our next goal is to show the reverse inequalities for the $\limsup$.
  To this end, we draw ideas from the proof of Lemma \ref{lem:strong_strain_rates_conv} and compare an energy balance on the nonlinear time-discrete level with a time-continuous energy balance in the linearized setting.
  First, recall from \eqref{disc_energy_balance} that for $K \in \N$ with $K  \tau_k \BBB \in \NNN [t, t + \tau_k)\BBB$   it holds that
  \begin{align}\label{disc_energy_balance-new}
    &\mechen_{\lp_k}(\nyk(K\tau_k))
      + \lp_k^{-2} \int_{0}^{K\tau_k} \int_\Omega
        \drate(\nabla \pyk, \nabla \dotayk, \ptk) \di x \di s
      - \tau_k \Lambda \int_{0}^{K\tau_k} \int_\Omega
        \abs{\nabla \dotauk}^2 \di x \di s  \notag \\
    &\quad \leq \mechen_{\lp_k}( y_{0,\lp_k} \BBB)
      + \frac{1}{\lp_k} \int_0^{K\tau_k}
        \langle \ell(s), \dot{\hat{y}}_k(s) \rangle \di s
      - \int_{0}^{K\tau_k} \int_\Omega
        \lp_k^{-1} \pl_F \cplpot(\nabla \nyk, \ptk)
          : \nabla \dotauk \di x \di s,
  \end{align}
  where $\Lambda > 0$ does not depend on $k$.
  Here, we also used \eqref{diss_rate} to replace $R$ by $\xi$.
  Now, in a similar fashion, testing \eqref{linear_evol_mech} \CCC with \BBB $z = \dot u$ we see that
  \begin{equation}\label{lim_energy_balance-new}
    \mechenl(u(t)) - \mechenl(u_0)
    + \int_0^t \int_\Omega \big( \CD e(\dot u) : e(\dot u) +  \mu \mathbb{B}^{(\alpha)}  : \nabla \dot u  \big) \BBB \di x \di s 
    = \int_0^t \langle \ell(s), \dot{u}(s) \rangle \di s.
  \end{equation}
  We now address the convergence of the various terms.
  First of all, by \eqref{hoelder_conv_nuk_puk} we clearly have
  \begin{equation}\label{lindiss3}
    \frac{1}{\lp_k} \int_0^{K \tau_k}
      \langle \ell(s), \dot{\hat{y}}_k(s) \rangle \di s
    = \int_0^{K \tau_k} \langle \ell(s), \dot{\hat{u}}_k(s) \rangle \di s
    \to \int_0^t \langle \ell(s), \dot{u}(s) \rangle \di s.
  \end{equation}
  For $\alpha = 1$, by arguing similarly as in \eqref{all not remove0}--\eqref{all not remove},  and using \eqref{a_priori_Linfty_W2p_lin} as well as  $\underline{\mu}_k  \to \mu $ strongly in $L^2(I \times \Omega)$ by Remark~\ref{rem: after temp}(iii) we find
 \begin{equation}\label{lindiss2XXX}
\NNN  I_3 \coloneqq  \lim_{k\to\infty} \BBB   \frac{1}{\lp_k}
      \int_{0}^{K \tau_k} \int_\Omega
        \pl_F \cplpot(\nabla \nyk, \ptk) : \nabla \dot{\hat{u}}_k \di x \di s
         \NNN = \BBB  \int_0^t \int_\Omega  \mathbb{B}^{(\alpha)} \mu : \nabla \dot u  \di x \di t,
  \end{equation} 
  where we also used the definition of $\mathbb{B}^{(\alpha)}$ in \eqref{alpha_dep}. For $\alpha \in (1,2]$,  \eqref{lindiss2XXX} also holds (with $\mathbb{B}^{(\alpha)} = 0$), since by Remark \ref{rem: next a priori} we find that $ \underline{\mu}_k$ is bounded in $L^q(I;L^q(\Omega))$ for some \CCC$q \in( 2/\alpha, 2]$\BBB, and therefore using $t \wedge 1 \le t^{q/2} $ for $t \ge 0$ and Young's inequality with constant $\lp_k^{\alpha q/2}$ we get   
      \begin{align}\label{eq: do it small!}
 \lp_k^{  -1}\intQ    \big| \CCC(\BBB\lp_k^{\alpha}\pmuk\CCC)\BBB\wedge 1 \big| | \nabla \dot{\hat{u}}_k| \di x \di t  \le  \lp_k^{  -1} \Big( \lp_k^{-\alpha q/2} \Vert  \lp_k^{\alpha}\pmuk \Vert_{L^q(I\times \Omega)}^q   +     \lp_k^{\alpha q/2}\Vert \nabla \dot{\hat{u}}_k \Vert_{L^2(I\times \Omega)}^2 \Big) \to 0.
  \end{align}
\BBB    Eventually, we get
  \begin{equation}\label{lindiss1}
    \lim_{k \to \infty} \mechen_{\lp_k}(y_{0, \lp_k})
    = \lim_{k \to \infty} \mechen_{\lp_k}(\id + \lp_k u_0) = \mechenl(u_0).
  \end{equation}
  In fact, for the convergence of the elastic energy we repeat the Taylor expansion in \eqref{taylor_mechen_lpk} (with equality), and for the \NNN second-gradient \BBB term we get by \eqref{H_upper_bound_spec}, $u_0 \in W^{2, p}(\Omega; \R^d)$, and $p > 2$ that
  \begin{equation*}
    \lp_k^{-2} \Big|
      \int_\Omega \hypot(\lp_k \nabla^2 u_0) \di x
    \Big|
    \leq C \lp_k^{p-2} \int_\Omega \abs{\nabla^2 u_0}^p \di x
    \leq C \lp_k^{p-2} \to 0.
  \end{equation*}
  Combining \eqref{disc_energy_balance-new}--\eqref{lim_energy_balance-new}, $K\tau_k \ge t$, the convergences \eqref{liminf_rescaled_en}, \eqref{liminf_rescaled_dissrate}, \eqref{lindiss3}, \NNN  \eqref{lindiss2XXX}, \BBB and \eqref{lindiss1}, as well as using that $\tau_k \int_0^{K \tau_k} \int_\Omega \abs{\nabla \dotauk}^2 \di x \di s \to 0$ as $\tau_k \to 0$ we get 
  \begin{equation*}
  \begin{aligned}
    \mechenl(u(t))
      + \int_0^t \int_\Omega &  \big( \CD e(\dot u) : e(\dot u)  +  \mu \mathbb{B}^{(\alpha)}  : \nabla \dot u  \big) \BBB \di x \di s
    = \mechenl(u_0)
      + \int_0^t \langle \ell(s), \dot{u}(s) \rangle \di s \\
    &\ge I_1 + I_2 \NNN + I_3 \BBB
    \ge \mechenl(u(t))
       \NNN  + \int_0^t \int_\Omega  \big( \CD e(\dot u) : e(\dot u)  +  \mu \mathbb{B}^{(\alpha)}  : \nabla \dot u  \big)  \di x \di s. \BBB
  \end{aligned}
  \end{equation*}
  Thus, all inequalities in \eqref{liminf_rescaled_en} and \eqref{liminf_rescaled_dissrate} are equalities.
  In particular, we derive
  \begin{align}
    \lim_{k \to \infty} \frac{1}{2} \int_\Omega
      \CW e(\nuk(t)) : e(\nuk(t)) \di x
    &= \frac{1}{2} \int_\Omega \CW e(u(t)) : e(u(t)) \di x, \label{rescaled_diss_conv1} \\
    \lim_{k \to \infty} \NNN \frac{1}{\eps_k^2} \BBB \int_{0}^t \int_\Omega
      \drate(\nabla \pyk, \nabla \dotayk, \ptk) \di x \di s
    &= \int_0^t \int_\Omega 4 D(\Id,0) e(\dot u) : e(\dot u) \di x \di s, \label{rescaled_diss_conv2}
  \end{align}
  where we also used the definition of $\CD$ in \eqref{def_WD_tensors}. \\
  \textit{Step 3 (Strong convergence):}
  Strong convergence for $\overline{u}_k$ in $H^1(\Omega;\R^d)$, i.e.,  the first part of \eqref{strong_rescaled_strain_rate_comp}, \BBB follows directly from \eqref{rescaled_diss_conv1}, Korn's and Poincar\'e's inequality, and the fact that $\CW$ is positive definite on $\R^{d \times d}_{\rm sym}$.
In the same way we obtain convergence of  $\underline{u}_k$ by employing $\pyk(t)$ in place of $\nyk(t)$ in \eqref{taylor_mechen_lpk}.
  Hence, the statement  also holds for  $\hat{u}_k$. \BBB

  \NNN For the second part of \BBB \eqref{strong_rescaled_strain_rate_comp}, we will first show strong convergence of $(\dot C_k)_k$ defined in \eqref{def_Ck_dotCk}:
  by \ref{D_bounds} we estimate
  \begin{align*}
    &\ac \intQ \abs{\dot C_k - 2 e(\dot u)}^2 \di x \di t
    \leq \intQ D(C_k, \ptk) (\dot C_k - 2e(\dot u))
      : (\dot C_k - 2 e(\dot u)) \di x \di t \\
    &\quad= \NNN \eps_k^{-2} \BBB \intQ \drate(\nabla \pyk, \nabla \dotayk, \ptk) \di x \di t
      - 2 \intQ 2 D(C_k, \ptk) e(\dot u) : \dot C_k \di x \di t \\
    &\phantom{\quad=}\quad + \intQ 4D(C_k, \ptk) e(\dot u) : e(\dot u) \di x \di t.
  \end{align*}
  By \eqref{rescaled_diss_conv2} for $t = T$, the pointwise convergence of $(\nabla \pyk)_k$ and $(\ptk)_k$ to $\Id$ and $0$, respectively (see \eqref{hoelder_conv_nuk_puk}--\eqref{hoelder_conv_nuk_puk3} and \eqref{nmuk_pmuk_Ls_conv}), and the already shown weak convergence of $\dot C_k$ towards $2 e(\dot u)$  (cf.~\eqref{ffflater}) \BBB we see that the above derived upper bound converges to $0$ as $k \to \infty$.
  Then, the desired strong convergence of $(\nabla \dotauk)_k$ is derived as follows: by using Poincaré's and Korn's inequality,  \eqref{hoelder_conv_nuk_puk}, and  \eqref{a_priori_Linfty_W2p_lin} \BBB we get
  \begin{align*}
    &\intQ \abs{\nabla \dotauk - \nabla \dot u}^2 \di x \di t
    \leq C \intQ \abs{\sym(\nabla \dotauk - \nabla \dot u)}^2 \di x \di t \\
    &\quad\leq C \intQ \abs{\dot C_k - 2 e(\dot u)}^2 \di x \di t
      + C \intQ \abs{\nabla \pyk - \Id}^2 \abs{\nabla \dotauk}^2 \di x \di t \\
    &\quad\leq C \intQ \abs{\dot C_k - 2 e(\dot u)}^2 \di x \di t
      + C \lp_k^{4/p} \intQ \abs{\nabla \dotauk}^2 \di x \di t \to 0.
  \end{align*}
  This concludes the proof.
\end{proof}

\begin{proposition}[Linearization of the heat-transfer equation]\label{prop:linearization_therm}
  Let $u$ be as in Lemma \ref{lem:comp_uk} and $\mu$ as in Lemma \ref{lem:comp_muk}.
  Then, for any $\vphi \in C^\infty(I \times \overline \Omega)$ with $\vphi(T) = 0$ we have that \eqref{linear_evol_temp} holds.
\end{proposition}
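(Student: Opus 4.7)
The plan is to mirror the argument of Proposition~\ref{thm:convergence_heat_vanishing_tau} at the rescaled level. Applying \eqref{ea: just copy} (with $\rdrate$ in place of $\drate$) to the $k$-th sequence and dividing by $\lp_k^\alpha$ yields a weak identity in which $\nmuk = \lp_k^{-\alpha}\ntk$ takes the role of $\nt$, the coupling and dissipation terms carry a prefactor $\lp_k^{-\alpha}$, and $\lp_k^{-\alpha}\aw_k \dot\vphi$, $\kappa \int_\Gamma \overline\theta_{\flat, \tau_k} \vphi$, and $\lp_k^{-\alpha}\inten(\nabla y_{0,\lp_k}, \theta_{0,\lp_k}) \vphi(0)$ appear in the remaining positions. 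The task is then to pass to the limit $k \to \infty$ in each summand and recover \eqref{linear_evol_temp}.

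The routine terms are handled as follows. Dominated convergence combined with \ref{C_regularity}, \eqref{bound_hcm}, and the pointwise convergences $\nabla \pyk \to \Id$ and $\ptk \to 0$ yields $\hcm(\nabla \pyk, \ptk) \to \mathbb{K}_0$ strongly in $L^\infty$; paired with $\nabla \nmuk \weakly \nabla \mu$ in $L^r$ from Lemma~\ref{lem:comp_muk}, this delivers the heat-conduction contribution. The boundary integrals converge via the continuous trace operator applied to $\nmuk \weakly \mu$ in $L^r(I; W^{1,r}(\Omega))$ together with $\overline \theta_{\flat, \tau_k} \to \theta_\flat$ in $L^1(I; L^2(\Gamma))$, see \eqref{ea: just copy-after}. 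The identity $\lp_k^{-\alpha}\aw_k = \int_0^{\amuk} c_V(\nabla \ayk, \lp_k^\alpha s) \di s$ (cf.\ \eqref{nmuk_pmuk_Ls_conv3-toproof2NNN}) combined with Remark~\ref{rem: after temp}\,(ii) produces $\lp_k^{-\alpha}\aw_k \to \bar c_V \mu$ strongly in $L^s$, handling the $\dot \vphi$-term; the analogous identity at the initial data, together with \ref{C_heatcap_cont}, gives $\lp_k^{-\alpha}\inten(\Id + \lp_k \nabla u_0, \lp_k^\alpha \mu_0) \to \bar c_V \mu_0$.

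For the dissipation term, \ref{D_bounds} combined with $\nabla \dot{\hat y}_k = \lp_k \nabla \dotauk$ and $\rdrate \le \drate$ gives the uniform pointwise bound $\lp_k^{-\alpha}\rdrate(\nabla \pyk, \nabla \dot{\hat y}_k, \ptk) \le C \lp_k^{2-\alpha} |\nabla \dotauk|^2$. For $\alpha \in [1,2)$ this tends to zero in $L^1$, matching $\CD^{(\alpha)} = 0$ from \eqref{alpha_dep}. For $\alpha = 2$, $\rdrate = \drate$, and the strong convergence $\nabla \dotauk \to \nabla \dot u$ in $L^2(I \times \Omega)$ given by Lemma~\ref{lemma: strong ratistrain}, together with the pointwise convergence of $(\nabla \pyk, \ptk)$ and Vitali's theorem (as in Step~3 of the proof of Lemma~\ref{lemma: strong ratistrain}), yields $\lp_k^{-2}\drate \to \CD e(\dot u) : e(\dot u) = \CD^{(2)} e(\dot u) : e(\dot u)$ in $L^1$.

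The hard part is showing that the coupling term $\lp_k^{1-\alpha}\int \pl_F \cplpot(\nabla \pyk, \ptk) : \nabla \dotauk\, \vphi \di x \di t$ vanishes in the limit, since no such contribution appears in \eqref{linear_evol_temp}. The estimate \eqref{C_locally_lipschitz} together with the truncation $\ptk \wedge 1 \le \ptk^\beta$ valid for any $\beta \in [0, 1]$ yields
\begin{equation*}
  \lp_k^{1-\alpha}\big|\pl_F \cplpot(\nabla \pyk, \ptk) : \nabla \dotauk\big|
  \le C \lp_k^{1-\alpha+\alpha\beta}\, \pmuk^\beta \abs{\nabla \dotauk}.
\end{equation*}
One then picks $\beta$ in the range $(1 - 1/\alpha,\, q/2 \wedge 1]$, where $\pmuk$ is uniformly bounded in $L^q(I \times \Omega)$ with $q = 2/\alpha + 4/(\alpha d)$ when $\alpha \in [1, 2)$ by Remark~\ref{rem: next a priori}, and $q$ arbitrarily close to (but less than) $(d+2)/d$ when $\alpha = 2$ by Theorem~\ref{thm:further_apriori_temp_bounds}. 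This interval is non-empty precisely because $\alpha \le 2 < 2 + 2/d$; the $\lp_k$-exponent is then strictly positive, $\pmuk^\beta$ is bounded in $L^2(I \times \Omega)$, and $(\nabla \dotauk)_k$ is bounded in $L^2(I \times \Omega)$, so Cauchy--Schwarz closes the estimate. Collecting all the limits yields \eqref{linear_evol_temp}.
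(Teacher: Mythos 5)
Your proposal is correct and follows essentially the same route as the paper's proof: rescale the summed Euler--Lagrange identity by $\lp_k^{-\alpha}$, pass to the limit in the conduction, boundary, internal-energy and initial-data terms by dominated convergence, kill the coupling term by interpolating $\ptk \wedge 1 \le \ptk^{\beta}$ against the available $L^q$ temperature bounds, and treat the dissipation via $\rdrate \le \drate$ for $\alpha<2$ and via the strong strain-rate convergence of Lemma~\ref{lemma: strong ratistrain} for $\alpha=2$. The only differences are cosmetic: the paper parametrizes the coupling estimate by $s=2\beta$ with $s\in(\tfrac{2(\alpha-1)}{\alpha},\tfrac{d+2}{d})$ and invokes \eqref{temp_inten_Lq_bound} uniformly in $\alpha$ rather than the sharper bound of Remark~\ref{rem: next a priori}; also note that $\hat w_k$ is the affine interpolation of the nodal values $w^{(k)}_{\lp_k,\tau_k}$ rather than $\inten$ evaluated at the affine interpolants, so its convergence should be read off from that of $\lp_k^{-\alpha}\overline w_k$ and $\lp_k^{-\alpha}\underline w_k$ separately, as the paper does.
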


\begin{proof}
  Similarly to the proof of Proposition \ref{thm:convergence_heat_vanishing_tau},  see \eqref{ea: just copy}, \BBB we can show that
  \begin{align}\label{discr-heat}
    &\int_0^T \int_\Omega
    \hcm(\nabla \pyk, \ptk) \nabla \nmuk \cdot \nabla \vphi \di x \di t
    - \int_0^{T} \int_\Omega   \lp_k^{-\alpha} \BBB \awk \dot \vphi \di x \di t
    + \kappa \int_0^{T} \int_{ \Gamma} \nmuk \vphi \di \haus^{d-1} \di t \notag \\
    &\quad- \int_0^{\NNN T} \int_\Omega \Big(
      \lp_k^{-\alpha} \rdrate(\nabla \pyk, \nabla \dotayk, \ptk)
      + \lp_k^{1-\alpha} \pl_F \cplpot(\nabla \NNN \pyk,  \ptk \BBB ) : \nabla \dotauk
    \Big) \vphi \di x \di t \notag \\
    &= \kappa \int_0^{T} \int_{ \Gamma}
     \NNN \overline \theta_{\flat, \tau}  \BBB \vphi \di \haus^{d-1} \di t
      + \lp_k^{-\alpha} \int_\Omega \inten(\nabla  y_{0,\lp} \BBB ,  \theta_{0,\lp} \BBB ) \vphi(0) \di x,
  \end{align}
  where $\awk \defas \hat w_{\lp_k\CCC,\BBB \tau_k}$, $\nabla  y_{0,\lp} \BBB  = \Id + \lp_k \nabla u_0$, and $ \theta_{0,\lp} \BBB  = \lp_k^\alpha \mu_0$.
  Note that in contrast to  \eqref{ea: just copy}, \BBB we rescaled both sides with $\lp_k^{-\alpha}$.
  We will now pass to the limit in each integral above as $k \to \infty$.
  Recall that $c_V(F, \theta) \defas -\theta \pl_\theta^2 \cplpot(F, \theta)$ for any $F \in \NNN GL^+(d) \BBB$ and $\theta \geq 0$.
  \CCC Using \ref{C_heatcap_cont} and Dominated Convergence we can show in a similar fashion as in Remark \ref{rem: after temp} that \BBB
  \begin{equation*}
    \lp_k^{-\alpha} \int_\Omega \inten(\nabla  y_{0,\lp} \BBB ,  \theta_{0,\lp} \BBB ) \vphi(0) \di x
    \to \int_\Omega \bar c_V \mu_0 \vphi(0) \di x.
  \end{equation*}
By Lemma \ref{lem:bound_hcm} we have that $\abs{\hcm(\nabla \pyk, \ptk)}$  is uniformly bounded. \BBB
  Consequently, from the pointwise convergence of $\nabla \pyk$ and $\ptk$ to $\Id$ and $0$, respectively, see \eqref{hoelder_conv_nuk_puk}--\eqref{hoelder_conv_nuk_puk3} and \eqref{nmuk_pmuk_Ls_conv}, we derive that
  \begin{align*}
    &\int_{0}^{T} \hspace{-0.1cm}
      \int_\Omega \hcm(\nabla \pyk, \ptk) \nabla \nmuk \cdot \nabla \vphi \di x \di t
    + \kappa\int_0^{T} \hspace{-0.1cm} \int_{\NNN \Gamma} \nmuk \vphi \di \haus^{d-1} \di t  \to \int_0^T \hspace{-0.1cm} \int_\Omega \NNN \hc_0 \BBB \nabla \mu \cdot \nabla \vphi \di x \di t
      + \kappa \NNN \int_\Gamma  \BBB \mu \vphi \di \haus^{d-1} \di t,
  \end{align*}
  where \NNN $\hc_0$  \BBB is defined in \eqref{lin_heat_cond_cap}.
By a change of variables and Dominated Convergence we find
  \begin{align}\label{eq: cof}
    \lp_k^{-\alpha} \inten(\nabla \nyk, \ntk)
    = \int_0^{\nmuk} c_V(\nabla \nyk, \lp_k^{\alpha} s) \di s
    = \int_0^{\mu} c_V(\nabla \nyk, \lp_k^{\alpha} s) \di s
      + {\rm O}(|\nmuk - \mu|) \to c_V(\Id, 0) \, \mu
  \end{align}
  pointwise a.e.~in $I \times \Omega$, where we again used that by \ref{C_bounds} the function $c_V$ is bounded, the pointwise convergence of $(\nabla \nyk)_k$ to $\Id$, and the pointwise convergence $\nmuk \to \mu$ (see \eqref{nmuk_pmuk_Ls_conv}, up to a subsequence).
   By \BBB Dominated Convergence this convergence also holds in $L^1(I \times \Omega)$.
  The same holds true for $ \pyk$, $\ptk$ in place of $\nyk$, $\ntk$.
  Thus, recalling the definition of $\awk$, we have shown that
  \begin{equation}\label{eq: XXXX}
    \int_{0}^{T} \int_\Omega \lp_k^{-\alpha} \awk \dot \vphi \di x \di t
    \to \intQ c_V(\Id,0) \mu \dot \vphi \di x \di t
    = \intQ \bar c_V \mu \dot \vphi \di x \di t.
  \end{equation}
  We now  prove \BBB that the contribution of the coupling potential \NNN vanishes \BBB in the limit.
  Indeed, by \eqref{C_locally_lipschitz},  \eqref{a_priori_Linfty_W2p_lin}, \BBB \eqref{temp_inten_Lq_bound}, \NNN \eqref{hoelder_conv_nuk_puk}, \BBB  and $t \wedge 1 \le t^{s/2}$ for some $s > \frac{2(\alpha-1)}{\alpha}$ with $s \in (1,  \frac{d+2}{d})$, the Cauchy-Schwarz and H\"older's inequality we see that   \BBB
  \begin{align*}
    \Big|
      \int_{0}^{T} & \int_\Omega\lp_k^{1-\alpha} \cplpot(\nabla  \NNN \pyk, \BBB \ptk)
        : \nabla \dotauk \vphi \di x \di t
    \Big|
    \leq \lp_k^{1-\alpha} \int_0^{T} \int_\Omega
      C (\ptk \wedge 1) (1 + \abs{\nabla \NNN \pyk \BBB - \Id})
      \abs{\nabla \dotauk} |\varphi| \di x \di t \nonumber \\
    &\leq C \lp_k^{1-\alpha} \Vert \ptk^{\frac{s}{2}} \Vert_{L^2(\Omega)}
      \Vert \nabla \dotauk \Vert_{L^2(\Omega)} \Vert \varphi \Vert_{L^\infty(\Omega)}
    \leq C \lp_k^{1 - \alpha + \alpha s / 2} \Vert \pmuk \Vert^{\frac{s}{2}}_{L^s(\Omega)} \NNN  \Vert \nabla \dotauk \Vert_{L^2(\Omega)} \Vert \varphi \Vert_{L^\infty(\Omega)} \BBB \to 0.
  \end{align*}
  Lastly, \NNN by \eqref{def_Ck_dotCk}, \BBB by the second convergence in \eqref{strong_rescaled_strain_rate_comp},  \eqref{diss_rate}, \BBB  and the continuity of $D$ one can show for $\alpha = 2$ that
  \begin{equation*}
      \int_{0}^{T} \int_\Omega
      \lp_k^{-\alpha} \rdrate(\nabla \pyk, \nabla \dotayk, \ptk) \vphi = \BBB \int_{0}^{T} \int_\Omega
      \lp_k^{-\alpha} \drate(\nabla \pyk, \nabla \dotayk, \ptk) \vphi
    \to \intQ \CD e(\dot u) : e(\dot u) \vphi \di x \di t.
  \end{equation*}
  For $\alpha < 2$ \NNN instead, \BBB it is easy to check \CCC using $\rdrate \leq \drate$ \BBB that the term vanishes as $k \to \infty$.
  Collecting all convergences and recalling the definition of  $\CD^{(\alpha)}$  \BBB in \eqref{alpha_dep}, we get that \eqref{linear_evol_temp} holds true, \NNN where for the external temperature we use \eqref{ea: just copy-after}. \BBB
\end{proof}

\begin{lemma}[Uniqueness of the linearized system]\label{lem:unique_mu}
  There exists at most one solution in the sense of Definition \ref{def:weak_form_linear_evol}.
\end{lemma}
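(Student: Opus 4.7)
The plan is to take the difference of two putative weak solutions and show it vanishes, splitting the analysis along the dichotomy in \eqref{alpha_dep} between the coupling quantities $\mathbb{B}^{(\alpha)}$ and $\CD^{(\alpha)}$. Given weak solutions $(u_1, \mu_1)$ and $(u_2, \mu_2)$, set $\tilde u \defas u_1 - u_2 \in H^1(I; H^1_{\Gamma_D}(\Omega; \R^d))$ with $\tilde u(0) = 0$ and $\tilde \mu \defas \mu_1 - \mu_2 \in L^1(I; W^{1,1}(\Omega))$ with vanishing initial and boundary data. The identities \eqref{linear_evol_mech} and \eqref{linear_evol_temp} are linear in $(u, \mu)$ except for the quadratic source $\CD^{(\alpha)} e(\dot u) : e(\dot u)$, which after subtraction becomes the bilinear form $\CD^{(\alpha)} e(\dot u_1 + \dot u_2) : e(\dot{\tilde u})$, so $(\tilde u, \tilde \mu)$ satisfies the system with zero data and this bilinear right-hand side.

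For $\alpha = 2$ we have $\mathbb{B}^{(\alpha)} = 0$, so the equation for $\tilde u$ decouples from $\tilde \mu$. By density of smooth functions in $L^2(I; H^1_{\Gamma_D}(\Omega; \R^d))$, the identity \eqref{linear_evol_mech} extends to all test functions in this space, and I would test with $z = \chi_{[0,t]} \dot{\tilde u}$ (after a standard time-mollification) to obtain
\begin{equation*}
  \frac{1}{2}\int_\Omega \CW e(\tilde u(t)) : e(\tilde u(t)) \di x + \int_0^t \int_\Omega \CD e(\dot{\tilde u}) : e(\dot{\tilde u}) \di x \di s = 0.
\end{equation*}
Since $\CW$ and $\CD$ are positive definite on $\R^{d\times d}_\sym$, Korn's and Poincar\'e's inequality (using $\tilde u = 0$ on $\Gamma_D$) force $\tilde u \equiv 0$; the bilinear source in the heat equation then drops, and what remains is a homogeneous linear parabolic equation for $\tilde \mu$. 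Symmetrically, for $\alpha \in [1, 2)$ we have $\CD^{(\alpha)} = 0$, so the equation for $\tilde \mu$ already decouples as a homogeneous linear heat equation; once $\tilde \mu \equiv 0$ is shown, the mechanical equation becomes homogeneous and the same energy argument gives $\tilde u \equiv 0$.

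The main obstacle is thus uniqueness for the homogeneous problem $\bar c_V \dot{\tilde\mu} - \diver(\mathbb{K}_0 \nabla \tilde \mu) = 0$ with zero initial value and homogeneous Robin condition in the weak class $L^1(I; W^{1,1}(\Omega))$, where the low regularity prevents a direct energy estimate. My plan is a duality argument: for arbitrary $\psi \in C_c^\infty((0,T) \times \Omega)$ one solves the backward adjoint problem
\begin{equation*}
  -\bar c_V \dot\varphi - \diver(\mathbb{K}_0 \nabla \varphi) = \psi, \quad \varphi(T, \cdot) = 0, \quad \mathbb{K}_0 \nabla \varphi \cdot \nu + \kappa \varphi = 0 \text{ on } \Gamma,
\end{equation*}
which by standard parabolic theory admits a weak solution $\varphi \in L^2(I; H^1(\Omega)) \cap L^\infty(I \times \Omega)$ with $\dot\varphi \in L^2(I; H^1(\Omega)^*)$. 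Approximating $\varphi$ by smooth functions of the form required in Definition \ref{def:weak_form_linear_evol} (by a space-time mollification compatible with $\varphi(T) = 0$) and inserting them into the homogeneous weak formulation for $\tilde \mu$, one passes to the limit using the $L^1(I; W^{1,1}(\Omega))$-regularity of $\tilde \mu$ to obtain $\intQ \tilde \mu \psi \di x \di t = 0$. Since $\psi$ is arbitrary, $\tilde \mu \equiv 0$, and combining with the previous paragraph closes the uniqueness proof.
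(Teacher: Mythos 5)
Your overall architecture coincides with the paper's: exploit the $\alpha$-dependent decoupling in \eqref{alpha_dep} to treat one equation first, run an energy argument for the mechanical part, and handle the thermal part separately. (The paper splits as $\alpha\in(1,2]$ versus $\alpha=1$ rather than $\alpha=2$ versus $\alpha\in[1,2)$; for $\alpha\in(1,2)$ both couplings vanish, so either order works.) Your mechanical step is sound: the paper tests with (approximations of) $\tilde\varphi\, u$ and derives $\dot a\le 0$ for $a(t)=\tfrac12\int_\Omega\CD e(u):e(u)$, whereas you test with $\chi_{[0,t]}\dot{\tilde u}$; since $u\in H^1(I;H^1)$, the chain rule needed for your version is available, and your identity $\CD^{(\alpha)}e(\dot u_1):e(\dot u_1)-\CD^{(\alpha)}e(\dot u_2):e(\dot u_2)=\CD^{(\alpha)}e(\dot u_1+\dot u_2):e(\dot{\tilde u})$ correctly shows the quadratic source becomes homogeneous once $\tilde u\equiv 0$.

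The genuine gap is in the thermal step. The paper does not prove uniqueness of the heat equation with $L^1$-data itself; it invokes \cite[Proposition~1]{Roubicek98Nonlinear}, precisely because uniqueness of merely distributional solutions in the class $L^1(I;W^{1,1}(\Omega))$ is delicate. Your duality argument does not close in this class: the adjoint solution $\varphi$ of $-\bar c_V\dot\varphi-\diver(\mathbb{K}_0\nabla\varphi)=\psi$ is, for coefficients $\mathbb{K}_0$ that are only continuous and a domain that is only Lipschitz, merely in $L^2(I;H^1(\Omega))\cap L^\infty$; in particular $\nabla\varphi$ need not be bounded (nor continuous up to $\Gamma$). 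Since $\nabla\tilde\mu$ is only in $L^1(I\times\Omega)$ (Definition~\ref{def:weak_form_linear_evol} gives no more), the pairing $\intQ\mathbb{K}_0\nabla\tilde\mu\cdot\nabla\varphi\di x\di t$ is not defined, and mollifying $\varphi$ does not help: passing to the limit in $\intQ\mathbb{K}_0\nabla\tilde\mu\cdot\nabla\varphi_h$ with $\nabla\tilde\mu\in L^1$ requires $\nabla\varphi_h\to\nabla\varphi$ in $L^\infty$, which mollification only delivers if $\nabla\varphi$ is already continuous and bounded. The same mismatch affects the term $\intQ\tilde\mu\,\dot\varphi$, since $\dot\varphi$ lives only in $L^2(I;H^1(\Omega)^*)$ while $\tilde\mu(t)\in W^{1,1}(\Omega)\not\subset H^1(\Omega)$. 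To repair this you would need either (a) improved regularity of the adjoint problem (e.g.\ $\nabla\varphi\in L^{r'}$ with $r'$ conjugate to an exponent $r>1$ for which $\nabla\tilde\mu\in L^r$ is known — but the definition of weak solution does not record the $L^r(I;W^{1,r})$ bounds, and maximal regularity up to a Lipschitz boundary with Robin conditions is itself nontrivial), or (b) to work, as the paper implicitly does, within the uniqueness framework of \cite{Roubicek98Nonlinear} tailored to the $L^1$-theory. As written, the step "one passes to the limit using the $L^1(I;W^{1,1})$-regularity of $\tilde\mu$" is exactly the point that fails.
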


\begin{proof}
  We start with $\alpha \in (1,2]$.
  In this case, \eqref{linear_evol_mech} is independent of the variable $\mu$.
  We show uniqueness of $u$.
  To this end, we suppose that there exist two solutions $u_1, \, u_2$, and set $u \defas u_1 - u_2$. Then $u = 0$ on $I \times \Gamma_D$ and $u(0) = 0$.
  Subtracting the weak formulations \eqref{linear_evol_mech} for both $u_1$ and $u_2$, we see that for any $z \in C^\infty(I\times \overline{\Omega}; \R^d)$ with $z = 0$ on $I \times \Gamma_D$ it holds that
  \begin{equation}\label{linear_evol_udif}
    \intQ \Big(\CW e(u) + \CD e(\dot u)\Big) : \nabla z \di x \di t = 0.
  \end{equation}
  Let us now define
  \begin{equation*}
    a(t) \defas \frac{1}{2} \int_\Omega \CD e(u(t)) : e(u(t)) \di x \quad \text{ for } t \in I.
  \end{equation*}
  Note that $a \in W^{1, 1}(I)$ with
  \begin{equation*}
    \dot a(t) = \int_\Omega \CD e(\dot u(t)) : e(u(t)) \di x
    = \int_\Omega \CD e(\dot u(t)) : \nabla u(t) \di x
  \end{equation*}
  for a.e.~$t \in I$.
  Let $\tilde \vphi \in C^\infty(I)$. Testing (\ref{linear_evol_udif}) with a sequence of smooth maps $(z_h)_h$ vanishing on $I \times \Gamma_D$ and converging  to $\tilde \vphi u$ in $L^2(I; H^1(\Omega))$ we derive that
  \begin{equation*}
    \int_0^T \tilde \vphi \int_\Omega \Big( \CW e(u) + \CD e(\dot u) \Big) : \nabla u \di x \di t = 0.
  \end{equation*}
  By the arbitrariness of $\tilde \vphi$ it then follows \CCC for almost all \BBB $t \in I$ \NNN that \BBB
  \begin{align*}
    \int_\Omega \big( \CD e(\dot u(t)) + \CW e(u(t)) \big) : \nabla u(t) \di x
    =0.
  \end{align*}
  This shows
  $$\dot a(t) = \int_\Omega   \CD e(\dot u(t)) : \nabla u(t) \di x =  - \int_\Omega \CW e(u(t)) : e(u(t)) \di x \leq 0. $$
   \MMM As $a(0) = 0$, it follows that $a=0$, and therefore $u=0$. \BBB

  Now, given a unique $u \in H^1(I;H^1(\Omega))$, we see that \eqref{linear_evol_temp} is an equation in the variable $\mu$ only.
  More precisely, it corresponds to the weak formulation of a heat equation with $L^1$-data.
  Uniqueness has been provided in \cite[Proposition 1]{Roubicek98Nonlinear}. This finishes the proof in the case $\alpha \in (1,2]$.

  We now briefly give the argument for $\alpha =1$.
  In this case, \eqref{linear_evol_temp} does not depend on $u$ and uniqueness follows again from  \cite[Proposition 1]{Roubicek98Nonlinear}.
  Then, the term $\intQ \mathbb{B}^\alpha \mu : \nabla z  \di x \di t$ in \eqref{linear_evol_mech} is only a datum, and uniqueness of $u$ follows by repeating the argument starting with \eqref{linear_evol_udif}.
\end{proof}

\MMM We are now ready to prove Theorem \ref{thm:linearization_right_diag}. \BBB

\begin{proof}[Proof of Theorem \ref{thm:linearization_right_diag}]
  We start with the proof of Theorem \ref{thm:linearization_right_diag}(iii).
  First, by Lemmas \ref{lem:comp_uk}--\ref{lem:comp_muk}, we obtain limits $u \in \CCC H^1\BBB(I; H^1_{\Gamma_D}(\Omega; \R^d))$ and $\mu \in L^1(I; W^{1,1}(\Omega))$.
  In view of \eqref{hoelder_conv_nuk_puk}, \eqref{nmuk_pmuk_Ls_conv}, and Lemma~\ref{lemma: strong ratistrain}, the convergences stated in  the statement \BBB hold, up to selecting a subsequence.
  In particular, \eqref{hoelder_conv_nuk_puk3} and \eqref{nmuk_pmuk_Ls_conv3} show that the convergence holds for all three different interpolations.    By Propositions \ref{prop:linearization_mech} and   \ref{prop:linearization_therm} we see that $(u, \mu)$ is a weak solution in the sense of Definition~\ref{def:weak_form_linear_evol}.
  As the weak solution is unique by Lemma~\ref{lem:unique_mu}, Urysohn's subsequence principle implies that the convergence holds for the whole sequence.
  This concludes the proof of Theorem \ref{thm:linearization_right_diag}(i),(iii).

  We briefly describe the adaptions for Theorem \ref{thm:linearization_right_diag}(ii).
  First, in the compactness result we replace \eqref{hoelder_conv_nuk_puk} and \eqref{nmuk_pmuk_Ls_conv} by \eqref{hoelder_conv_nuk_puk2} and \eqref{nmuk_pmuk_Ls_conv2}, respectively.
  The linearization of the mechanical equation and the heat-transfer equation in Propositions \ref{prop:linearization_mech} and \ref{prop:linearization_therm}, respectively, can be derived along similar lines, by replacing the time discrete equations \eqref{discri-mech} and \eqref{discr-heat} with their time-continuous analogs in \eqref{weak_limit_mechanical_equation} and \eqref{weak_limit_heat_equation}, respectively.
  In a similar fashion, for the proof Lemma \ref{lemma: strong ratistrain}, we use the time-continuous energy balance \eqref{cont_energy_balance} in place of \eqref{disc_energy_balance-new}.
  The rest of the argument remains unchanged.
\end{proof}

\subsection{Proof of Theorem \ref{thm:linearization_left_bottom}}\label{sec: 5.2}
\NNN We start with a $\Gamma$-convergence result. \BBB With the notation from \NNN Subsections~\ref{sec:setting}--\ref{sec:nonlinear_scheme} \BBB we define for $k \in \{1, \ldots, T/\tau \}$ the functional $E_\lp^{(k)} \colon H^1_{\Gamma_D}(\Omega; \R^d) \to \R$ through $E_\lp^{(k)}(u) = +\infty$ if $u \notin W^{2, p}(\Omega; \R^d)$ and
\begin{equation}\label{def_E_lp}
\begin{aligned}
  E_\lp^{(k)}(u) &\defas
   \frac{1}{\lp^2} \mechen(\id + \lp u)
    +  \frac{1}{\lp^2} \cplen(\id + \lp u, \tst{k-1})
    + \frac{1}{\tau\lp^2} \diss(\yst{k-1},  \id + \lp u \BBB - \yst{k-1}, \tst{k-1}) \\
  &\phantom{\defas}\quad
    - \langle \lst{k}, u \rangle
    - \frac{1}{\lp^2} \int_\Omega
      \tst{k-1} \pl_\theta \cplpot(\nabla \yst{k-1}, \tst{k-1}) \di x
\end{aligned}
\end{equation}
if $u \in W^{2, p}(\Omega; \R^d)$.
Although the last term in \eqref{def_E_lp} does not influence the minimizers of $E_\lp^{(k)}$ for fixed $k$, it is needed to ensure the boundedness of $(|E_\lp^{(k)}|)_\lp$ as $\lp \to 0$ along sequences of minimizers.  Recall also $\mathcal{E}_\lp$ from \eqref{eq: rescaled toten}. \BBB

\begin{proposition}\label{prop:gamma_conv}
  Suppose that  $\sup_{\lp>0}\toten_\lp(\yst{k-1}, \tst{k-1}) <+\infty $ \BBB and $ \ust{k-1}_{\lp,\tau}  \coloneqq  \BBB \lp^{-1}(\yst{k-1} - \id) \to u_\tau^{(k-1)}$ \NNN strongly \BBB in $H^1(\Omega; \R^d)$ \NNN as $\eps \to 0$. \BBB Suppose that $\lp^{-\alpha} \BBB \theta_{\lp, \tau}^{(k-1)} \to \mu_\tau^{(k-1)}$ in $L^1(\Omega)$ and that the convergence holds in  $L^2(\Omega)$ if $\alpha=1$. \BBB     Then, the sequence $(E_\lp^{(k)})_\lp$, defined in \eqref{def_E_lp},  $\Gamma$-converges in the weak \CCC$H^1$\BBB-topology  to \BBB $\bar E_0^{(k)} \colon H^1_{\Gamma_D}(\Omega; \R^d) \to \R$ given by
  \begin{equation*}
    \bar E_0^{(k)}(u) \defas
     \int_\Omega  \Big(  \frac{1}{2} \CW e(u) : e(u) \di x
      + \frac{1}{2 \tau}  \CD e(\tilde{u}) : e(\tilde{u}) 
         + \bar c_V \mu_\tau^{(k-1)}   +\mu_\tau^{(k-1)} \mathbb{B}^{(\alpha)} \colon \nabla \tilde{u} \Big)  \di x    - \sprod{\lst k}{u},
  \end{equation*}
  where $\tilde{u} \coloneqq  u - u_\tau^{(k-1)}$, \BBB  $\CW, \, \CD$  as in \eqref{def_WD_tensors}, $\bar c_V$ as in \eqref{lin_heat_cond_cap},  and $\mathbb{B}^{(\alpha)} $ as in \eqref{alpha_dep}. \BBB
\end{proposition}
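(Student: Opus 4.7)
The strategy is standard for linearization-type $\Gamma$-convergence results: identify the pointwise Taylor-expansion limit of each summand in $E_\lp^{(k)}$ at the natural state $(\Id, 0)$, verify the liminf inequality by weak lower semicontinuity of the resulting quadratic forms, and exhibit a recovery sequence by taking $u_\lp \equiv u$ when $u$ is sufficiently regular and diagonalizing otherwise.

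\textbf{Step 1: Taylor expansions.} Using \ref{W_regularity}, \ref{W_frame_invariace}, and \ref{W_lower_bound_spec} (hence $\elpot(\Id) = 0$ and $\pl_F \elpot(\Id) = 0$), a second-order expansion gives $\lp^{-2} \elpot(\Id + \lp \nabla u) \to \tfrac{1}{2}\CW e(u):e(u)$, where frame indifference reduces $\CW \nabla u : \nabla u$ to the symmetric part. By \ref{H_bounds} and \ref{H_bounds2}, $\lp^{-2} \int_\Omega \hypot(\lp \nabla^2 u) \di x \le C\lp^{p-2} \Vert \nabla^2 u\Vert_{L^p}^p \to 0$ since $p > d \ge 2$. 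For the dissipation, \ref{D_quadratic} yields
\[
  \tfrac{1}{\tau\lp^2} \diss(\yst{k-1}, \id + \lp u - \yst{k-1}, \tst{k-1}) = \tfrac{1}{2\tau} \int_\Omega D(C_{\lp,\tau}^{(k-1)}, \tst{k-1}) \hat C_{\lp,\tau} : \hat C_{\lp,\tau} \di x,
\]
where $C_{\lp,\tau}^{(k-1)} \defas (\nabla \yst{k-1})^T \nabla \yst{k-1}$ and $\hat C_{\lp,\tau} \defas (\nabla (u - \lp^{-1}(\yst{k-1}-\id)))^T \nabla \yst{k-1} + (\nabla \yst{k-1})^T \nabla (u - \lp^{-1}(\yst{k-1}-\id))$; since $\nabla \yst{k-1} \to \Id$ uniformly (Lemma \ref{lem:pos_det} applied to the hypothesis $\sup_\lp \toten_\lp(\yst{k-1}, \tst{k-1}) < \infty$), $\tst{k-1} \to 0$ pointwise, and $\lp^{-1}(\yst{k-1}-\id) \to u_\tau^{(k-1)}$ strongly in $H^1$, continuity of $D$ together with $\CD = 4 D(\Id, 0)$ gives the limit $\tfrac{1}{2\tau}\int_\Omega \CD e(\tilde u) : e(\tilde u) \di x$. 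For the coupling block, I use the identity $\cplpot = \inten + \theta \pl_\theta \cplpot$ to decompose
\[
  \cplpot(\id + \lp u, \tst{k-1}) - \tst{k-1} \pl_\theta \cplpot(\nabla \yst{k-1}, \tst{k-1}) = \inten(\id + \lp u, \tst{k-1}) + \tst{k-1} \Delta_{\lp,\tau},
\]
with $\Delta_{\lp,\tau} \defas \pl_\theta \cplpot(\id + \lp u, \tst{k-1}) - \pl_\theta \cplpot(\nabla \yst{k-1}, \tst{k-1})$. A Taylor expansion in the first argument, together with \ref{C_heatcap_cont} and \ref{C_thrid_order}, gives $\Delta_{\lp,\tau} = \lp \mathbb{B}^{(1)} : \nabla \tilde u + o(\lp)$; using $\tst{k-1} = \lp^\alpha (\lp^{-\alpha}\tst{k-1})$ and the definition \eqref{alpha_dep} of $\mathbb{B}^{(\alpha)}$, the second term converges to $\mu_\tau^{(k-1)} \mathbb{B}^{(\alpha)} : \nabla \tilde u$. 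Expanding $\inten(\id + \lp u, \theta)$ first in $\theta$ using $\pl_\theta \inten(\Id, 0) = \bar c_V$ (from \ref{C_heatcap_cont} and \eqref{sec_deriv}), and then in $F$ (which only contributes at higher order by \ref{C_zero_temperature} and \ref{C_bounds}), identifies exactly $\bar c_V \mu_\tau^{(k-1)}$ when $\alpha = 2$ and an $u$-independent constant when $\alpha < 2$.

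\textbf{Step 2: Liminf and recovery.} Given $u_\lp \weakly u$ weakly in $H^1_{\Gamma_D}(\Omega;\R^d)$ with $\sup_\lp E_\lp^{(k)}(u_\lp) < \infty$ (otherwise nothing to prove), \ref{W_lower_bound_spec} and Lemma \ref{lem:pos_det} allow restricting to $u_\lp \in W^{2,p}_{\Gamma_D}$ with $\Vert\id + \lp u_\lp - \id\Vert_{W^{1,\infty}} \to 0$. Weak lower semicontinuity of the positive semidefinite quadratic forms $\int \CW e(\cdot):e(\cdot)$ and $\int \CD e(\cdot):e(\cdot)$, combined with Lemma \ref{lem:gen_korn}, handles the quadratic pieces; the non-quadratic remainders from Step 1 vanish in the limit by dominated convergence thanks to the uniform $W^{1,\infty}$ bound. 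The linear coupling term passes to the limit using the strong convergence of $\lp^{-1}(\yst{k-1}-\id)$ in $H^1$ and of $\lp^{-\alpha}\tst{k-1}$ (in $L^2$ for $\alpha = 1$, in $L^1$ for $\alpha \in (1,2]$). For the recovery sequence with $u \in H^1_{\Gamma_D}$, I first assume $u \in W^{2,p}_{\Gamma_D}$ and take $u_\lp \equiv u$: all Taylor estimates of Step 1 then become limits as $\lp \to 0$ using $u \in W^{1,\infty}$ (since $p > d$), giving $\lim_\lp E_\lp^{(k)}(u) = \bar E_0^{(k)}(u)$. For general $u \in H^1_{\Gamma_D}$, I approximate by a sequence in $W^{2,p}_{\Gamma_D}$ strongly in $H^1$ and diagonalize, using the continuity of $\bar E_0^{(k)}$ on $H^1_{\Gamma_D}(\Omega; \R^d)$.

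\textbf{Main obstacle.} The subtlest point is the coupling block in the case $\alpha = 1$: there $\tst{k-1}/\lp$ does \emph{not} vanish in the limit, so the expansion $\pl_\theta \cplpot(\id + \lp u, \theta) - \pl_\theta \cplpot(\nabla \yst{k-1}, \theta) = \lp \mathbb{B}^{(1)} : \nabla \tilde u + o(\lp)$ must be controlled uniformly in $\theta$ down to $\theta = 0$, which is exactly what \ref{C_heatcap_cont} and \ref{C_thrid_order} (continuity of $\pl_{F\theta} \cplpot$ and a $1/\max\{\theta,1\}$ bound on $\pl_{FF\theta} \cplpot$) provide. Passing to the limit in the product $\lp^{-1}\tst{k-1} \mathbb{B}^{(1)} : \nabla \tilde u_\lp$ with only weak $H^1$-convergence of $u_\lp$ requires the \emph{strong} $L^2$-convergence of $\lp^{-1}\tst{k-1}$ assumed in the hypothesis precisely for $\alpha = 1$. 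Secondarily, the subtracted term $-\lp^{-2} \int_\Omega \tst{k-1} \pl_\theta \cplpot(\nabla \yst{k-1}, \tst{k-1}) \di x$ in the definition of $E_\lp^{(k)}$ is engineered exactly to absorb the $u$-independent (and, for $\alpha < 2$, divergent) leading-order contribution of $\lp^{-2}\inten(\id + \lp u, \tst{k-1})$, so that the $u$-dependent part of the $\Gamma$-limit matches $\bar E_0^{(k)}$ up to an additive constant which does not affect minimization.
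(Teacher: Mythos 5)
Your proposal follows the same route as the paper's proof: the same rewriting of the coupling block via $\cplpot=\inten+\theta\pl_\theta\cplpot$, the same Taylor expansions around $(\Id,0)$, weak lower semicontinuity of the quadratic forms for the liminf, and the constant recovery sequence $u_\lp\equiv u$ plus density for the limsup. However, there is one genuine gap and one skipped-but-essential step. The gap concerns the cross term $\lp^{-2}\int_\Omega\tst{k-1}\Delta_{\lp,\tau}\di x$ for $\alpha\in(1,2]$. After the Taylor expansion, its leading part is of order $\lp^{-1}\int_\Omega(\tst{k-1}\wedge 1)\,|\nabla\tilde u_\lp|\di x$, the truncation coming from the $1/\max\{\theta,1\}$ decay of $\pl_{F\theta}\cplpot$ in \ref{C_bounds}. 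Under the stated hypotheses you only control $\lp^{-\alpha}\tst{k-1}$ in $L^1$ and $\nabla\tilde u_\lp$ in $L^2$; estimating $\tst{k-1}\wedge 1\le\sqrt{\tst{k-1}}$ and using Cauchy--Schwarz gives at best a bound of order $\lp^{\alpha/2-1}$, which does \emph{not} vanish for $\alpha\le 2$. So your one-line assertion that this term "converges to $\mu_\tau^{(k-1)}\mathbb{B}^{(\alpha)}:\nabla\tilde u$" does not follow from the hypotheses alone when $\alpha>1$. The paper closes this by invoking the additional a priori bound $\Vert\lp^{-\alpha}\tst{k-1}\Vert_{L^q(\Omega)}\le C$ for some $q>2/\alpha$ (Remark \ref{rem: next a priori}, resp.\ Theorem \ref{thm:further_apriori_temp_bounds}), available because $\tst{k-1}$ is an iterate of the scheme, together with the scaled Young inequality of \eqref{eq: do it small!}. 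You correctly flag $\alpha=1$ as delicate, but the case you identify as subtle is precisely the one your hypotheses handle (strong $L^2$ convergence is assumed there), whereas the under-justified case is $\alpha\in(1,2]$.

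The skipped step is the coercivity argument (the paper's Step~1). To "restrict to $u_\lp\in W^{2,p}_{\Gamma_D}$ with $\Vert \lp u_\lp\Vert_{W^{1,\infty}}\to 0$" you must first derive $\sup_\lp\lp^{-2}\mechen(\id+\lp u_\lp)<\infty$ from $\sup_\lp E_\lp^{(k)}(u_\lp)<\infty$; only then do \eqref{W1infty_dist_to_id} and Lemma \ref{lem:pos_det} apply and legitimize every subsequent Taylor expansion. Since the coupling difference and the loading term in \eqref{def_E_lp} are sign-indefinite, this requires absorbing them into the elastic energy via \ref{C_bounds}, \eqref{H1_dist_to_id}, \ref{W_lower_bound_spec}, and Young's inequality with a small constant; it is not automatic. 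Finally, a small correction of interpretation: the subtracted term in \eqref{def_E_lp} does not "absorb" the contribution of $\lp^{-2}\int_\Omega\inten(\Id+\lp\nabla u,\tst{k-1})\di x$ --- that term survives and is exactly what produces the constant $\bar c_V\int_\Omega\mu_\tau^{(k-1)}\di x$ in the limit; the subtraction merely converts $\cplpot$ (which is not bounded below) into the nonnegative $\inten$ plus the $u$-dependent difference $\tst{k-1}\Delta_{\lp,\tau}$.
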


\begin{proof}
  All constants we encounter in this proof are implicitly assumed to be independent of $\lp$.
  We will work with the equivalent representation
  \begin{align}\label{def_E_lp_2}
    E_\lp^{(k)}(u) &=
       \frac{1}{\lp^2} \mechen(\id + \lp u)
           + \frac{1}{\tau\lp^2} \diss(\yst{k-1}, \id + \lp u - \yst{k-1}, \tst{k-1})
      + \frac{1}{\lp^2} \int_\Omega    \inten_\lp(\Id + \lp \nabla u, \tst{k-1}) \di x \BBB \notag  \\
    &\phantom{=}\quad
      + \frac{1}{\lp^2} \int_\Omega       \tst{k-1} \big(\pl_\theta \cplpot(\Id + \lp \nabla u, \tst{k-1}) - \pl_\theta \cplpot(\nabla \yst{k-1}, \tst{k-1}) \big)  \di x  - \langle \lst{k}, u \rangle , 
  \end{align}
  which can be derived from \eqref{def_E_lp} by adding and subtracting
  \begin{equation*}
    \frac{1}{\lp^2} \int_\Omega \tst{k-1} \pl_\theta \cplpot(\Id + \lp \nabla u, \tst{k-1})
  \end{equation*}
  and using the definition of $\inten$  in \eqref{Wint}. \\
  \textit{Step 1 (Mechanical energy bound):}
  Let $(u_\lp)_\lp \subset W^{2, p}_{\Gamma_D}(\Omega; \R^d)$ be a sequence such that $\sup_{\lp > 0} E_\lp^{(k)}(u_\lp) < \infty$.  We will show that then also $\sup_{\lp > 0} \NNN \eps^{-2}  \mechen \BBB (y_\lp) < \infty$, where we shortly wrote $y_\lp \defas \id + \lp u_\lp$.
  By the nonnegativity of $\inten$ and $\disspot$ we derive that  
  \begin{equation}\label{E_eps_initial_lower_bound}
  \begin{aligned}
    E_\lp^{(k)}(u_\lp)
    &\geq  \frac{1}{\lp^2} \mechen(y_\lp)
      + \frac{1}{\lp^2} \int_\Omega \tst{k-1}\big(
          \pl_\theta \cplpot(\nabla y_\lp, \tst{k-1})
          - \pl_\theta \cplpot(\nabla \yst{k-1}, \tst{k-1})
        \big) \di x   - \langle \lst k, u_\lp \rangle.
  \end{aligned}
  \end{equation}
  By the second bound in \ref{C_bounds}, Young's inequality with constant $\lambda$,  and $1\wedge t \le \sqrt{t}$ \NNN for $t \ge 0$ \BBB it follows that
  \begin{align*}
    &\tst{k-1} \big|
      \pl_\theta \cplpot(\nabla y_\lp, \tst{k-1})
      - \pl_\theta \cplpot(\nabla \yst{k-1}, \tst{k-1})
    \big| \\
    &\quad\leq C (\tst{k-1} \wedge 1)
      (1 + |\nabla y_\lp - \Id| + |\nabla \yst{k-1} - \Id|) \\
    &\quad\leq \frac{C}{\lambda} \tst{k-1}
      + C \lambda |\nabla y_\lp - \Id|^2
      + C \NNN \lambda \BBB |\nabla \yst{k-1} - \Id|^2.
  \end{align*}
  Integrating over $\Omega$ and using \eqref{H1_dist_to_id} \NNN as well as \BBB \ref{W_lower_bound_spec}  \MMM we get \BBB that
  \begin{align}
    &\frac{1}{\lp^2} \Big|
      \int_\Omega \tst{k-1}\big(
          \pl_\theta \cplpot(\nabla y_\lp, \tst{k-1})
          - \pl_\theta \cplpot(\nabla \yst{k-1}, \tst{k-1})
        \big) \di x
    \Big| \nonumber \\
    &\quad\leq \frac{C}{\lambda \lp^2} \int_\Omega \tst{k-1} \di x
      +   \frac{C \NNN \lambda \BBB}{\lp^2} \elen(\yst{k-1})
      + \frac{ C \lambda}{\lp^2}  \elen(y_\lp). \label{pl_theta_diff_bound}
  \end{align}
  Again by \eqref{H1_dist_to_id}, Poincar\'e's inequality, and Young's inequality with constant  $\lambda / \lp$ \BBB we see that
  \begin{equation*}
    |\langle \lst k, u_\lp \rangle|  =  \lp^{-1} |\langle \lst k, y_\lp - \id \rangle| \BBB
    \leq \frac{C}{\lambda } \norm{\lst k}_{H^{-1}}^2
      + C \frac{\lambda}{\lp^2} \elen(y_\lp).
  \end{equation*}
  Hence, combining the above estimate with \eqref{pl_theta_diff_bound} and \eqref{E_eps_initial_lower_bound},  and using H\"older's inequality \BBB we arrive at
  \begin{equation*}
    E_\lp^{(k)}(y_\lp) \geq (1 - C \lambda)\lp^{-2}\mechen(y_\lp)
      - \frac{C}{\lambda} \big(
        \toten_\lp(\yst{k-1}, \tst{k-1})
        +  \norm{\lst k}_{H^{-1}}^2
      \big).
  \end{equation*}
  Choosing $\lambda$ sufficiently small such that $1 - C\lambda \geq 1/2$ this leads  the desired bound. Consequently, in the sequel, we can assume that  \eqref{a_priori_Linfty_W2p_lin}--\eqref{a_priori_Linfty_H1_lin} holds for both $y_\lp$ and $\yst{k-1}$. \BBB 
  
  \textit{Step 2 ($\Gamma$-$\liminf$):} Let $(u_\lp)_\lp \subset H^1_{\Gamma_D}(\Omega; \R^d)$ be such that $u_\lp \weakly u$ weakly in $H^1(\Omega; \R^d)$.
  Without loss of generality we can assume that $\sup_{\lp > 0} E_\lp^{(k)}(y_\lp) < \infty$ and $\liminf_{\lp \to 0} E_\lp^{(k)}(u_\lp) = \lim_{\lp \to 0} E_\lp^{(k)}(u_\lp)$.
  In particular, we can select a subsequence (without relabeling) such that $\NNN \theta_{\lp, \tau}^{(k-1)} \BBB \to 0$ a.e.~in $\Omega$.
  We are now ready to compute the $\liminf$ of the various terms of $E_\lp^{(k)}(u_\lp)$.
  By  \eqref{a_priori_Linfty_W2p_lin} \BBB we see that $\nabla y_\lp \to \Id$ uniformly.
  Hence, by the weak convergence of $(u_\lp)_\lp$ in $H^1(\Omega; \R^d)$ we can show similarly to the derivation of \eqref{liminf_rescaled_en} that
  \begin{equation}\label{eq: se energy}
    \liminf_{\lp \to 0} \frac{1}{\lp^2}\mechen(y_\lp)
    \geq   \liminf_{\lp \to 0} \frac{1}{2}  \int_\Omega \CW \nabla u_\lp : \nabla u_\lp \di x \ge \BBB   
    \frac{1}{2} \int_\Omega \CW e(u) : e(u) \di x.
  \end{equation}
  As in the proof of \eqref{liminf_rescaled_dissrate}, it follows from the pointwise convergence of \NNN $(\yst{k-1})_\lp$ \BBB and $(\theta_{\lp, \tau}^{(k-1)})_\lp$ that
  \begin{equation*}
    \liminf_{\lp \to 0} \frac{1}{\tau\NNN \eps^2} \diss(\yst{k-1}, y_\lp - \yst{k-1}, \tst{k-1})
    \geq \frac{1}{2 \tau} \int_\Omega \CD e(u - u_\tau^{(k-1)})
      : e(u - u_\tau^{(k-1)}) \di x.
  \end{equation*}
  By the same argument as in  \eqref{eq: XXXX}, \BBB the $L^1$-convergence of $ \lp^{-\alpha}  \BBB \theta_{\lp \tau}^{(k-1)}$  implies that
  \begin{equation}\label{liminf_inten}
    \lim_{\lp \to 0} \frac{1}{\lp^\alpha} \int_\Omega \inten(\nabla y_\lp, \tst{k-1}) \di x = \bar c_V \int_\Omega \mu_\tau^{(k-1)} \di x.
  \end{equation}
  For the  remaining  coupling  term in \eqref{def_E_lp_2}, we  Taylor expand around $(\Id, \tst{k-1})$ and get \BBB  by the second bound in \ref{C_bounds}\CCC, \ref{C_thrid_order}, \BBB and \eqref{a_priori_Linfty_W2p_lin}, applied for  both $y_\lp$ and $\yst{k-1}$, that
  \begin{align*}
 &\tst{k-1}    \big| \big(\pl_\theta \cplpot(\nabla y_\lp, \tst{k-1}) 
      - \pl_\theta \cplpot(\nabla \yst{k-1}, \tst{k-1}) \big) - \partial_{\theta F} W^{\rm cpl}(\Id,  \tst{k-1} ) \colon \nabla (y_\lp - \yst{k-1})    \big| \\ &  \ \leq C(\CCC \tst{k-1} \wedge 1 \BBB) \big(\abs{\nabla y_\lp -\Id }^2 + \abs{\nabla \yst{k-1}-\Id}^2\big) \le C\lp^{1+\CCC \frac{2}{p}}(\CCC \tst{k-1} \wedge 1 \BBB) \big(\abs{\nabla u_\lp  } + \abs{\nabla \ust{k-1}_{\lp,\tau}}\big).
  \end{align*}
  pointwise a.e.~in $\Omega$.    Thus, by   repeating the \NNN argument \BBB in \eqref{eq: do it small!}  we derive 
  \begin{align*}
    &\lim_{\lp \to 0} \frac{1}{\lp^2}   \int_\Omega
      \tst{k-1}\big(
          \pl_\theta \cplpot(\nabla y_\lp, \tst{k-1})
          - \pl_\theta \cplpot(\nabla \yst{k-1}, \tst{k-1})
        \big) \di x \\
    &\quad= \lim_{\lp \to 0}   \int_\Omega \lp^{-1}  \BBB \theta_{\lp, \tau}^{(k-1)}  \partial_{\theta F} W^{\rm cpl}(\Id,  \tst{k-1} ) \colon \nabla (u_\lp - \ust{k-1}_{\lp,\tau}).
  \end{align*}  
  Thus, by the definition of  $\mathbb{B}^{(\alpha)}$  in \eqref{alpha_dep} and by repeating the argument in \eqref{lindiss2XXX}--\eqref{eq: do it small!} we conclude 
  \begin{equation}\label{liminf_rem}
 \frac{1}{\lp^2}  \int_\Omega
      \tst{k-1}\big(
          \pl_\theta \cplpot(\nabla y_\lp, \tst{k-1})
          - \pl_\theta \cplpot(\nabla \yst{k-1}, \tst{k-1})
        \big) \di x
    \to   \int_\Omega \mu_\tau^{(k-1)} \mathbb{B}^{(\alpha)} \colon \nabla( \NNN u \BBB - u^{(k-1)}_\tau)  \di x
   \end{equation}  
 as $\lp \to 0$. Finally, \BBB   notice that the weak convergence also implies $    \lim_{\lp \to 0} \sprod{\lst k}{u_\lp} = \sprod{\lst k}{u}$.  Combining all aforementioned  estimates \BBB we conclude the proof of the  $\Gamma$-$\liminf$.

  \textit{Step 3 ($\Gamma$-$\limsup$):}
  Let $u \in H^1(\Omega; \R^d)$ with $u = 0$ on $\Gamma_D$.
  By a standard approximation argument in Sobolev spaces we can assume without loss of generality that $u \in C^\infty(\Omega; \R^d)$.
   Choose $u_\lp = u$ for all $\lp$. We only need to check the convergence of the energy. \BBB 
  First, notice that by \eqref{H_upper_bound_spec} and $p > 2$
  \begin{equation*}
    \frac{1}{\lp^2} \int_\Omega \hypot(\nabla^2 y_\lp) \di x
    \leq \frac{1}{\lp^2} \int_\Omega \aC |\lp \nabla^2 u|^p \di x
    = \aC \lp^{p-2} \int_\Omega |\nabla^2 u|^p \di x \to 0,
  \end{equation*}
  where $y_\lp \defas \id + \lp u$.
  By a Taylor expansion we also see that
  \begin{equation*}
    \frac{1}{\lp^2} \int_\Omega \elpot(\nabla y_\lp) \di x
    = \frac{1}{2\lp^2} \int_\Omega \CW \lp \nabla u : \lp \nabla u \di x
      + \mathrm{O}\Big( \lp \int_\Omega |\nabla^3 u| \di x \Big)
    \to \frac{1}{2} \int_\Omega \CW e(u) : e(u) \di x.
  \end{equation*}
  \CCC Furthermore, using \BBB \ref{D_quadratic} we can write
  \begin{equation*}
    \NNN \frac{1}{\eps^{2}} \BBB \diss(\yst{k-1}, y_\lp - \yst{k-1}, \tst{k-1})
    = \frac{1}{2} \int_\Omega D(C_\lp, \tst{k-1}) \dot C_\lp : \dot C_\lp,
  \end{equation*}
  where $C_\lp \defas (\nabla \yst{k-1})^T \nabla \yst{k-1}$ and $\dot C_\lp \defas (\nabla u - \nabla u_{\lp, \tau}^{(k-1)})^T \nabla \yst{k-1} + (\nabla \yst{k-1})^T(\nabla u - \nabla u_{\lp, \tau}^{(k-1)})$.
  By the strong convergence of $(u_{\lp, \tau}^{(k-1)})_\lp$ in $H^1(\Omega; \R^d)$ it follows that $\dot C_\lp \to  2 \BBB e(u - u_\tau^{(k-1)})$ strongly in $L^2(\Omega; \R^{d \times d})$.
  Consequently,
  \begin{equation*}
    \frac{1}{\tau \NNN \eps^2} \diss(\yst{k-1}, y_\lp - \yst{k-1}, \tst{k-1})
    \to \frac{1}{2 \tau} \int_\Omega \CD e(u - u_\tau^{(k-1)}) : e(u - u_\tau^{(k-1)}).
  \end{equation*}
 The convergence of the terms  \eqref{liminf_inten} and \eqref{liminf_rem} follows as \CCC in the previous step\BBB. This concludes the proof.
\end{proof}

We  close with the proof of  Theorem \ref{thm:linearization_left_bottom}.

\begin{proof}[Proof of Theorem \ref{thm:linearization_left_bottom}]
  We prove the result by induction on $k$. 
  For the base case $k = 0$, we only need to check the convergences  and the energy convergence. \BBB
  In fact, setting $u_\tau^{(0)} \defas u_0$ and $\mu_\tau^{(0)} \defas \mu_0$, this directly follows from  \eqref{initial_cond} and repeating the argument in the $\Gamma$-$\limsup$ above. \BBB
  
  Suppose now that the statement is true for $k-1$ where $k \in \setof{1, \ldots, T/\tau}$.
  With \eqref{a_priori_Linfty_H1_lin}  we have \BBB $u_{\lp, \tau}^{(k)} = \lp^{-1}(\yst k - \id) \weakly u_\tau^{(k)}$ weakly in $H^1(\Omega; \R^d)$ (up to a subsequence).
  By the induction hypothesis it  holds \CCC that \BBB $u_{\lp, \tau}^{(k-1)} = \lp^{-1}(\yst{k-1} - \id) \to u_\tau^{(k-1)}$ strongly in $H^1(\Omega; \R^d)$ and  $\lp^{-\alpha} \tst{k-1} \weakly \mu_\tau^{(k-1)}$ weakly in $W^{1, r}(\Omega)$ for any $r \in [1, \frac{d+2}{d+1})$. Therefore, we also find \BBB $\lp^{-\alpha} \tst{k-1} \to \mu_\tau^{(k-1)}$ strongly in $L^1(\Omega)$.    If $\alpha =1$, Remark~\ref{rem: after temp}(iii) even yields convergence in $L^2(\Omega)$.  \NNN As also  $\sup_{\lp>0}\toten_\lp(\yst{k-1}, \tst{k-1}) <+\infty $ due to Lemma \ref{lemma: first a prioiro}, we can apply Proposition \ref{prop:gamma_conv}.  \BBB   By the fundamental theorem of $\Gamma$-convergence, $u_\tau^{(k)}$ is a minimizer of $\bar E_0^{(k)}$ and $E_\lp^{(k)}(u_{\lp, \tau}^{(k)}) \to \bar E_0^{(k)}(u_\tau^{(k)})$.
 As  $\bar E_0^{(k)}$ \NNN is \BBB strictly convex, \BBB  $u_\tau^{(k)}$ is the unique minimizer of the corresponding minimization problem.
  In particular,  the weak $H^1$-convergence of $(u_{\lp, \tau}^{(k)})_\lp$ holds true without selecting a subsequence.  Moreover, energy convergence implies that in \eqref{eq: se energy} equality holds. This along with weak convergence, as well as Korn's and Poincar\'e's inequality yields  \BBB $u_{\lp, \tau}^{(k)} \to u_\tau^{(k)}$ strongly in $H^1(\Omega; \R^d)$.  Clearly, $u_\tau^{(k)}$ satisfies \eqref{el_mech_step_lin}. \BBB

  Let $r \in [1, \frac{d+2}{d+1})$ \CCC and $s \in [1, \frac{d+2}{d})$\BBB.
  As $\tau > 0$ was fixed, we see by  \eqref{nmuk_pmuk_Ls_conv} \BBB that, up to selecting a subsequence, $\lp^{-\alpha} \tst k \to \mu_\tau^{(k)}$ weakly in $W^{1, r}(\Omega)$ and strongly in $L^s(\Omega)$.  This limit $\mu_\tau^{(k)}$ solves \eqref{el_thermal_step_lin}. Indeed,  testing \eqref{el_thermal_step}  (with $\rdrate$ in place of $\drate$) \BBB with $\vphi \in C^\infty(\bar \Omega)$ and dividing by $\lp^\alpha$ we can pass to the limit $\lp \to 0$,  and obtain  \eqref{el_thermal_step_lin} \BBB by an argument similar to the one in the proof Proposition \ref{prop:linearization_therm}  neglecting the time dependence. The main difference is that we do not perform integration by parts in time, but by using the argument in \eqref{eq: cof} we pass directly to the limit in the term
$$\frac{1}{\lp^\alpha} \int_\Omega    \tau^{-1} \big(    \wst k - \wst{k-1} \big) \vphi \di x \to \int_\Omega     \bar{c}_V  \tau^{-1} \big( \mu_\tau^{(k)} - \mu_\tau^{(k-1)} \big) \vphi \di x. $$  
 To conclude the induction step,    \BBB  it remains to show the uniqueness of $\mu_\tau^{(k)}$, which in particular will imply that the weak $W^{1, r}$-convergence holds true without selecting a subsequence.
  Suppose that $\tilde \mu_\tau^{(k)}$ also satisfies \eqref{el_thermal_step_lin}.
  Then, for the difference $\mu \defas \mu_\tau^{(k)} - \tilde \mu_\tau^{(k)}$ it holds that
  \begin{equation*}
    \int_\Omega \big( \NNN \bar{c}_V \BBB\frac{\mu}{\tau} \vphi + \mathbb{K}_0 \nabla \mu \cdot \nabla \vphi \big) \di x
      + \kappa \int_\Gamma \mu \vphi \di \haus^{d-1} = 0.
  \end{equation*}
  Taking a   smooth \BBB sequence $(\vphi_h)_h \NNN \subset C_c^\infty(\Omega)\BBB$ converging to $\chi(\mu)$ \CCC in $C^1$, \BBB where \CCC$\chi(t) \defas \arctan(t)$, \BBB this shows with \eqref{spectrum_bound_K}, \CCC $\chi(t) t \geq 0$ for all $t$\BBB,  and \CCC$\chi' \ge 0$ \BBB \BBB that $\int_\Omega \frac{\mu}{\tau} \chi(\mu) \di x = 0$.     As  \CCC $\chi(t) t \geq 0$ for all $t$ \BBB and $\chi(t) = 0$ if  and only if $t = 0$, we have proved $\mu \equiv 0$, and thus uniqueness holds.

(ii)   We only sketch the proof as it follows along the lines of the reasoning in Section \ref{sec:tau_to_zero_delta_fixed}.
  Let $\hat u_\tau$, $\overline{u}_\tau$, $\underline{u}_\tau$  be defined similar to \eqref{y_interpolations}, and use similar notation for $\mu$.  We first observe \CCC that \BBB  $\CCC(\BBB\hat u_\tau\CCC)_\tau\BBB$ is bounded in $H^1(I;H^1(\Omega;\R^d))$ and $\CCC(\BBB\hat{\mu}_\tau\CCC)_\tau\BBB$ is bounded in $L^r(I;W^{1,r}(\Omega))$. This follows from Lemmas \ref{lem:comp_uk}--\ref{lem:comp_muk} and \eqref{eq: again a convergence}. Additional control can be recovered from the estimates stated in Theorem \ref{thm:further_apriori_temp_bounds}. Thus,  \BBB  we can find $u \in H^1(I; H^1_{\Gamma_D}(\Omega; \R^d))$ such that $\nabla \dot{\hat u}_\tau \weakly \nabla \dot u$ and $\nabla \overline u_\tau \weakly \nabla u$ weakly in $L^2(I\times \Omega; \R^{d \times d})$.   Moreover, there exists $\mu \in L^1(I; W^{1,1}(\Omega))$ with $\mu \ge 0$ a.e.~such that the latter two convergences in \eqref{linearized_tau_zero_convs} can be derived (up to a subsequence) using the Aubin-Lions' theorem and by following the reasoning in Lemma~\ref{lem:pointwise_temp_conv}.  \BBB

  Using  \eqref{el_mech_step_lin} for every smooth $z \in L^2(I; H^1_{\Gamma_D}(\Omega; \R^d))$ and summing \CCC over \BBB every $k \in \setof{1, \ldots, T/\tau}$ we derive that
  \begin{equation*}
    \int_0^{T} \int_\Omega \big( \CW e(\overline u_\tau)  +  \underline{\mu}_\tau \mathbb{B}^{(\alpha)} + \CD e(\dot{\hat u}_\tau) \big) \BBB : \nabla z
       \di x \di t
    - \int_0^{T} \langle \overline \ell_\tau(t), z(t) \rangle \di t = 0.
  \end{equation*}
  Consequently, we can then pass to the limit $\tau \to 0$ in \CCC the above equality \BBB  which results in \eqref{linear_evol_mech}. \BBB 
  Using \eqref{el_thermal_step_lin} for every $k \in \setof{1, \ldots, T/\tau}$ we also see that for any $\vphi \in C^\infty(I \times \bar \Omega)$ with $\vphi(T) = 0$ it \CCC holds \BBB
  \begin{align*}
   &\int_0^{T} \int_\Omega
      \big(  \CD^{(\alpha)} e(\dot{\hat u}_\tau) : e(\dot{\hat u}_\tau) \, \vphi
      + \mathbb{K}_0 \nabla \overline \mu_\tau \cdot \nabla \vphi -\bar c_V \hat \mu_\tau \dot \vphi\big) \di x \di t
      + \kappa \int_0^{T} \int_{\Gamma} (\overline \mu_\tau - \overline \theta_{\flat, \tau}) \vphi \di \haus^{d-1} \\
      &\quad= \bar c_V \int_\Omega \mu_0 \vphi(0) \di x,
  \end{align*}
   where as usual we applied integration by parts.   In particular, \NNN as $\tau \to 0$ \BBB by \eqref{linearized_tau_zero_convs} \BBB we see that
  \begin{align}\label{eq. LLL enerty2}
    &\lim_{\tau \to 0} \int_0^{T} \int_\Omega
     ( -\bar c_V \hat \mu_\tau \dot \vphi
      + \mathbb{K}_0 \nabla \overline \mu_\tau \cdot \nabla \vphi) \di x \di t
      + \kappa \int_0^{T} \int_{\Gamma} (\overline \mu_\tau - \overline \theta_{\flat, \tau}) \vphi \di \haus^{d-1} \notag \\
    &\quad= \int_0^T \int_\Omega
      (-\bar c_V \mu \dot \vphi
      + \mathbb{K}_0 \nabla \mu \cdot \nabla \vphi) \di x \di t
      + \kappa \int_0^T \int_{\Gamma} (\mu - \theta_\flat) \vphi \di \haus^{d-1}.
  \end{align}
   We also find \BBB
    \begin{align}\label{eq. LLL enerty}
    \lim_{\tau \to 0}   \frac{1}{2} \int_\Omega \CW e(\bar u(t)) : e(\bar \NNN u(t) \BBB ) \di x
    &=\frac{1}{2} \int_\Omega \CW e( u_\tau(t)) : e( u_\tau(t)) \di x, \notag \\   
    \lim_{\tau \to 0} \int_0^{T} \int_\Omega \CD e(\dot{\hat u}_\tau) : e(\dot{\hat u}_\tau) \, \vphi \di x \di t
    &= \int_0^T \int_\Omega \CD e(\dot u) : e(\dot u) \, \vphi \di x \di t.
  \end{align} 
  Indeed, inequalities follow from weak convergence, and the equalities are recovered by resorting to energy balances in the time-discrete and time-continuous setting, \BBB see Lemma \ref{lem:strong_strain_rates_conv},  in particular \eqref{two ini}--\eqref{diss_convergence}, for details. Let us highlight that at this point for $\alpha = 1$ we exploit $ \int_0^T \int_\Omega \underline{\mu}_\tau \mathbb{B}^{(\alpha)} : \nabla \dot{\hat u}_\tau \di x \to \int_0^T  \int_\Omega\mu \mathbb{B}^{(\alpha)} : \nabla \dot{u}\di x$ since we can assume $\underline{\mu}_\tau \to \mu$ in $L^2(I;L^2(\Omega))$ by \NNN Remark \ref{rem: after temp}(iii). \BBB

The second part of \eqref{eq. LLL enerty} along with \eqref{eq. LLL enerty2} implies \NNN that  \eqref{linear_evol_temp} holds. \BBB  This shows that $(u, \mu)$ is a weak solution of  \eqref{viscoel_small}--\eqref{initial_conds_lin} \BBB in the sense of Definition \ref{def:weak_form_linear_evol}.
   \MMM This \BBB solution is unique  (see Theorem \ref{thm:linearization_right_diag}(i)), \BBB  all aforementioned convergences hold true without selecting a subsequence.  Energy convergence in \eqref{eq. LLL enerty} along with weak convergence implies $\overline u_\tau(t) \to \overline u(t)$ strongly in $H^1(\Omega; \R^d)$ for every $t \in I$. For the other interpolations, one can argue in a similar fashion by replacing $\overline u_\tau(t)$ by $\underline u_\tau(t)$ in \eqref{eq. LLL enerty}. \BBB 
\end{proof}

\noindent \textbf{Acknowledgements} This work was funded by  the DFG project FR 4083/5-1 and  by the Deutsche Forschungsgemeinschaft (DFG, German Research Foundation) under Germany's Excellence Strategy EXC 2044 -390685587, Mathematics M\"unster: Dynamics--Geometry--Structure. The work was further supported by the DAAD project 57600633, and by the project  DAAD-22-03. M.K.~acknowledges support by  GA\v{C}R-FWF project 21-06569K and by the Erwin Schr\"{o}dinger International Institute for Mathematics and Physics  during his stay in Vienna in 2022.

\typeout{References}

\end{document}